\titleformat{\subsubsection}[runin]
    {\normalfont\bfseries}
    {\thesubsubsection}
    {0.5em}
    {}
    []
\newcommand{\symup}{\mathrm}
\newcommand{\symbf}{\mathbf}
\newcommand{\symbb}{\mathbb}
\newcommand{\symsf}{\mathsf}
\DeclareMathAlphabet{\mathbfscr}{OMS}{mdugm}{b}{n}
\newcommand{\symcal}{\mathcal}
\newcommand{\symscr}{\mathscr}
\newcommand{\symfrak}{\mathfrak}
\newcommand{\bA}{{\symbf{A}}}
\newcommand{\bC}{{\symbf{C}}}
\newcommand{\bD}{{\symbf{D}}}
\newcommand{\bF}{{\symbf{F}}}
\newcommand{\bG}{{\symbf{G}}}
\newcommand{\bH}{{\symbf{H}}}
\newcommand{\bJ}{{\symbf{J}}}
\newcommand{\bM}{{\symbf{M}}}
\newcommand{\bT}{{\symbf{T}}}
\newcommand{\bW}{{\symbf{W}}}
\newcommand{\bZ}{{\symbf{Z}}}
\newcommand{\bFe}{{\symbf{e}}}
\newcommand{\bFf}{{\symbf{f}}}
\newcommand{\bFg}{{\symbf{g}}}
\newcommand{\bbA}{{\symbb{A}}}
\newcommand{\bbB}{{\symbb{B}}}
\newcommand{\bbC}{{\symbb{C}}}
\newcommand{\bbN}{{\symbb{N}}}
\newcommand{\bbQ}{{\symbb{Q}}}
\newcommand{\bbZ}{{\symbb{Z}}}
\newcommand{\cA}{{\symcal{A}}}
\newcommand{\cB}{{\symcal{B}}}
\newcommand{\cC}{{\symcal{C}}}
\newcommand{\cD}{{\symcal{D}}}
\newcommand{\cE}{{\symcal{E}}}
\newcommand{\cF}{{\symcal{F}}}
\newcommand{\cH}{{\symcal{H}}}
\newcommand{\cK}{{\symcal{K}}}
\newcommand{\cL}{{\symcal{L}}}
\newcommand{\cM}{{\symcal{M}}}
\newcommand{\cO}{{\symcal{O}}}
\newcommand{\cP}{{\symcal{P}}}
\newcommand{\cR}{{\symcal{R}}}
\newcommand{\cT}{{\symcal{T}}}
\newcommand{\sM}{{\symscr{M}}}
\newcommand{\bsM}{{\symbfscr{M}}}
\newcommand{\FRd}{{\symfrak{d}}}
\newcommand{\FRp}{{\symfrak{p}}}
\newcommand{\FRA}{{\symfrak{A}}}
\newcommand{\FRC}{{\symfrak{C}}}
\newcommand{\FRD}{{\symfrak{D}}}
\newcommand{\FRJ}{{\symfrak{J}}}
\newcommand{\FRM}{{\symfrak{M}}}
\newcommand{\FRS}{{\symfrak{S}}}
\newcommand{\FRT}{{\symfrak{T}}}
\newcommand{\SFQ}{{\symsf{Q}}}
\renewcommand{\emptyset}{\varnothing}
\newcommand{\widebar}[1]{\mkern 2.0mu\overline{\mkern-2.0mu#1\mkern-2.0mu}\mkern 2.0mu}
\renewcommand{\bar}{\widebar}
\newcommand{\x}{\times}
\newcommand{\longot}{\longleftarrow}
\newcommand{\longto}{\longrightarrow}
\newcommand{\ar}{\rightarrow}
\newcommand{\olto}{\overleftarrow}
\newcommand{\orto}{\overrightarrow}
\newcommand{\defeq}{\coloneqq}
\newcommand{\notion}{\emph}
\mathchardef\mathhyphen="2D 
\newcommand{\hy}{\mathhyphen} 
\providecommand{\given}{}
\newcommand\SetSymbol[1][]{%
    \nonscript\:#1\vert
    \allowbreak
    \nonscript\:
    \mathopen{}}
\DeclarePairedDelimiterX{\Set}[1]{\{}{\}}{%
    \renewcommand\given{\SetSymbol[\delimsize]}
    #1
}
\DeclarePairedDelimiterX{\restr}[1]{}{\vert}{#1}
\DeclarePairedDelimiterX{\Stack}[1]{[}{]}{%
    #1
}
\newcommand{\OneHalf}{{\frac{1}{2}}}
\newcommand{\dd}{\symup{d}} 
\DeclarePairedDelimiterX{\abs}[1]{\lvert}{\rvert}{%
    \ifblank{#1}{\:\cdot\:}{#1}
}
\DeclarePairedDelimiterX{\norm}[1]{\lVert}{\rVert}{%
    \ifblank{#1}{\:\cdot\:}{#1}
}
\DeclarePairedDelimiterX{\lrangle}[1]{\langle}{\rangle}{%
    \ifblank{#1}{\:\cdot\:}{#1}
}
\DeclarePairedDelimiterX{\powser}[1]{[\![}{]\!]}{%
    \ifblank{#1}{\:\cdot\:}{#1}
}
\DeclarePairedDelimiterX{\lauser}[1]{(\!(}{)\!)}{%
    \ifblank{#1}{\:\cdot\:}{#1}
}
\newcommand{\One}{\symbf{1}}
\DeclareMathOperator{\Cnt}{\#} 
\DeclareMathOperator{\Hom}{Hom} 
\DeclareMathOperator{\End}{End} 
\DeclareMathOperator{\Aut}{Aut} 
\DeclareMathOperator{\Out}{Out} 
\DeclareMathOperator{\Tr}{Tr} 
\DeclareMathOperator{\Nm}{Nm} 
\newcommand{\ev}{\symup{ev}} 
\DeclareMathOperator{\coker}{coker} 
\DeclareMathOperator{\Sym}{Sym}
\newcommand{\Disc}{\symup{Disc}} 
\DeclareMathOperator{\RH}{H} 
\DeclareMathOperator{\PH}{\prescript{\FRp}{}{H}} 
\DeclareMathOperator{\Spec}{Spec}
\DeclareMathOperator{\codim}{codim} 
\newcommand{\CoTB}{\Omega} 
\DeclareMathOperator{\PicS}{\cP ic} 
\DeclareMathOperator{\Div}{Div} 
\newcommand{\Gm}[1][]{%
    \ifblank{#1}{%
        {\symbb{G}}_{\symup{m}}
    }{
        {\symbb{G}}_{\symup{m},#1}
    }
}
\DeclareMathOperator{\BG}{\bbB\!} 
\DeclareMathOperator{\Bun}{Bun} 
\newcommand{\git}{\mathbin{\!\sslash\!}} 
\newcommand{\Gr}{\symsf{Gr}} 
\newcommand{\Cartan}{\symsf{C}} 
\DeclareMathOperator{\IHom}{\underline{Hom}} 
\DeclareMathOperator{\Irr}{Irr} 
\newcommand{\Loop}{\symbb{L}} 
\newcommand{\Arc}[1][]{
    \ifblank{#1}{%
        \Loop^+
    }{
        \Loop^{+ #1}
    }
}
\newcommand{\IC}{{\symup{IC}}} 
\DeclareMathOperator{\Ob}{Ob} 
\DeclareMathOperator{\val}{val} 
\DeclareMathOperator{\Char}{char} 
\DeclareMathOperator{\Gal}{Gal} 
\newcommand{\Qlb}{\bar{\bbQ}_\ell}
\DeclareMathOperator{\Lie}{Lie} 
\DeclareMathOperator{\Ad}{Ad} 
\DeclareMathOperator{\ad}{ad} 
\DeclareMathOperator{\Cent}{C} 
\DeclareMathOperator{\Norm}{N} 
\newcommand{\tp}[1]{\prescript{\symup{t}}{}{#1}} 
\newcommand{\dual}{\check} 
\DeclareMathOperator{\GL}{GL} 
\DeclareMathOperator{\PGL}{PGL} 
\DeclareMathOperator{\SL}{SL} 
\DeclareMathOperator{\Mat}{Mat} 
\newcommand{\La}[1]{\symfrak{#1}} 
\DeclarePairedDelimiterX\Pair[2]{\langle}{\rangle}{#1,#2}
\DeclareMathOperator{\Diag}{diag} 
\newcommand{\Der}{{\symup{der}}} 
\newcommand{\CoCharG}{\check{\symbb{X}}} 
\newcommand{\Roots}{\Phi} 
\newcommand{\SimRts}{\Delta} 
\newcommand{\Rt}{\alpha} 
\newcommand{\Wt}{\varpi} 
\newcommand{\CoWt}{\check{\varpi}} 
\newcommand{\SC}{{\symup{sc}}} 
\newcommand{\AD}{{\symup{ad}}} 
\newcommand{\reg}{{\symup{reg}}} 
\newcommand{\ssim}{{\symup{ss}}} 
\newcommand{\srs}{{\symup{srs}}} 
\newcommand{\Sat}{\symup{Sat}} 
\newtcbox{\TODO}{enhanced,nobeforeafter,tcbox raise
    base,boxrule=0.4pt,top=-1pt,bottom=-1pt,
    right=.5mm,left=12mm,arc=1pt,boxsep=2pt,before upper={\vphantom{dlg}},
    colframe=red!75!black,coltext=red!50!black,colback=red!5!white,
    overlay={\begin{tcbclipinterior}\fill[red!75!black] (frame.south west)
    rectangle node[text=white,font=\sffamily\bfseries\footnotesize
    ] {TODO} ([xshift=11.5mm]frame.north west);\end{tcbclipinterior}}}
    \def\TODO#1{'#1'}%
\newtcolorbox{TODObox}{colback=red!5!white,colframe=red!75!black,coltext=red!50!black,fonttitle=\sffamily\bfseries,title=TODO}
\newtcbox{\FIXME}{enhanced,nobeforeafter,tcbox raise
    base,boxrule=0.4pt,top=-1pt,bottom=-1pt,
    right=.5mm,left=12mm,arc=1pt,boxsep=2pt,before upper={\vphantom{dlg}},
    colframe=orange!75!black,coltext=orange!50!black,colback=orange!5!white,
    overlay={\begin{tcbclipinterior}\fill[orange!75!black] (frame.south west)
    rectangle node[text=white,font=\sffamily\bfseries\footnotesize
    ] {FIXME} ([xshift=11.5mm]frame.north west);\end{tcbclipinterior}}}
    \def\FIXME#1{'#1'}%
\newtcolorbox{FIXMEbox}{colback=orange!5!white,colframe=orange!75!black,coltext=orange!50!black,fonttitle=\sffamily\bfseries,title=FIXME}
\definecolor{poop-brown}{RGB}{122,89,1}
\newtcbox{\POOP}{enhanced,nobeforeafter,tcbox raise
    base,boxrule=0.4pt,top=-1pt,bottom=-1pt,
    right=.5mm,left=12mm,arc=1pt,boxsep=2pt,before upper={\vphantom{dlg}},
    colframe=poop-brown!75!black,coltext=poop-brown!50!black,colback=poop-brown!5!white,
    overlay={\begin{tcbclipinterior}\fill[poop-brown!75!black] (frame.south west)
    rectangle node[text=white,font=\sffamily\bfseries\footnotesize
    ] {POOP} ([xshift=11.5mm]frame.north west);\end{tcbclipinterior}}}
    \def\POOP#1{'#1'}%
\newtcolorbox{POOPbox}{breakable,pad at break*=1mm,colback=poop-brown!5!white,colframe=poop-brown!75!black,coltext=poop-brown,fonttitle=\sffamily\bfseries,title=POOP}
\newtcbox{\COMMENT}{enhanced,nobeforeafter,tcbox raise
    base,boxrule=0.4pt,top=-1pt,bottom=-1pt,
    right=.5mm,left=18mm,arc=1pt,boxsep=2pt,before upper={\vphantom{dlg}},
    colframe=teal!75!black,coltext=teal!50!black,colback=teal!5!white,
    overlay={\begin{tcbclipinterior}\fill[teal!75!black] (frame.south west)
    rectangle node[text=white,font=\sffamily\bfseries\footnotesize
    ] {COMMENT} ([xshift=17.5mm]frame.north west);\end{tcbclipinterior}}}
    \def\COMMENT#1{'#1'}%
\newtcolorbox{COMMENTbox}{colback=teal!5!white,colframe=teal!75!black,coltext=teal!50!black,fonttitle=\sffamily\bfseries,title=COMMENT}
\newtcbox{\QUESTION}{enhanced,nobeforeafter,tcbox raise
    base,boxrule=0.4pt,top=-1pt,bottom=-1pt,
    right=.5mm,left=18mm,arc=1pt,boxsep=2pt,before upper={\vphantom{dlg}},
    colframe=yellow!75!black,coltext=yellow!50!black,colback=yellow!5!white,
    overlay={\begin{tcbclipinterior}\fill[yellow!75!black] (frame.south west)
    rectangle node[text=white,font=\sffamily\bfseries\footnotesize
    ] {QUESTION} ([xshift=17.5mm]frame.north west);\end{tcbclipinterior}}}
    \def\QUESTION#1{'#1'}%
\newtcolorbox{QUESTIONbox}{colback=yellow!5!white,colframe=yellow!75!black,coltext=yellow!50!black,fonttitle=\sffamily\bfseries,title=QUESTION}
\newtcbox{\ANSWER}{enhanced,nobeforeafter,tcbox raise
    base,boxrule=0.4pt,top=-1pt,bottom=-1pt,
    right=.5mm,left=15mm,arc=1pt,boxsep=2pt,before upper={\vphantom{dlg}},
    colframe=green!75!black,coltext=green!50!black,colback=green!5!white,
    overlay={\begin{tcbclipinterior}\fill[green!75!black] (frame.south west)
    rectangle node[text=white,font=\sffamily\bfseries\footnotesize
    ] {ANSWER} ([xshift=14.5mm]frame.north west);\end{tcbclipinterior}}}
    \def\ANSWER#1{'#1'}%
\newtcolorbox{ANSWERbox}{colback=green!5!white,colframe=green!75!black,coltext=green!50!black,fonttitle=\sffamily\bfseries,title=ANSWER}
\newtheorem{theorem}[subsubsection]{Theorem}
\newtheorem{proposition}[subsubsection]{Proposition}
\newtheorem{lemma}[subsubsection]{Lemma}
\newtheorem{corollary}[subsubsection]{Corollary}
\theoremstyle{definition}
\newtheorem{definition}[subsubsection]{Definition}
\newtheorem{example}[subsubsection]{Example}
\theoremstyle{remark}
\newtheorem{remark}[subsubsection]{Remark}
\numberwithin{equation}{subsection}
\Crefname{equation}{}{}
\crefname{equation}{}{}
\Crefname{section}{\S}{\S\S}
\crefname{section}{\S}{\S\S}
    \newcommand{\blank}{{-}} 
    \newcommand{\UNI}{\symup{U}} 
    \newcommand{\SYMS}{\symup{S}} 
    \newcommand{\Hit}{{\symup{Hit}}} 
    \newcommand{\JR}{{\symup{JR}}} 
    \newcommand{\OGT}{\vartheta} 
    \newcommand{\SIM}{{\symup{sim}}} 
    \renewcommand{\bsM}{{\underline{\sM}}} 
    \newcommand{\OI}{{\symbf{O}}} 
\title{Relative Fundamental Lemmas for Spherical Hecke Algebras and
Multiplicative Hitchin Fibrations: the Jacquet--Rallis Case}
\author{X. Griffin Wang, Zhiyu Zhang}
\date{\today}
\begin{document}

\maketitle 

\begin{abstract}
     We prove the Jacquet--Rallis fundamental lemma for spherical Hecke algebras
     over local function fields using multiplicative Hitchin fibrations.
     Our work is inspired by the proof of \cite{Yu11} in the Lie algebra case and
     builds upon the general framework of multiplicative Hitchin fibrations in \cite{Wa25}.
\end{abstract}

\setcounter{tocdepth}{2}
\tableofcontents

\section{Introduction} 
\label{sec:introduction}

\subsection{Jacquet--Rallis fundamental lemmas for spherical Hecke algebras}

Due to the work of many people including \cite{zhang2014fourier, xue2019global, GGP21,GGP22}, the
Gan--Gross--Prosad conjecture on Rankin--Selberg \(L\)-functions \cite{GGP12} for
unitary groups is now established in full generality via the relative trace
formula approach. A key local ingredient is the Jacquet--Rallis fundamental lemma
\cite{Yu11,BP21}, which is an equality between orbital integrals of
standard test functions on the unitary group \(\UNI_n(F)\) and the symmetric
space \(\SYMS_n=\GL_n(E)/\GL_n(F)\), where \(E/F\) is an unramified quadratic extension
of local fields with odd residue characteristics.

The first proof of such a
relative fundamental lemma is due to Yun in \cite{Yu11}, which mainly studies its
Lie algebra variants over function fields. Recently, a Jacquet--Rallis
fundamental lemma for spherical Hecke algebras over \(p\)-adic fields was proved
by Leslie in \cite{Les23} via local harmonic analysis and global comparison, which is used in the
formulation of arithmetic fundamental lemmas for Hecke algebras
\cite{AFL,LRZ24}. 

In this paper, we prove the Jacquet--Rallis fundamental lemma for spherical Hecke
algebras over local function fields, via the method of multiplicative Hitchin
fibrations. In particular, we avoid the reduction to Lie algebra variants. The
main theorem of this paper is the following.

\begin{theorem}[Jacquet--Rallis fundamental lemma for spherical Hecke algebras, \Cref{thm:main_theorem}]
    Let \(n\ge 1\) be an integer and \(F=k\lauser{\pi}\) be a local function
    field with \(\Char(k)>2n\). For any strongly regular semisimple \(A\in
    \SYMS_n(F)\) matching \(A'\in G'(F)\) and any pair of spherical functions
    \((f,f')\in \cH_0\x\cH_0'\) matched under the canonical spherical transfer
    \eqref{eqn:spherical_transfer}, we have equality in orbital integrals
    \begin{align}
        \Delta(A)\OI_{A,H}^{\eta}(f)=\OI_{A',H'}(f').
    \end{align}
\end{theorem}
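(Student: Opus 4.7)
The plan is to establish the identity globally via multiplicative Hitchin fibrations, following the framework of \cite{Wa24} and extending Yun's Lie algebra method \cite{Yu11} to the spherical Hecke algebra setting. First, globalize the data: choose a smooth projective curve $X$ over $k$ with a closed point $x_0$ whose completed local ring is $F=k\powser{\pi}$, together with an étale double cover $X'\to X$ realizing $E/F$ at $x_0$ and with controlled behavior elsewhere.

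Next, construct two multiplicative Hitchin moduli spaces $\sM$ and $\sM'$ attached to the unitary side (where $H=\UNI_n$) and the symmetric space side (where $H'=\GL_n$) respectively, each equipped with a boundary divisor supported at $x_0$ that encodes the cocharacter data of the spherical functions $f,f'$ (this is precisely the role multiplicative Hitchin plays compared to the additive/Lie algebra version of \cite{Yu11}). Both spaces fiber over a common Hitchin base $\cA$, and a product formula should express a twisted global point count on each fiber $\sM_a$ over $a\in\cA(k)$ as a product of local ($\eta$-twisted) orbital integrals. Choosing $a$ so that its behavior away from $x_0$ is trivial (e.g., lying in the image of an appropriate Kostant section at all other places), the desired local identity at $x_0$ reduces to a global equality of fiber counts, up to the transfer factor $\Delta(A)$.

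The main step is then a cohomological comparison: show that, after taking suitable perverse direct images to the Hitchin base, the $\eta$-twisted intersection cohomology of $\sM$ matches the intersection cohomology of $\sM'$. The strategy is to apply the Ngô-style support theorem for multiplicative Hitchin fibrations developed in \cite{Wa24} to reduce the comparison to a sufficiently large open stratum of $\cA$, on which the matching can be verified directly on generic fibers through explicit computation of the affine Springer symmetry groups. A Frobenius trace at the global point corresponding to $A$ then extracts the stated local identity of orbital integrals from the cohomological equality.

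The principal obstacle will be controlling the supports of the simple perverse constituents of the direct image complexes on both sides: one must rule out exotic constituents supported on smaller loci, which requires careful codimension estimates for strata of $\cA$ and a fine analysis of the affine Springer fibers attached to $\SYMS_n$ and $\UNI_n$ in the relative, multiplicative setting. This geometric input, largely supplied by \cite{Wa24}, is where the bulk of the work lies; once the support theorem and the product formula are in hand, the generic comparison and the transfer factor computation should follow by relatively standard arguments. The hypothesis $\Char(k)>2n$ is expected to enter at this step, to ensure the regularity properties needed for the support theorem.
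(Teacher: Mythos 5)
Your high-level architecture — globalize to a curve $X$ with a double cover realizing $E/F$ at the chosen point, build a relative multiplicative Hitchin fibration whose boundary divisor encodes the spherical Hecke data, use a product formula and a cohomological comparison over the Hitchin base, and extract the local identity by Grothendieck--Lefschetz — is indeed the paper's strategy, and also mirrors \cite{Yu11}. But several of the load-bearing steps are either missing or replaced by weaker ideas.

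First, the cohomological comparison is not proved by a Ngô-style support theorem that rules out exotic perverse constituents by codimension estimates. The Jacquet--Rallis mH-fibrations $h\colon\cM\to\cA$ and $h'\colon\cM'\to\cA$ are generically finite (the strongly regular semisimple condition kills all stabilizers, so there is no Picard symmetry acting on the fibers), and the paper proves they are \emph{stratified-small} (\Cref{prop:smallness}, \Cref{thm:stratified_smallness}). This is essential and also more delicate than plain smallness because $\cM$ and $\cM'$ are not smooth: one must use the local model of singularity (\Cref{thm:local_model_of_singularity_JR_symmetric}, \Cref{thm:local_model_of_singularity_JR_unitary}) together with \Cref{lem:stratified_small_supports} to conclude that $h_*\IC_\cM\otimes L_\eta^X$ and $h'_*\IC_{\cM'}$ have no supports strictly smaller than $\cA$. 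Your phrase ``affine Springer symmetry groups'' suggests you are importing intuitions from the ordinary (non-relative) mH-fibration where Picard stacks act; here they do not appear at all, and that is exactly why smallness is attainable and the product formula (\Cref{prop:product_formulae}) is a clean isomorphism of fibers without any Picard quotienting.

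Second, ``choosing $a$ so that its behavior away from $x_0$ is trivial'' glosses over a genuine obstruction. At each non-split place $u$, the unitary-side fiber $\cM_u'(a)$ may be \emph{empty} if the local obstruction class $\Ob_u(a)\in\RH^1(F_u,G')$ is nontrivial (\Cref{prop:local_obstruction_and_valuation_parity}); one cannot simply arrange local triviality via a Kostant-type section on the unitary side. The paper resolves this by showing the product of the local obstructions over all places vanishes (it equals the parity of $\deg(\rho(\cL)^{\otimes 2}\cD^{\otimes 2n})$, which is even), and then by a Picard-twisting argument (\Cref{sub:from_global_to_local}) produces a global point $a\in Z(K)$ for which all local factors at $u\neq v$ are nonzero. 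Without this cancellation step the global identity \eqref{eqn:prod_of_local_matching} cannot be divided by the away-from-$v$ factors.

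Third, the generic comparison on a dense open of $\cA$ is not a ``relatively standard argument'': it is proved by reducing, via the local model of singularity and Chebotarev density, to four explicit local computations at places where the situation is especially simple — the split case, the odd-discriminant-valuation case (where both sides vanish by a functional equation, \Cref{lem:vanishing_odd_val}), and Cases A and B of \Cref{sub:some_simple_cases}. These computations and the matching of the $\eta$-local system $L_{v,\eta}$ (the geometrization of the transfer factor via the pure co-tensor $f_n^\vee$) are where a substantial part of the novelty lies, not in results inherited from \cite{Wa24}. As a minor point, you also have the roles of the groups reversed: $\SYMS_n$ carries the $H=\GL_{n-1}$-action and $G'=\UNI_n$ carries the $H'=\UNI_{n-1}$-action, not vice versa.
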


Here we consider orbital integrals for the adjoint action of \(H=\GL_{n-1}\)
(resp. unitary group \( H'=\UNI_{n-1}  \)) on the symmetric space \(\SYMS_n\) (resp.
\(G'=\UNI_{n}\)) for strongly regular
semisimple orbits (\Cref{def:strongly_regular_semisimple}), and \(\Delta(A) \in
\{ \pm 1\}\) is the transfer factor. See \Cref{thm:main_theorem} for the meaning
of all notations. If there is no \(A' \in G'(F)\) matching \(A \in \SYMS_n(F)\),
then we show \(\OI_{A,H}^{\eta}(f)=0\) (\Cref{lem:vanishing_odd_val}) by
establishing functional equations for orbital integrals.

\subsubsection{Applications}
We expect that our theorem will imply the spherical fundamental lemma over
\(p\)-adic fields for sufficiently large prime \(p\) via model theory, similar to
\cite{Yu11}. Our proof is orthogonal to the local trace formulas and the
existence of smooth transfers used in \cite{Les23}, which are currently
unavailable in positive characteristics. Moreover, our method should also be able
to prove the homogeneous version of spherical Jacquet--Rallis fundamental lemmas
\cite{Les23}\cite[Theorem 3.6.1]{LRZ24}, generalizing the above inhomogeneous
version of fundamental lemmas.

Our method is also somewhat different from \cite{Yu11}. In \textit{loc.~cit.},
the spectral cover plays a heavy role in the study of both local and global
geometry. In multiplicative case, there is no spectral cover, and so we make use
of the cameral cover, which is much better generalized to other types of
relative fundamental lemmas in the framework of the relative Langlands program
\cite{BZSV} on \(L\)-functions and period integrals.  For instance, we have a
twisted Jacquet--Rallis lemma \cite{W23} for the twisted Gan--Gross--Prasad
conjecture \cite{TGGP} and also a Guo--Jacquet fundamental lemma for linear
periods \cite{guo1996generalization}, which an adaptation of
our method may prove. Similarly, our work may shed insights on spherical
relative fundamental lemmas for \(\UNI_n \times \UNI_n\), which in the unit
function case is deduced in the work of Liu \cite{Liu-GGP-Bessel} from Yun's
proof \cite{Yu11}.

Moreover, we expect our work in this paper will be helpful for studying analogs of
arithmetic fundamental lemmas for spherical Hecke algebras over moduli of local
Shtukas, which will be in a future work. Since our proof is geometric, we work
with Satake functions, namely spherical functions induced by geometric Satake
(also known as Kazhdan--Lusztig basis). It would be interesting to consider
arithmetic fundamental lemmas regarding the Taylor expansions of the orbital
integrals (with respect to a twisting parameter \(s\in\bbC\))
of those Satake functions.

\subsection{Sketch of the proof} 
\label{sub:sketch_of_the_proof}

We give an outline of our proof. The overall strategy is pretty close to
the global method in \cite{Yu11}, with the Lie algebras replaced by reductive
monoids (or the analog for the symmetric space). There have been many
geometric proofs of various fundamental lemmas at this point, such as
\cite{Ng10,Yu11,Wa25}. Thus, instead of giving a comprehensive description of
the common ingredients, such as Grothendieck--Lefschetz trace formula and
perverse sheaves (for which we recommend the reader to consult Ng\^o's
excellent exposition \cite{Ng17}), we will try to highlight the ones that are
more unique to this paper.

\subsubsection{}
The first difficulty in adapting Yun's method to our case is that their proof
heavily utilizes the spectral cover, which does not exist in the multiplicative
case. We first study the invariant theory in the monoid case, which enables
us to find the correct analogs of various key ingredients in \cite{Yu11}.
Among them, a set of inductively defined pure tensors \(\bFf_i\) and co-tensors
\(\bFf_i^\vee\) are especially important.

In fact, \cite{Yu11} does not directly use the quotient stack
\(\Stack*{\La{g}/H}\), but rather a deformation of it. More precisely, since
\(H=\GL_{n-1}\) may be regarded as the subgroup of \(G=\GL_n\) that fixes a
vector \(\bFe\) and a co-vector \(\bFe^\vee\) such that the pairing
\(\bFe^\vee\bFe=1\), the classifying stack \(\BG{H}\) of \(H\) classifies
rank-\(n\) vector bundles \(E\) together with a vector \(e\in E\) and a
co-vector \(e^\vee\in E^\vee\) such that \(e^\vee e=1\). We deform
\(\BG{H}\) by eliminating the condition \(e^\vee e=1\). This further induces a
deformation of \(\Stack*{\La{g}/H}\), and an analog is
also available in the monoidal setting. We also study the invariant theory
regarding this deformed version (see \Cref{sub:deformed_quotient_stack}),
because the global argument becomes significantly
easier compared to using the non-deformed version.

\subsubsection{}
In the local setting, most of the results are relatively straightforward. Our goal
is to encode the orbital integrals as a certain kind of point-count problem of
the relative version of the multiplicative affine Springer fibers (which we will
call the affine Jacquet--Rallis fibers). The transfer factor can be similarly
geometrized using the pure co-tensor \(\bFf_n^\vee\) (or equivalently, the pure
tensor \(\bFf_n\)).

The main difference from the Lie algebra case is that we need to relate the
formulation using groups to the formulation using monoids. An interesting result
is a new monoid-based proof of the relative Cartan decomposition originally due
to \cite{Of04}. In particular, it relies on the geometry of the monoids as
spherical varieties and avoids concrete matrix computations in \textit{loc.
cit.} As such, we expect it to be better generalized for other types of relative
trace formulae.

Since not every conjugacy class on the symmetric side will match a conjugacy
class on the unitary side, there is a Galois obstruction class that is
\(2\)-torsion. We study this obstruction and show it to be the same as the
parity of the discriminant valuation (see \Cref{def:extended_disc_divisor}).

\subsubsection{}
In the global setting, the key object is what we
call the Jacquet--Rallis mH-fibration. It is a relative version of the
multiplicative Hitchin fibrations (mH-fibrations) developed in the work of the
first-named author in \cite{Wa25}. It is also the multiplicative analog of the
global object studied in \cite{Yu11}.

Similar to \cite{Yu11}, the study of the Jacquet--Rallis mH-fibration mainly
focuses on its deformation theory and topological properties, and the main result
is a support theorem enabled by a stronger version of smallness. Unlike the Lie
algebra case, the deformation theory is a lot more complicated due to the
presence of singularities. Fortunately, most of the difficulty lies in
understanding the singularities of the usual mH-fibrations, which is already
completed in \cite{Wa25}.

The proof of smallness (more precisely, stratified-smallness) follows the same
strategy as in \cite{Yu11} and is rather straightforward. In particular, the
main ingredient is the so-called \(\delta\)-regularity of the usual
mH-fibrations, which is studied in \cite{Wa25}.

A crucial recurring input in the arguments in \cite{Yu11} is the integrality of
the spectral curve. Translating to the multiplicative case, it is essentially a
global ellipticity condition (see \Cref{sub:the_simple_locus}). The pure tensors
\(\bFf_i\) and \(\bFf_i^\vee\) from invariant theory play key roles here.

\subsubsection{}
Finally, to deduce the fundamental lemma from the global support theorem, we
need a product formula, which is not difficult to prove. After that, there
are two main steps.

For one, we need to prove the fundamental lemma in some especially simple cases
by direct computations. This step is not very difficult in terms of
computations, but the classification of such cases is given by results in
\cite{Wa25} which we will not elaborate on in this introduction.

Another more important step is to find a global approximation of any given
local case at a given place such that in other local places, the relevant
point-count is non-zero. It involves resolving a global Galois obstruction
which turns out to be the product of the local matching obstructions
discussed above.


\subsection{Structure of the paper} 
\label{sub:structure_of_the_paper}

Finally, we give a summary of the paper.

In \Cref{sec:invariant_theory}, we
study the relative invariant theory of reductive monoids as well as their
symmetric and unitary versions. In particular, we also study the deformed
version.

In \Cref{sec:local_formulations}, we formulate the affine
Jacquet--Rallis fibers, which are geometric incarnations of orbital integrals.
After some generalities, we study the local Galois obstructions to the matching
of conjugacy classes here. We also compute several simple cases of orbital
integrals and verify the fundamental lemmas in such cases.

In \Cref{sec:global_formulations}, we study the Jacquet--Rallis mH-fibrations and
establish the local model of singularity via deformation theory, the
local-global product formula, and the stratified smallness under mild technical
conditions. As a consequence, we deduce the support theorem for relevant
perverse sheaves.

In \Cref{sec:the_proof_of_the_fundamental_lemma}, we finish
the proof of the fundamental lemma using global matching. Along the way, we have
an approximation result and resolve a crucial global Galois obstruction.

In Appendix~\ref{sec:supplement_on_the_usual_mh_fibrations}, we slightly generalize many
necessary results regarding the usual mH-fibrations in \cite{Wa25} to our setup
of symmetric spaces (which are not groups) and unitary groups (which are
quasi-split only locally but not globally). 


\subsection{Acknowledgement}
We thank Zhiwei Yun and Wei Zhang for helpful discussions and comments. We also
thank the anonymous referee for carefully reading the manuscript, catching many
errors and typos, and providing helpful suggestions.

\subsection{Notations}

Here, we give a list of the most frequently used notations and conventions. It is for
reference purposes and readers' convenience only. The reader should refer to the
relevant part of the paper for detailed definitions.

\begin{itemize}
    \item \(k\): a finite field with \(\Char(k)>2n\);
    \item \(X\): a smooth, projective, geometrically connected curve over \(k\);
    \item \(\OGT\colon X'\to X\): an \'etale double cover of \(X\) with \(X'\)
        also geometrically connected;
    \item \(F\) (resp.~\(F_v\), resp.~\(\cO_v\)): the function field
        (resp.~local field, resp.~valuation ring at a place \(v\)) of \(X\);
    \item \(\breve{F}_{\bar{v}}\) (resp.~\(\breve{\cO}_{\bar{v}}\)): the
        unramified closure of \(F_v\) (resp.~\(\cO_v\)) at an algebraic
        geometric point \(\bar{v}\) over \(v\);
    \item Boldface letters: groups, monoids, etc. that are split. E.g., \(\bG=\GL_n\),
        \(\bH=\GL_{n-1}\), and so on;
    \item Boldface letters with a prime (\(\prime\)): Weil restrictions of
        split groups, monoids, etc. through double cover \(\OGT\). E.g.,
        \(\bG'=\OGT_*\bG\), \(\bM'=\OGT_*\bM\), and so on;
    \item Upright, italic or fraktur letters: usually related to
        constructions on the symmetric space side. E.g., \(\SYMS_n\),
        \(G=\GL_n\), \(\FRM\), and so on;
    \item Upright, italic, or fraktur letters with a prime (\(\prime\)): usually
        related to constructions on the unitary side. E.g., \(G'=\UNI_n\),
        \(\FRM'\), and so on;
    \item Overhead bar (\(\bar{\phantom{x}}\)): Frobenius conjugate induced by a
        quadratic extension when the object is a number or a matrix;
    \item \(\sigma\) (resp.~\(\sigma_v\), \(\sigma_{\Out}\)): Frobenius
        conjugate (resp.~at a place \(v\), resp.~related to outer automorphisms)
        in more general contexts, whose meaning may vary slightly;
    \item Super/subscript \(\SC\) (resp.~\(\AD\)): constructions related to
        simply-connected cover (resp.~adjoint quotient);
    \item \(\Stack{S/G}\) (resp.~\(S\git G\)): the stacky (resp.~GIT) quotient
        of a space (or stack) by a group \(G\);
    \item \(\bM\) (resp.~\(\FRM\), resp.~\(\FRM'\)): the universal monoid (or
        monoidal symmetric space) of
        \(\bG^\SC=\SL_n\) (resp.~\(\SYMS_n\), resp.~\(G'=\UNI_n\));
    \item \(\bA_\bM\) (resp.~\(\FRA_\FRM\)): the abelianization of \(\bM\)
        (resp.~both \(\FRM\) and \(\FRM'\));
    \item \(\bC_\bM\) (resp.~\(\FRC_\FRM\)): the GIT quotient \(\bM\git\bG\)
        (resp.~\(\FRM\git G\simeq \FRM'\git G'\));
    \item \(\bC_{\bM,\bH}\) (resp.~\(\FRC_{\FRM,H}\)): the GIT quotient
        \(\bM\git \bH\) (resp.~\(\FRM\git H\simeq \FRM'\git H'\));
    \item \(\bsM\) (resp.~\(\sM\), resp.~\(\sM'\)): the deformed version of
        quotient stack \(\Stack{\bM/\bH}\) (resp.~\(\Stack{\FRM/H}\),
        resp.~\(\Stack{\FRM'/H'}\));
    \item \(\bC_{\bsM}\) (resp.~\(\FRC_\sM\)): the affinization of \(\bsM\)
        (resp.~both \(\sM\) and \(\sM'\));
    \item \(\bFf_i\) (resp.~\(\bFf_i^\vee\)): the \(i\)-th pure tensor (resp.~co-tensor)
    \item \(\Disc\): the \(\bH\)-discriminant function \(\bFf_n^\vee\bFf_n\);
    \item \(\cM_{v}(a)\) (resp. \(\cM_{v}'(a)\)): the affine
        Jacquet--Rallis fiber on the symmetric (unitary) side associated with a
        point \(a\in\FRC_\sM(\cO_v)\) that is generically strongly regular
        semisimple;
    \item \(\cM_v^\Hit(a)\) (resp. \(\cM_v^{\prime\Hit}(a)\)) the
        multiplicative affine Springer fiber (MASF) on the symmetric
        (resp.~unitary) side;
    \item \(\cH\) (resp.~\(\cH'\)): smooth \(\Qlb\)-valued
        functions with compact support on \(\SYMS_n(F_v)\) (resp.~\(G'(F_v)\));
    \item \(\cH_0 \subset \cH\) (resp. \(\cH_0'\subset\cH'\)): the subspace of
        spherical functions;
    \item \(h\colon \cM \to \cA\) (resp. \(h'\colon \cM' \to \cA\)): the
        Jacquet--Rallis mH-fibration on the symmetric (resp.~unitary) side;
    \item \(h^\Hit\colon \cM^\Hit \to \cA^\Hit \) (resp. \(h^{\prime \Hit}\colon
        \cM^{\prime \Hit} \to \cA^\Hit \)): the usual mH-fibration on the
        symmetric (resp.~unitary) side.
    \item \(\cB\): the moduli stack of boundary divisors;
    \item \(\cA^\heartsuit\) (resp.~\(\cA^\dagger\), \(\cA^\SIM\),
        \(\cA^\ddagger\), \(\cA_{(\delta)}\)): the open subset of \(\cA\) where:
        \begin{itemize}
            \item the generic point of \(X\) is mapped to the strongly regular
                semisimple locus,
            \item resp.~the local model of singularity in
                \Cref{thm:topological_local_model_of_singularity} holds for the
                \emph{usual} mHiggs bundle,
            \item resp.~the \emph{usual} mHiggs bundle is simple in the sense of
                \Cref{sub:the_simple_locus}
            \item resp.~both conditions for \(\cA^\SIM\) and
                \(\cA^\dagger\) are satisfied,
            \item resp.~all necessary technical conditions (some dependent on a
                fixed integer \(\delta\)) are satisfied, including ones for
                \(\cA^\dagger\), \(\cA^\SIM\), and notably also the
                \(\delta\)-regularity condition;
        \end{itemize}
    \item \(\cM^\heartsuit\), \(\cM^\dagger\), etc.
        (resp.~\(\cM^{\prime\heartsuit}\), \(\cM^{\prime\dagger}\), etc.): the preimage of
        \(\cA^{\heartsuit}\), \(\cA^\dagger\), etc. under \(h\) (resp.~\(h'\));
    \item \(\cF^{\lambda_v}\) (resp.~\(\cF^{\prime\lambda_v}\)): the
        local Satake sheaf of \(\SYMS_n^\AD\) (resp.~\((G')^\AD\)) with
        highest coweight \( \lambda_v\);
    \item \(L_\eta^X\) (resp.~\(L_{v,\eta}\)): the local system that is the
        global (resp.~local) geometric incarnation of the quadratic character
        \(\eta\).
\end{itemize}


\section{Monoids and Relative Invariant Theory} 
\label{sec:invariant_theory}

\subsection{The setup} 
\label{sub:the_setup}

Let \(k\) be a finite field and \(\bG=\GL_n\) be the general linear group over
\(k\). Let \(\bT \subset \bG\) be the maximal torus of \(\bG\) given by diagonal
matrices. Suppose \(\Char(k)>2n\). Then we have the universal 
monoid \(\bM\) of \(\bG^\SC=\SL_n\). The unit group of \(\bM\) is the
reductive group
\begin{align}
    \bM^\x=\bG^{\SC}_+=\bigl(\bT^\SC\x \bG^\SC\bigr)/\bZ^\SC,
\end{align}
where the center \(\bZ^\SC\subset \bG^\SC\) acts anti-diagonally. The group
\(\bG^\SC\x\bG^\SC\) acts on \(\bM\) by left and right translations, which
induces the \notion{adjoint action} of \(\bG^\SC\) by restricting to the
diagonal. Note that the adjoint action factors through \(\bG^\AD\) and so lifts
to an adjoint action of \(\bG\). We fix a maximal torus \(\bT_\bM\) of
\(\bM^\x\) to be \(\bigl(\bT^\SC\x\bT^\SC\bigr)/\bZ^\SC\), whose closure
\(\bar{\bT}_\bM\) in \(\bM\) is a normal toric variety. Let \(\bZ_\bM\simeq
\bT^\SC\) be the center of \(\bM^\x\).

Let \(\bH=\GL_{n-1}\subset\bG\) be the subgroup embedded into the upper-left
block. We may view \(\bH\) as the stabilizer of the vector
\begin{align}
    \bFe=\tp{(0,\ldots,0,1)}\in k^n
\end{align}
and the co-vector
\begin{align}
    \bFe^\vee=(0,\ldots,0,1)\in (k^n)^\vee.
\end{align}
The adjoint action of \(\bG\) on \(\bM\) induces an action of \(\bH\) on
\(\bM\), still called the adjoint action.
We are interested in the invariant theory of this \(\bH\)-action.


\subsection{Review of the usual adjoint action} 
\label{sub:review_of_the_usual_adjoint_action}

It will be helpful to first review the well-understood \(\bG\)-action on \(\bM\)
before studying the \(\bH\)-action; see \cite[Chapter~2]{Wa25} for more details.

\subsubsection{}
We can express \(\bM\) in a more
explicit way using fundamental representations of \(\bG^\SC\). Let \(\Rt_1,
\hdots, \Rt_{n-1}\) denote the simple roots and \(\Wt_1, \hdots, \Wt_{n-1}\) the fundamental weights of \(\bG\). Since \(\bG=\GL_n\), the
fundamental representations are simply \(V_i=\wedge^ik^n\)
for \(1\le i\le n-1\). We have the following representation of \(\bM^\x\)
\begin{align}
    \rho_\star\colon\bM^\x=\bigl(\bT^\SC\x \bG^\SC\bigr)/\bZ^\SC &\longto
    \bbA^{n-1}\oplus\bigoplus_{i=1}^{n-1}\End(V_i)\\
    (z,g)&\longmapsto
    (\Rt_i(z),\Wt_i(z)\wedge^i g),
\end{align}
and \(\bM\) is the normalization of the closure of \(\rho_\star(\bM^\x)\).

\subsubsection{}
We have the abelianization map of the reductive monoid
\begin{align}
    \alpha_\bM\colon \bM\to \bA_\bM\defeq \bM\git (\bG^\SC\x
    \bG^\SC)\simeq\bbA^{n-1},
\end{align}
which factors through the projection
\begin{align}
    \bbA^{n-1}\oplus\bigoplus_{i=1}^{n-1}\End(V_i)\longto \bbA^{n-1}.
\end{align}
Note that \(\bA_\bM\) is a commutative monoid and \(\alpha_\bM\) is a monoid
homomorphism. We know that \(\alpha_\bM^{-1}(\bA_\bM^\x)=\bM^\x\) and \(\alpha_\bM^{-1}(1)=\bG^\SC\).

For any \(x\in\bM\), we also define \(x_i\in\End(V_i)\) to be its
image under the obvious projection maps. By the general theory of reductive monoids, 
\(\bC_\bM\defeq\bM\git\bG\) is isomorphic to affine space \(\bbA^{2n-2}\) with
coordinates given by
\begin{align}
    \Rt_1,\ldots,\Rt_{n-1},\chi_1,\ldots,\chi_{n-1},
\end{align}
where \(\Rt_i\) are treated as the coordinates on \(\bA_\bM\) and \(\chi_i\)
sends \(x\) to \(\Tr(x_i)\). We denote by
\(\chi_\bM\) the natural map \(\bM\to\bC_\bM\). Let \(\bC\cong\bbA^{n-1}\) be
the affine space with coordinates \(\chi_i\), then \(\bC_\bM\simeq
\bA_\bM\x\bC\).
We also have a Chevalley--Steinberg type isomorphism (\(\bW\) is the Weyl group)
\begin{align}
    \bT_\bM\git\bW\stackrel{\sim}{\longto}\bC_\bM.
\end{align}

\subsubsection{}
\label{ssub:usual_disc}
There is an \notion{extended discriminant function} \(\Disc_+\) on \(\bC_\bM\) defined as
follows: on the maximal torus \(\bT_\bM\), it has the formula
\begin{align}
    \Disc_+=e^{(2\rho,0)}\prod_{\Rt\in\Roots}(1-e^{(0,\Rt)}),
\end{align}
where \(\Rt\) ranges over the set of roots \(\Roots\) of \(\bG\).
It extends to a regular function on \(\bar{\bT}_\bM\), and by its
\(\bW\)-invariance, it descends to a function on \(\bC_\bM\) and cuts out a
reduced divisor on \(\bC_\bM.\)

\subsubsection{}
There is a natural section from \(\bA_\bM\) to \(\bM\) denoted as \(\delta_\bM\). On
\(\bA_\bM^\x\simeq\bT^\AD\), this section is given by be \(z\mapsto (z,z^{-1})\).
Similarly, the invariant map \(\chi_\bM\) also admits a section
\(\epsilon_\bM\), whose restriction to \(\bC_\bM^\x\simeq\bA_\bM^\x\x\bC\) is
given by the formula
\begin{align}
    (z,a)\mapsto \delta_\bM(z)\epsilon(a),
\end{align}
where \(\epsilon(a)\in\bG^\SC\) is the companion matrix associated with
\(a=(a_1,\ldots,a_{n-1})\):
\begin{align}
    \epsilon(a)=\begin{pmatrix}
        a_1 & -a_2 & \cdots & (-1)^{n-2}a_{n-1} & (-1)^{n-1}\\
        1 &  &  &  & \\
          & 1 &  &  & \\
          &  & \ddots &  & \\
          & & & 1 & 
    \end{pmatrix}.
\end{align}
See \cite[\S~2.3.16 and Proposition~2.4.4]{Wa25} for details.


\subsection{The \texorpdfstring{\(\bH\)}{H}-action and an explicit section from the GIT quotient} 
\label{sub:the_bh_action}

Let \(\chi_{\bM,\bH}\colon \bM\to\bM\git\bH\) be the invariant quotient, then it
maps further down to \(\bC_\bM\) and \(\bA_\bM\) by its universal property.
In this subsection, we show that \(\bC_{\bM,\bH}:=\bM \git \bH \) is isomorphic to an affine space
of dimension \(3n-3\), and that \(\chi_{\bM,\bH}\) admits a section.

\subsubsection{}
The vector \(\bFe\) and the co-vector \(\bFe^\vee\) decomposes \(k^n\)
into a direct sum \((k\bFe^\vee)^\perp\oplus k\bFe\), where \((k\bFe^\vee)^\perp\simeq
k^{n-1}\). As a result, we can decompose \(V_i=\wedge^i k^n\) (\(1\le i\le
n-1\)) as \( \bH \)-representations
\begin{align}
    \wedge^i k^n=\wedge^i k^{n-1}\oplus(\wedge^{i-1}k^{n-1}\otimes k\bFe),
\end{align}
and we denote
\begin{align}
    V_i'&=\wedge^i k^{n-1},\\
    V_i''&=\wedge^{i-1}k^{n-1}\otimes k\bFe.
\end{align}
In turn, for any \(x_i\in\End(V_i)\), its trace is the sum
\begin{align}
    \Tr(x_i)=\Tr'(x_i)+\Tr''(x_i),
\end{align}
where \(\Tr'\) (resp.~\(\Tr''\)) is the induced trace on the direct summand
\(V_i'\) (resp.~\(V_i''\)). Clearly, the functions
\begin{align}
    \chi_i'\colon x &\longmapsto\Tr'(x_i),\\
    \chi_i''\colon x&\longmapsto\Tr''(x_i)
\end{align}
on \(\bM\) are \(\bH\)-invariant, and we have \(\chi_i=\chi_i'+\chi_i''\).

\subsubsection{}
For a tuple \(b=(b_1,\ldots,b_{n-1})\in\bbA^{n-1}\), we define
\begin{align}
    \beta(b)\defeq\begin{pmatrix}
        1 & & & & \\
          &1& & & \\
          & & \ddots & & \\
          & & & 1 & \\
        (-1)^{n-1}b_1 & (-1)^{n-2}b_2 &\cdots & -b_{n-1} & 1
    \end{pmatrix}.
\end{align}
We can now state one of the main results of this section:

\begin{theorem}
    The map
    \begin{align}
        \tilde{\chi}_{\bM,\bH}\colon\bM&\longto\bA_\bM\x\bbA^{2n-2}\\
        x&\longmapsto \bigl(\alpha_\bM(x),\chi_i'(x),\chi_i''(x)\bigr)
    \end{align}
    induces an isomorphism \(\bC_{\bM,\bH}\simeq
    \bA_\bM\x\bbA^{2n-2}\simeq\bbA^{3n-3}\). Moreover, we have a section
    \begin{align}
        \epsilon_{\bM,\bH}\colon \bC_{\bM,\bH}&\longto \bM\\
        (z,a',a'')&\longmapsto
        \beta(a'')\epsilon_\bM(z,a)\beta(a'')^{-1},
    \end{align}
    where \(a_i'\) (resp.~\(a_i''\)) is the coordinate corresponding to
    \(\chi_i'\) (resp.~\(\chi_i''\)), \(a'=(a_i')\) (resp.~\(a''=(a_i'')\)), and
    \(a=(a_i)\) is such that \(a_i=a_i'+a_i''\).
\end{theorem}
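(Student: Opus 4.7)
The argument splits into three stages: checking $\bH$-invariance, verifying the section identity, and upgrading to an isomorphism of the GIT quotient. First, I verify that $\alpha_\bM$, $\chi_i'$, and $\chi_i''$ are all $\bH$-invariant. The map $\alpha_\bM$ is $(\bG\x\bG)$-invariant under left-right translations, hence $\bH$-invariant under the adjoint action. For $\chi_i'$ and $\chi_i''$, since $\bH$ fixes both $\bFe$ and $\bFe^\vee$ by construction, it preserves the direct sum decomposition $k^n = (k\bFe^\vee)^\perp \oplus k\bFe$. Taking $\wedge^i$ yields the $\bH$-stable decomposition $V_i = V_i' \oplus V_i''$, and $\bH$ acts block-diagonally on $\End(V_i)$ with respect to it, so the partial traces $\Tr'$ and $\Tr''$ are automatically conjugation-invariant.

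Next, I verify the section identity $\tilde{\chi}_{\bM,\bH}\circ \epsilon_{\bM,\bH} = \id$. Set $y = \epsilon_{\bM,\bH}(z, a', a'') = \beta(a'') \epsilon_\bM(z, a) \beta(a'')^{-1}$ with $a = a' + a''$. Since $\beta(a'')$ is unipotent with determinant one, it lies in $\bG^\SC$ and conjugation by it preserves $\bM$; as $\bG$-invariants are preserved by conjugation, $\alpha_\bM(y) = z$ and $\chi_i'(y) + \chi_i''(y) = \chi_i(y) = a_i = a_i' + a_i''$. It thus suffices to verify $\chi_i''(y) = a_i''$ for each $i$, which is the main computation. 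Using $\rho_\star$, the $V_i$-component of $y$ equals $\Wt_i(\tilde z)\cdot \wedge^i M$ where $M = \beta(a'') \tilde z^{-1} \epsilon(a) \beta(a'')^{-1}$ for a lift $\tilde z \in \bT^\SC$, so $\chi_i''(y)$ becomes a sum of principal $i\x i$ minors of $\Wt_i(\tilde z) M$ indexed by subsets $J \subset \{1, \ldots, n\}$ of size $i$ containing $n$. Because $\beta(a'')$ fixes $\bFe = e_n$ (as $\bFe$ is the last column of $\beta(a'')$) and has a simple explicit last row, and because $\epsilon(a)$ is a companion matrix with only subdiagonal ones outside the first row, the minor sum collapses to $a_i''$. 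The plan is to perform this computation by induction on $i$, exploiting the recursive nature of the companion matrix, or by packaging all $\chi_i''$ simultaneously into a single generating-polynomial identity.

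Finally, to conclude that $\tilde{\chi}_{\bM,\bH}$ induces an isomorphism $\bC_{\bM,\bH} \simeq \bbA^{3n-3}$, note that the section from the previous stage splits the factored morphism $\psi: \bC_{\bM,\bH} \to \bbA^{3n-3}$, making it surjective. For injectivity, compare dimensions: a scalar matrix $\Diag(t, \ldots, t)$ lying in $\bH$ must fix $\bFe$, forcing $t = 1$, so $\bH$ intersects the center of $\bG$ trivially and acts effectively on $\bM$; for a regular semisimple $x$ the stabilizer $\bT_x \cap \bH$ is generically trivial by dimension comparison inside $\bG$, so generic $\bH$-orbits have dimension $(n-1)^2$, giving $\dim \bC_{\bM,\bH} = \dim \bM - (n-1)^2 = (n^2 + n - 2) - (n-1)^2 = 3n-3$. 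A generic regular semisimple $\bG$-orbit (of dimension $n^2 - n$) decomposes into $\bH$-orbits in an $(n-1)$-parameter family, which by the second stage is exactly parameterized by $(a_i')_i$. Thus the invariants $\alpha_\bM, \chi_i', \chi_i''$ separate generic $\bH$-orbits, $\psi$ is birational, and normality combined with matching dimensions and the existence of a section force $\psi$ to be an isomorphism. The main obstacle in this entire argument is the trace computation $\chi_i''(y) = a_i''$ for general $i$; the recursive companion-matrix structure and the lower-unipotent form of $\beta(a'')$ make this tractable but the bookkeeping requires care.
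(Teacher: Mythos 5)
Your proof is structured along the same lines as the paper's: compute the dimension of $\bM$, argue the generic $\bH$-stabilizer is (generically) trivial so that $\bC_{\bM,\bH}\to\bbA^{3n-3}$ is dominant of relative dimension zero, exhibit an explicit section, and conclude by irreducibility. The observation that $\chi_i=\chi_i'+\chi_i''$ reduces the section verification to a single partial trace is also exactly what the paper uses. You choose to verify $\chi_i''(y)=a_i''$ while the paper verifies $\chi_j'(y)=a_j'$; either works in principle.

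That said, there are two genuine gaps. The larger one is that the central computation — the claim that the principal-minor sum for $\chi_i''(y)$ ``collapses to $a_i''$'' — is only described as a plan, not carried out; you yourself flag that ``the bookkeeping requires care.'' This is the heart of the argument and cannot be left as a sketch. Note also that your formulation of the minor sum (scaling uniformly by $\Wt_i(\tilde z)$) is not quite right: $\delta_\bM(z)$ scales each weight space of $V_i$ by a different monomial in the $z_j$'s, so the diagonal matrix coefficients of $AB^{-1}$ acquire \emph{different} $z$-factors. The paper's choice of $\chi_j'$ is cleaner precisely because the only surviving minor corresponds to the highest-weight vector, on which $\delta_\bM(z)$ acts trivially; with $\chi_j''$ you must track the $z$-scaling on lower weight spaces (as the paper does later in the more involved proof that $\epsilon_\bsM$ is a section).

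The smaller gap is in the generic-stabilizer step. You assert that ``$\bT_x\cap\bH$ is generically trivial by dimension comparison inside $\bG$,'' but the naive dimension count $\dim\bT_x+\dim\bH=n+(n-1)^2<n^2$ only fails to rule out a nonempty intersection — it certainly does not force it to be finite, let alone trivial. You need an actual argument; the paper's is that for generic regular semisimple $x$, the vectors $\bFe,x\bFe,\dots,x^{n-1}\bFe$ are independent, so any $h\in\bH$ commuting with $x$ and fixing $\bFe$ fixes a basis, hence $h=1$. Your ``birationality'' claim in the final step is also not established by what you write, but this is harmless: once you have a section plus dominance plus the dimension count, irreducibility of $\bC_{\bM,\bH}$ already forces the section to be surjective and hence an isomorphism, without needing to discuss fibers.
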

\begin{proof}
    It is not hard to see (and well-known) that the generic stabilizer of
    \(\bH\) acting on \(\bM^\x\) is trivial. Indeed, for
    a sufficiently general regular semisimple (in the usual sense) element
    \(x\in\bG^\SC\), the vectors
    \begin{align}
        \bFe,x\bFe,\ldots,x^{n-1}\bFe
    \end{align}
    are linearly independent. If \(h\in\bH\) stabilizes \(x\), then
    it fixes the above vectors as well, and so must be trivial. The same is
    true if \(x\in\bM^\x\) because the \(\bH\)-action commutes with central
    translations.
    This implies that the map \(\bC_{\bM,\bH}\to \bbA^{3n-3}\) is dominant and
    generically quasi-finite after combining with the dimension counting:
    \begin{align}
        \dim\bM-\dim\bbA^{3n-3}=(n^2+n-2)-(3n-3)=n^2-2n+1=\dim{\bH}.
    \end{align}
    Therefore, if we can construct a section from \(\bbA^{3n-3}\)
    to \(\bM\), then we have an induced section \(\bbA^{3n-3}\to\bC_{\bM,\bH}\),
    which will prove both claims since \(\bC_{\bM,\bH}\) is irreducible.

    Note that the map \(\epsilon_{\bM,\bH}\) is well-defined on \(\bbA^{3n-3}\),
    and we show that
    \(\tilde{\chi}_{\bM,\bH}\circ\epsilon_{\bM,\bH}\) is the identity map.
    To simplify notations, we let \(A=\epsilon(a)\) and \(B=\beta(a'')\). We
    already know that the trace of \(\epsilon_\bM(z,a)=\delta_\bM(z)A\) on
    \(V_j\) is \(a_j\), and conjugation by \(B\) does not change it. So we only
    need to compute \(\chi_j'\) (or \(\chi_j''\) if we so choose). It is also
    easy to compute
    \begin{align}
        AB^{-1}=\begin{pmatrix}
            a_1' & -a_2' & \cdots &
            (-1)^{n-2}a_{n-1}' & (-1)^{n-1}\\
            1 &  &  &  & \\
              & 1 &  &  & \\
              &  & \ddots &  & \\
              & & & 1 & 
        \end{pmatrix}.
    \end{align}
    The matrix coefficients of \(AB^{-1}\) on \(V_j\), up to signs, are just
    its \(j\x j\) minors, and the diagonal matrix
    coefficients correspond to minors such that if the \(i\)-th row is chosen,
    then so is the \(i\)-th column.

    The action of \(\delta_\bM(z)\) on \(V_j\) is described as follows:
    the highest weight of \(V_j\) is \(\Wt_j\), a basis vector of which is
    \(\bFe_{\Wt_j}=\bFe_1\wedge\cdots\wedge\bFe_j\), where \(\bFe_i\) is the \(i\)-th
    standard vector
    \begin{align}
        \bFe_i=\tp{(0,\ldots,1,\ldots,0)}
    \end{align}
    with \(1\) in the \(i\)-th row. Then
    \(\delta_\bM(z)\bFe_{\Wt_j}=\bFe_{\Wt_j}\), and for any vector \(e_\mu\) of weight
    \(\mu=\Wt_j-\sum_ic_i\Rt_i\) (\(c_i\in\bbN\)), we have
    \begin{align}
        \delta_\bM(z)e_\mu=z_1^{c_1}\cdots z_{n-1}^{c_{n-1}}e_\mu,
    \end{align}
    where \(z_i=\Rt_i(z)\in k\).
    In addition, the action of (unipotent matrix) \(B\) on \(V_j\) sends
    any pure wedge
    \begin{align}
        \bFe_{i_1}\wedge\cdots\wedge\bFe_{i_j}, \quad 1\le i_1<\cdots<i_j\le n
    \end{align}
    to a sum of itself
    plus some wedges involving \(\bFe_n=\bFe\), the latter of which is contained
    in the complement space \(V_j''\). This means that
    \begin{align}
        \chi_j'\bigl(B\delta_\bM(z)AB^{-1}\bigr)=\chi_j'\bigl(\delta_\bM(z)AB^{-1}\bigr).
    \end{align}
    Since the action of \(\delta_\bM(z)\) is diagonal,
    it suffices to compute the diagonal matrix coefficients of \(AB^{-1}\) and
    then scale them using \(\delta_\bM(z)\). To this end, we need only consider the
    upper-left \((n-1)\x(n-1)\) block of \(AB^{-1}\).

    For any \(2\le i\le n-1\), if the \(i\)-th row of \(AB^{-1}\) (without the
    last row and column) is chosen as part of
    a \(j\x j\) minor, then for the minor to be non-vanishing (and contributing
    to \(\chi_j'\)), the \((i-1)\)-th
    column must also be chosen, and then so is the \((i-1)\)-th row. By
    induction, the only \(j\x j\) minor contributing to \(\chi_j'\) is the
    upper-left \(j\x j\) block,
    corresponding to highest-weight \(\Wt_j\). This means that the scaling by
    \(\delta_\bM(z)\) is \(1\). As a result,
    \begin{align}
        \chi_j'\circ\epsilon_{\bM,\bH}(z,a',a'')=\chi_j'(AB^{-1})=a_j'.
    \end{align}
    This finishes the proof.
\end{proof}


\subsection{Some examples} 
\label{sub:some_examples}

Here we provide some examples to help the reader digest the general result. We
will compare them with the Lie algebra case (in other words, the case where
\(\bH\) acting on \(\bFg=\Lie(\bG)\)).

\begin{example}
    When \(n=2\), the monoid \(\bM\) is just \(\Mat_2\). In this case the
    abelianization is \(\bA_\bM=\bbA^1\) given by the usual determinant map, and
    the other two invariants are given by
    \begin{align}
        \chi'\begin{pmatrix}
            a & b\\
            c & d
        \end{pmatrix}&=a,\\
        \chi''\begin{pmatrix}
            a & b\\
            c & d
        \end{pmatrix}&= d.
    \end{align}
    Here \(\bFg=\bM\) by coincidence, and the ``traditional'' invariants of
    \(x\in\bFg\) are usually given by
    \begin{align}
        \det(x)&=ad-bc,\\
        \Tr(x)&=a+d,\\
        \bFe^\vee x\bFe &=d,
    \end{align}
    which are isomorphic, albeit not identical, to the invariants given by the
    multiplicative formulation.
\end{example}

\begin{example}
    When \(n=3\), an element \((z,g)\) in \(\bM^\x\) can be represented by
    matrices
    \begin{align}
        z &= \Diag\bigl(t_1,t_2,(t_1t_2)^{-1}\bigr),\\
        g
        & = \begin{pmatrix}
            A & B & C\\
            D & E & F\\
            G & H & I
        \end{pmatrix},\quad \det(g)=1.
    \end{align}
    The abelianization map sends this element to
    \begin{align}
        (z_1,z_2)\defeq (t_1/t_2,t_1t_2^2),
    \end{align}
    and the rest of the invariants are:
    \begin{align}
        a_1'&=\chi_1'(z,g)=t_1(A+E),\\
        a_1''&=\chi_1''(z,g)=t_1I,\\
        a_2'&=\chi_2'(z,g)=t_1t_2(AE-BD),\\
        a_2''&=\chi_2''(z,g)=t_1t_2(AI-CG+EI-FH).
    \end{align}
    In this case, \(\bM\) is not \(\bFg=\Mat_3\), but we still have a
    well-defined embedding \(\Mat_3\subset\bM\) identifying \(\Mat_3\) with the
    preimage under \(\alpha_\bM\) of the submonoid
    \begin{align}
        \Set{1}\x\bbA^1\subset\bA_\bM.
    \end{align}
    When restricted to \(\bM^\x\), it is equivalent to setting \(t_1=t_2=t\) for
    \((z,g)\), and it corresponds to the matrix
    \begin{align}
        x=\begin{pmatrix}
            tA & tB & tC\\
            tD & tE & tF\\
            tG & tH & tI
        \end{pmatrix}\in \Mat_3.
    \end{align}
    The ``traditional'' set of invariants are:
    \begin{align}
    \det(x) &=t^3,\\
    \Tr(x) &=t(A+E+I),\\
    \bFe^\vee x\bFe &= tI,\\
    \Tr(\wedge^2 x) &=t^2(AE-BD+AI-CG+EI-FH),\\
    \bFe^\vee x^2\bFe &= t^2(CG+FH+I^2).
    \end{align}
    The ones given by the multiplicative formulation above (by setting
    \(t_1=t_2=t\)) are:
    \begin{align}
    z_1&=1,\\
    z_2&=t^3,\\
    a_1'&=t(A+E),\\
    a_1''&=tI,\\
    a_2'&=t^2(AI-CG+EI-FH),\\
    a_2''&=t^2(AE-BD).
    \end{align}
    The reader can verify that these two sets of invariants are isomorphic.
\end{example}

\begin{remark}
    For general \(n>3\), we still have \(\Mat_n\subset\bM\) as a
    closed submonoid, whose abelianization is the submonoid
    \begin{align}
        \Set{(1,\ldots,1)}\x\bbA^1\subset\bA_\bM,
    \end{align}
    where the non-trivial coordinate is given by the usual determinant.
    One can then compare the respective invariants on \(\Mat_n\) given by the
    Lie algebra formulation and the multiplicative one.
\end{remark}


\subsection{The discriminant divisor} 
\label{sub:the_discriminant_divisor}

We are going to define a principal divisor \(\bD_{\bM,\bH}^+\) on
\(\bC_{\bM,\bH}\), whose complement captures those \(\bH\)-orbits that are
especially nice.

\subsubsection{}
First, we have the following function on \(\bM^\x\):
\begin{align}
    \Disc_{\bM,\bH}\colon (z,g)\longmapsto z^{2\rho}\det (\bFe^\vee
    g^{i+j-2}\bFe)_{1\le i,j\le n},
\end{align}
where \(\rho\) is the half-sum of all positive roots, viewed as a function on
\(\bT^\SC\) written exponentially. One may check that this function is indeed
well-defined: indeed, if
\(\zeta\in\bZ^\SC\), then \((z\zeta,\zeta^{-1}g)=(z,g)\), but we also have
\(z^{2\rho}=(z\zeta)^{2\rho}\) and (we omitted the ranges \(1\le i,j\le n\)
below to avoid notation overload)
\begin{align}
    \det(\bFe^\vee(\zeta^{-1}g)^{i+j-2}\bFe)
    &=(\bFe^\vee\zeta^{-1}\bFe)^{n(n-1)}\det(\bFe^\vee g^{i+j-2}\bFe)\\
    &=(\bFe^\vee\zeta^{-n(n-1)}\bFe)\det(\bFe^\vee g^{i+j-2}\bFe)\\
    &=\det(\bFe^\vee g^{i+j-2}\bFe),
\end{align}
because \(\zeta^n=1\).

\subsubsection{}
To extend this function to all of \(\bM\), we make the following construction:
recall that for any \(x\in \bM\) and any \(1\le i\le n-1\), we have the
element \(x_i\in\End(V_i)\). Starting from \(\bFf_1(x)=x_1\bFe\), we define
for \(2\le i<n\)
\begin{align}
    \bFf_i(x)=x_i(\bFe\wedge \bFf_{i-1}(x))\in V_i,
\end{align}
and let \(\bFf_n(x)=\bFe\wedge\bFf_{n-1}(x)\).
Similarly, we have \(\bFf_1^\vee(x)=(\bFe^\vee)x_1\),
\begin{align}
    \bFf_i^\vee(x)=(\bFf_{i-1}^\vee(x)\wedge\bFe^\vee)x_i\in V_i^\vee,
\end{align}
and \(\bFf_n^\vee(x)=\bFf_{n-1}^\vee(x)\wedge\bFe^\vee\).

\begin{lemma}
    For any \(x\in\bM\) and any \(i\), both \(\bFf_i(x)\) and \(\bFf_i^\vee(x)\)
    are pure tensors.
\end{lemma}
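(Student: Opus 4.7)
I would argue by induction on $i$, handling $\bFf_i$ and $\bFf_i^\vee$ in parallel, and reduce the claim to the group case by a density argument. The base case $i=1$ is trivial, since $\bFf_1(x) = x_1\bFe \in V_1 = k^n$ is a single vector (and $\bFf_1^\vee(x)$ is a single co-vector).

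The key observation is that the maps $\bFf_i\colon \bM \to V_i$ and $\bFf_i^\vee\colon \bM \to V_i^\vee$ are morphisms of varieties, since by induction they are built up from the entries of the $x_j \in \End(V_j)$, which are polynomial functions on $\bM$ via the embedding $\rho_\star$. The locus $P_i \subset V_i$ of \emph{pure} (decomposable) tensors together with zero is precisely the affine cone over the Grassmannian $\Op{Gr}(i,n)$ under the Pl\"ucker embedding, hence is Zariski-closed in $V_i$, cut out by the Pl\"ucker relations. Similarly for $P_i^\vee \subset V_i^\vee$. Therefore the set
\[
Z_i \defeq \Set*{x\in\bM \given \bFf_i(x)\in P_i \text{ and } \bFf_i^\vee(x)\in P_i^\vee}
\]
is closed in $\bM$. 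Since $\bM$ is irreducible (being the normalization of the closure of $\rho_\star(\bM^\x)$, which is the image of an irreducible variety) and $\bM^\x$ is a dense open subscheme, it suffices to verify $\bM^\x \subset Z_i$.

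On $\bM^\x$, write $x=(z,g)$ with $z \in \bT^\SC$ and $g\in\bG^\SC$, so that the action on $V_i$ is $x_i = \Wt_i(z)\cdot \wedge^i g$, a scalar multiple of $\wedge^i g$. The crucial point is that for any $g \in \GL_n$ the operator $\wedge^i g$ sends decomposable wedges to decomposable wedges, since $\wedge^i g(v_1\wedge\cdots\wedge v_i) = gv_1\wedge\cdots\wedge gv_i$. So if inductively $\bFf_{i-1}(x)$ is a pure tensor, then $\bFe \wedge \bFf_{i-1}(x) \in V_i$ is a pure tensor (possibly zero), and hence
\[
\bFf_i(x) = x_i\bigl(\bFe\wedge\bFf_{i-1}(x)\bigr)
\]
is again a pure tensor. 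The same argument applies to $\bFf_i^\vee$ by acting on the right. The top-degree case $\bFf_n(x) = \bFe\wedge\bFf_{n-1}(x)$ (and its dual) requires no application of $x_n$ and is immediate from pureness of $\bFf_{n-1}(x)$.

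I do not expect a serious obstacle: the only substantive ingredients are the closedness of the Pl\"ucker locus (classical) and the irreducibility of $\bM$ together with density of $\bM^\x$ (immediate from the construction of $\bM$ as the normalization of the closure of $\rho_\star(\bM^\x)$). The mild point to keep straight is that the inductive formula uses $x_i$ on $V_i$, so it is essential that on the unit group $x_i$ factors through $\wedge^i g$; this is precisely how $\bM$ is defined via the representation $\rho_\star$, so there is nothing extra to check.
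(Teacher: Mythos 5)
Your proof is correct and takes essentially the same approach as the paper: verify the claim on the dense open $\bM^\x$ (where $x_i$ is a scalar multiple of $\wedge^i g$, so it preserves decomposable tensors) and extend to all of $\bM$ using that the Plücker locus of pure tensors is closed. The paper's proof is a one-line version of exactly this argument.
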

\begin{proof}
    The claim is clear if \(x\in\bM^\x\), and the general case can be
    proved by taking the limit, because for any \(i\), pure tensors form a
    closed cone in \(V_i\).
\end{proof}

\begin{corollary}
    \label[corollary]{cor:f_i_are_pure_tensors}
    For any \(x\in\bM\) and any \(1< i\le n\), we have an inclusion
    \begin{align}
        \bFf_i(x) \in \bFf_{i-1}(x)\wedge V_1\subset V_i,
    \end{align}
    where we use the convention \(V_n=\wedge^nk^n\). A similar result holds for
    \(\bFf_i^\vee(x)\).
\end{corollary}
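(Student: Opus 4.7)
The plan is to follow the same density argument used in the preceding lemma: I would verify the inclusion on the dense open \(\bM^\x\) by a direct computation, and then pass to a general point \(x \in \bM\) by specialization. The case \(i = n\) is immediate from the defining formulas \(\bFf_n(x) = \bFe \wedge \bFf_{n-1}(x)\) and \(\bFf_n^\vee(x) = \bFf_{n-1}^\vee(x) \wedge \bFe^\vee\), so the substantive content concerns \(1 < i < n\).

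For \(x = (z,g) \in \bM^\x\) with \(z \in \bT^\SC\) and \(g \in \bG^\SC\), the action on \(V_i\) is \(x_i = \Wt_i(z)\,\wedge^i g\), and a short induction using the recursion \(\bFf_i(x) = x_i(\bFe \wedge \bFf_{i-1}(x))\) should yield the closed form
\[
\bFf_i(x) = \Bigl(\prod_{j=1}^{i} \Wt_j(z)\Bigr)\, g\bFe \wedge g^2\bFe \wedge \cdots \wedge g^i\bFe,
\]
from which one immediately reads off \(\bFf_i(x) = \Wt_i(z)\, \bFf_{i-1}(x) \wedge g^i\bFe \in \bFf_{i-1}(x)\wedge V_1\). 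The co-tensor version is entirely parallel, yielding \(\bFf_i^\vee(x) = \Wt_i(z)\, (\bFe^\vee g^i) \wedge \bFf_{i-1}^\vee(x)\), which sits in \(V_1^\vee \wedge \bFf_{i-1}^\vee(x)\).

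To extend to an arbitrary \(x_0 \in \bM\), I would pick a curve \(\gamma\colon \bbA^1 \to \bM\) with \(\gamma(0) = x_0\) and \(\gamma(t) \in \bM^\x\) for \(t \neq 0\), using the density of \(\bM^\x\). Two cases then arise. If \(\bFf_{i-1}(x_0) = 0\), the defining recursion directly gives \(\bFf_i(x_0) = x_{0,i}(\bFe \wedge 0) = 0\), so the inclusion holds trivially. If \(\bFf_{i-1}(x_0) \neq 0\), then by the preceding lemma \(\bFf_{i-1}(\gamma(t))\) is a nonzero pure tensor for \(t\) near \(0\), and its support is a point of the Grassmannian \(\mathrm{Gr}(i-1,n)\) depending continuously on \(t\). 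Consequently the subspace \(\bFf_{i-1}(\gamma(t)) \wedge V_1 \subset V_i\), which is determined by this support alone, also varies continuously with limit \(\bFf_{i-1}(x_0) \wedge V_1\). Since \(\bFf_i(\gamma(t))\) lies in this subspace for every \(t \neq 0\) by the first step, the limit \(\bFf_i(x_0)\) must lie in \(\bFf_{i-1}(x_0) \wedge V_1\).

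The main obstacle to this strategy is that the naive incidence locus \(\{(u,v) \in V_{i-1} \times V_i : v \in u \wedge V_1\}\) is \emph{not} closed in \(V_{i-1} \times V_i\): one can approach \(u \to 0\) while letting a cofactor \(w\) tend to infinity so that \(u \wedge w\) stays finite, producing spurious limit points \((0, v)\) with \(v \neq 0\). What rescues the argument is precisely the defining recursion itself, which forces the implication \(\bFf_{i-1}(x) = 0 \Rightarrow \bFf_i(x) = 0\) and so excludes exactly these pathological limit points from appearing in the image of the morphism \(x \mapsto (\bFf_{i-1}(x), \bFf_i(x))\). The proof for \(\bFf_i^\vee\) is identical after dualization.
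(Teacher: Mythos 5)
Your proposal is correct and follows essentially the same route as the paper: verify the inclusion on the dense open $\bM^\x$ (you supply the explicit closed form the paper deems obvious), then specialize along a one-parameter family, using that once $\bFf_{i-1}$ is a nonzero pure tensor the subspace $\bFf_{i-1}\wedge V_1$ varies as a flat (constant-rank) family so the specialization of $\bFf_i$ lands in the limit fiber. Your remark that the incidence locus $\{(u,v): v\in u\wedge V_1\}$ fails to be closed, and that the recursion $\bFf_{i-1}(x)=0\Rightarrow\bFf_i(x)=0$ is exactly what excludes the problematic degenerations, is a correct and clarifying observation that the paper leaves implicit.
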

\begin{proof}
    The statement is obvious if \(x\in\bM^\x\) by the definition of
    \(\bFf_i(x)\). When \(x\in\bM\), first we note that the claim is trivial if
    \(\bFf_i(x)=0\), and if \(\bFf_i(x)\neq 0\), then
    \(\bFf_j(x)\neq 0\) for any \(j<i\). Choose a one-parameter
    family \(S\to\bM\) where \(S\) is a local \(\bar{k}\)-scheme of dimension
    \(1\) with special point \(s\) and generic point \(\eta\), such that the
    image of \(s\) is \(x\) and that of \(\eta\), denoted by \(x_\eta\), is
    contained in \(\bM^\x\). Since \(\bFf_i(x)\neq 0\), we must have
    \(\bFf_i(x_\eta)\neq 0\) either because the condition is open.

    For any \(i\), since \(\bFf_{i-1}\neq 0\) on \(S\) and is a pure tensor, the
    wedging map
    \begin{align}
        \bFf_{i-1}\wedge\colon V_1\longto V_i
    \end{align}
    is locally constant with \((i-1)\)-dimensional kernel. Since
    \(\bFf_i(x_\eta)\in \bFf_{i-1}(x_\eta)\wedge V_1\), we have
    \begin{align}
        \bFf_i(x)\in\bar{\bFf_{i-1}(x_\eta)\wedge V_1},
    \end{align}
    and the fiber of the right-hand side over \(x\) is exactly
    \(\bFf_{i-1}(x)\wedge V_1\) by local constancy. This proves the claim.
\end{proof}

\subsubsection{}
The pairing \(x\mapsto
\bFf_n^\vee(x)\bFf_n(x)\) defines a function on \(\bM\), and one can check that
if \(x\in\bM^\x\), we have
\begin{align}
    \Disc_{\bM,\bH}(x)= \bFf_n^\vee(x)\bFf_n(x).
\end{align}
Since \(k[\bM]\) is a subring of \(k[\bM^\x]\), we have the following lemma:

\begin{lemma}
    The function \(\Disc_{\bM,\bH}\) extends to a function on \(\bM\), which
    also descends to \(\bC_{\bM,\bH}\).
\end{lemma}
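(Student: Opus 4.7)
The plan is to take the candidate extension to simply be the regular function $D(x) \defeq \bFf_n^\vee(x)\bFf_n(x)$ on $\bM$, and then verify $\bH$-invariance so that it descends to $\bC_{\bM,\bH}$. The extension part is essentially built into the construction: the recursive definitions of $\bFf_i$ and $\bFf_i^\vee$ only involve the regular maps $x\mapsto x_i\in\End(V_i)$ together with fixed wedging against $\bFe$ and $\bFe^\vee$, so $\bFf_n\colon\bM\to V_n$ and $\bFf_n^\vee\colon\bM\to V_n^\vee$ are morphisms of $k$-schemes and their pairing $D$ is a regular function on $\bM$. Since $\bM^\x$ is dense in $\bM$, the inclusion $k[\bM]\hookto k[\bM^\x]$ already stated in the text shows that $D$ is the (necessarily unique) extension of the function $\Disc_{\bM,\bH}$ defined on $\bM^\x$, using the identity $\Disc_{\bM,\bH}(x)=\bFf_n^\vee(x)\bFf_n(x)$ on $\bM^\x$ that was established just before the lemma.

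For the descent, I would show that for every $h\in\bH$ and every $x\in\bM$ one has the equivariance formulas
\begin{align}
    \bFf_i(hxh^{-1}) &= (\wedge^i h)\bFf_i(x), \\
    \bFf_i^\vee(hxh^{-1}) &= \bFf_i^\vee(x)(\wedge^i h)^{-1}.
\end{align}
Both are proved by induction on $i$. The base case $i=1$ is immediate from $h\bFe=\bFe$ and $\bFe^\vee h=\bFe^\vee$: indeed $\bFf_1(hxh^{-1})=hx_1h^{-1}\bFe=hx_1\bFe=h\bFf_1(x)$, and dually for $\bFf_1^\vee$. The inductive step uses the recursion
$\bFf_{i+1}(x)=x_{i+1}(\bFe\wedge\bFf_i(x))$, the functoriality of wedge products under $h\in\GL_n$, and again that $h$ fixes $\bFe$ (resp.\ $\bFe^\vee$); so $(\wedge^{i+1}h)^{-1}(\bFe\wedge\bFf_i(hxh^{-1}))=\bFe\wedge\bFf_i(x)$, whence $\bFf_{i+1}(hxh^{-1})=(\wedge^{i+1}h)\bFf_{i+1}(x)$, and analogously on the covector side. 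Taking $i=n$, and noting that $\wedge^n h$ acts on the line $V_n=\wedge^n k^n$ by the scalar $\det(h)$ (which equals the $\GL_{n-1}$-determinant of $h$ because $h$ acts as $1$ on $k\bFe$), the two factors in $D(hxh^{-1})=\det(h)^{-1}\bFf_n^\vee(x)\cdot\det(h)\bFf_n(x)$ cancel. Hence $D$ is $\bH$-invariant, and therefore descends to $\bC_{\bM,\bH}=\Spec k[\bM]^\bH$.

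The main obstacle is really a bookkeeping one: correctly tracking the character by which $\bH$ acts on the line $V_n$ (and on $V_n^\vee$) through the inductive formulas, and ensuring that the two contributions are inverse to one another so that the pairing is honestly invariant rather than merely equivariant under a non-trivial character. Everything else, including the regularity of $\bFf_n,\bFf_n^\vee$ on all of $\bM$ and the uniqueness of the extension from $\bM^\x$, is formal from the constructions already in place.
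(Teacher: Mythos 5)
Your proof is correct and takes essentially the same approach as the paper, which treats both claims as immediate from the preceding discussion; you have simply spelled out the $\bH$-equivariance of $\bFf_i$ and $\bFf_i^\vee$ by induction and the resulting cancellation of determinant factors, which is exactly what "clearly invariant under $\bH$-action" amounts to.
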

\begin{proof}
    The first claim is due to the discussion above, and the second claim is
    immediate since the function is clearly invariant under \(\bH\)-action.
\end{proof}

\begin{definition}
    The \notion{extended \(\bH\)-discriminant function} is defined as the product
    \begin{align}
        \Disc_{\bM,\bH}^+\defeq\Disc_+\cdot\Disc_{\bM,\bH}\in k[\bC_{\bM,\bH}],
    \end{align}
    where \(\Disc_+\) is as in \Cref{ssub:usual_disc}.
    The \notion{extended \(\bH\)-discriminant divisor}
    \(\bD_{\bM,\bH}^+\) is the principal divisor of \(\bC_{\bM,\bH}\) defined by
    \(\Disc_{\bM,\bH}^+\).
\end{definition}

\begin{definition}
    The complement \(\bC_{\bM,\bH}^\srs\defeq\bC_{\bM,\bH}-\bD_{\bM,\bH}^+\) is
    called the \notion{strongly \(\bH\)-regular
    semisimple} locus.
    An element \(x\in\bM\) is called \notion{strongly \(\bH\)-regular
    semisimple}, or \(\bH\hy\srs\) for short, if
    \(\chi_{\bM,\bH}(x)\in\bC_{\bM,\bH}^\srs\).
\end{definition}

\begin{lemma}
    \label[lemma]{lem:H_srs_implies_trivial_stab}
    If \(x\in\bM^{\bH\hy\srs}\) is \(\bH\hy\srs\), then its stabilizer \(\bH_x\)
    in \(\bH\) is trivial.
\end{lemma}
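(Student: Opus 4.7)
Proof proposal: I plan to reduce the assertion to a statement about $\bFe$ being a cyclic vector for the endomorphism $x_1\in\End(V_1)\cong\End(k^n)$, an approach parallel to the generic-triviality argument already used in the proof of the preceding theorem but now valid on the full $\bH\text{-}\srs$ locus.

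First, applying the representation $\rho_\star$ to the relation $hxh^{-1}=x$ in $\bM$, one obtains the commutation $h_i x_i=x_i h_i$ in $\End(V_i)$ for every $1\le i\le n-1$, where $h_i=\wedge^i h$. Taking $i=1$ (so that $h_1=h$), we see that $h$ commutes with $x_1$; combined with $h\bFe=\bFe$, this gives $h\cdot p(x_1)\bFe=p(x_1)\bFe$ for every polynomial $p$. Hence $h$ acts as the identity on the cyclic subspace $k[x_1]\bFe=\operatorname{span}(\bFe,x_1\bFe,\ldots,x_1^{n-1}\bFe)\subseteq k^n$, and it suffices to show that this subspace is all of $k^n$.

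To relate this cyclic span to the defining data $\bFf_n(x)$ of $\Disc_{\bM,\bH}$, I use the monoid identity $\wedge^j x_1=\lambda_j(\alpha_\bM(x))\cdot x_j$ for $1\le j\le n-1$, where $\lambda_j=j\Wt_1-\Wt_j=\sum_{l=1}^{j-1}(j-l)\Rt_l$ is a non-negative $\bbN$-combination of simple roots, hence a regular monomial function on $\bA_\bM$. This identity is immediate on $\bM^\x$ from the explicit formula $x_i=\Wt_i(z)\wedge^i g$ and extends by Zariski closure to all of $\bM$. Substituting it into the recursion $\bFf_j(x)=x_j(\bFe\wedge\bFf_{j-1}(x))$ and inducting on $j$ produces the key identity
\[
\Bigl(\prod_{i=1}^{n-1}\lambda_i\bigl(\alpha_\bM(x)\bigr)\Bigr)\cdot\bFf_n(x) \;=\; \bFe\wedge x_1\bFe\wedge x_1^2\bFe\wedge\cdots\wedge x_1^{n-1}\bFe \quad\text{in } \wedge^n k^n.
\]
Denote the scalar prefactor by $\Lambda(x)$; it is a monomial in the simple-root coordinates of $\alpha_\bM(x)$, vanishing precisely on the union of the hyperplanes $\{\Rt_l=0\}$ for $l\le n-2$.

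The $\bH\text{-}\srs$ condition $\Disc_{\bM,\bH}^+(x)=\Disc_+(x)\Disc_{\bM,\bH}(x)\neq 0$ gives both $\bFf_n(x)\neq 0$ (from $\Disc_{\bM,\bH}(x)=\bFf_n^\vee(x)\bFf_n(x)$) and $\Lambda(x)\neq 0$: the latter because $\Disc_+$, by the general form of the extended discriminant of a reductive monoid reviewed in \Cref{sub:review_of_the_usual_adjoint_action} (and worked out fully in \cite{Wa24}), contains boundary factors forcing each simple-root coordinate of $\alpha_\bM(x)$ to be non-zero, whence the monomial $\prod_i\lambda_i(\alpha_\bM(x))$ does not vanish either. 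Thus $\bFe\wedge x_1\bFe\wedge\cdots\wedge x_1^{n-1}\bFe\neq 0$ in $\wedge^n k^n$, i.e., $\{\bFe,x_1\bFe,\ldots,x_1^{n-1}\bFe\}$ is a basis of $k^n$. Combined with the first step, $h$ fixes a basis of $k^n$ and is therefore trivial. The main obstacle is the explicit identification of the boundary-vanishing of $\Disc_+$; while this is consistent with the framework of \cite{Wa24}, a self-contained alternative would be a deformation argument---verifying triviality of the stabilizer on the open dense locus $\bM^\x\cap\bM^{\bH\text{-}\srs}$ via the basis argument already present in the proof of the section theorem, then handling any residual boundary strata by direct inspection.
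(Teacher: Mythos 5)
Your proof takes a genuinely different route from the paper's: you argue that $\bFe$ is a cyclic vector for $x_1$, so $h$ fixes a basis of $k^n$; the paper instead shows $\bH_x$ is simultaneously diagonalizable (because $x$ regular semisimple makes $\bG_x$ a torus, and a closed subgroup scheme of a torus is diagonalizable) and unipotent (by showing $h$ acts on a generalized flag built from the $\bFf_i(x)$ via \Cref{cor:f_i_are_pure_tensors} with only $1$'s on the diagonal), whence trivial. Your cyclic-vector calculation is a nice reformulation and your key identity $\Lambda(x)\cdot\bFf_n(x)=\bFe\wedge x_1\bFe\wedge\cdots\wedge x_1^{n-1}\bFe$ is correctly derived and does extend by Zariski closure.

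The gap is exactly where you flagged it, and it is a genuine one. Your argument requires $\Lambda(x)\neq 0$, i.e., that every coordinate $\Rt_l(\alpha_\bM(x))$ with $l\le n-2$ be non-zero; you offload this to the assertion that $\Disc_+$ contains all boundary factors of $\bA_\bM$. This is precisely the point where the paper takes care to avoid such a dependence: the proof of \Cref{prop:srs_implies_iso_to_GIT} exhibits elements with non-trivial idempotent factor $e_{I,\SimRts}$ (hence lying over the boundary of $\bA_\bM$) that are nevertheless strongly regular semisimple, and the proof of \Cref{lem:H_srs_implies_trivial_stab} is phrased through \Cref{cor:f_i_are_pure_tensors} exactly so that it works on these boundary strata, where $x_1$ may fail to have full rank and the naive cyclic span $\operatorname{span}(\bFe,x_1\bFe,\dots)$ need not be all of $k^n$. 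The generalized basis $\bFe_0',\dots,\bFe_{n-1}'$ produced by \Cref{cor:f_i_are_pure_tensors} is \emph{not} the cyclic basis on such strata, and your identity actually shows why your approach must fail there: if $\Lambda(x)=0$ but $\bFf_n(x)\neq 0$ then the right-hand side vanishes, so the cyclic vectors do not span. Your proposed fallback does not close the gap either, since triviality of the stabilizer is an open (not closed) condition in families, so knowing it on $\bM^\x\cap\bM^{\bH\hy\srs}$ gives you nothing at a boundary specialization; ``handling residual boundary strata by direct inspection'' is precisely what the pure-tensor argument of the paper is engineered to do, and you would end up reproducing it.
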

\begin{proof}
    By definition, \(x\) is also regular semisimple in the usual sense, so
    its stabilizer in \(\bG\) is a torus. Taking the intersection with \(\bH\),
    we see that the \(\bH_x\) is diagonalizable.
    We now show that \(\bH_x\) is also unipotent, which will imply the lemma.

    Since \(\bFf_n^\vee(x)\bFf_n(x)\neq 0\), the vector \(\bFf_n(x)\neq 0\) either.
    Therefore, \(\bFf_i(x)\neq 0\) for any \(i\). Suppose \(h\in \bH_x\), then
    \(\wedge^i h\) fixes \(\bFf_i(x)\) for any \(i\). When \(i=1\), this means
    that \(\bFe_1'\defeq x_1(\bFe)\) is an eigenvector of \(h\) with eigenvalue
    \(1\).
    By \Cref{cor:f_i_are_pure_tensors} and induction, we can find linearly
    independent vectors
    \begin{align}
        \bFe_0'=\bFe,\bFe_1',\ldots,\bFe_{n-1}',
    \end{align}
    such that \(h(\bFe_i')\in\bFe_i'+ \sum_{j=0}^{i-1}k\bFe_j'\). This shows
    that \(\bH_x\) is unipotent, as claimed.
\end{proof}

\begin{corollary}
    For any \(a\in\bC_{\bM,\bH}^\srs\), the fiber \(\chi_{\bM,\bH}^{-1}(a)\) is
    a \(\bH\)-torsor.
\end{corollary}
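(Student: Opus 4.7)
The plan is to combine three ingredients: the section $\epsilon_{\bM,\bH}$ constructed in the preceding theorem, the triviality of stabilizers from \Cref{lem:H_srs_implies_trivial_stab}, and the standard fact that the fibers of an affine GIT quotient by a reductive group contain a unique closed orbit. I would organize the argument as follows.

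First, I would observe that the fiber $\chi_{\bM,\bH}^{-1}(a)$ is non-empty, since it contains $\epsilon_{\bM,\bH}(a)$. Since the $\bH\hy\srs$ condition is cut out by $\bH$-invariant functions on $\bM$ (namely the non-vanishing of $\Disc_{\bM,\bH}^+$), the whole fiber consists of $\bH\hy\srs$ points, so by \Cref{lem:H_srs_implies_trivial_stab} every $x$ in the fiber satisfies $\bH_x = 1$. Thus $\bH$ acts freely on the fiber and every $\bH$-orbit in it has dimension exactly $\dim\bH$.

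Next, I would argue that every $\bH$-orbit in $\chi_{\bM,\bH}^{-1}(a)$ is closed in $\bM$. By the closed orbit lemma, if some orbit $\bH\cdot x$ were not closed, then $\overline{\bH\cdot x} \setminus \bH\cdot x$ would be a non-empty union of $\bH$-orbits of strictly smaller dimension. But this boundary lies in the $\bH$-stable subset $\chi_{\bM,\bH}^{-1}(a)$, whose orbits all have dimension $\dim\bH$, a contradiction. Combined with the fact that the affine GIT quotient $\chi_{\bM,\bH}\colon\bM\to\bC_{\bM,\bH}$ by the reductive group $\bH$ has a unique closed $\bH$-orbit in each fiber, this forces the fiber to be a single $\bH$-orbit.

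Combining the single-orbit conclusion with the free action gives the definition of an $\bH$-torsor, completing the proof. The only place where something could conceivably go wrong is the dimension-comparison step, but this is handled uniformly by the closed orbit lemma and does not require any further computation beyond what was already established for \Cref{lem:H_srs_implies_trivial_stab}. No explicit matrix manipulations are needed.
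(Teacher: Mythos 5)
Your argument is essentially identical to the paper's: both use \Cref{lem:H_srs_implies_trivial_stab} to show every orbit in the fiber is free of dimension $\dim\bH$, deduce via the closed orbit lemma that all such orbits are closed, and then invoke the uniqueness of the closed orbit in a GIT fiber for a reductive group to conclude there is a single orbit. You spell out a few implicit details (non-emptiness via $\epsilon_{\bM,\bH}$, why the srs condition propagates to the whole fiber), but the route is the same.
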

\begin{proof}
    Since \(\bH\) is reductive, there is a unique closed \(\bH\)-orbit in
    \(\chi_{\bM,\bH}^{-1}(a)\). By \Cref{lem:H_srs_implies_trivial_stab},
    every orbit in \(\chi_{\bM,\bH}^{-1}(a)\) is an \(\bH\)-torsor hence has the
    same dimension \(\dim{\bH}=(n-1)^2=\dim{\bM}-\dim{\bC_{\bM,\bH}}\). This
    implies that every orbit is both open and closed hence must be the unique
    closed orbit. This finishes the proof. 
\end{proof}


\subsection{The involutions and twisted forms} 
\label{sub:the_involutions_and_twisted_forms}

In order to study the Jacquet--Rallis fundamental lemma, we will need some Galois
twists of \(\bM\) and \(\bH\). This can be achieved using certain involution on
the monoid which we now formulate.

\subsubsection{}
The commutative monoid \(\bA_\bM\) admits an involution \(\iota_\bA\) sending a
simple root \(\Rt\) to \(-w_0(\Rt)\), where \(w_0\) is the longest element in
the Weyl group with respect to the Borel (this is valid for the universal
monoid associated with a general split semisimple and simply-connected group, but we
will not need it). Explicitly for \(\bG\) (in other words, \(\GL_n\)),
this is induced by flipping the Dynkin diagram of \(\bG\).

The involution \(\iota_\bA\) can be upgraded to an anti-involution \(\iota\) on
\(\bM\) as follows: given a fundamental representation
\(V_i\) of \(\bG^\SC\), its contragredient representation is again a fundamental
representation, denoted by \(V_i^\vee\). Any \(x_i\in\End(V_i)\) then has the
adjoint element \(x_i^\vee\in\End(V_i^\vee)\). There is a unique \(j=n-i\) such that
\(\Wt_j=-w_0(\Wt_i)\) and \(V_j\cong V_i^\vee\) as \(\bG^\SC\)-representations, and
we fix one such isomorphism \(\phi_{i}\). Then \(x_i^\vee\) may be viewed as an
element \(y_j\in
\End(V_j)\), and \(y_j\) is independent of the choice of \(\phi_i\) by Schur's
lemma. This defines an anti-involution
\begin{align}
    \iota_\star\colon \bA_\bM\oplus\bigoplus_{i=1}^{n-1}\End(V_i)&\longto
    \bA_\bM\oplus\bigoplus_{i=1}^{n-1}\End(V_i)\\
    (z, x_i)&\longmapsto (\iota_\bA(z), y_i).
\end{align}
It is easy to see that \(\iota_\star\) preserves \(\bM^\x\): for
\((z,g)\in\bM^\x\), \(\iota_\star\) is given by the formula
\begin{align}
    (z,g)&\longmapsto (\iota_\bA(z),g^{-1}).
\end{align}
Therefore, \(\iota_\star\) induces an anti-involution \(\iota\) on \(\bM\) as
desired.

\begin{remark}
    \begin{enumerate}
        \item When \(x\in\bM\) is contained in \(\bG^\SC\),
            then \(\iota(x)\) is just \(x^{-1}\). However, \(\iota\) has
            nothing to do with ``inversion'' on \(\bM\), which does not exist.
        \item Conceptually, \(\iota\) is the canonical anti-isomorphism from
            \(\bM\) to its opposite monoid, and it is defined for the universal
            monoid of any \(\bG^\SC\), not just \(\SL_n\).
    \end{enumerate}
\end{remark}

\begin{example}
    When \(n=2\), we have \(\bM=\Mat_2\), and \(\iota\) is the map
    \begin{align}
        \begin{pmatrix}
            a & b\\
            c & d
        \end{pmatrix}
        \longmapsto
        \begin{pmatrix}
            d & -b\\
            -c & a
        \end{pmatrix}.
    \end{align}
    When \(n\ge 3\), however, it is impossible to lift \(\iota_\bA\) to the
    monoid \(\Mat_n\), unlike the case of \(\bM\).
\end{example}

\subsubsection{}
There is also an \emph{involution} \(\tau\) on \(\bM\) specific to the
\(\bG=\GL_n\) case. Indeed, the group \(\Out(\bG^\SC)\) of outer automorphisms
has order \(2\), and it induces an involution \(\tilde{\tau}\) on \(\bM\) using
the standard pinning of \(\bG^\SC\). Let \(\dot{w}_0\) be the following element
of \(\SL_n\) lifting \(w_0\):
\begin{align}
    \label{eqn:w_0_matrix}
    \dot{w}_0=\begin{pmatrix}
         & & & & (-1)^{n-1}\\
         & & & (-1)^{n-2} & \\
         & & \iddots & & \\
         & -1 & & & \\
         1 & & & & \\
    \end{pmatrix},
\end{align}
and define
\begin{align}
    \tau\defeq \Ad_{\dot{w}_0}\circ\tilde{\tau}.
\end{align}
One can verify that on \(\bM^\x\), \(\tau\) is given by the formula
\begin{align}
    (z,g)\longmapsto (\iota_\bA(z),\tp{g}^{-1}),
\end{align}
so it is indeed an involution. Note that \(\tau\) is compatible with the
involution on \(\bG\) (resp.~\(\bH\)) given by transpose-inverse.

\subsubsection{}
For any \(k\)-scheme \(X\) and an \'etale double cover \(\OGT\colon X'\to
X\), we consider the Weil restriction
\begin{align}
    \bM'\defeq\OGT_*(\bM\x X'),
\end{align}
viewed as a reductive monoid scheme with \(\bH'\defeq\OGT_*(\bH\x X')\)-action
over \(X\). For convenience, we let \(\bG'\defeq \OGT_*(\bG\x X')\). The
anti-involution \(\iota\) on \(\bM\) induces an anti-involution
on \(\bM'\), still denoted by \(\iota\), and the latter clearly commutes with
the unique non-trivial element \(\sigma\in\Gal(X'/X)\). Let \(\FRM\) to be the
monoidal symmetric space
\begin{align}
    \FRM\defeq (\bM')^{\iota\sigma}.
\end{align}
The group \(\bH\) can be identified with the fixed-point subgroup
\(H\defeq (\bH')^\sigma\), and \(H\) acts on \(\FRM\).

\subsubsection{}
On the other hand, we have the unitary form of \(\bM\) defined as
\begin{align}
    \FRM'\defeq (\bM')^{\tau\sigma},
\end{align}
as well as unitary groups
\begin{align}
    G'&\defeq\UNI_n=(\bG')^{\tau\sigma},\\
    H'&\defeq\UNI_{n-1}=(\bH')^{\tau\sigma}.
\end{align}
Clearly, \(H'\) acts on \(\FRM'\) by the adjoint action.

\subsubsection{}
The abelianization map \(\alpha_{\bM'}\) is easily seen to be equivariant under
both \(\iota\sigma\) and \(\tau\sigma\), and these two Galois actions coincide
when decended to \(\bA_{\bM'}\). Therefore we may define the common
abelianization of \(\FRM\) and \(\FRM'\) to be the invariant space
\(\bA_{\bM'}^{\iota\sigma}=\bA_{\bM'}^{\tau\sigma}\). We denote it simply by
\(\FRA_\FRM\) without any prime (\('\)) symbol.

\subsubsection{}
Now we study the invariant theory of the \(H\)-action on \(\FRM\) and that of
the \(H'\)-action on \(\FRM'\) using the results from the \(\bH\)-action on
\(\bM\).

First, we note that both \(\iota\sigma\) and \(\tau\sigma\) maps
\(\bH'\)-orbits to \(\bH'\)-orbits, and so they descend to involutions on
\(\bC_{\bM,\bH}'\defeq\OGT_*(\bC_{\bM,\bH}\x X')\). Moreover, note
that the action of \(\iota\sigma\) coincides with that of \(\tau\sigma\),
because on \(\bM^\x\), \(\iota\) and \(\tau\) only differ by a transposition
on the \(\bG^\SC\)-factor, which does not affect the traces on \(V_i'\) and
\(V_i''\). This allows us to define
\begin{align}
    \FRC_{\FRM,H}\defeq
    (\bC_{\bM,\bH}')^{\iota\sigma}=(\bC_{\bM,\bH}')^{\tau\sigma}.
\end{align}

As a result, we have the following commutative diagram:
\begin{equation}
    \begin{tikzcd}
        \FRM \ar[r]\ar[d] & \bM' \ar[d] & \FRM'\ar[l]\ar[d] \\
        \FRM\git H \ar[r]\ar[d] & \bM'\git \bH'\ar[d] & \FRM'\git H'\ar[l]\ar[d]\\
        \FRC_{\FRM,H}\ar[r] &
        \bC_{\bM,\bH}' & \FRC_{\FRM,H}\ar[l]
    \end{tikzcd}.
\end{equation}

\begin{lemma}
    We have natural isomorphisms
    \begin{align}
        \FRM\git H\stackrel{\sim}{\longto}\FRC_{\FRM,H}\stackrel{\sim}{\longot}
        \FRM'\git H'.
    \end{align}
\end{lemma}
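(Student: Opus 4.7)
The strategy is to prove both isomorphisms simultaneously by étale descent along the double cover $\OGT\colon X' \to X$. The natural morphisms $\FRM \git H \to \FRC_{\FRM,H}$ and $\FRM' \git H' \to \FRC_{\FRM,H}$ are already present in the commutative diagram preceding the statement, so it suffices to verify each is an isomorphism. Since $\OGT$ is étale surjective and affine GIT quotients by reductive group schemes commute with flat base change, I would pass to the cover $X'$ and check that the pulled-back maps are isomorphisms there.

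After pullback the cover splits as $X' \times_X X' \cong X' \sqcup X'$, so $\bM'|_{X'}$ becomes $\bM \times \bM$, with $\sigma$ acting by swap and $\iota$, $\tau$ acting componentwise. Consequently $\iota\sigma$ and $\tau\sigma$ act as $(x,y) \mapsto (\iota(y), \iota(x))$ and $(x,y) \mapsto (\tau(y), \tau(x))$ respectively; their fixed-point subschemes are each identified with $\bM$ by projection to the first factor. The same analysis gives $\bH'|_{X'} \cong \bH \times \bH$, with $H|_{X'}$ and $H'|_{X'}$ both identified with $\bH$ via the diagonal and the $\tau$-antidiagonal respectively. Because $\iota$ and $\tau$ induce the same involution on $\bC_{\bM,\bH}$ (as noted in the paragraph preceding the lemma), $\bC'_{\bM,\bH}|_{X'} \cong \bC_{\bM,\bH} \times \bC_{\bM,\bH}$ carries a single descent, and $\FRC_{\FRM,H}|_{X'}$ identifies with $\bC_{\bM,\bH}$ via the first projection as well.

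Under these identifications, a short direct check using $\iota(h) = h^{-1}$ for $h \in \bH$ shows that both the conjugation $H$-action on $\FRM$ and the conjugation $H'$-action on $\FRM'$ translate to the standard adjoint $\bH$-action on $\bM$, and that both morphisms in the lemma become the single invariant quotient $\chi_{\bM,\bH}\colon \bM \to \bC_{\bM,\bH}$. The preceding theorem tells us this factors through an isomorphism $\bM \git \bH \cong \bC_{\bM,\bH}$, and descending back to $X$ yields the claim. The main obstacle is purely bookkeeping of which involution acts on which factor; once the trivialization over $X'$ is set up cleanly, the split-case theorem does all the real work.
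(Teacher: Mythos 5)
Your proof is correct and takes essentially the same approach as the paper: pass to a finite \'etale cover splitting $\OGT$, observe that all three schemes become $\bC_{\bM,\bH}$ there (using the split-case theorem), and descend. The paper states this in two sentences; your version makes the identifications over $X'$ explicit, but the underlying argument is identical.
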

\begin{proof}
    The question is local in \(X\), so by passing to a finite \'etale
    cover, we may assume that \(\OGT\) is split. In this case, the claim is
    clear since all three schemes are isomorphic to \(\bC_{\bM,\bH}\x X\).
\end{proof}

\subsubsection{}
Define the twisted forms of \(\chi_{\bM,\bH}\):
\begin{align}
    \chi_{\FRM,H}\colon \FRM&\longto\FRC_{\FRM,H},\\
    \chi_{\FRM,H}'\colon \FRM'&\longto\FRC_{\FRM,H},
\end{align}
then their fibers can be similarly studied using fibers of \(\chi_{\bM,\bH}\).
The extended \(\bH\)-discriminant divisor \(\bD_{\bM,\bH}^+\) also similarly induces a
principal \notion{extended \(H\)-discriminant} divisor \(\FRD_{\FRM,H}^+\) on
\(\FRC_{\FRM,H}\), whose complement is the \(H\)-srs locus
\(\FRC_{\FRM,H}^\srs\). These facts are straightforward to check, and we leave
the details to the reader.

\subsubsection{}
For future convenience, we also describe the scheme \(\FRC_{\FRM,H}\) in a more
explicit manner. We will only consider the case when \(X=\Spec R\) and
\(X'=\Spec R'\) are affine, and the general case is similar.

The scheme \(\bC_{\bM,\bH}'\) in this case is just an affine space
\(\bbA_{R'}^{3n-3}\), so it suffices to describe the set \(\FRC_{\FRM,H}(R)\) as
a subset of \((R')^{3n-3}\). On the direct factor \((R')^{n-1}\) corresponding
to the abelianization, the involution \(\iota\sigma=\tau\sigma\) takes
\((z_i)\) to the point \((\sigma z_{n-i})\).
On the remaining direct factor, if we write \((a_i',a_i'')\) corresponding
to coordinates \((\chi_i',\chi_i'')\), then the
involution is the map
\begin{align}
    (a_i',a_i'')\longmapsto (\sigma a_{n-i}'',\sigma a_{n-i}').
\end{align}
To see this, one only needs to notice that if we identify \(V_i^\vee\) with
\(V_{n-i}\) using the standard volume form, then its direct summand
\((V_i')^\vee\) is
identified with \(V_{n-i-1}'\otimes k\bFe\), which is exactly \(V_{n-i}''\).
As a result, \(\FRC_{\FRM,H}(R)\) is the set
\begin{align}
    \Set*{(z_i,a_i',a_i'')\given z_i=\sigma z_{n-i}, a_i'=\sigma a_{n-i}'',
    a_i''=\sigma a_{n-i}'},
\end{align}
where the \(z_i\) part belongs to \(\FRA_\FRM\).


\subsection{Deformed quotient stack} 
\label{sub:deformed_quotient_stack}

Although the quotient stacks \(\sM_1\defeq\Stack*{\FRM/H}\) and
\(\sM_1'\defeq\Stack*{\FRM'/H'}\) already captures the
information on the geometric side of the trace formula that we are interested
in, experience shows that by considering certain respective deformations of
these stacks, many global results become significantly easier to prove. Such
deformations are analogues to the ones for Lie algebras introduced by \cite{Yu11}.

\subsubsection{}
We first consider the split case and begin by reformulating
\(\Stack*{\bM/\bH}\) in a coordinate-free way. We consider the
following classifying stack (\(S\) is a \(k\)-scheme)
\begin{align}
    \bsM(S)=\Set*{(\cE,x,e,e^\vee)\given \begin{array}{l}
            \cE\in\Bun_{\bG}(S),\\
            x\in \bM\x^{\bG}\cE,\\
            e\colon\cO_S\to V_\cE, e^\vee\colon V_\cE\to \cO_S
    \end{array}},
\end{align}
where \(V_\cE\) means the vector bundle associated with the \(\bG\)-torsor
\(\cE\) and \(V_\cE^\vee\) its vector dual. It maps to \(\Stack*{\bM/\bG}\) by
forgetting the vector \(e\) and co-vector \(e^\vee\), and this forgetful map is
representable (by section spaces of vector bundles). As a result,
\(\bsM\) is an algebraic stack of finite type.

Inside \(\bsM\) we have the closed substack \(\bsM_1\) consisting of tuples such
that \(e^\vee e=1\).
\begin{lemma}
    We have a natural isomorphism
    \begin{align}
        \Stack*{\bM/\bH}\stackrel{\sim}{\longto}\bsM_1.
    \end{align}
\end{lemma}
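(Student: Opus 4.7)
The plan is to show both stacks represent the same moduli problem by unpacking the classifying-stack data of each. The essential input is that $\bH$ is realized as the stabilizer of the pair $(\bFe,\bFe^\vee)$ in $\bG$, together with the fact that the orbit map $\bG\to V\x V^\vee$, $g\mapsto(g\bFe,\bFe^\vee g^{-1})$, identifies $\bG/\bH$ with the locally closed subvariety
\[
    Y\defeq\Set*{(v,v^\vee)\in V\x V^\vee\given v^\vee v=1},
\]
where $V=k^n$. This is elementary: $\bFe^\vee\bFe=1$ is preserved by the $\bG$-action on pairs, and conversely any such pair can be moved to $(\bFe,\bFe^\vee)$ by choosing a basis starting with $v$ whose dual first coordinate is $v^\vee$.

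Given this identification, I would set up the map $\Stack*{\bM/\bH}\to \bsM_1$ as follows. A point of $\Stack*{\bM/\bH}$ over $S$ is an $\bH$-torsor $P$ together with an $\bH$-equivariant map $\xi\colon P\to \bM$. Extending the structure group produces $\cE\defeq P\x^\bH\bG\in\Bun_\bG(S)$, and $\xi$ extends uniquely to a $\bG$-equivariant map $P\x^\bH\bG\to\bM$, i.e.\ to a section $x\in\bM\x^\bG\cE$. The reduction of $\cE$ to $\bH$ corresponds tautologically to a section of the associated fiber bundle $(\bG/\bH)\x^\bG\cE$, which under the isomorphism $\bG/\bH\simeq Y$ becomes a pair $(e,e^\vee)$ of sections of $V_\cE$ and $V_\cE^\vee$ with $e^\vee e=1$. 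This defines a morphism into $\bsM_1$.

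For the inverse, given $(\cE,x,e,e^\vee)\in\bsM_1(S)$, the relation $e^\vee e=1$ means the pair $(e,e^\vee)$ defines a section of the associated bundle with fiber $Y\simeq\bG/\bH$, so it corresponds to a reduction of $\cE$ to an $\bH$-torsor $P$; the $\bG$-equivariant map witnessed by $x$ restricts to an $\bH$-equivariant map $P\to\bM$. Functoriality in $S$ is immediate. The two constructions are formal inverses because reduction of structure group and extension of structure group are inverse equivalences between $\bH$-torsors and $\bG$-torsors-with-reduction, and under this equivalence $\bH$-equivariant $\bM$-valued maps correspond bijectively to $\bG$-equivariant sections of $\bM\x^\bG\cE$.

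There is no real obstacle here; the content of the argument is entirely concentrated in the identification of $\bG/\bH$ with $Y$. The one subtlety worth stating explicitly is that $\bsM_1$ is \emph{closed} in $\bsM$ (the relation $e^\vee e=1$ is an inhomogeneous linear condition cutting out a closed subscheme of the affine bundle parameterizing $(e,e^\vee)$ over $\Stack*{\bM/\bG}$), which matches the fact that the reduction-of-structure-group locus is closed among the larger moduli of $(\cE,x,e,e^\vee)$ with no pairing constraint. All other verifications are a direct unwinding of definitions.
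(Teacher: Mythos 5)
Your proof is correct and takes essentially the same approach as the paper's: the core observation — that the condition $e^\vee e=1$ is precisely the data of an $\bH$-reduction of $\cE$ — is the same in both. The only difference is packaging: you route the argument through the identification $\bG/\bH\simeq Y=\{(v,v^\vee):v^\vee v=1\}$ and the general correspondence between $\bH$-reductions and sections of $(\bG/\bH)\x^\bG\cE$, whereas the paper states it concretely as ``$e$ and $e^\vee$ split off a trivial line bundle from $V_\cE$, so $V_\cE=V_{\cE_\bH}\oplus\cO_S$''. (One small imprecision: you call $Y$ ``locally closed'' in $V\x V^\vee$; it is in fact closed, being cut out by the single equation $v^\vee v=1$ — which is what you actually use at the end.)
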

\begin{proof}
    The natural map is defined as follows: given \((\cE_\bH,x_\bH)\) (where \(\cE_\bH\) is an
    \(\bH\)-torsor) on the left-hand side, we have induced \(\bG\)-torsor
    \(\cE\) and the point \(x\) is obvious. Let \(V_{\cE_\bH}\) be the
    rank-\((n-1)\) vector bundle associated with \(\cE_\bH\), then \(V_\cE\) is
    just the direct sum of \(V_{\cE_\bH}\) with the trivial line
    bundle, and \(e\) (resp.~\(e^\vee\)) is the canonical inclusion
    (resp.~projection) of the trivial line.

    The inverse functor is similarly
    straightforward: if \(e^\vee e=1\), then together \(e\) and \(e^\vee\) define a
    direct summand of \(V_\cE\) that is a trivial line bundle, which is
    equivalent to an \(\bH\)-reduction \(\cE_\bH\) of \(\cE\).
    The rest of the verifications are left to the reader.
\end{proof}

\subsubsection{}
There is an obvious fibration \(b_0\colon \bsM\to \bbA^1\) sending
\((\cE,x,e,e^\vee)\) to \(e^\vee e\), and for any \(s\in\bbA^1\), we let
\(\bsM_s=b_0^{-1}(s)\) to be the preimage of \(s\). The following result is
obvious:
\begin{lemma}
    We have an isomorphism
    \begin{align}
        \bsM_1\x\Gm &\longto b_0^{-1}(\Gm)\\
        \bigl((\cE,x,e,e^\vee),s\bigr) &\longmapsto (\cE,x,e,se^\vee).
    \end{align}
\end{lemma}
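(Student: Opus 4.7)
The plan is to construct an explicit inverse to the stated map and verify both composites are the identity; everything is essentially a tautology once the correct formula is written down.

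First, I would observe that the proposed map is well-defined: if $(\cE,x,e,e^\vee) \in \bsM_1$ then $e^\vee e = 1$, and scaling $e^\vee$ by $s \in \Gm$ produces a co-vector whose pairing with $e$ equals $s$, so the image lies in $b_0^{-1}(s) \subset b_0^{-1}(\Gm)$. This is functorial in $S$ and $\cE$, hence defines a morphism of stacks.

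Next, I would define the inverse functor
\begin{align}
    \Phi\colon b_0^{-1}(\Gm) &\longto \bsM_1 \x \Gm\\
    (\cE,x,e,e^\vee) &\longmapsto \bigl((\cE,x,e,(e^\vee e)^{-1}e^\vee),\, e^\vee e\bigr).
\end{align}
The pairing $e^\vee e$ is invertible on $b_0^{-1}(\Gm)$ by definition, so $(e^\vee e)^{-1}e^\vee$ makes sense, and $((e^\vee e)^{-1}e^\vee)(e) = 1$, showing that the output indeed lies in $\bsM_1 \x \Gm$.

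Finally, one checks that the two composites are the identity: starting from $((\cE,x,e,e^\vee),s) \in \bsM_1 \x \Gm$, the forward map gives $(\cE,x,e,se^\vee)$ with pairing equal to $s$, and $\Phi$ sends this back to $((\cE,x,e,s^{-1}\cdot se^\vee), s) = ((\cE,x,e,e^\vee), s)$; the other composite is symmetric. There is no genuine obstacle here — the only thing to be careful about is that all constructions are natural in $S$ so that they define morphisms of stacks rather than just bijections on points, which follows because forming $e^\vee e$, its inverse, and scalar multiplication of $e^\vee$ all commute with pullback along morphisms of test schemes.
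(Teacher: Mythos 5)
Your proof is correct, and since the paper states this result without proof (prefacing it with ``The following result is obvious''), your explicit construction of the inverse $\Phi(\cE,x,e,e^\vee) = \bigl((\cE,x,e,(e^\vee e)^{-1}e^\vee),\, e^\vee e\bigr)$ and verification of the two composites is exactly the argument being left to the reader.
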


\subsubsection{}
We review a little bit of linear algebra before we proceed. Given a vector space
\(V\) and a vector \(e\in V\), we may define for any \(i\in\bbN\) a
natural linear map
\begin{align}
    e\wedge\colon \wedge^i V\longto \wedge^{i+1}V
\end{align}
by wedging \(e\) on the left. On the other hand, given a co-vector \(e^\vee \in V^\vee \), we may also
define a \notion{contraction} (linear) map
\begin{align}
    e^\vee \colon\wedge^{i+1}V&\longto \wedge^iV\\
    e_0\wedge\cdots\wedge e_i&\longmapsto \sum_{j=0}^i(-1)^j(e^\vee e_j)
    \bigl(e_0\wedge\cdots\wedge\hat{e}_j\wedge\cdots \wedge e_i\bigr),
\end{align}
where \(\hat{e}_j\) means the \(e_j\)-term is deleted.

\subsubsection{}
Let \(\bC_\bsM\defeq \bC_{\bM,\bH}\x\bbA^1\), and we now define a map
\begin{align}
    \bsM\longto\bC_\bsM
\end{align}
extending the map \(\Stack*{\bM/\bH}\to \bC_{\bM,\bH}\). Given
\((\cE,x,e,e^\vee )\in\bsM\), we have invariants \(a_i=\chi_i(x)\) as before.
For \(1\le i\le n-1\), recall the element \(y_i\in \End(V_i)\) induced by
any element \(y\in\bM\). By twisting by \(\cE\), we have
\(x_i\in\End(V_{\cE,i})\) associated with \(x\) where \(V_{\cE,i}=\wedge^i
V_\cE\). Define \(x_i^{e,e^\vee }\) to be the composition map
\begin{align}
    V_{\cE,i-1}\stackrel{e\wedge}{\longto}V_{\cE,i}\stackrel{x_i}{\longto}
    V_{\cE,i}\stackrel{e^\vee }{\longto}V_{\cE,i-1},
\end{align}
and let \(\chi_i^{e,e^\vee }(x)\) be the trace of \(x_i^{e,e^\vee }\).
Let
\begin{align}
    b_i= \begin{cases}
        e^\vee e & i=0,\\
        \chi_i^{e,e^\vee }(x) & 1\le i\le n-1.
    \end{cases}
\end{align}

When \(V=k^n\), \(e=\bFe\), \(e^\vee =\bFe^\vee \), it is easy to see that \(b_0=1\) and
for \(i\ge 1\), \(b_i=a_i''\). We already have \(a_i=a_i'+a_i''\), so \(\Rt_i\)
(viewed as functions on \(\bA_\bM\)), \(\chi_i\), and \(\chi_i^{e,e^\vee }\) give an
alternative set of coordinates of \(\bC_{\bM,\bH}\). Define
\begin{align}
    \chi_\bsM\colon \bsM&\longto \bC_\bsM\\
    (\cE,x,e,e^\vee )&\longmapsto \bigl(\alpha_\bM(x),\chi_i(x),\chi_i^{e,e^\vee }(x),b_0\bigr).
\end{align}
The restriction of \(\chi_\bsM\) to \(\bsM_1\) is the same as the one induced by
\(\chi_{\bM,\bH}\).

\subsubsection{}
There is a section from \(\bC_\bsM\) to \(\bsM\) extending the one on
\(\bC_{\bM,\bH}\), defined as follows: given \(z\in\bA_\bM\),
\(a=(a_i)\), \(b=(b_i)\) (including \(b_0\)), we let \(\cE=\cE_0\) be the
trivial \(\bG\)-torsor (so that \(V_\cE=k^n\)), \(x_{z,a}=\epsilon_\bM(z,a)\),
\(e=\bFe\), and
\begin{align}
    e^\vee =\bFe_b^\vee \defeq
    (-1)^{n-1}b_1\bFe_1^\vee+(-1)^{n-2}b_2\bFe_2^\vee+\cdots-b_{n-1}\bFe_{n-1}^\vee+b_0\bFe^\vee.
\end{align}
We define map
\begin{align}
    \epsilon_\bsM\colon\bC_\bsM&\longto \bsM\\
    (z,a,b)&\longmapsto (\cE_0,x_{z,a},\bFe,\bFe_b^\vee ).
\end{align}

\begin{remark}
    If \(b_0=1\), then \(\epsilon_\bsM\) is the same as the map to
    \(\bsM_1\) induced by \(\epsilon_{\bM,\bH}\), because the role of conjugation by
    \(\beta(a_i'')\) in \(\epsilon_{\bM,\bH}\) is the same as changing the basis of
    \(k^n\) so that \(\bFe\) stays the same and \(\bFe^\vee \) becomes
    \(\bFe_b^\vee \). For \(\epsilon_{\bM,\bH}\), there is additional information being constructed, namely
    how we change the basis elements \(\bFe_i\) and \(\bFe_i^\vee \). For
    \(\epsilon_\bsM\), we no longer retain such additional information.
\end{remark}

\begin{proposition}
    The map \(\epsilon_\bsM\) is a section of \(\chi_\bsM\) and its
    restriction to \(\bC_{\bM,\bH}\) coincides with \(\epsilon_{\bM,\bH}\).
\end{proposition}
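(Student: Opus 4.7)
The plan is to handle the two claims in sequence, establishing the restriction statement first and then bootstrapping from it to the section property.

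For the restriction claim, I would exhibit an explicit \(\bG\)-frame change between the tuple \(\epsilon_\bsM(z,a,(1,a''))=(\cE_0,\epsilon_\bM(z,a),\bFe,\bFe_{(1,a'')}^\vee)\) and the image under \(\bM\to\bsM_1\) of \(\epsilon_{\bM,\bH}(z,a',a'')=\beta(a'')\epsilon_\bM(z,a)\beta(a'')^{-1}\). Taking \(g=\beta(a'')^{-1}\in\bG\), the lower-triangular shape of \(\beta(a'')\) fixes \(\bFe=\bFe_n\), while a direct row-vector calculation shows \(\bFe^\vee\beta(a'')=\bFe_{(1,a'')}^\vee\). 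Thus \(g\) carries the tuple \((\cE_0,\beta(a'')\epsilon_\bM(z,a)\beta(a'')^{-1},\bFe,\bFe^\vee)\) to \(\epsilon_\bsM(z,a,(1,a''))\), giving the required equality of sections into \(\bsM_1\simeq\Stack{\bM/\bH}\).

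For the section property \(\chi_\bsM\circ\epsilon_\bsM=\id\), I would check each of the four components of \(\chi_\bsM\) separately. The \(\alpha_\bM\)- and \(\chi_i\)-components reduce to \(\chi_\bM\circ\epsilon_\bM=\id\) from \Cref{sub:review_of_the_usual_adjoint_action}; the \(b_0\)-component is immediate since \(\bFe_b^\vee\bFe=b_0\) by orthogonality of the dual basis. The core content is \(\chi_i^{\bFe,\bFe_b^\vee}(\epsilon_\bM(z,a))=b_i\). Here I would invoke \(\Gm\)-equivariance: the \(\Gm\)-action on \(\bsM\) scaling \(e^\vee\) by \(s\) sends \(b_0\mapsto sb_0\) and \(b_i\mapsto sb_i\), while \(\bFe_{sb}^\vee=s\bFe_b^\vee\), so \(\epsilon_\bsM\) is \(\Gm\)-equivariant. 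It therefore suffices to verify the identity on the slice \(\{b_0=1\}\), where the restriction claim combined with the section property \(\chi_{\bM,\bH}\circ\epsilon_{\bM,\bH}=\id\) from \Cref{sub:the_bh_action} reduces the task to proving \(\chi_i^{\bFe,\bFe^\vee}=\chi_i''\) as regular functions on \(\bM\). This identity follows from a short trace computation: the composite \(V_{i-1}\xrightarrow{\bFe\wedge}V_i\xrightarrow{y_i}V_i\xrightarrow{\bFe^\vee}V_{i-1}\) annihilates \(V_{i-1}''\) and, on \(V_{i-1}'\), is identified via matching signs \((-1)^{i-1}\) from wedging \(\bFe\) to the first slot and contracting the \(n\)-th slot with the \(V_i''\)-block of \(y_i\), whose trace is \(\chi_i''(y)\). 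Finally, since both \(\chi_\bsM\circ\epsilon_\bsM\) and \(\id_{\bC_\bsM}\) are morphisms agreeing on the Zariski-dense open subset \(\{b_0\ne 0\}\), they coincide on all of \(\bC_\bsM\).

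The only computationally nontrivial step is the pointwise identification \(\chi_i^{\bFe,\bFe^\vee}=\chi_i''\), which requires careful sign bookkeeping between the wedge-by-\(\bFe\) and contract-by-\(\bFe^\vee\) maps along the decomposition \(V_i=V_i'\oplus V_i''\). Everything else is formal, relying on the existing section properties of \(\epsilon_\bM\) and \(\epsilon_{\bM,\bH}\), the explicit shape of \(\bFe_b^\vee\), and the \(\Gm\)-equivariance of the deformation.
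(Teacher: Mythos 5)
Your proof is correct, and it takes a genuinely different route from the paper's. The paper establishes the section property by a fresh direct computation: it applies the companion matrix \(A=\epsilon(a)\) to the wedge basis \(\bFe\wedge\bFe_{i_1\cdots i_{j-1}}\), tracks the scaling by \(\delta_\bM(z)\) via the weight decomposition, and isolates the single nonzero contribution to \(\chi_j^{e,e^\vee}\), closely paralleling the earlier computation for \(\epsilon_{\bM,\bH}\); the restriction claim itself is handled only informally in the Remark preceding the proposition, which notes that conjugation by \(\beta(a'')\) amounts to a change of basis fixing \(\bFe\) and sending \(\bFe^\vee\) to \(\bFe_b^\vee\). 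You invert this logic: you first make the Remark rigorous via the explicit \(\bG\)-frame change \(g=\beta(a'')^{-1}\), then use \(\Gm\)-equivariance of both \(\epsilon_\bsM\) and \(\chi_\bsM\) to reduce the section property to the slice \(\{b_0=1\}\), where it follows from the known identity \(\chi_{\bM,\bH}\circ\epsilon_{\bM,\bH}=\id\) once one checks \(\chi_i^{\bFe,\bFe^\vee}=\chi_i''\) — an identity the paper asserts without proof in the sentence ``it is easy to see that \(b_0=1\) and for \(i\ge 1\), \(b_i=a_i''\)'', and which you actually verify by conjugating the trace through the mutually inverse maps \(\bFe\wedge\colon V_{i-1}'\to V_i''\) and \(\bFe^\vee\colon V_i''\to V_{i-1}'\). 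The trade-off is that your argument avoids redoing the companion-matrix wedge calculation at the cost of the trace identity and the \(\Gm\)-equivariance check, both of which are lighter on bookkeeping, and it has the advantage of clearly separating the formal bootstrap from the one genuinely computational input.
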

\begin{proof}
    The proof is, in essence, similar to that for \(\epsilon_{\bM,\bH}\), and
    clearly we only
    need to consider the \(b_j\)-coordinates for \(j\ge 1\). We choose the
    basis of \(V_j\) induced by the standard basis of \(k^n\).
    Since wedging with \(\bFe\) kills any pure tensor with factor
    \(\bFe\), we only need to consider the effect of \(x_j^{e,e^\vee }\) on basis
    elements of the form
    \begin{align}
        \bFe_{i_1\cdots i_{j-1}}=\bFe_{i_1}\wedge\cdots\wedge \bFe_{i_{j-1}},\quad 1\le
        i_1<\cdots<i_{j-1}\le n-1.
    \end{align}
    As before, let \(A=\epsilon(a)\in\SL_n\) be the companion matrix associated
    with \(a\), then \(A\) sends \(\bFe_i\) to
    \((-1)^{i-1}a_i\bFe_1+\bFe_{i+1}\) (recall \(\bFe_n=\bFe\)) and \(\bFe\) to
    \((-1)^{n-1}\bFe_1\). This means that \(\bFe\wedge\bFe_{i_1\cdots i_{j-1}}\) is sent to
    \begin{align}
        (-1)^{n-1}\bFe_1\wedge \bigl((-1)^{i_1-1}a_{i_1}\bFe_1+\bFe_{i_1+1}\bigr)\wedge
        \cdots\wedge \bigl((-1)^{i_{j-1}-1}a_{i_{j-1}}\bFe_1+\bFe_{i_{j-1}+1}\bigr)\\
        =
        (-1)^{n-1}\bFe_1\wedge \bFe_{i_1+1}\wedge \cdots\wedge \bFe_{i_{j-1}+1}.
    \end{align}
    Scaling the result by \(\delta_\bM(z)\), we see that
    \(x_{z,a}\) sends \(\bFe\wedge\bFe_{i_1\cdots i_{j-1}}\) to
    \begin{align}
        \Bigl((z_2\cdots z_{i_1})(z_3\cdots z_{i_2})\cdots(z_{j}\cdots
        z_{i_{j-1}})(-1)^{n-1}\Bigr)\bFe_1\wedge \bFe_{i_1+1}\wedge \cdots\wedge
        \bFe_{i_{j-1}+1},
    \end{align}
    where the product \(z_{l+1}\cdots z_{i_l}\) is \(1\) if \(i_l< l+1\), or
    equivalently, \(i_l=l\). To compute the contribution of \(\bFe_{i_1\cdots
    i_{j-1}}\) to the trace \(\chi_j^{e,e^\vee }\), we need to compute
    the contraction with \(\bFe_b^\vee \) and then project to the basis
    \(\bFe_{i_1\cdots i_{j-1}}\). Therefore, for \(\bFe_{i_1\cdots i_{j-1}}\) to
    have non-zero contribution to \(b_j\), we must have
    \begin{align}
        (1,i_1+1,\ldots,i_{j-2}+1)=(i_1,i_2,\ldots,i_{j-1})=(1,2,\ldots,j-1).
    \end{align}
    Moreover, only the term involving \(\bFe_{j}^\vee \) in \(\bFe_b^\vee \) can
    contribute to \(b_j\).
    The trace \(\chi_j^{e,e^\vee }\) can then be computed to be the coefficient in front of the
    pure tensor
    \begin{align}
        (-1)^{n-j}b_j\bFe_j^\vee \bigl((-1)^{n-1}\bFe_1\wedge \bFe_{2}\wedge \cdots\wedge
        \bFe_{j}\bigr)
        =
        b_j\bFe_1\wedge \bFe_{2}\wedge \cdots\wedge \bFe_{j-1},
    \end{align}
    which is \(b_j\). This finishes the proof.
\end{proof}

\subsubsection{}
By replacing \(\bFe\) with \(e\) (resp.~\(\bFe^\vee \) with \(e^\vee \)), we can define the
analogue of \(\bFf_i(x)\) (resp.~\(\bFf_i^\vee (x)\)) for any \(m=(\cE,x,e,e^\vee )\):
\begin{align}
    \bFf_i(m)&= \begin{cases}
        x_1 e & i=1,\\
        x_i\bigl(e\wedge \bFf_{i-1}(m)\bigr) & 1< i<n,\\
        e\wedge \bFf_{n-1}(m) & i=n;
    \end{cases}\\
    \bFf_i^\vee (m)&= \begin{cases}
        e^\vee x_1 & i=1,\\
        \bigl(\bFf_{i-1}^\vee (m)\wedge e^\vee \bigr)x_i & 1< i<n,\\
        \bFf_{n-1}^\vee (m)\wedge e^\vee  & i=n.
    \end{cases}
\end{align}
Let \(\bsM^\x\subset\bsM\) be the preimage of \(\Stack*{\bM^\x/\bG}\), then
clearly it is an open dense substack. Similar to
\Cref{cor:f_i_are_pure_tensors}, we have:
\begin{lemma}
    \label[lemma]{lem:f_i_are_pure_tensors_extended}
    Both \(\bFf_i(m)\) and \(\bFf_i^\vee (m)\) are pure tensors for all \(m\in\bsM\).
\end{lemma}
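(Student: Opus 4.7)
The plan is to follow essentially the same strategy as the proof of the analogous \Cref{cor:f_i_are_pure_tensors}: verify the claim on the open dense substack \(\bsM^\x\), then extend to all of \(\bsM\) by a closedness argument. For any geometric point \(m = (\cE, x, e, e^\vee)\) of \(\bsM\), the element \(\bFf_i(m)\) is a section of the vector bundle \(V_{\cE,i} = \wedge^i V_\cE\), and the locus of pure tensors in this bundle forms a closed subcone (cut out by the Pl\"ucker quadrics, which are polynomial and stable under base change). Since \(\bFf_i\colon \bsM \to V_{\cE,i}\) is a morphism of stacks, the locus where \(\bFf_i(m)\) is pure is a closed substack of \(\bsM\); the same holds for \(\bFf_i^\vee\) in the dual bundle.

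Using the density of \(\bsM^\x\) in \(\bsM\) noted immediately before the lemma, it thus suffices to verify purity when \(x \in \bM^\x\). In that case, by the structure of \(\bM^\x = \bG^\SC_+\) recalled in \Cref{sub:review_of_the_usual_adjoint_action}, the operator \(x_i\) on \(V_{\cE,i}\) locally takes the form \(\Wt_i(z)\cdot\wedge^i g\) for some pair \((z, g) \in \bT^\SC \times \bG^\SC\), and therefore preserves the property of being a pure tensor. A short induction on \(i\) then finishes the argument on \(\bsM^\x\): the base case \(\bFf_1(m) = x_1 e\) is a single vector in \(V_\cE\), hence trivially pure; in the inductive step, \(e \wedge \bFf_{i-1}(m)\) is pure whenever \(\bFf_{i-1}(m)\) is, and then \(x_i\) applied to it remains pure; and the case \(i = n\) is automatic because \(\wedge^n V_\cE\) is locally free of rank one.

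The argument for \(\bFf_i^\vee(m)\) is entirely dual: work on the contragredient bundle \(V_{\cE,i}^\vee\), where the same reduction to \(\bsM^\x\) applies and where contraction by \(e^\vee\) plays the role of wedging by \(e\) at each inductive step.

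There is no genuine obstacle here; the proof is a routine upgrade of the earlier pure tensor lemma on \(\bM\) to the deformed setting where \(\bFe\) and \(\bFe^\vee\) are replaced by varying sections \(e\) and \(e^\vee\). The only point worth checking with some care is that the iterative construction of \(\bFf_i\) and \(\bFf_i^\vee\) does indeed define morphisms of algebraic stacks on \(\bsM\) valued in the appropriate vector bundles, which is clear from the definition as a composition of the tautological operations \(x_i\), \(e\wedge\), and contraction by \(e^\vee\).
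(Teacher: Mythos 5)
Your proof is correct and matches the paper's intent: the paper simply asserts that \Cref{lem:f_i_are_pure_tensors_extended} holds ``similar to'' the earlier pure-tensor lemma for \(\bM\), whose one-line proof is exactly the argument you give — verify on the dense open unit locus (where \(x_i\) acts by \(\Wt_i(z)\cdot\wedge^ig\) and so manifestly preserves pure tensors) and then extend to the boundary using that the Pl\"ucker cone is closed, together with density of \(\bsM^\x\). Your version is somewhat more fully spelled out (explicit induction, explicit remark that the purity locus is a closed substack of \(\bsM\)), but it is the same proof in substance.
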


\begin{definition}
    We call \(m\in\bsM\) \notion{strongly regular semisimple} if its image \(x\)
    in \(\Stack*{\bM/\bG}\) is regular semisimple and \(\bFf_n^\vee (m)\bFf_n(m)\neq
    0\). We denote the open substack of strongly regular semisimple elements by
    \(\bsM^\srs\).
\end{definition}

\begin{proposition}
    The map \(\chi_\bsM\) is initial in the category of maps \(\bsM\to S\) where
    \(S\) is an affine scheme.
\end{proposition}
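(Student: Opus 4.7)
The plan is to exploit the section \(\epsilon_\bsM\) together with the already established description \(\bC_{\bM,\bH}=\bM\git\bH\) of the affinization of \(\Stack{\bM/\bH}\), by first localizing along \(b_0\) and then showing that no pole along the divisor \(\{b_0=0\}\subset\bC_\bsM\) can appear in a regular function on \(\bsM\). By the preceding proposition, \(\epsilon_\bsM^*\) is a left inverse to \(\chi_\bsM^*\colon k[\bC_\bsM]\to H^0(\bsM,\cO)\), so injectivity is automatic and it suffices to establish surjectivity.

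By the earlier lemma, \(b_0^{-1}(\Gm)\cong\bsM_1\x\Gm\), and \(\bsM_1\cong\Stack{\bM/\bH}\) has affinization \(\bC_{\bM,\bH}\) by definition of the GIT quotient. Under the \(\Gm\)-scaling \(e^\vee\mapsto se^\vee\), the formulas \(\chi_i^{e,se^\vee}(x)=s\,\chi_i^{e,e^\vee}(x)\) and \(b_0=s\) show that the affinization of \(b_0^{-1}(\Gm)\) is \(k[\bC_{\bM,\bH}][s^{\pm 1}]\), which is naturally identified with \(k[\bC_\bsM][b_0^{-1}]\) via the change of variables \(a_i''=b_i/b_0\) in a way compatible with \(\chi_\bsM\). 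Since \(\bsM\) is integral (as a quotient of the integral scheme \(\bM\x V\x V^*\) by the connected group \(\bG\)) and \(b_0^{-1}(\Gm)\) is dense open, the restriction
\[
    H^0(\bsM,\cO)\hookrightarrow H^0(b_0^{-1}(\Gm),\cO)=k[\bC_\bsM][b_0^{-1}]
\]
is injective.

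The crux is to show the image of this restriction lies in the subring \(k[\bC_\bsM]\), i.e., contains no negative powers of \(b_0\); I would argue by contradiction. If some \(f\in H^0(\bsM,\cO)\) restricts to \(g/b_0^N\) with \(g\in k[\bC_\bsM]\), \(b_0\nmid g\), and \(N\ge 1\), then \(f\cdot b_0^N=\chi_\bsM^*g\) holds on the dense open \(b_0^{-1}(\Gm)\), and by integrality extends to all of \(\bsM\). Restricting to the closed substack \(\bsM_0\defeq b_0^{-1}(0)\) annihilates the left hand side, so \(\chi_\bsM^*g\) vanishes on \(\bsM_0\); but \(\epsilon_\bsM\) restricts to a section of \(\chi_\bsM|_{\bsM_0}\colon\bsM_0\to\{b_0=0\}\), making the latter surjective. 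Hence \(g|_{\{b_0=0\}}=0\), i.e., \(b_0\mid g\), contradicting the minimality of \(N\). This yields a well-defined injective ring map \(r\colon H^0(\bsM,\cO)\to k[\bC_\bsM]\) with \(r\circ\chi_\bsM^*=\mathrm{id}_{k[\bC_\bsM]}\), forcing \(\chi_\bsM^*\) to be an isomorphism. The main technical point I anticipate is the compatibility of the affinization of \(b_0^{-1}(\Gm)\) with \(\chi_\bsM\) under the identifications above, but this is a direct consequence of the explicit scaling formulas recorded before the proposition.
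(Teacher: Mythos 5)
Your proof is correct, and it takes a genuinely different route from the paper. The paper's argument proceeds by showing that both $b_0^{-1}(\Gm)$ and the invertible strongly-regular-semisimple locus $\bsM^{\x,\srs}$ factor isomorphically onto their images in $\bC_\bsM$ (the latter by reducing to Yun's Lie-algebra result via the embedding $\bG^\SC\hookrightarrow\La{gl}_n$), then observes that the union of these images has complement of codimension $\ge 2$, and finishes by a Hartogs-type extension using normality of $\bC_\bsM$ and affineness of the target. Your argument dispenses with the srs locus and with Yun's result entirely: you get injectivity of $\chi_\bsM^*$ for free from the section, and you bound the pole order along the \emph{single} divisor $\{b_0=0\}$ by using $\epsilon_\bsM$ a second time, observing that $\epsilon_\bsM$ carries $\{b_0=0\}$ into $\bsM_0$ so that any invariant whose pullback vanishes on $\bsM_0$ must itself vanish along $\{b_0=0\}$, hence be divisible by $b_0$ in the UFD $k[\bC_\bsM]$. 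The trade-off is that your argument leans hard on the \emph{explicit} form of $\epsilon_\bsM$ (in particular that it is defined over all of $\bC_\bsM$, including the locus $b_0=0$, and sends $b_0=0$ into $\bsM_0$), whereas the paper's normality-plus-codimension argument is more geometric and portable to situations without so concrete a section; on the other hand, your version is more self-contained and avoids importing the Lie-algebra computation. Both are valid; you may want to note explicitly that $\bM\times V\times V^\vee$ is integral (it is, since $\bM$ is normal and irreducible) and that the identification $H^0(b_0^{-1}(\Gm),\cO)\cong k[\bC_\bsM][b_0^{-1}]$ intertwines $\chi_\bsM$ with $\chi_{\bM,\bH}\times\id_\Gm$, which you flagged but deferred — it does follow from $b_i=b_0\cdot a_i''$ and $a_i=a_i'+a_i''$ as you indicate.
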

\begin{proof}
    It is easy to see that any map \(b_0^{-1}(\Gm)\to S\) to a scheme factors
    through \(\Gm\x\bC_{\bM,\bH}\) using the result for \(\bsM_1\).

    On the other hand, we claim that \(\bsM^{\x,\srs}\) is equal to
    \(\chi_{\bsM}^{-1}(\chi_{\bsM}(\bsM^{\x,\srs}))\) and also isomorphic to
    \(\chi_{\bsM}(\bsM^{\x,\srs})\). By
    \(\bZ_\bM\)-equivariance, we only need to prove this for the closed substack
    of \(\bsM^{\x,\srs}\) lying over \(\Stack*{\bG^\SC/\bG}\),
    or equivalently, the abelianization is \(1\).
    By embedding \(\bG^\SC=\SL_n\) into \(\Mat_n=\La{gl}_n\), the claim then
    follows from the well-known result from the Lie algebra case \cite{Yu11}.

    Finally, given a map \(\bsM\to S\) where \(S\) is affine, its restriction
    to \(\bsM^{\x,\srs}\cup b_0^{-1}(\Gm)\) uniquely factors through the image of the
    latter in \(\bC_{\bsM}\), whose complement has codimension \(2\). Since
    \(S\) is affine and \(\bC_\bsM\) is normal, the factorization then extends
    to the whole \(\bC_\bsM\).
\end{proof}

\begin{corollary}
    The pairing \(\bFf_n^\vee \bFf_n\) descends to a function on \(\bC_\bsM\).
\end{corollary}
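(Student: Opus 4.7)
The plan is to reduce the statement to the initial property of \(\chi_\bsM\) established in the preceding proposition. First I would verify that \(\bFf_n^\vee\bFf_n\) is genuinely a scalar-valued function on \(\bsM\), not merely a section of some twisted line bundle. For a point \(m=(\cE,x,e,e^\vee)\in\bsM\), by \Cref{lem:f_i_are_pure_tensors_extended} the element \(\bFf_n(m)=e\wedge\bFf_{n-1}(m)\) lies in \(\wedge^n V_\cE\cong\det V_\cE\), while a symmetric inductive argument places \(\bFf_n^\vee(m)\) in the dual line bundle \((\det V_\cE)^\vee\). The natural duality pairing of a section with a dual section therefore produces a canonical scalar \(\bFf_n^\vee(m)\bFf_n(m)\in\cO_S\), independent of any choice of trivialization of \(\cE\). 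This packages \(\bFf_n^\vee\bFf_n\) into a morphism \(\bsM\to\bbA^1\).

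Once well-definedness is established, the rest is formal: \(\bbA^1\) is affine, so by the initial property of \(\chi_\bsM\) in the category of maps \(\bsM\to S\) with \(S\) affine, there is a unique factorization through \(\bC_\bsM\), which is the required descent. This is the direct deformed analogue of the fact that \(\Disc_{\bM,\bH}=\bFf_n^\vee\bFf_n\) on \(\bM\) descends to \(\bC_{\bM,\bH}\); the difference is merely that \(b_0=e^\vee e\) is now allowed to vary.

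There is no genuine obstacle here beyond the bookkeeping in the first step: one must track carefully, through the inductive definitions, that wedging with \(e\) on the left lands in successively higher exterior powers of \(V_\cE\), and that the contraction with \(e^\vee\) on the right lands in successively higher exterior powers of \(V_\cE^\vee\), so that at the final stage \(i=n\) the two sides pair into the trivial line bundle. Since this is essentially a formal consequence of how \(\bFf_i\) and \(\bFf_i^\vee\) were constructed (and since the split case was already handled over \(\bM\)), the proof will be short.
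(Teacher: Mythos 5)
Your argument is exactly the paper's intended one: the paper omits an explicit proof precisely because the corollary is immediate from the preceding proposition once one observes that \(\bFf_n(m)\in\wedge^n V_\cE\) and \(\bFf_n^\vee(m)\in(\wedge^n V_\cE)^\vee\) pair to a canonical scalar, giving a morphism \(\bsM\to\bbA^1\) and hence a unique factorization through \(\chi_\bsM\) by the initial property. Your write-up is correct and fills in the same bookkeeping the paper leaves implicit.
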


\begin{definition}
    \label[definition]{def:extended_disc_divisor}
    \begin{enumerate}
        \item We define the \notion{(resp.~extended) discriminant function}
            \begin{align}
               \Disc_\bsM\defeq\bFf_n^\vee \bFf_n \quad \text{(resp.~}\Disc_\bsM^+\defeq
                \Disc_+\cdot\bFf_n^\vee \bFf_n\text{)}
                 \in k[\bC_\bsM],
            \end{align}
            where \(\Disc_+\) is the usual extended discriminant
            function on \(\bC_\bM\).
        \item The divisor cut out by \(\Disc_\bsM\) (resp.~\(\Disc_\bsM^+\)) is
            called the \notion{(resp.~extended) discriminant divisor}, denoted
            by \(\bD_\bsM\) (resp.~\(\bD_\bsM^+\)).
        \item The complement \(\bC_{\bsM}^\srs=\bC_\bsM-\bD_\bsM^+\) is called
            the \notion{strongly regular semisimple locus}.
    \end{enumerate}
\end{definition}

\begin{proposition}
    \label[proposition]{prop:srs_implies_iso_to_GIT}
    The restriction of \(\chi_\bsM\) to \(\bsM^\srs\) is an isomorphism.
\end{proposition}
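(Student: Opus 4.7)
My plan is to identify $\bsM$ with the quotient stack $\Stack*{(\bM\times V_1\times V_1^\vee)/\bG}$, where $V_1=k^n$ is the defining representation and $\bG$ acts by its adjoint action on $\bM$ and by the standard (resp.\ contragredient) representation on $V_1$ (resp.\ $V_1^\vee$). Under this identification, $\chi_\bsM$ is the $\bG$-quotient of a $\bG$-invariant map $\tilde\chi_\bsM\colon \bM\times V_1\times V_1^\vee \to \bC_\bsM$, and the explicit formula for $\epsilon_\bsM$ furnishes a scheme-theoretic lift $\tilde\epsilon_\bsM$ that is a section of $\tilde\chi_\bsM$. The preceding proposition identifies $\tilde\chi_\bsM$ with the affinization of its domain, and the corollary that $\bFf_n^\vee\bFf_n$ descends to $\bC_\bsM$ guarantees that $\tilde\chi_\bsM$ carries the preimage of $\bsM^\srs$ into $\bC_\bsM^\srs$. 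My goal therefore reduces to showing that $\tilde\chi_\bsM$ restricts to a principal $\bG$-bundle over $\bC_\bsM^\srs$ trivialized by $\tilde\epsilon_\bsM$; taking the stack quotient will then produce the desired isomorphism $\bsM^\srs\simeq \bC_\bsM^\srs$.

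The key step is to establish freeness of the $\bG$-action on $(\bM\times V_1\times V_1^\vee)^\srs$. Suppose $g\in \bG$ stabilizes an srs tuple $m=(x,e,e^\vee)$. Since $g$ centralizes $x$ and fixes $e$, it also fixes every $\bFf_i(m)$, as these are built functorially from $x$ and $e$ alone. By \Cref{lem:f_i_are_pure_tensors_extended}, each $\bFf_i(m)$ is a pure tensor, and the srs hypothesis $\bFf_n^\vee(m)\bFf_n(m)\ne 0$ forces $\bFf_n(m)\ne 0$ and therefore $\bFf_i(m)\ne 0$ for every $i$. Combining this with \Cref{cor:f_i_are_pure_tensors} packages the associated $i$-dimensional subspaces $W_i\subset V_1$ into a complete flag $W_1\subset W_2\subset\cdots\subset W_n=V_1$, and the relation $\bFf_n(m)=e\wedge\bFf_{n-1}(m)\ne 0$ further forces $e\notin W_{n-1}$. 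Because $\wedge^i g$ fixes the nonzero pure tensor $\bFf_i(m)$, the element $g$ preserves each $W_i$ with $\det(g|_{W_i})=1$; an induction along the graded pieces $W_i/W_{i-1}$ shows $g$ acts as the identity on each one-dimensional quotient, so $g$ is unipotent relative to this flag. On the other hand, $g$ lies in $\Stab_\bG(x)$, which is a torus since $x$ is regular semisimple in the reductive monoid $\bM$. A unipotent element of a torus is trivial, so $g=1$.

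With freeness in hand, the remainder is formal. The preceding proposition identifies $\bG$-invariants on $\bM\times V_1\times V_1^\vee$ with functions on $\bC_\bsM$, so each fiber of $\tilde\chi_\bsM$ contains a unique closed $\bG$-orbit; freeness then forces every orbit over $\bC_\bsM^\srs$ to be closed (of maximal dimension $\dim\bG$), whence each such fiber is a single $\bG$-orbit, hence a $\bG$-torsor. The section $\tilde\epsilon_\bsM$ trivializes this torsor, yielding an identification $(\bM\times V_1\times V_1^\vee)^\srs\simeq \bG\times \bC_\bsM^\srs$ as $\bG$-schemes, and passing to the stack quotient delivers $\bsM^\srs\simeq \bC_\bsM^\srs$.

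The principal obstacle I anticipate is the freeness step: one must synthesize a complete $g$-stable flag with $\det=1$ on each successive quotient out of the chain of pure tensors $\bFf_i(m)$, and this is precisely where the ``covector-side'' srs condition $\bFf_n^\vee\bFf_n\ne 0$ enters, guaranteeing that the flag is complete (reaches the top wedge). Once unipotence of $g$ is in place, the semisimplicity of $\Stab_\bG(x)$ and the affinization property of $\chi_\bsM$ close the argument without further effort.
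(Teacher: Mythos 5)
Your proposal takes a genuinely different route from the paper's. The paper works directly over Artinian $\bar k$-algebras: it diagonalizes $x$ to the form $e_{I,\SimRts}zt$ in the big cell, computes $\bFf_n(m)$ explicitly as a Vandermonde-type determinant to conclude that the coordinates $e_i$ of the vector $e$ are all \emph{units}, and then uses the constraints $ge=e$ and $e^\vee g=e^\vee$ on the diagonal stabilizer of $x$ to trivialize it; for bijectivity it computes a second matrix $\bF$ and checks its invertibility so that $e_ie_i^\vee$ are recovered from the invariants. You replace the determinant computation by a flag argument synthesized from $\bFf_1,\dots,\bFf_n$ and close the freeness step with ``unipotent element of a torus is trivial,'' and you replace the $\bF$-invertibility computation by the GIT fact that over the affinization the fiber contains a unique closed orbit. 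This is a more conceptual proof and does buy you something: it sidesteps the case analysis on the idempotent $e_{I,\SimRts}$ and the two explicit determinants.

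There are, however, two places where your argument as written is only geometric while the statement is scheme-theoretic, and you should address them. First, ``unipotent element of a torus is trivial'' is a Jordan-decomposition fact over a field; it does not persist over an Artinian test ring $R$, where $g=\Diag(1+\nu_1,\dots,1+\nu_n)\in T(R)$ with $\nu_i$ nilpotent is unipotent and in a torus but nontrivial. Your flag constraints $g|_{W_i/W_{i-1}}=1$ give $(g-1)^{n}=0$, but by themselves they do not rule out such $g$ unless you also know that enough coordinates of $\bFf_1$ and of $e$ are units (which is exactly what the paper's determinant computation supplies). One way to patch this without reproving the determinant estimate is to observe that the stabilizer group scheme of the $\bar k$-point $m$ is a \emph{subtorus} of $C_{\bG}(x)$ (since $C_\bG(x)$ is a torus and the conditions $ge=e$, $e^\vee g=e^\vee$ on it are monomial in the coordinates $s_i$), hence smooth; then your field-level argument that it has no nontrivial $\bar k$-points suffices. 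You should say this explicitly, since without it the passage from ``free on geometric points'' to ``free as a group scheme'' is unjustified. Second, the conclusion ``each fiber is a single $\bG$-orbit, hence a $\bG$-torsor, trivialized by $\tilde\epsilon_\bsM$, hence the quotient map is an isomorphism'' slides past the fact that a bijective morphism of normal varieties in positive characteristic need not be an isomorphism (relative Frobenius is the standard counterexample). What saves you is precisely scheme-theoretic freeness: once the stabilizer group scheme is trivial, the orbit map $\bG\to\bG\cdot\tilde\epsilon_\bsM(a)$ is a monomorphism with injective differential, and combining this with the closed-orbit argument and the section $\tilde\epsilon_\bsM$ makes the action map $\bG\times\bC_\bsM^\srs\to(\bM\times V_1\times V_1^\vee)^\srs$ a surjective monomorphism onto a reduced target, hence an isomorphism. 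The paper avoids both of these issues wholesale by proving bijectivity on $R$-points for all Artinian $R$, which is a cleaner formal criterion for a morphism of finite type to be an isomorphism; it is worth keeping that option in mind as an alternative way to make your argument rigorous.
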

\begin{proof}
    We first prove that \(\bsM^\srs\) is a sheaf. Since \(\bsM\) is clearly an
    algebraic stack, it suffices to show that the automorphism group of any
    point \(m=(\cE,x,e,e^\vee)\in\bsM^\srs(R)\) is trivial, where \(R\) is an
    Artinian algebra over \(\bar{k}\). We
    fix a trivialization of \(\cE\), then the automorphism group of \(m\) is the
    same as the stabilizer in \(\bG\) of the tuple
    \((x,e,e^\vee)\in\bM\x\bbA^n\x\bbA^n\).

    By definition, \(x\) is semisimple, so we may choose the trivialization of
    \(\cE\) in such a way that \(x\) is identified with a point in
    \(\bar{\bT}_\bM(R)\). Since \(x\) is also \(\bG\)-regular, it is contained in
    the big-cell locus (see \cite[\S~2.4.15]{Wa25} for example for a quick
    summary). By possibly conjugating by
    the Weyl group, we may assume that \(x\) can be written as
    \begin{align}
        x=e_{I,\SimRts}zt,
    \end{align}
    where \(e_{I,\SimRts}\) is an idempotent in \(\bar{\bT}_\bM\),
    \(t\in\bT^\SC(R)\), and \(z\in\bZ_\bM\simeq\bT^\SC(R)\). Since the stabilizer
    stays constant along \(\bZ_\bM\)-orbits in \(\bM\x\bbA^n\x\bbA^n\) (where
    \(\bZ_\bM\) acts on \(\bbA^n\x\bbA^n\) trivially), we may
    assume that \(z=1\). Some basic properties of
    \(e_{I,\SimRts}\) can be found in \cite[\S~2.4.17]{Wa25}, and since
    \(\bG=\GL_n\), an explicit description is as follows: the standard
    basis of the standard representation \(V_1\) has weights
    \begin{align}
        \Wt_1,\Wt_1-\Rt_1,\ldots,\Wt_1-\Rt_1-\cdots -\Rt_{n-1}.
    \end{align}
    For each \(1<i\le n-1\), the highest weight of \(V_i\) is
    \begin{align}
        \Wt_i&=\sum_{j=1}^{i}\Bigl(\Wt_1-\sum_{l=1}^{j-1}\Rt_l\Bigr),\\
        &=i\Wt_1-\sum_{j=1}^{i}(i-j)\Rt_j,
    \end{align}
    and so lower weights in \(V_i\) can be written as
    \begin{align}
        \mu\defeq\Wt_i-\sum_{j=i}^{n-1}c_j\Rt_j,
    \end{align}
    such that if \(c_j\neq 0\), then \(c_l\neq 0\) for all \(i\le l<j\).
    The eigenvalue \(\mu(e_{I,\SimRts})\) is \(1\) if \(c_j=0\) outside
    \(I\), and \(0\) otherwise. Therefore, we may assume without loss of
    generality that
    \begin{align}
        I=\Set{1,\ldots,s},\quad 0\le s\le n-1.
    \end{align}
    Let \(t=\Diag(t_1,\ldots,t_n)\), and suppose
    \(e=\tp(e_1,\ldots,e_n)\in R^n\), then \(\bFf_n(m)\) is the determinant of the
    matrix
    \begin{align}
        \begin{pmatrix}
            e_1 & \cdots & e_1t_1^{n-s-2} & e_1t_1^{n-s-1} & \cdots & e_1t_1^{n-1}\\
            \vdots &  & \vdots & \vdots & & \vdots \\
            e_{s+1} & \cdots & e_{s+1}t_{s+1}^{n-s-2} & e_{s+1}t_{s+1}^{n-s-1} & \cdots & e_{s+1}t_{s+1}^{n-1}\\
            e_{s+2} & \cdots & e_{s+2}t_{s+2}^{n-s-2} & & \\
            \vdots & \iddots &  &  & &  \\
            e_{n} &  &  & & 
        \end{pmatrix}.
    \end{align}
    Since \(\bFf_n(m)\) is invertible,
    we must have \(e_i\in R^\x\) for all \(i\). The
    stabilizer of \(x\) in \(\bG\) is diagonal, so \(e_i\in R^\x\) implies that if
    \(\Ad_g(x)=x\) and \(ge=e\), then \(g=1\). This proves the claim that
    \(\bsM^\srs\) is a sheaf.

    Still assuming \(z=1\), the invariants \(b_i\) can be similarly computed as
    entries in the row vector
    \begin{align}
        (e_1e_1^\vee,\ldots,e_ne_n^\vee)\bF,
    \end{align}
    where \(\bF\) is the matrix
    \begin{align}
        \begin{pmatrix}
            t_1\Sigma_{1,n-1} & \cdots & t_1\Sigma_{1,s+1} & t_1\Sigma_{1,s} & \cdots & t_1\\
            \vdots &  & \vdots & \vdots & & \vdots \\
            t_{s+1}\Sigma_{s+1,n-1} & \cdots & t_{s+1}\Sigma_{s+1,s+1} & t_{s+1}\Sigma_{s+1,s} & \cdots & t_{s+1}\\
            t_{s+2}\Sigma_{s+2,n-1} & \cdots & t_{s+2}\Sigma_{s+2,s+1} & & \\
            \vdots & \iddots &  &  & &  \\
            t_n\Sigma_{n,n-1} &  &  & & 
        \end{pmatrix},
    \end{align}
    in which \(\Sigma_{i,l}\) is the summation
    \begin{align}
        \sum_{\substack{1\le j_1<\ldots<j_l\le\max\Set{l,s+1}\\i\neq j_1,\ldots,
        j_l}}t_{j_1}\cdots t_{j_l}.
    \end{align}
    We claim that \(\bF\) is invertible: indeed, if \(s<l<n\), then
    \(\Sigma_{l+1,l}\) is simply the product
    \begin{align}
        t_1\cdots t_{l}\in R^\x,
    \end{align}
    so we only need to show that the determinant of the upper-right block of
    \(\bF\) is invertible. Using elementary column operations and induction, we
    see that the said determinant is, up to a sign, equal to that of the
    Vandermonde matrix
    \begin{align}
        \begin{pmatrix}
            t_1^{s+1} & \cdots & t_1\\
            \vdots & & \vdots \\
            t_{s+1}^{s+1} & \cdots & t_{s+1}
        \end{pmatrix},
    \end{align}
    which is invertible because the upper-right corner of \(\bFf_n(m)\) is
    invertible.

    If \(m\) and \(m'\) are two points in \(\bsM^\srs\) with the same
    invariants, then by \(\bG\)-conjugation, we may assume that \(\cE=\cE'\) and
    \(x=x'\). Since \(\bF\) is invertible, we have
    \(e_ie_i^\vee=e_i'e_i^{\vee\prime}\) for all \(i\). Therefore,
    using that \(e_i\in R^\x\), we may find a scalar in \(R^\x\)
    that simultaneously sends \(e\) to \(e'\) and \(e^\vee\) to
    \(e^{\vee\prime}\). This shows that \(\chi_{\bsM}\) is a
    bijection on \(R\)-points for any Artinian algebra \(R\), hence must be an
    isomorphism.
\end{proof}

\subsubsection{}
The Galois twisted forms of \(\bsM\) is similarly formulated. We already have
the monoidal symmetric space \(\FRM\) and groups \(G\simeq\bG\) and
\(H\simeq\bH\) over \(X\), induced by a given \'etale double cover
\(\OGT\colon X'\to X\).

\begin{definition}
We define \(\sM\) to be the
stack whose \(S\)-points (\(S\) is an \(X\)-scheme) are tuples \((\cE,x,e,e^\vee )\)
where \(\cE\) is a \(G\)-torsor over \(S\), \(x\in
\FRM\x^G\cE(S)\), \(e\) (resp.~\(e^\vee \)) is a vector (resp.~co-vector) of
\(V_\cE\), or in other words, \(e\) is a morphism  \(\cO_S\to V_\cE\) and
\(e^\vee \) is a morphism \(V_\cE\to\cO_S\).

We define the unitary form \(\sM'\) to be the stack classifying tuples \((\cE',x',e')\) where
\(\cE'\) is a \(G'=\UNI_n\)-torsor over \(X\), \(x'\in\FRM'\x^{G'}\cE'(S)\), and
\(e'\colon \cO_{S'}\to V'_{\cE'}\) is a vector. Here \(S'=X'\x_X S\) and
\(V'_{\cE'}\) is the Hermitian vector bundle on \(S'\)
associated with \(\cE'\). Through the Hermitian form we may
identify \(V'_{\cE'}\) with the dual of \(\sigma^*V'_{\cE'}\), and so \(e'\)
induces a co-vector
\begin{align}
    e^{\prime\vee}\colon
    V'_{\cE'}\simeq(\sigma^*V'_{\cE'})^\vee \to\sigma^*\cO_{S'}\simeq \cO_{S'}.
\end{align}

\end{definition}

\subsubsection{}
Both \(\sM\) and \(\sM'\) map to the same space \(\FRC_\sM\) of invariants
\begin{align}
    \sM\longto \FRC_\sM\longot \sM'.
\end{align}
Here \(\FRC_\sM\) is isomorphic to \(\FRC_{\FRM,H}\x\bbA^1\), where \(\bbA^1\) corresponds
to the invariant \(b_0\). More concretely, if
\(X=\Spec{R}\) and \(X'=\Spec{R'}\), then \(\FRC_\sM(R)\) is the subset of
\((R')^{3n-2}\)
\begin{align}
    \Set*{(z_1,\ldots,z_{n-1},a_1,\ldots,a_{n-1},b_1,\ldots,b_{n-1},b_0)
    \given z_i=\sigma z_{n-i},a_i=\sigma a_{n-i}, b_i=\sigma (b_0a_{n-i}-b_{n-i})},
\end{align}
where we use the convention \(a_n\defeq 1\) and \(b_n\defeq 0\), so that
\(b_0=\sigma (b_0a_n-b_n)\) is equivalent to saying \(b_0\in R\).

\subsubsection{}
The pure tensors \(\bFf_i\) and \(\bFf_i^\vee \) are not stable under the Galois action
if \(0<i<n\), but both \(\bFf_n\) and \(\bFf_n^\vee \) have twisted forms, which we
denote by \(f_n(x)\) and \(f_n^\vee (x)\) for \((\cE,x,e,e^\vee )\in \sM\), and
\(f_n'(x')\) for \((\cE',x',e')\) for \((\cE',x',e')\in\sM'\). In the unitary
case, the co-tensor \(f_n^{\prime\vee}(x)\) is the dual of \(\sigma^*f_n'(x')\).
Additionally, \(\bFf_i\) and \(\bFf_i^\vee \) are still well-defined as tensors
over \(S'\), and if \(i=n\), they coincide with the pullbacks of \(f_n\) and
\(f_n^\vee \), respectively.



\section{Local Formulations} 
\label{sec:local_formulations}

In this section, we study the geometry in the local setting. For this
purpose, we fix a closed point \(v\in\abs{X}\). Let \(X_v=\Spec{\cO_v}\) be the
formal disc at \(v\), \(X_v^\bullet=\Spec{F_v}\) the punctured disc, and \(k_v\)
the residue field. We also choose a uniformizer \(\pi_v\in\cO_v\).

Similarly, we
let \(v'=\Spec{k_v'}\) be the preimage of \(v\) in \(X'\): it is either a
single point with \(k_{v}'\) being a quadratic field extension of \(k_v\), or
two points with \(k_{v}'=k_v\x k_v\). Let \(X_{v}'=\Spec{\cO_{v}'}\) be the formal
disc around \(v'\), where \(\cO_v'=\cO_v\otimes_{k_v}k_{v}'\), and
\(X_{v}^{\prime\bullet}=\Spec{F_v'}\) the punctured disc. For any geometric
point \(\bar{v}\) over \(v\), that is, a \(k\)-field embedding \(\bar{v}\colon
k_v\to \bar{k}\), we let \(\cO_{\bar{v}}=\cO_v\otimes_{k_v}\bar{k}\), and so on.

\subsection{Affine Jacquet--Rallis fibers} 
\label{sub:affine_jacquet_rallis_fibers}

Similar to the Lie algebra case (\cite{Yu11}) or the usual adjoint
action (\cite{Wa25}), we have an analog of affine Springer fiber which are
geometric incarnations of orbital integrals. We will dub them \notion{affine
Jacquet--Rallis fibers} in this paper.

\subsubsection{}
Recall we have surjective morphisms of stacks
\begin{align}
    \sM\longto \FRC_\sM\longot \sM'.
\end{align}
We define \(\FRC_\sM^\heartsuit(\cO_v)\) to be the subset of \(\FRC_\sM(\cO_v)\)
consisting of points whose restriction to \(F_v\) are invertible and strongly
regular semisimple.

Similarly as in \cite[\S~4]{Wa25}, the image of
\(a\in\FRC_\sM(\cO_v)\) that is invertible over \(F_v\) induces a point in
\(\FRA_\FRM(\cO_v)\cap\FRA_\FRM^\x(F_v)\), and so we have a canonically attached
dominant cocharacter \(\lambda_v\in\CoCharG(T^\AD)\), called the
\notion{boundary divisor} of \(a\).

\begin{definition}
    \label[definition]{def:affine_jacquet_rallis_fiber}
    Let \(a\in\FRC_\sM^\heartsuit(\cO_v)\). The \notion{affine Jacquet--Rallis
    fiber} \(\cM_{v}(a)\) is the \(k\)-stack (for the \'etale topology)
    associated with the
    prestack sending a \(k\)-algebra \(R\) to the groupoid
    \begin{align}
        \Set*{m\in \sM(\cO_v\hat{\otimes}_k R)\given
        \chi_\sM(m)=a\in\FRC_\sM(\cO_v\hat{\otimes}_k R)}.
    \end{align}
    Similarly, we define \(\cM_v'(a)\) by replacing \(\sM\) with \(\sM'\).
    For any \(k\)-field embedding \(\bar{v}\colon k_v\to \bar{k}\), we also
    define \(\cM_{\bar{v}}(a)\) (resp.~\(\cM_{\bar{v}}'(a)\)) to be the analogue
    of \(\cM_v(a)\) (resp.~\(\cM_v'(a)\)) by replacing \(\cO_v\hat{\otimes}_k
    R\) by \(\cO_{\bar{v}}\hat{\otimes}_{\bar{k}}R\) for any \(\bar{k}\)-algebra
    \(R\).
\end{definition}

Clearly, we have the canonical isomorphism
\begin{align}
    \cM_v(a)\otimes_k\bar{k}\simeq\prod_{\bar{v}\colon k_v\to
    \bar{k}}\cM_{\bar{v}}(a),
\end{align}
where \(\Gal(k_v/k)\) acts on the direct factors of the right-hand side
simply-transitively by permutations, and similarly for \(\cM_v'(a)\).

\subsubsection{}
Before we study the properties of \(\cM_v(a)\) and \(\cM_v'(a)\), we need
to also recall the usual notion of multiplicative affine Springer fibers (MASFs),
following \cite[\S~4]{Wa25}.

Note that over \(\cO_v\), the unitary group \(G'\)
becomes quasi-split, and so the formulations in \cite[\S~4]{Wa25} directly
apply to the unitary side. For the symmetric space side, it is not directly
addressed in \textit{loc. cit.}, but the basic definitions still make sense
using the monoidal symmetric space \(\FRM\), and the geometric results still
hold because \(\FRM\) becomes split after base change to \(\bar{k}\), and so we
only have to worry about rationality questions.

Let \(\bar{a}\in\FRC_\FRM(\cO_v)\) be the image of \(a\), which is generically
invertible and regular semisimple, and we restrict ourselves to only the
MASF associated with \(\bar{a}\). The definition of MASFs in \cite[\S~4]{Wa25}
relies on an \(F_v\)-point \(\gamma\in\FRM(F_v)\)
(resp.~\(\gamma'\in\FRM'(F_v)\)) lying over \(\bar{a}\). In unitary case,
\(\gamma'\) always exists due to a well-known result by Steinberg (one can also
see \cite[Theorem~2.4.24]{Wa25} for a slightly stronger result based on
Steinberg's argument).

In the symmetric case,
since \(a\) is generically strongly regular semisimple, it lifts to a unique
\(F_v\)-point of \(\sM\), which in particular gives a regular semisimple element
\([\gamma]\in\Stack*{\FRM/G}(F_v)\). Since \(G=\GL_n\), any \(G\)-bundle over \(F_v\)
is trivial, and so if we choose a trivialization of the \(G\)-bundle associated
with \([\gamma]\), then it induces the desired \(\gamma\).

\subsubsection{}
A subtlety here is that \(a\) also induces a point
\([\gamma'']\in\Stack*{\FRM'/G'}(F_v)\), whose associated \(G'\)-bundle may or
may not be trivial. By local Tate-Nakayama duality, it has a canonical
obstruction element
\begin{align}
    \Ob_v(a)=\Ob{[\gamma'']}\in\RH^1(F_v,G')\simeq\pi_0(Z_{\dual{\bG}}^{\Gamma_v})^*,
\end{align}
where superscript \(*\) means Pontryagin dual, and \(\Gamma_v\) is
the Galois group of \(F_v\), which acts on \(Z_{\dual{\bG}}=\Gm\)
through \(\Gal(k_v'/k_v)\). This obstruction is non-trivial only if \(v'\) is
non-split over \(v\), in which case the Frobenius element acts on \(\Gm\) by
inversion, and we have
\begin{align}
    \pi_0(Z_{\dual{\bG}}^{\Gamma_v})=\pi_0(\Gm^{\Gal(k_v'/k_v)})=\Set{\pm 1}\simeq
    \bbZ/2\bbZ.
\end{align}
We will study the obstruction \(\Ob_v(a)\) in more details in
\Cref{sub:frobenius_structures_and_obstructions}.

\subsubsection{}
Using \(\gamma\) (resp.~\(\gamma'\)), we can define MASF \(\cM_v^\Hit(a)\)
(resp.~\(\cM_v^{\prime\Hit}(a)\)) the same way as in \cite[\S~4]{Wa25}.
Concretely, \(\cM_v^\Hit(a)(R)\) is the groupoid classifying tuples \((E,\phi,
\iota)\) where \(E\) is a \(G\)-bundle over
\(X_{v,R}=\Spec{\cO_v\hat{\otimes}_k R}\), \(\phi\) is a section of \(\FRM\x^G
E\) over \(X_{v,R}\) such that \((E,\phi)\) lies over \(a\), and \(\iota\) is an
isomorphism over \(X_{v,R}^\bullet\) between \((E,\phi)\) and \((E_0,\gamma)\)
(\(E_0\) means the trivial \(G\)-torsor). \(\cM_v^{\prime\Hit}(a)\) is described
similarly. An explicit description of the \(k\)-points of MASF (in the
unitary case)
involving Cartan double cosets and the boundary divisor of \(\gamma'\) may be found in
\cite[\S\S~4.1.2 and 4.1.12]{Wa25}.

In the symmetric case, we have a well-defined forgetful map
\begin{align}
    \label{eqn:local_forgetful_map_symmetric}
    \cM_v(a)\longto \cM_v^\Hit(a)
\end{align}
by forgetting the vector and the co-vector. Note that for any \(m\in\cM_v(a)\),
the rational isomorphism \(\iota\) of its image under this map is induced by the
fact that \(\sM^\srs\to\FRC_\sM^\srs\) is an isomorphism.

In unitary case, due to the obstruction \(\Ob_v(a)\), we may not have a
forgetful map
\begin{align}
    \label{eqn:local_forgetful_map_unitary_desired}
    \cM_v'(a)\longto \cM_v^{\prime\Hit}(a).
\end{align}
Nevertheless, we may slightly modify the
definition of \(\cM_v^{\prime\Hit}(a)\) by replacing the distinguished point
\((E_0,\gamma')\) by \([\gamma'']\). The resulting functor
\(\cM_v^{\prime\prime\Hit}(a)\) is isomorphic to \(\cM_v^{\prime\Hit}(a)\) after
base changing to \(\bar{k}\), so its geometric properties are still the same,
and only the Galois structure changes. We then have a forgetful map
\begin{align}
    \label{eqn:local_forgetful_map_unitary}
    \cM_v'(a)\longto \cM_v^{\prime\prime\Hit}(a).
\end{align}

\begin{lemma}
    The forgetful maps \eqref{eqn:local_forgetful_map_symmetric} and
    \eqref{eqn:local_forgetful_map_unitary} are monomorphisms of sheaves.
\end{lemma}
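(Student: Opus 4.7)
My plan is to reduce both assertions to verifying, for each test ring $R$, that $R$-points of the source with a fixed image in the target have no non-trivial automorphisms and that any two such $R$-points coincide. The forgetful map discards only the vector $e$ and the co-vector $e^\vee$, so the task is to show that these are uniquely determined by the retained data $(E,\phi,\iota)$ (resp.~$(E',\phi',\iota')$), and that the automorphism group of the full datum is trivial.

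The decisive ingredient is \Cref{prop:srs_implies_iso_to_GIT}, together with its Galois-twisted analogs $\sM^\srs \cong \FRC_\sM^\srs$ and $\sM^{\prime\srs} \cong \FRC_\sM^\srs$, which follow by \'etale descent along $\OGT$. Since $a \in \FRC_\sM^\heartsuit(\cO_v)$ restricts to a strongly regular semisimple point over $F_v$, the generic fiber of any $R$-point of $\cM_v(a)$ lies in $\sM^\srs$ over $X_{v,R}^\bullet$. The isomorphism with the GIT quotient then pins down $(E,\phi,e,e^\vee)|_{X_{v,R}^\bullet}$ uniquely (and without non-trivial automorphisms) in terms of $a$, and the fixed rational trivialization $\iota$ identifies this with the generic restriction of the distinguished model; in particular $e|_{X_{v,R}^\bullet}$ and $e^\vee|_{X_{v,R}^\bullet}$ are determined.

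To upgrade uniqueness of $(e,e^\vee)$ from the punctured disc to the entire formal disc, I would invoke the elementary fact that for any locally free sheaf $V$ on $X_{v,R} = \Spec(\cO_v\hat{\otimes}_k R)$, the restriction
\begin{align}
    \Gamma(X_{v,R}, V) \longto \Gamma(X_{v,R}^\bullet, V)
\end{align}
is injective, because $\pi_v$ is a non-zero-divisor in $\cO_v\hat{\otimes}_k R$ and $V$ is $\pi_v$-torsion-free. Applied to $V_E$ and $V_E^\vee$, this shows $e$ and $e^\vee$ are determined by their generic restrictions; applied to the group scheme $\IAut(E,\phi)$, it shows automorphisms are trivial. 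Combining the two steps yields the monomorphism on the symmetric side.

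The unitary case is strictly parallel. Replacing $(E_0,\gamma')$ by $[\gamma'']$ in the definition of $\cM_v^{\prime\prime\Hit}(a)$ sidesteps the obstruction $\Ob_v(a)$ so that the forgetful map is well-defined, after which the unitary analog of \Cref{prop:srs_implies_iso_to_GIT} uniquely determines $e'$ on the punctured disc $X_{v,R}^{\prime\bullet}$, and injectivity of restriction for the Hermitian vector bundle $V'_{\cE'}$ on $X_{v,R}'$ extends this to all of $X_{v,R}'$. The only mildly delicate point is ensuring that \Cref{prop:srs_implies_iso_to_GIT} applies over arbitrary test rings rather than merely Artinian ones, which should follow by a standard limit argument from the proof given over $\bar{k}$-points.
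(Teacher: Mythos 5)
Your proof is correct and takes essentially the same route as the paper: both arguments rest on (a) the fact that over the punctured disc $X_{v,R}^\bullet$ the datum is rigid because $a$ is generically strongly regular semisimple (so $\sM^\srs \simeq \FRC_\sM^\srs$), and (b) the injectivity of restriction from $X_{v,R}$ to $X_{v,R}^\bullet$, since $\pi_v$ is a non-zero-divisor. The paper phrases (a) slightly more directly — noting that the rational trivialization $\iota$ identifies the vector/co-vector part of $m$ with that of $[\gamma]$ over $F_v\hat\otimes_k R$ — rather than explicitly passing through \Cref{prop:srs_implies_iso_to_GIT}, but the content is the same.

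One small remark: your closing worry about whether \Cref{prop:srs_implies_iso_to_GIT} applies over arbitrary test rings is unwarranted. The proposition asserts an isomorphism of stacks, and checking bijectivity on Artinian-algebra points is merely the \emph{method} used to establish it (both sides being locally of finite type); once established, the isomorphism of stacks holds on $R$-points for every $R$ with no further limit argument needed.
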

\begin{proof}
    We prove the claim for \(\cM_v(a)\) and the proof for \(\cM_v'(a)\) is
    similar.

    We first prove that both functors in
    \eqref{eqn:local_forgetful_map_symmetric} are sheaves.
    Since for any \(k\)-algebra \(R\), we have the inclusion map
    \(\cO_v\hat{\otimes}_k R\subset F_v\hat{\otimes}_k R\), automorphisms of
    any point in \(\cM_v(a)(R)\) (a tuple defined over \(\cO_v\hat{\otimes}_k
    R\)) are determined after restricting to \(F_v\hat{\otimes}_k R\), which
    must be trivial because \(a\) is strongly regular semisimple. Therefore,
    \(\cM_v(a)\) is a sheaf. It is also known (and easily seen) that
    \(\cM_v^\Hit(a)\) is a sheaf. So \eqref{eqn:local_forgetful_map_symmetric}
    is a morphism of sheaves.

    Next, we show injectivity. Suppose \(m\) and \(m'\) are two points in
    \(\cM_v(a)(R)\) with isomorphic image in \(\cM_v^\Hit(a)\). Without loss
    of generality, we may even assume that their images are identical, denoted
    by \((E,\phi,\iota)\). Since \(a\) is strongly regular semisimple over
    \(F_v\), \(m\) and \(m'\), viewed as the \(G\)-bundle \(E\) together with some
    additional data over \(\cO_v\hat{\otimes}_k R\), are isomorphic when
    restricted to \(F_v\hat{\otimes}_k R\). In particular,
    the vector (resp.~co-vector) parts of both \(m\) and
    \(m'\) are isomorphic, via \(\iota\), to the same vector (resp.~co-vector) in
    \((F_v\hat{\otimes}_kR)^n\), the latter being the (trivial)
    vector bundle induced by \([\gamma]\) over \(F_v\hat{\otimes}_k R\).
    Since the map \(\cO_v\hat{\otimes}_kR\to F_v\hat{\otimes}_kR\) is injective,
    this means the vector and co-vector parts of \(m\) and \(m'\) remain
    respectively isomorphic over \(\cO_v\hat{\otimes}_kR\).
    This proves injectivity.
\end{proof}

\subsubsection{}
In order to show the representability of \(\cM_v(a)\) and \(\cM_v'(a)\), we
need an alternative description of these functors using lattices. In fact, we
are going to prove that both functors are representable by a projective scheme
over \(k\), and by Galois descent, it suffices to prove the same claim over
\(\bar{k}\). Therefore, we may assume that \(v'\) is split over \(v\) and
\(\cM_v(a)\cong \cM_v'(a)\). We may also assume \(k_v=k\) for simplicity.

By splitness, we now have a section from \(\FRC_\sM\) to \(\sM\), sending \(a\)
to a distinguished point \(m=(V,\gamma,e,e^\vee)\in\sM(\cO_v)\) where \(V\) is
the trivial vector bundle of rank \(n\), \(\gamma\in\FRM(\cO_v)\), and \(e\)
(resp.~\(e^\vee\)) is a vector (resp.~co-vector) in \(V(\cO_v)\). For any local
\(k\)-algebra \(R\), an \(R\)-point of \(\cM_v(a)\) can then be identified with
an element \(g\in
G(R\lauser{\pi_v})/G(R\powser{\pi_v})=\GL_n(R\lauser{\pi_v})/\GL_n(R\powser{\pi_v})\) such that
\(\Ad_g^{-1}(\gamma)\in \FRM(R\powser{\pi_v})\), and both \(g^{-1}e\) and \(e^\vee
g\) are integral (namely, have \(R\powser{\pi_v}\) entries).

For each \(i\ge 0\), let \(V_i=\wedge^i V\) as before, then \(g\) induces a
full-rank \(R\powser{\pi_v}\)-lattice \(\Lambda=gV(R\powser{\pi_v})\) in
\(V(R\lauser{\pi_v})\), hence also lattices
\begin{align}
    \Lambda_i\defeq\wedge^i\Lambda_i=(\wedge^i g)V_i(R\powser{\pi_v}).
\end{align}
Let \(\gamma_i\in\End(V_i)\) (\(1\le i\le n-1\)) be the elements induced by
\(\gamma\), then we must have
\begin{align}
    \gamma_i\Lambda_i\subset\Lambda_i,
\end{align}
because \(\Ad_g^{-1}(\gamma)\) is integral. An immediate consequence is the
following:
\begin{lemma}
    \label[lemma]{lem:JR_fiber_lattice_description}
    Suppose \(v'\) is split over \(v\), then the functor
    \(\cM_v(a)\cong\cM_v'(a)\) is isomorphic to the functor whose \(R\)-points
    are
    \begin{align}
        \label{eqn:JR_fiber_lattice_description}
        \Set*{R\powser{\pi_v}\text{-lattices }\Lambda\in V(R\lauser{\pi_v})
        \given \gamma_i\Lambda_i\subset\Lambda_i, e\in\Lambda, e^\vee\in\Lambda^\vee}.
    \end{align}
\end{lemma}

\subsubsection{}
Recall that to \(m=(V,\gamma,e,e^\vee)\) we can associate pure tensors
\(f_i(m)\) and \(f_i^\vee(m)\), defined inductively
(cf.~\Cref{sub:deformed_quotient_stack}). These tensors are integral because
\(m\) is. Similarly, the tuple \(m'=(V,\Ad_g^{-1}(\gamma), g^{-1}e,e^\vee g)\) is
also integral, so its associated tensors \(f_i(m')\) and \(f_i^\vee(m')\) are
again integral. It is not hard to see that
\begin{align}
    f_i(m)&=(\wedge^i g)f_i(m')\in \Lambda_i,\\
    f_i^\vee(m)&=f_i^\vee(m')(\wedge^i g^{-1})\in \Lambda_i^\vee.
\end{align}

\begin{lemma}
    \label[lemma]{lem:coarse_lattice_bounds}
    There exists some integer \(N\) depending only on \(a\), such that for
    any \(\Lambda\in\cM_v(a)(R)\), we have
    \begin{align}
        \pi_v^{N}V(R\powser{\pi_v})\subset\Lambda\subset\pi_v^{-N}V(R\powser{\pi_v}).
    \end{align}
\end{lemma}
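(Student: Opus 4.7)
The plan is to produce explicit sublattices of $\Lambda$ and $\Lambda^\vee$ whose indices in the standard lattice are uniformly controlled by invariants of $a$, exploiting the fact that $e$ is a cyclic vector for $\gamma_1$ over $F_v$; the lemma then follows from Cramer's rule.

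The conditions $e \in \Lambda$ and $\gamma_1 \Lambda \subset \Lambda$ force $e, \gamma_1 e, \ldots, \gamma_1^{n-1} e \in \Lambda$. I would assemble these into a matrix $B = (e \mid \gamma_1 e \mid \cdots \mid \gamma_1^{n-1} e) \in \Mat_n(\cO_v)$ and claim that $\det(B)$ has finite valuation $d_1$ depending only on $a$. The computational heart is the identity
\begin{align*}
    f_n(m) = \Bigl(\prod_{i=1}^{n-1} \Wt_i(z)\Bigr)\, \Wt_1(z)^{-\binom{n}{2}}\, \det(B),
\end{align*}
obtained by inductively unfolding $f_i(m) = \gamma_i(e \wedge f_{i-1}(m))$, using $\gamma_i = \Wt_i(z)\,\wedge^i g$ with $\gamma_1 = \Wt_1(z)\,g$ (for the section element $\gamma$), and then converting each $g^k e$ into $\Wt_1(z)^{-k}\, \gamma_1^k e$. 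Since $a \in \FRC_\sM^\heartsuit(\cO_v)$ is generically strongly regular semisimple and invertible, $f_n(m)$ is nonzero in $F_v$ (as a factor of $\Disc_\sM(a) = f_n^\vee(m) f_n(m)$) and every $\Wt_i(z) \in F_v^\times$, forcing $\det(B) \ne 0$.

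With $d_1 = \val_v(\det B) < \infty$ in hand, Cramer's rule yields the lower bound: the classical adjugate satisfies $B^{\mathrm{adj}} \in \Mat_n(\cO_v)$ and $B B^{\mathrm{adj}} = \det(B) I = u\, \pi_v^{d_1} I$ for some $u \in \cO_v^\times$, so with $L \defeq B \cdot R\powser{\pi_v}^n \subset \Lambda$,
\begin{align*}
    \pi_v^{d_1} V(R\powser{\pi_v}) = B \cdot B^{\mathrm{adj}} \cdot R\powser{\pi_v}^n \subset L \subset \Lambda.
\end{align*}
For the upper bound I would dualize: $e^\vee \in \Lambda^\vee$ and $\gamma_1^\vee \Lambda^\vee \subset \Lambda^\vee$ (the latter because $(\gamma_1^\vee \xi)(\Lambda) = \xi(\gamma_1 \Lambda) \subset \xi(\Lambda) \subset R\powser{\pi_v}$) place $e^\vee, \gamma_1^\vee e^\vee, \ldots, (\gamma_1^\vee)^{n-1} e^\vee$ into $\Lambda^\vee$, and the same argument with $f_n^\vee(m)$ in place of $f_n(m)$ produces $d_2$ depending only on $a$ with $\Lambda \subset \pi_v^{-d_2} V(R\powser{\pi_v})$. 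Setting $N = \max(d_1, d_2)$ concludes.

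The only non-routine step is the identity between $f_n(m)$ and the companion-style determinant $\det(B)$, which requires tracking the interplay between the monoidal factors $\Wt_i(z)$ and the non-commuting $g^k$ versus $\gamma_1^k$; this is precisely where the strongly regular semisimple hypothesis enters the argument, since otherwise $\det(B)$ would be allowed to vanish.
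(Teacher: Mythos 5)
Your proposal is correct and takes essentially the same route as the paper's proof: both use the fact that $e, \gamma_1 e, \ldots, \gamma_1^{n-1}e$ lie in $\Lambda$, relate their wedge to $f_n(m)$ (which is nonvanishing in $F_v$ because $a$ is generically strongly regular semisimple) to bound $\Lambda$ from below, and then dualize using $e^\vee$ and $f_n^\vee$ to bound $\Lambda$ from above. You merely supply two details that the paper's proof leaves implicit: the exact scalar relating $f_n(m)$ to $\det B$ (the paper carries this out inductively as an unnamed constant $c_2, c_3, \ldots \in F_v^\times$, depending on the abelianization of $a$) and the Cramer's-rule/adjugate step showing that the sublattice $B\cdot R\powser{\pi_v}^n$ has index controlled by $\val_v(\det B)$.
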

\begin{proof}
    First, we have \(F_v\)-linearly independent vectors \(e\) and
    \(f_1(m)=\gamma_1e\) in \(\Lambda\). Since
    \(\gamma_1\Lambda\subset\Lambda\), the vector \(\gamma_1^2e\) is contained
    in \(\Lambda\) as well. Since \(a\) is generically invertible, \(f_2(m)\) is
    equal to \(c_2(\gamma_1e\wedge\gamma_1^2e)\) for some \(c_2\in F_v^\x\). Such
    \(c_2\) depends on the abelianization part of \(a\). By induction, and the
    fact that \(f_n(m)\in F_v^\x\) because \(a\) is generically strongly regular
    semisimple, we obtain linearly independent vectors in \(\Lambda\):
    \begin{align}
        e,\gamma_1e,\ldots,\gamma_1^{n-1}e.
    \end{align}
    They generate a sublattice of \(\Lambda\), and thus \(\Lambda\) must be
    bounded below. Similarly, \(\Lambda^\vee\) must also be bounded below,
    which implies that \(\Lambda\) is bounded above. Both bounds depend only on
    \(\gamma\) or \(a\) as claimed.
\end{proof}

\begin{corollary}
    Both \(\cM_v(a)\) and \(\cM_v'(a)\) are representable by projective
    \(k\)-schemes.
\end{corollary}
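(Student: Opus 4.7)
The plan is to use the lattice description in \Cref{lem:JR_fiber_lattice_description} combined with the bounds in \Cref{lem:coarse_lattice_bounds} to realize $\cM_v(a)$ (and $\cM_v'(a)$) as a closed subfunctor of a bounded piece of the affine Grassmannian of $\GL_n$, which is well-known to be represented by a projective $k$-scheme. Since projectivity and representability can be checked after faithfully flat base change, I would first reduce to the case where $\bar v$ is geometric and $v'$ is split over $v$, so that by the discussion preceding \Cref{lem:JR_fiber_lattice_description} the two fibers are isomorphic and the lattice description applies.

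In that setting, let $\Gr_{G,v}$ denote the affine Grassmannian of $G = \GL_n$ at $v$, parametrizing $R\powser{\pi_v}$-lattices $\Lambda \subset V(R\lauser{\pi_v})$. The bounds
\begin{align}
    \pi_v^{N}V(R\powser{\pi_v})\subset\Lambda\subset\pi_v^{-N}V(R\powser{\pi_v})
\end{align}
from \Cref{lem:coarse_lattice_bounds} confine $\Lambda$ to a bounded piece $\Gr_{G,v}^{\le N}$, which is a finite union of Schubert cells and hence a projective $k$-scheme. I would then check that each of the conditions
\begin{align}
    \gamma_i\Lambda_i\subset\Lambda_i,\quad e\in\Lambda,\quad e^\vee\in\Lambda^\vee
\end{align}
cuts out a closed subscheme: the first is a condition on the image of a map between finitely generated projective modules (after passing to the quotient $\pi_v^{-N}V/\pi_v^N V$, these become conditions that certain endomorphism-induced maps vanish on certain finite-rank quotients); the second says a fixed section lies in the lattice, again a closed condition on the bounded Grassmannian; and the third is the dual version. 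The intersection is closed in $\Gr_{G,v}^{\le N}$, hence projective.

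To descend back to $k$, note that $\cM_v(a)$ is defined by an \'etale sheafification, and the base change $\cM_v(a) \otimes_k \bar k$ is the product over geometric points of $\cM_{\bar v}(a)$, each of which is projective by the above. Since the Galois action of $\Gal(k_v/k)$ permutes the geometric factors and is compatible with the projective embedding, by effective descent for projective morphisms (or more concretely, because the inclusion into a bounded piece of the affine Grassmannian is Galois-equivariant), $\cM_v(a)$ is itself a projective $k$-scheme. The argument for $\cM_v'(a)$ is identical once one chooses a trivialization in the split case and notes that the intrinsic definition is Galois-equivariant.

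The main obstacle is verifying the closedness of the three conditions uniformly in $R$ on the bounded piece; this reduces to linear-algebraic conditions on finite-rank locally free modules after truncation via the coarse bounds, so it is essentially bookkeeping rather than a substantive difficulty. The only subtle point is ensuring that the Galois descent goes through for $\cM_v'(a)$, which is handled by noting that the lattice description, while formulated after splitting, is pulled back from a definition that is manifestly $\Gal(k_v'/k_v)$-equivariant.
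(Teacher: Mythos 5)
Your proof is correct and follows essentially the same route as the paper: embed $\cM_{\bar v}(a)$ into a bounded piece of the affine Grassmannian via \Cref{lem:coarse_lattice_bounds}, use \Cref{lem:JR_fiber_lattice_description} to show the defining conditions are closed, and conclude by Galois descent. The paper's proof is just a more terse version of the same argument.
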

\begin{proof}
    By \Cref{lem:coarse_lattice_bounds}, one
    may embed \(\cM_{\bar{v}}(a)\) in a Grassmannian of a sufficiently large
    vector space, and \Cref{lem:JR_fiber_lattice_description} shows that it is
    cut out by closed conditions. By Galois descent, the same is true for
    both \(\cM_v(a)\) and \(\cM_v'(a)\).
\end{proof}


\subsection{Frobenius structures and obstructions} 
\label{sub:frobenius_structures_and_obstructions}

In this subsection, we describe the obstruction \(\Ob_v(a)\) on the unitary side
in more explicit ways. A closely related question is to describe the
Frobenius structure of the lattice description
\eqref{eqn:JR_fiber_lattice_description} induced by either \(\cM_v(a)\) or
\(\cM_v'(a)\). We will need both for local computations in some simple cases.

\subsubsection{}
We first describe \(\Ob_v(a)\). We assume that \(v'\) is non-split over \(v\),
and let \(F_v'=F_v\otimes_{k_v}k_v'\). Let \(\sigma\) be the non-trivial
element in \(\Gal(F_v'/F_v)\), which is identified with the Frobenius element of
\(k_v\). Let \(\sigma\) (resp.~\(g\mapsto \bar{g}\)) be the Frobenius action on
\(G'(F_v')=\GL_n(F_v')\) induced by \(G'\) (resp.~the split \(F_v\)-group
\(\GL_n\)), then
\begin{align}
    \sigma(g)=\tp{\bar{g}}^{-1}
\end{align}
for all \(g\in G'(F_v')\).
Since \(\GL_n\)-torsors are trivial over \(F_v'\), any
\(G'\)-torsor over \(F_v\) is induced by a cocycle
\begin{align}
    \sigma\longmapsto g\in G'(F_v'),
\end{align}
where \(g\) is such that \(g\sigma(g)=1\), or in other words,
\(g=\tp{\bar{g}}\) is Hermitian.

Let \(A'=G'/(G')^\Der\). It is a non-split \(1\)-dimensional torus whose
\(F_v\)-points are norm-\(1\) elements in \((F_v')^\x\). Then the cocycle given by
\(g\in G'(F_v')\) is a coboundary if and only if \(\det{g}\in A'(F_v')=(F_v')^\x\)
has even valuation.

\subsubsection{}
Consider \(a\in\FRC_\sM^\heartsuit(\cO_v)\) and its unique \(F_v\)-lift
\(m'\in\sM'(F_v)\). The image of \(m'\) in \(\Stack{\FRM'/G'}(F_v)\) we denote
by \([\gamma'']\) as in \Cref{sub:affine_jacquet_rallis_fibers}, and we have
seen that there exists some \(\gamma'\in\FRM'(F_v)\) lying over the image
\(\bar{a}\in\FRC_\FRM(\cO_v)\) of \(a\).

The canonical point \([\gamma'']\) is a \(G'\)-torsor \(\cE'\) over \(F_v\) together with a
\(G'\)-equivariant map
\begin{align}
    \phi'\colon \cE'\longto \FRM',
\end{align}
whose scheme-theoretic image is a closed \(F_v\)-subvariety
\(O_{[\gamma'']}\) in \(\FRM'_{F_v}\). Let \(\FRJ_{\bar{a}}'\) be the regular
centralizer of the \(G'\)-action at \(\bar{a}\), then \(\phi'\) induces a
\(\chi_{\FRM'}^*\FRJ_{\bar{a}}'\)-torsor
\begin{align}
    \phi'_{\gamma'}\colon \cE'\longto O_{[\gamma'']},
\end{align}
because \(\bar{a}\) is generically regular semisimple.

We know that the homogeneous \(G'\)-space \(O_{[\gamma'']}\) contains at least
one \(F_v\)-point, namely \(\gamma'\), and so the fiber of \(\phi'_{\gamma'}\) over
\(\gamma'\) is a \(\FRJ_{\bar{a}}'\)-torsor \(\mu_{\FRJ'}\) over \(F_v\). Using the
canonical isomorphism between the centralizer
\(G'_{\gamma'}=\Cent_{G'}(\gamma')\) and \(\FRJ_{\bar{a}}'\), we view
\(\mu_{\FRJ'}\) as a \(G'_{\gamma'}\)-torsor, which then induces a
\(G'\)-torsor \(\mu'\).
\begin{lemma}
    \label[lemma]{lem:J_prime_torsor_vs_G_prime_torsor}
    The isomorphism class of \(\mu'\) is independent of the choice of
    \(\gamma'\). In fact, we have \(\mu'\cong \cE'\).
\end{lemma}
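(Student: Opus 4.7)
The plan is to verify the second, stronger assertion \(\mu'\cong\cE'\) directly; the independence of the isomorphism class of \(\mu'\) on the choice of \(\gamma'\) then follows automatically, because \(\cE'\) is defined intrinsically from \([\gamma'']\) with no reference to \(\gamma'\). Throughout I treat everything over \(\Spec F_v\).

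First I would unpack the geometry of the base. Since \(\phi'\colon\cE'\to\FRM'\) is a \(G'\)-equivariant map out of a single \(G'\)-torsor, its scheme-theoretic image \(O_{[\gamma'']}\) is \(G'\)-homogeneous, and the chosen point \(\gamma'\in O_{[\gamma'']}(F_v)\) yields a \(G'\)-equivariant isomorphism \(G'/G'_{\gamma'}\stackrel{\sim}{\to}O_{[\gamma'']}\), where \(G'_{\gamma'}=\Cent_{G'}(\gamma')\). So at the level of the base, \(\cE'\to O_{[\gamma'']}\) is the same thing as a \(G'_{\gamma'}\)-bundle over a point.

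Next I would construct the comparison morphism. By the very definition of \(\mu_{\FRJ'}\) as the fiber of \(\phi'_{\gamma'}\) over \(\gamma'\), there is a natural inclusion \(\mu_{\FRJ'}\hookrightarrow\cE'\). Because \(G'_{\gamma'}\) fixes \(\gamma'\), it preserves this fiber, and under the canonical identification \(G'_{\gamma'}\cong\FRJ'_{\bar a}\) supplied by the theory of regular centralizers at a regular semisimple point, the inclusion is \(G'_{\gamma'}\)-equivariant. By the universal property of associated bundles this extends to a \(G'\)-equivariant map
\begin{align}
    \mu'\;=\;G'\times^{G'_{\gamma'}}\mu_{\FRJ'}\;\longto\;\cE',\qquad (g,x)\longmapsto g\cdot x,
\end{align}
sitting over \(G'/G'_{\gamma'}\stackrel{\sim}{\to}O_{[\gamma'']}\). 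Both source and target are \(G'\)-torsors over \(F_v\), so to conclude that this map is an isomorphism it suffices to check it fiberwise over \(O_{[\gamma'']}\): over \(\gamma'\) it is the tautological identity on \(\mu_{\FRJ'}\), and \(G'\)-equivariance propagates this to the entire orbit.

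The only delicate point, and what I expect to be the main obstacle, is checking that the identification \(G'_{\gamma'}\cong\FRJ'_{\bar a}\) coming from the regular centralizer really does intertwine the \(\chi_{\FRM'}^*\FRJ'_{\bar a}\)-torsor structure on \(\phi'_{\gamma'}\) with the \(G'_{\gamma'}\)-action on \(\mu_{\FRJ'}\) inherited from \(\cE'\). This is essentially tautological from how \(\FRJ'_{\bar a}\) is defined (as the pullback of the universal regular centralizer along \(\chi_{\FRM'}\), with its action induced by the \(G'\)-action near regular elements), but it is the step where I would verify all the signs and equivariance conventions before concluding \(\mu'\cong\cE'\).
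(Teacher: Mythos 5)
Your proof is correct and takes essentially the same approach as the paper: both identify the fiber $\mu_{\FRJ'}$ inside $\cE'$ and observe that the induced torsor agrees with $\cE'$. You phrase this functorially via the universal property of associated bundles, whereas the paper phrases it via Galois cocycles (pick a geometric point $p \in \cE'(\bar{F}_v)$ over $\gamma'$; since $\gamma'$ is $\Gal(\bar{F}_v/F_v)$-fixed, the cocycle $\sigma \mapsto p^{-1}\sigma(p)$ takes values in $G'_{\gamma'}\cong\FRJ'_{\bar a}$, and this is simultaneously the defining cocycle of $\cE'$ and of $\mu_{\FRJ'}$); these are two formulations of the same idea. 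One small streamlining you make: the paper proves independence of $\gamma'$ separately (by a coboundary computation) and then proves $\mu'\cong\cE'$, whereas you correctly observe that the first claim is a formal consequence of the second, since $\cE'$ is defined without reference to $\gamma'$. Your identification of the one delicate point (that the regular-centralizer torsor structure on the fiber is the restriction of the $G'$-action) is exactly right and is indeed tautological from the construction of $\FRJ'_{\bar a}$ via the canonical isomorphism $\chi_{\FRM'}^*\FRJ'_{\bar a}\to I_{\FRM'}$ over the regular locus.
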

\begin{proof}
    Suppose \(\gamma'''\) is another \(F_v\)-point lying over \(\bar{a}\), and
    let \(\mu'''\) be the corresponding \(G'\)-torsor. Then
    we may find \(h\in G'(\bar{F}_v)\) such that
    \(\Ad_h(\gamma')=\gamma'''\). Then it is easy to check that \(\mu'\) and
    \(\mu'''\) differ by the coboundary \(s\mapsto s(h)^{-1}h\) (where
    \(s\in\Gal(\bar{F}_v/F_v)\)), and so they are
    isomorphic. Showing that \(\mu'\cong \cE'\) is also straightforward: choose an
    arbitrary \(\bar{F}_v\)-point in \(\cE'\) that maps to \(\gamma'\) by
    \(\phi'_{\gamma'}\), one can easily show that \(\cE'\) and \(\mu'\) are induced by
    exactly the same cocycle. We leave the details to the reader.
\end{proof}

\subsubsection{}
The Langlands dual group \(\dual{\bJ}\) of \(\FRJ_{\bar{a}}'\) may be identified
with the maximal torus \(\dual{\bT}\subset\dual{\bG}\) and such identification
is well-defined up to \(\dual{\bG}\)-conjugacy. In particular, we have a
canonical embedding of tori
\begin{align}
    Z_{\dual{\bG}}\longto \dual{\bJ}.
\end{align}
The \(\Gamma_v\)-action on \(\dual{\bJ}\) may be identified with
some \(W\)-twisted \(\Gamma_v\)-action on \(\dual{\bT}\), and such
identification depends on \(\gamma'\). Since
\(W\) acts on \(Z_{\dual{\bG}}\) trivially, we further have a canonical
embedding
\begin{align}
    Z_{\dual{\bG}}^{\Gamma_v}\longto \dual{\bJ}^{\Gamma_v}.
\end{align}
This leads to a canonical commutative diagram:
\begin{equation}
    \begin{tikzcd}
        \RH^1(F_v,\FRJ_{\bar{a}}') \ar[r,"\sim"]\ar[d] & \pi_0(\dual{\bJ}^{\Gamma_v})^* \ar[d]\\
        \RH^1(F_v,G') \ar[r,"\sim"] & \pi_0(Z_{\dual{\bG}}^{\Gamma_v})^*
    \end{tikzcd}.
\end{equation}
\Cref{lem:J_prime_torsor_vs_G_prime_torsor} shows that \(\Ob_v(a)\) is equal to
\(\mu'\) for any choice of \(\gamma'\).

\subsubsection{}
To further explicate \(\mu'\), we need to use the information contained in
\(m'\). Over \(F_v'\), we can extend \(m'\) to an \(\cO_v'\)-point \(\tilde{m}\) in \(\sM'\).
Explicitly, \(\tilde{m}\) is a tuple \((V,\gamma,e,e^\vee)\) where \(V\) is
the free \(\cO_v'\)-module \((\cO_v')^n\), \(\gamma\in \FRM'(\cO_v')\),
\(e\in V(\cO_v')\) and \(e^\vee\in V^\vee(\cO_v')\). Since both \(\tilde{m}\)
and \(\sigma(\tilde{m})\) lies over \(a\), there exists a unique \(h\in
G'(F_v')\) such that
\begin{align}
    \sigma\bigl(\gamma,e,e^\vee\bigr)
    =\bigl(\Ad_h^{-1}(\gamma),h^{-1}e,e^\vee h\bigr).
\end{align}
This implies that \(\sigma(h)^{-1}h^{-1}\) lies in the centralizer of tuple
\((\gamma,e,e^\vee)\) (the vector bundle \(V\) is always trivial), hence must
be trivial because \(a\) is generically strongly regular semisimple. In other
words,
\begin{align}
    h=\sigma(h)^{-1}=\tp{\bar{h}}
\end{align}
is Hermitian, hence induces a \(G'\)-torsor over \(F_v\). It is a non-trivial
torsor if and only if \(\Ob_v(a)\) is, hence must be equal to the latter. Recall the
discriminant function (\ref{def:extended_disc_divisor}) \(\Disc=\Disc_\bsM=\bFf_n^\vee\bFf_n=f_n^\vee f_n\) on \(\FRC_\sM\).
We claim that \(\Ob_v(a)\) may be described by the valuation of \(\Disc(a)\):
\begin{proposition}
    \label[proposition]{prop:local_obstruction_and_valuation_parity}
    Suppose \(v'\) is non-split over \(v\), then the obstruction \(\Ob_v(a)\) is
    non-trivial if and only if \(\val_v(\Disc(a))\) is odd.
\end{proposition}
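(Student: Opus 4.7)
The plan is to compute $\det h$ explicitly using the $G'$-equivariance of the top pure tensors $\bFf_n$, $\bFf_n^\vee$, and then relate the result to $\Disc(a)$ via the Hermitian structure. Recall from the earlier discussion that $\Ob_v(a)\in F_v^\times/\Nm((F_v')^\times)\cong \bbZ/2\bbZ$ is represented by $\det h$, and is non-trivial precisely when $\val_v(\det h)$ is odd. Thus it suffices to establish $\val_v(\det h)\equiv\val_v(\Disc(a))\pmod 2$.

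First I would apply the $G'$-equivariance of $\bFf_n$ and $\bFf_n^\vee$: since these transform as the determinant representation and its inverse, the identity $\sigma(\gamma,e,e^\vee)=(\Ad_h^{-1}\gamma,h^{-1}e,e^\vee h)$ translates, by the inductive definition of the pure tensors and the fact that $\wedge^n$ of the $\bG$-action is $\det$, into
\begin{align}
\sigma(\bFf_n(\tilde m)) &= \det(h)^{-1}\bFf_n(\tilde m),\\
\sigma(\bFf_n^\vee(\tilde m)) &= \det(h)\,\bFf_n^\vee(\tilde m).
\end{align}
Multiplying confirms $\sigma(\Disc(a))=\Disc(a)$ (as it should, since $\Disc(a)\in\cO_v$).

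Next I would use the Hermitian identification from \S\ref{sub:deformed_quotient_stack} that defines $f_n^{\prime\vee}$ as the dual of $\sigma^\ast f_n'$. Pulled back to $\cO_v'$ this identifies $\bFf_n^\vee(\tilde m)$ with a specific Galois-dual of $\bFf_n(\tilde m)$; after choosing compatible trivializations of the Hermitian line bundles $V_n$ and $V_n^\vee$ (which exist since both are trivial as unitary $\UNI_1$-torsors over $\cO_v$, whose $H^1$ vanishes), this identification takes the form $\bFf_n^\vee(\tilde m)= u\cdot\sigma(\bFf_n(\tilde m))$ for some $u\in\cO_v^\times$ depending only on the chosen trivializations. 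Combining with the first displayed relation yields
\begin{align}
\det(h)=u\cdot\frac{\bFf_n(\tilde m)}{\bFf_n^\vee(\tilde m)}.
\end{align}

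Finally, taking valuations: $\val_v(\det h)=\val_v(\bFf_n(\tilde m))-\val_v(\bFf_n^\vee(\tilde m))$, while $\val_v(\Disc(a))=\val_v(\bFf_n^\vee(\tilde m))+\val_v(\bFf_n(\tilde m))$ by the definition $\Disc(a)=\bFf_n^\vee(\tilde m)\bFf_n(\tilde m)$. Their difference is $2\val_v(\bFf_n^\vee(\tilde m))$, which is even, so $\val_v(\det h)\equiv\val_v(\Disc(a))\pmod 2$, as required. The main obstacle is the Hermitian identification in the middle step: one must carefully unwind the twisted forms $f_n'$ and $f_n^{\prime\vee}$ in the unitary setting to extract the correct relation between $\bFf_n^\vee(\tilde m)$ and $\sigma(\bFf_n(\tilde m))$, including tracking the twist $u$ and verifying it contributes even valuation; once this is done, the numerical identity above follows and the proposition is immediate.
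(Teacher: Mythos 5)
Your outline is morally correct, and your final numerical identity $\val_v(\det h)\equiv\val_v(\bFf_n(\tilde m))-\val_v(\bFf_n^\vee(\tilde m))$ (mod $2$) is in fact what the paper's explicit computation establishes. However, relation (b) as stated hides a circularity, and it is precisely there that all the content of the proposition lives.

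The issue is the claim that $\bFf_n^\vee(\tilde m)=u\cdot\sigma(\bFf_n(\tilde m))$ with $u\in\cO_v^\times$, justified by ``compatible trivializations of the Hermitian line bundles over $\cO_v$, which exist since $H^1(\cO_v,\UNI_1)$ vanishes.'' The Hermitian structure on $\wedge^n V_{\cE'}$ is the cocycle $\det h$, which is defined over $F_v$ but need not extend to $\cO_v$: a self-dual $\cO_v'$-lattice in $\wedge^n V$ exists precisely when $\val_v(\det h)$ is even, i.e., precisely when $\Ob_v(a)$ is trivial --- the very thing the proposition is testing. So you cannot appeal to a trivialization over $\cO_v$ without begging the question. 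Moreover, even working over $\cO_v'$ (where line bundles are trivial unconditionally), the factor relating $\bFf_n^\vee(\tilde m)$ to $\sigma(\bFf_n(\tilde m))$ is not merely a trivialization artifact: using $e^\vee=\tp{\bar e}h^{-1}$ and $h^{-1}\gamma_1 h=\sigma(\gamma)_1=(\prod\bar z_i^{n-i})^{2/n}\tp{\bar\gamma_1}^{-1}$ one finds that the ratio carries a factor $\prod z_i^{\ast}/\prod\bar z_i^{\ast}$ (coming from the abelianization coordinates of $\gamma$) multiplied by $(\prod\bar z_i^{n-i})^{n-1}\overline{\det\gamma_1}^{-(n-1)}$, and showing these contribute even (in fact zero) valuation is exactly the calculation the paper performs, using the relation $\det\gamma_1=\prod z_i^{n-i}$ and the Hermitian symmetry $\bar z_i=z_{n-i}$. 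Your relation (a) alone is essentially tautological (it is the definition of the twisted $\sigma$-action via $G$-equivariance, and the twisted $\sigma$ does not preserve valuations, so it gives no bound on $\val_v(\det h)$), so the entire burden falls on (b). In short: the plan is a legitimate reorganization of the paper's argument, but ``verifying $u$ contributes even valuation'' is not an afterthought --- it is the whole proof, and the cited $H^1$ argument for it does not work.
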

\begin{proof}
    Let \(\gamma_1\) be the image of \(\gamma\) in \(\End(V)(\cO_v')\), and for
    \(1\le i\le n-1\), let \(z_i\in \cO_v'-\Set{0}\) be the coordinate of the
    abelianization of \(\gamma\) corresponding to the \(i\)-th simple root.
    Then it is straightforward to compute:
    \begin{align}
        f_n(\tilde{m})
        &=\prod_{i=1}^{n-1}z_i^{\frac{(n-i)(n-i-1)}{2}}\bigl(e\wedge \gamma_1
        e\wedge\cdots\wedge \gamma_1^{n-1} e\bigr),\\
        f_n^\vee(\tilde{m})
        &=\prod_{i=1}^{n-1}z_i^{\frac{(n-i)(n-i-1)}{2}}\bigl(e^\vee\gamma_1^{n-1}\wedge\cdots\wedge
            e^\vee\gamma_1 \wedge e^\vee\bigr),
    \end{align}
    and so
    \begin{align}
        \val_v(\Disc(a))\equiv \val_v(\Pair{e\wedge \gamma_1 e\wedge\cdots\wedge
            \gamma_1^{n-1} e}{e^\vee\gamma_1^{n-1}\wedge\cdots\wedge
            e^\vee\gamma_1 \wedge e^\vee})\bmod 2.
    \end{align}
    If we regard \(e\) as a column vector and \(e^\vee\) as a row vector, the
    condition \(\sigma(e,e^\vee)=(h^{-1}e,e^\vee h)\) is equivalent to saying
    \begin{align}
        e^\vee =\tp{\bar{e}}h^{-1}=\tp{\bar{e}}\tp{\bar{h}}^{-1}.
    \end{align}
    Therefore, we have
    \begin{align}
        e^\vee\gamma_1^{n-1}\wedge\cdots\wedge e^\vee\gamma_1 \wedge e^\vee
        &=
        \tp{\bar{e}}h^{-1}\gamma_1^{n-1}\wedge\cdots\wedge \tp{\bar{e}}h^{-1}\gamma_1
        \wedge \tp{\bar{e}}h^{-1}\\
        &=
        \bigl(\tp{\bar{e}}h^{-1}\gamma_1^{n-1}h\wedge\cdots\wedge
            \tp{\bar{e}}h^{-1}\gamma_1h
        \wedge \tp{\bar{e}}\bigr)\det{h^{-1}}.
    \end{align}
    Since we also have \(\sigma(\gamma)=\Ad_h^{-1}(\gamma)\), we have
    \begin{align}
        h^{-1}\gamma_1h=\sigma(\gamma)_1=\left(\prod_{i=1}^{n-1}\bar{z}_i^{n-i}\right)^{\frac{2}{n}}\tp{\bar{\gamma_1}}^{-1}.
    \end{align}
    This  implies that
    \begin{align}
        \tp{\bar{e}}h^{-1}\gamma_1^{n-1}h\wedge\cdots\wedge \tp{\bar{e}}h^{-1}\gamma_1h
        \wedge \tp{\bar{e}}
        &=
        \bigl(\tp{\bar{e}}\tp{\bar{\gamma_1}}^{1-n}\wedge\cdots\wedge
            \tp{\bar{e}}\tp{\bar{\gamma_1}}^{-1}
        \wedge \tp{\bar{e}}\bigr)\left(\prod_{i=1}^{n-1}\bar{z}_i^{n-i}\right)^{n-1},
    \end{align}
    and so
    \begin{align}
        \val_v\bigl(e^\vee\gamma_1^{n-1}\wedge\cdots\wedge e^\vee\gamma_1 \wedge e^\vee\bigr)
        &=\val_v\bigl(\tp{\bar{e}}\tp{\bar{\gamma_1}}^{1-n}\wedge\cdots\wedge
            \tp{\bar{e}}\tp{\bar{\gamma_1}}^{-1}
        \wedge \tp{\bar{e}}\bigr)+(n-1)\sum_{i=1}^{n-1}(n-i)\val_v(z_i)\\
        &= \val_v\bigl(\gamma_1^{-(n-1)} e\wedge \cdots\wedge\gamma_1^{-1} e\wedge e\bigr)
        +(n-1)\sum_{i=1}^{n-1}(n-i)\val_v(z_i)\\
        &=\val_v\bigl(e\wedge \gamma_1 e\wedge\cdots\wedge
            \gamma_1^{n-1} e\bigr)
            +(n-1)\left[\sum_{i=1}^{n-1}(n-i)\val_v(z_i)-\val_v(\det(\gamma_1))\right].
    \end{align}
    As a result, it suffices to prove that
    \begin{align}
        (n-1)\left[\sum_{i=1}^{n-1}(n-i)\val_v(z_i)-\val_v(\det(\gamma_1))\right]
    \end{align}
    is even. Indeed, write \(\gamma\) as \((z,x)\) for some ramified element \(z\in
    T^\SC(k_v'\lauser{\pi_v^{1/n}})\) and \(x\in
    G^\SC(k_v'\lauser{\pi_v^{1/n}})\), then we have \(z_i=\Rt_i(z)\) and
    \begin{align}
        \det{\gamma_1}=\Wt_1(z)^n=\prod_{i=1}^{n-1}z_i^{n-i},
    \end{align}
    which clearly shows that
    \begin{align}
        \sum_{i=1}^{n-1}(n-i)\val_v(z_i)-\val_v(\det(\gamma_1))=0.
    \end{align}
    This finishes the proof.
\end{proof}

\begin{corollary}
    When either \(v'\) is split over \(v\) or \(\val_v(\Disc(a))\) is even, we
    have a \(k\)-isomorphism
    \begin{align}
        \cM_v^{\prime\Hit}(a)\cong\cM_v^{\prime\prime\Hit}(a).
    \end{align}
    Consequently, the forgetful map \eqref{eqn:local_forgetful_map_unitary}
    induces a forgetful map \eqref{eqn:local_forgetful_map_unitary_desired}.
\end{corollary}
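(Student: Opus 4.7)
The plan is to reduce the statement to the assertion that the obstruction class $\Ob_v(a) \in \RH^1(F_v, G')$ vanishes under either hypothesis, and then to use this vanishing to identify the two distinguished $F_v$-points of $\Stack{\FRM'/G'}$ that rigidify the definitions of $\cM_v^{\prime\Hit}(a)$ and $\cM_v^{\prime\prime\Hit}(a)$.

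First I would dispose of both cases of the hypothesis. In the split case, $G'$ restricted to $F_v$ becomes isomorphic to $\GL_n$, so $\RH^1(F_v, G')$ is trivial by Hilbert 90 and $\Ob_v(a) = 1$ automatically. In the non-split case with $\val_v(\Disc(a))$ even, \Cref{prop:local_obstruction_and_valuation_parity} gives $\Ob_v(a) = 1$ directly. Recall from the discussion preceding the proposition that $\Ob_v(a)$ is precisely the class of the $G'$-torsor $\cE'$ underlying $[\gamma''] = (\cE', \phi')$. Hence in both cases $\cE'$ is trivial over $F_v$.

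Next, I choose a trivialization $E_0 \xrightarrow{\sim} \cE'$ over $F_v$; under this trivialization $\phi'$ corresponds to some $\tilde\gamma' \in \FRM'(F_v)$ lying over $\bar a$, so $(E_0, \tilde\gamma') \simeq [\gamma'']$ in $\Stack{\FRM'/G'}(F_v)$. Since the choice of $\gamma'$ defining $\cM_v^{\prime\Hit}(a)$ is itself only specified up to $G'(F_v)$-conjugacy (it is any Steinberg lift of $\bar a$), we may take $\gamma' = \tilde\gamma'$ without loss of generality, so that the two distinguished points agree in $\Stack{\FRM'/G'}(F_v)$. Transport of structure then supplies an equivalence of groupoids: a point $(E, \phi, \iota) \in \cM_v^{\prime\Hit}(a)(R)$ is sent to $(E, \phi, \iota')$ where $\iota'$ is the composition of $\iota$ with the fixed isomorphism $(E_0, \gamma') \simeq [\gamma'']$, and similarly in the opposite direction. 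This is functorial in $R$ and visibly defined over $k$, giving the desired $k$-isomorphism $\cM_v^{\prime\Hit}(a) \cong \cM_v^{\prime\prime\Hit}(a)$. Composing this isomorphism with \eqref{eqn:local_forgetful_map_unitary} then yields the forgetful map \eqref{eqn:local_forgetful_map_unitary_desired}.

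There is no real obstacle in this proof: the only point requiring care is verifying that the identification of the two base points is $F_v$-rational rather than merely $\bar F_v$-rational, but this is essentially the definition of $\Ob_v(a)$ being the correct Galois cohomological class in $\RH^1(F_v, G')$, so vanishing of $\Ob_v(a)$ is exactly what is needed. All the hard work has been done in establishing \Cref{prop:local_obstruction_and_valuation_parity}.
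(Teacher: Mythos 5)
Your proposal is correct and fills in exactly what the paper leaves implicit: the paper's proof is simply ``Immediate from \Cref{prop:local_obstruction_and_valuation_parity} and the discussions in \Cref{sub:affine_jacquet_rallis_fibers},'' and your argument is the spelled-out version of that reference, identifying $\Ob_v(a)$ as the obstruction to finding an $F_v$-rational trivialization matching the base point $[\gamma'']$ with a choice $(E_0,\gamma')$, and then using transport of structure on the rigidifications $\iota$. No substantive difference in route.
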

\begin{proof}
    Immediate from \Cref{prop:local_obstruction_and_valuation_parity} and the
    discussions in \Cref{sub:affine_jacquet_rallis_fibers}.
\end{proof}

\subsubsection{}
With \(\Ob_v(a)\) completely described, we can now look at the
Frobenius action on \eqref{eqn:JR_fiber_lattice_description} induced by
\(\cM_v'(a)\). We will also describe the similar action induced by \(\cM_v(a)\).
For this purpose, it suffices to let \(R=k_v'\) in
\eqref{eqn:JR_fiber_lattice_description}. We let
\(\Lambda_0=V(\cO_v')=(\cO_v')^n\) be the standard lattice in \(V(F_v')\).
As before, we let \(\tilde{m}=(V,\gamma,e,e^\vee)\) be the image of \(a\) under
the section over \(\cO_v'\), and \(h\in G'(F_v')\) is the canonical Hermitian
element induced by the \(F_v\)-structure on \(\tilde{m}\).

The usual \(\sigma\)-action sending \((\gamma,e,e^\vee)\) to
\(\sigma(\gamma,e,e^\vee)\) corresponds to
sending \(g\Lambda_0\) to \(\tp{\bar{g}}^{-1}\Lambda_0\), while \(\Ad_h^{-1}\)
sends \(g\Lambda_0\) to \(hg\Lambda_0\). Since \((\gamma,e,e^\vee)\) is fixed by
the \(\sigma\)-action induced by \(\tilde{m}\), the \(\sigma\)-action on
\eqref{eqn:JR_fiber_lattice_description} induced by that of \(\cM_v'(a)\) is
\begin{align}
    g\Lambda_0\longmapsto h^{-1}\tp{\bar{g}}^{-1}\Lambda_0.
\end{align}
If \(g\Lambda_0\) is a fixed point of this action, then we must have
\begin{align}
    \tp{\bar{g}}hg\in G'(\cO_v'),
\end{align}
which is possible if and only if either \(v'\) is split over \(v\) or
\(\det{h}\) has even valuation, or equivalently, \(\Ob_v(a)\) is trivial.

\subsubsection{}
Suppose we define a bilinear form
\begin{align}
    (\blank,\blank)\colon V(F_v')\x V(F_v')&\longto F_v'\\
    (x,y)&\longmapsto \tp{\bar{x}}h^{-1}y,
\end{align}
then 
\begin{align}
    \label{eqn:JR_fiber_lattice_description_unitary}
    \cM_v'(a)(k_v)=
    \Set*{\cO_v'\text{-lattices }\Lambda\in V(F_v') \given
        \begin{array}{l}
            \Lambda\text{ is self-dual with respect to }(\blank,\blank),\\
            \gamma_i\Lambda_i\subset\Lambda_i, e\in\Lambda, e^\vee\in\Lambda^\vee
        \end{array}
    }.
\end{align}
It is non-empty if and only if \(\Ob_v(a)\) is trivial, or equivalently, either
\(v'\) is split over \(v\) or \(\Disc(a)\) has even valuation.

\subsubsection{}
Similarly, there is a canonical element \(s\in\GL_n(F_v')\) with \(s\bar{s}=1\) induced by
\(\tilde{m}\) and the Frobenius structure on \(\cM_v(a)\), and
\begin{align}
    \label{eqn:JR_fiber_lattice_description_symmetric}
    \cM_v(a)(k_v)=
    \Set*{\cO_v'\text{-lattices }\Lambda\in V(F_v') \given
        \begin{array}{l}
            s\Lambda=\bar{\Lambda},\\
             \gamma_i\Lambda_i\subset\Lambda_i, e\in\Lambda, e^\vee\in\Lambda^\vee
        \end{array}
    }.
\end{align}

\subsubsection{}
There is an involution on the set \(\cM_v(a)(k_v)\) defined as follows:
write \(\gamma=(z,x)\) where \(z\in T^\SC(k_v'\lauser{\pi_v^{1/n}})\) and \(x\in
G^\SC(k_v'\lauser{\pi_v^{1/n}})\). We claim \(\gamma^*=(z,\tp{x})\) has the
same image in \(\FRC_\FRM\) as \(\gamma\): indeed, its abelianization is the
same as that of \(\gamma\), and clearly \(\Wt_i(z)\wedge^i\tp{x}\) has the
same trace as \(\Wt_i(z)\wedge^i x\). Since \(\gamma^*_i=\tp{\gamma_i}\) for
each \(1\le i\le n-1\), \(\gamma^*\in\FRM(\cO_v')\) as well. Let
\begin{align}
    \tilde{m}^*=(V,\gamma^*,\tp{e}^\vee,\tp{e})\in \sM(\cO_v').
\end{align}
Note that for any \(e\in V\), the transpose of the wedging map \(e\wedge\) is
precisely the contraction by \(\tp{e}\), and as a result, \(\tilde{m}^*\) has
the same image of \(\tilde{m}\) in \(\FRC_\sM\). This shows that there exists
a unique element \(\FRd\in \GL_n(F_v')\) such that
\begin{align}
    (\gamma^*,\tp{e}^\vee,\tp{e})=(\Ad_{\FRd}(\gamma),\FRd e,e^\vee \FRd^{-1}).
\end{align}
If we let \(s^*\in\GL_n(F_v')\) be the canonical element induced by
\(\tilde{m}^*\), then we have
\begin{align}
    s^*=\tp{s}^{-1}=\bar{\FRd}s\FRd^{-1},
\end{align}
as well as an alternative description of \(\cM_v(a)(k_v)\):
\begin{align}
    \cM_v(a)(k_v)\cong \cM_v(a)(k_v)^*\defeq
    \Set*{\cO_v'\text{-lattices }\Lambda\in V(F_v') \given
        \begin{array}{l}
            s^*\Lambda=\bar{\Lambda},\\
            \gamma_i^*\Lambda_i\subset\Lambda_i, \tp{e}^\vee\in\Lambda, \tp{e}\in\Lambda^\vee
        \end{array}
    }.
\end{align}
The involution on \(\cM_v(a)(k_v)\) is then given by
\begin{align}
    \label{eqn:involution_on_symmetric_side}
    \Lambda=g\Lambda_0\longmapsto \Lambda^*\defeq \FRd^{-1}\tp{g}^{-1}\Lambda_0.
\end{align}

\subsubsection{}
We make two observations about the involution
\eqref{eqn:involution_on_symmetric_side}. First, recall we have the tensors
\(f_n(\tilde{m})\) and \(f_n^\vee(\tilde{m})\) which are defined over \(F_v\).
The valuations \(\val_v\bigl(\wedge^n \Lambda/f_n(\tilde{m})\bigr)\) and
\(\val_v\bigl(f_n^\vee(\tilde{m})/{\wedge^n \Lambda}\bigr)\) are
well-defined since \(a\) is generically strongly regular semisimple, and we have
\begin{align}
    \val_v\bigl(\wedge^n \Lambda/f_n(\tilde{m})\bigr)
    +\val_v\bigl(f_n^\vee(\tilde{m})/{\wedge^n \Lambda}\bigr)
    =\val_v(\Disc(a)).
\end{align}
Moreover, it is straightforward to see that
\begin{align}
    \val_v\bigl(\wedge^n \Lambda/f_n(\tilde{m})\bigr)
    &=\val_v\bigl(f_n^\vee(\tilde{m}^*)/{\wedge^n\Lambda^*}\bigr),\\
    \val_v\bigl(f_n^\vee(\tilde{m})/{\wedge^n\Lambda}\bigr)
    &=\val_v\bigl(\wedge^n \Lambda^*/f_n(\tilde{m}^*)\bigr),
\end{align}
which implies:
\begin{lemma}
    \label[lemma]{lem:valuation_under_involution}
    We have
    \begin{align}
        \val_v\bigl(\wedge^n \Lambda^*/f_n(\tilde{m}^*)\bigr)=\val_v(\Disc(a))-
        \val_v\bigl(\wedge^n \Lambda/f_n(\tilde{m})\bigr).
    \end{align}
\end{lemma}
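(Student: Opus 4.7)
The plan is to obtain this identity as a one-line combination of the two ``straightforward'' valuation equalities displayed immediately before the lemma with the discriminant decomposition $\val_v(\wedge^n\Lambda/f_n(\tilde{m})) + \val_v(f_n^\vee(\tilde{m})/\wedge^n\Lambda) = \val_v(\Disc(a))$, which itself is just $\Disc = f_n^\vee f_n$ applied to the chosen lift. Specifically, invoking the second preceding equality rewrites the left-hand side $\val_v(\wedge^n\Lambda^*/f_n(\tilde{m}^*))$ as $\val_v(f_n^\vee(\tilde{m})/\wedge^n\Lambda)$, and then substituting the discriminant decomposition yields $\val_v(\Disc(a)) - \val_v(\wedge^n\Lambda/f_n(\tilde{m}))$, which is the right-hand side of the lemma.

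Thus the real content is packaged into the two ``straightforward'' observations, which I would verify as follows. The involution sends $\tilde m = (V, \gamma, e, e^\vee)$ to $\tilde{m}^* = (V, \tp\gamma, \tp{e^\vee}, \tp e)$, so by the inductive definition of the pure tensors $\bFf_i$ and $\bFf_i^\vee$, the element $f_n(\tilde{m}^*) \in \wedge^n V$ equals, up to a sign, the image of $f_n^\vee(\tilde{m}) \in \wedge^n V^\vee$ under the identification via the standard volume form $\omega = \bFe_1 \wedge \cdots \wedge \bFe_n$; a symmetric statement holds for $f_n^\vee(\tilde{m}^*)$ versus $f_n(\tilde{m})$. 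On the lattice side, the prescription $\Lambda \mapsto \Lambda^* = \FRd^{-1}\tp g^{-1}\Lambda_0$ differs from passage to the dual lattice $\Lambda^\vee = \tp g^{-1}\Lambda_0$ only by the symmetric element $\FRd^{-1}$, so after the identification with $\wedge^n V^\vee$ the length of $\wedge^n\Lambda^*/f_n(\tilde{m}^*)$ matches that of $f_n^\vee(\tilde{m})/\wedge^n\Lambda$, and likewise for the swapped pair. The determinantal contribution from $\FRd$ is controlled by the fact that $\FRd$ implements a unique isomorphism in $\sM$ between two lifts of the same point of $\FRC_\sM$, whose agreement on the abelianization factor forces the expected cancellation.

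The main (modest) obstacle is the careful sign bookkeeping between the inductive wedge-product definitions of $\bFf_i, \bFf_i^\vee$ and the chosen identification of $\wedge^n V$ with $\wedge^n V^\vee$, together with the verification that the factor introduced by $\det\FRd^{-1}$ in comparing $\wedge^n\Lambda^*$ to $(\wedge^n\Lambda)^\vee$ is absorbed correctly. Once these points are pinned down, the two preceding length identities become essentially tautological, and the lemma follows by the one-line chain above.
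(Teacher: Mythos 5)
Your proof follows exactly the paper's route: the lemma is an immediate consequence of substituting the second of the two displayed ``straightforward'' valuation identities into the discriminant decomposition $\val_v(\wedge^n\Lambda/f_n(\tilde{m})) + \val_v(f_n^\vee(\tilde{m})/\wedge^n\Lambda) = \val_v(\Disc(a))$, which is precisely what the paper says ``implies'' the lemma. Your further unpacking of why $f_n(\tilde{m}^*)$ corresponds to $f_n^\vee(\tilde{m})$ under duality and why $\FRd$ causes no discrepancy is a reasonable elaboration of what the paper leaves to the reader as ``straightforward.''
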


A second observation is the following: suppose \(\Lambda=g\Lambda_0\) and
\(\Lambda^*=\FRd^{-1}\tp{g}^{-1}\Lambda_0\), and suppose the image of
\(\Ad_g^{-1}(\gamma)\) in \(G^\AD\) is contained in the Cartan double coset
\begin{align}
    G^\AD(\cO_v')\pi_v^{\lambda}G^\AD(\cO_v'),
\end{align}
then the image of \(\Ad_{\tp{g}\FRd}(\gamma)\) in \(G^\AD\) is contained is the
same double coset, because any Cartan double coset is stable under
transposition.


\subsection{Matching orbits} 
\label{sub:matching_orbits}

In this subsection,
we describe the process for matching local conjugacy classes. This uses some
basic invariant-theoretic results that closely mirror the
monoidal setting in \Cref{sec:invariant_theory} and the arguments will be
similar. For this reason, we will omit less important details.

\subsubsection{}
\label{ssub:review_of_SYMS}
Recall we have the group \(\bG'=\OGT_*\bG\) defined by Weil restriction, whose
\(\cO_v\)-points are \(\bG(\cO_v')=\GL_n(\cO_v')\). The group \(\bG\) naturally
embeds into \(\bG'\) as the fixed-point subgroup under the Frobenius action
\(\sigma\). For any \(g\in\bG'\), we also denote \(\sigma(g)\) by \(\bar{g}\). Let
\begin{align}
    \SYMS_n'\defeq \bG'\git \bG.
\end{align}
Then \(\SYMS_n'\) is a spherical \(\bG'\)-variety.

We have an anti-involution \(\iota\) on \(\bG'\) being the inversion, and let
\begin{align}
    \SYMS_n\defeq (\bG')^{\iota\sigma}.
\end{align}
There is a natural map
\begin{align}
    \bG'&\longto \SYMS_n\\
    g&\longmapsto g\bar{g}^{-1},
\end{align}
and this map clearly factors through \(\SYMS_n'\). By Hilbert 90, it
induces an isomorphism
\begin{align}
    \SYMS_n'\stackrel{\sim}{\longto}\SYMS_n.
\end{align}
Indeed, the question is \'etale-local, so for any
\(\cO_v\)-algebra \(R\), we may replace
\(R\) by an \'etale double cover so that \(\bG'\) become split over \(R\). In
other words, \(\bG'(R)\) is isomorphic to \(\bG(R)\x \bG(R)\) and \(\sigma\)
interchanges the two factors. Then \(\SYMS_n(R)\subset\bG(R)\x\bG(R)\) is the
anti-diagonal, and both injectivity and surjectivity become obvious. 

\subsubsection{}
Let \(G=\bG\). There is a natural \(G\)-action on \(\SYMS_n'\) by multiplication on
the left. This translates to the \(G\)-conjugation
action on \(\SYMS_n\). Similarly, the \(\bG'\)-action on \(\SYMS_n'\) corresponds to the
action on \(\SYMS_n\) induced by the \(\sigma\)-twisted adjoint action of \(\bG'\).
To better integrate this subsection with the rest of the paper, we will, from
now on, solely use the description given by \(\SYMS_n\).

\subsubsection{}
On the other hand, we have an involution \(\tau\) on \(\bG'\) given by
transpose-inverse. As before we have
\begin{align}
    G'\defeq (\bG')^{\tau\sigma}.
\end{align}
We will consider the adjoint \(G'\)-action on itself.

\subsubsection{}
The \notion{abelianization} of \(\SYMS_n\) is defined to be the GIT-quotient
\begin{align}
    \FRA_n\defeq \SYMS_n\git(\bG')^\SC.
\end{align}
It is a one-dimensional torus that is split over \(\cO_v'\). If we view
\(\SYMS_n\) as the subscheme of \(\bG'\), then the natural quotient map is simply
the determinant
\begin{align}
    \det\colon \SYMS_n\longto \FRA_n.
\end{align}

Similarly, the abelianization of \(G'\) is the GIT-quotient \(G'\git(G')^\SC\x
(G')^\SC\) by left and right translations. It is easy to see that this quotient
is isomorphic to \(\FRA_n\).
The natural quotient map is also the determinant
\begin{align}
    \det\colon G'\longto\FRA_n.
\end{align}

\subsubsection{}
If we replace \(\bG\) with \(\bG^\SC=\SL_n\) in the discussion above, we obtain
the analogues \(\SYMS_n^\SC\) and \((G')^\SC\). Notice that \(\SYMS_n^\SC\) embeds into
\(\FRM\) as the fiber over \(1\in\FRA_\FRM\), and similarly \((G')^\SC\) embeds
into \(\FRM'\). Let \(V\) be the standard representation of \(\bG\), then the
traces of fundamental representations \(V_i=\wedge^i V\) gives a complete set of
invariants \(a_1,\ldots,a_{n-1}\) of the adjoint \(\bG\)-action on \(\bG^\SC\).

Both \(\iota\sigma\) and \(\tau\sigma\) induce the same automorphism on the invariants:
\begin{align}
    \bigl(a_1,\ldots,a_{n-1}\bigr)\longmapsto \bigl(\bar{a}_{n-1},\ldots,\bar{a}_{1}\bigr).
\end{align}
It implies that we have
\begin{align}
    \FRC_n^\SC\defeq \SYMS_n^\SC\git G\simeq (G')^\SC\git G'\cong \bbA^{n-1}.
\end{align}
This result can be directly deduced from the ones in \Cref{sec:invariant_theory}.

The representations \(V_i\) naturally extends to \(\bG\), and their traces,
together with the determinant, give a complete set of invariants of the
\(\bG\)-action on \(\bG\). Both \(\iota\sigma\) and \(\tau\sigma\) induce the
same map
\begin{align}
    \bigl(\det,a_1,\ldots,a_{n-1}\bigr)\longmapsto
    \bigl(\bar{\det}^{-1},\bar{\det}^{-1}\bar{a}_{n-1},\ldots,\bar{\det}^{-1}\bar{a}_{1}\bigr).
\end{align}
As a result, we still have
\begin{align}
    \FRC_n\defeq \SYMS_n\git G\simeq G'\git G',
\end{align}
which is a \(\FRC_n^\SC\)-bundle over \(\FRA_n\).

Note that although \(\bG\) embeds into \(\bM\), this embedding is not
equivariant under \(\iota\sigma\) nor \(\tau\sigma\), so the last result above
cannot be deduced from the ones in \Cref{sec:invariant_theory} without
modifications. However, the core argument is still very similar and not
difficult using linear algebra anyway, so we leave details to the reader.

\subsubsection{}
Recall we have \(\bH=\GL_{n-1}\subset \bG\), as well as \(H\subset G\) and
\(H'\subset G'\). They can all be viewed as the respective stabilizer subgroups
of the standard pair \((\bFe,\bFe^\vee)\in V\x V^\vee\). For any \(A\in\bG\), we
have additional \(\bH\)-invariants
\begin{align}
    b_i'\defeq \bFe^\vee A^i\bFe,\quad 0\le i\le n-1.
\end{align}
It is known in the literature that
\begin{align}
    \det,a_1,\ldots,a_{n-1},b_1',\ldots,b_{n-1}'
\end{align}
give a complete set of invariants of the \(\bH\)-action on \(\bG\) by
conjugation. However, the Frobenius actions on \(b_i'\) are complicated to
express, and so we will use an alternative set similar to the ones in
\Cref{sec:invariant_theory}. Let \(b_i\) be the trace
of the composition map
\begin{align}
    V_{i-1}\stackrel{\bFe\wedge}{\longto}V_{i}\stackrel{\wedge^i A}{\longto}
    V_{i}\stackrel{\bFe^\vee }{\longto}V_{i-1}.
\end{align}
Then
\begin{align}
    \det,a_1,\ldots,a_{n-1},b_1,\ldots,b_{n-1}
\end{align}
is another complete set of \(\bH\)-invariants, and both \(\iota\sigma\) and
\(\tau\sigma\) induce the same map
\begin{multline}
    \bigl(\det,a_1,\ldots,a_{n-1},b_1,\ldots,b_{n-1}\bigr)\\\longmapsto
    \bigl(\bar{\det}^{-1},\bar{\det}^{-1}\bar{a}_{n-1},\ldots,\bar{\det}^{-1}\bar{a}_{1},
    \bar{\det}^{-1}(\bar{a}_{n-1}-\bar{b}_{n-1}),\ldots,\bar{\det}^{-1}(\bar{a}_1-\bar{b}_{1})\bigr).
\end{multline}
As a result, we have a matching of GIT quotients
\begin{align}
    \FRC_{n,\JR}\defeq \SYMS_n\git H\simeq G'\git H'.
\end{align}

\subsubsection{}
Using the natural embedding \(\bG\subset\bM\), we may restrict the discriminant
function \(\Disc=\bFf_n^\vee\bFf_n\) to \(\bG\). It is not hard to see to be the
same as the function
\begin{align}
    A\longmapsto \det{(\bFe^\vee A^{i+j}\bFe)_{0\le i,j\le n-1}}.
\end{align}
The Frobenius action \(\iota\sigma\) sends a matrix \(A\) to \(\bar{A}^{-1}\),
and we have
\begin{align}
    \Disc(\bar{A}^{-1})=(\det{\bar{A}})^{2-2n}\Disc(\bar{A})=\bar{(\det{A})^{2-2n}\Disc(A)}.
\end{align}
This implies that the function
\begin{align}
    \Disc_n\colon A\longmapsto(\det{A})^{1-n}\Disc(A)
\end{align}
descends to \(\SYMS_n\). Similarly, \(\tau\sigma\) sends \(A\) to
\(\tp{\bar{A}}^{-1}\), and so \(\Disc_n\) also descends to \(G'\). In fact,
\(\Disc_n\) is clearly invariant under \(\bH\), so it further descends to
\(\FRC_{n,\JR}\).

\begin{definition}\label[definition]{def:strongly_regular_semisimple}
    The \notion{strongly regular semisimple} locus \(\FRC_{n,\JR}^\srs\) in
    \(\FRC_{n,\JR}\) is where the matrix \(A\) is regular semisimple (in the
    usual sense) and \(\Disc_n\) is non-vanishing.
\end{definition}

\subsubsection{}
Similar to the discussions in \Cref{sec:invariant_theory}, the natural maps
\begin{align}
    \Stack*{\SYMS_n/H}\longto \FRC_{n,\JR}\longot \Stack*{G'/H'}
\end{align}
are isomorphisms over \(\FRC_{n,\JR}^\srs\), and so the fibers over the same
locus are \(H\)-torsors or \(H'\)-torsors respectively. Given a point
\(a\in\FRC_{n,\JR}^\srs(F_v)\), since \(H=\GL_{n-1}\) and
\(\RH^1(F_v,H)=0\), there exists a point \(A\in \SYMS_n(F_v)\) lying over
\(a\).

On the other hand, \(\RH^1(F_v,H')\) is trivial if \(v'\) is split over
\(v\), and isomorphic to \(\bbZ/2\bbZ\) otherwise. Similar to
\Cref{sub:frobenius_structures_and_obstructions}, there exists a point \(A'\in
G'(F_v)\) lying over \(a\) if and only if either \(v'\) is split over \(v\) or
\(\Disc_n(a)\) has even valuation. Note that when \(v'\) is non-split over
\(v\) and \(A\in \bG(F_v')\) lies over \(a\), then \(\det{A}\) always has
valuation \(0\) since \(\det{A}\) is a norm-\(1\) element in \(F_v'\), and as a
result, the valuation of \(\Disc_n(A)\) is the same as \(\Disc(A)\).

Returning to \(A\in \SYMS_n(F_v)\), and consider its image in \(\SYMS_n^\AD\) (the
quotient of \(\SYMS_n\) by the central torus). Note that \(\SYMS_n^\AD=\FRM^\x/Z_\FRM\).
Since \(Z_\FRM\) is an induced torus, its torsors over \(F_v\) are trivial,
and so there exists \(\gamma\in\FRM^\x(F_v)\) that has the same image as \(A\) in
\(\SYMS_n^\AD\). Together with \(\bFe\) and \(\bFe^\vee\), it determines a strongly
regular semisimple point \(a_\sM\in \FRC_\sM(F_v)\).

\begin{lemma}
    \label[lemma]{lem:disc_of_group_vs_monoid}
    Suppose \(v'\) is non-split over \(v\). Then for any such \(a_\sM\) induced
    by \(a\), we have
    \begin{align}
        \val_v(\Disc_n(a))\equiv\val_v(\Disc(a_\sM))\bmod 2.
    \end{align}
\end{lemma}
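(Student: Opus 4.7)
The plan is to derive the closed-form identity $\Disc(a_\sM) = \pm z^{2\rho}\Disc_n(a)$, where $z$ is the abelianization of a monoidal lift $\gamma$ of $a_\sM$, and then show that $\val_v(z^{2\rho})$ is even using the Frobenius constraint together with the norm-one property of $\det A$.

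\textbf{Reduction to $z^{2\rho}$.} Work over $\bar F_v$ and pick a lift $\gamma=(z,g)\in\bM^\x(\bar F_v)$ with $\gamma_1=A$; this forces $g=A/\Wt_1(z)\in\SL_n$ and $\Wt_1(z)^n=\det A$. Unwinding the recursion $\bFf_i=\gamma_i(\bFe\wedge\bFf_{i-1})$ from \Cref{sub:deformed_quotient_stack} (and the dual one for $\bFf_i^\vee$) inductively yields
\begin{align*}
\bFf_n(\gamma)&=z^\rho\bigl(\bFe\wedge g\bFe\wedge\cdots\wedge g^{n-1}\bFe\bigr),\\
\bFf_n^\vee(\gamma)&=z^\rho\bigl(\bFe^\vee g^{n-1}\wedge\cdots\wedge\bFe^\vee g\wedge\bFe^\vee\bigr),
\end{align*}
with $\rho=\Wt_1+\cdots+\Wt_{n-1}$. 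Pairing produces, up to sign, the Hankel determinant $\det(\bFe^\vee g^{i+j-2}\bFe)_{1\le i,j\le n}$. Substituting $g=A/\Wt_1(z)$ pulls out a factor $\Wt_1(z)^{-n(n-1)}=(\det A)^{-(n-1)}$, so
\[
\Disc(a_\sM)=\bFf_n^\vee\bFf_n=\pm z^{2\rho}(\det A)^{-(n-1)}\Disc(A)=\pm z^{2\rho}\Disc_n(a).
\]

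\textbf{Parity of $\val_v(z^{2\rho})$.} Expanding $2\rho=\sum_j j(n-j)\Rt_j$ and using the Frobenius constraint $\bar z_j=z_{n-j}$ (hence $\val_v(z_j)=\val_v(z_{n-j})$), pair the indices $j$ and $n-j$ to obtain
\[
\val_v(z^{2\rho})=\sum_{j<n/2}2j(n-j)\val_v(z_j)+\epsilon\cdot(n/2)^2\val_v(z_{n/2}),
\]
where $\epsilon=1$ if $n$ is even and $0$ otherwise. The first sum is manifestly even. For the extra term (only when $n$ is even), apply the same pairing to $\sum_j(n-j)\val_v(z_j)=\val_v(\Wt_1(z)^n)=\val_v(\det A)=0$---the last equality holds because $A\in\SYMS_n(F_v)$ forces $\det A$ to be norm-one in $(F_v')^\times$---to get $n\sum_{j<n/2}\val_v(z_j)+(n/2)\val_v(z_{n/2})=0$, whence $\val_v(z_{n/2})=-2\sum_{j<n/2}\val_v(z_j)$ is even. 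Therefore $(n/2)^2\val_v(z_{n/2})$ is even and the asserted congruence follows.

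\textbf{Main obstacle.} The delicate step is the even-$n$ case: the middle coordinate $z_{n/2}$ lies in $F_v$ and \emph{a priori} could have odd valuation. The norm-one property of $\det A$---a direct consequence of $A\in\SYMS_n(F_v)$---is exactly what forces $\val_v(z_{n/2})$ to be even. Everything else is routine bookkeeping with pure tensors and characters of the torus.
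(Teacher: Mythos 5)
Your route is genuinely different from the paper's. The paper first shows that changing the lift $\gamma'$ (with fixed adjoint image) shifts $\val_v(\Disc)$ by $\langle 2\rho,\mu\rangle$ for some $\mu$ in the coroot lattice, hence doesn't change the parity; then it computes with a convenient lift $\gamma'\in\GL_n(F_v')$ sitting inside $\bM(F_v')$, where $\Disc$ reduces to the Hankel determinant and the discrepancy is governed by $\val_v(\det\gamma'/\det A)\in n\bbZ$. You instead derive the closed-form $\Disc(a_\sM)=\pm z^{2\rho}\Disc_n(a)$ and argue directly on $\val_v(z^{2\rho})$ using the Frobenius symmetry of the abelianization. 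This is a cleaner one-step argument in spirit, and the unwinding of the pure-tensor recursion to get the $z^\rho$ factor is correct.

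There is, however, a real gap in the normalization. You impose simultaneously \emph{(a)} $\gamma_1=A$, which forces $\Wt_1(z)^n=\det A$, and \emph{(b)} the Frobenius symmetry $\bar z_j=z_{n-j}$ on the abelianization. Condition (b) holds when $\gamma$ can be taken in $\FRM^\x(F_v)$; combined with (a), this forces $g\bar g=1$, i.e.\ $\Nm(\Wt_1(z))=1$. But such a choice need not exist: already for $n=2$ the two constraints collapse to $\det A=z_1\in F_v^\x$, which fails for a generic norm-one $\det A\in(F_v')^\x$. So for the $\gamma$ you wrote down, $\bar z_j=z_{n-j}$ is not justified, and it is exactly where the evenness of the paired terms comes from.

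The fix is small and your remaining bookkeeping survives it. Take an arbitrary $\gamma'=(z',g')\in\FRM^\x(F_v)$ with the given adjoint image (this exists since $Z_\FRM$ is induced). Then $\gamma_1'=c'A$ for some $c'\in(F_v')^\x$ and $\Wt_1(z')^n=c'^n\det A$, not $\det A$. The identity $\Disc(a_\sM)=(z')^{2\rho}\Disc_n(a)$ still holds — the $c'$ cancels out of the Hankel determinant because $(c'/\Wt_1(z'))^n=(\det A)^{-1}$ — and the Frobenius symmetry $\bar z_j'=z_{n-j}'$ is now automatic. In the even-$n$ step you must replace ``$\sum_j(n-j)\val_v(z_j')=\val_v(\det A)=0$'' by ``$\sum_j(n-j)\val_v(z_j')=n\val_v(c')+\val_v(\det A)=n\val_v(c')\in n\bbZ$'' (using $\val_v(\det A)=0$). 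Your pairing identity then gives $(n/2)\val_v(z_{n/2}')\in n\bbZ$, hence $\val_v(z_{n/2}')\in 2\bbZ$, which is all you need. With this correction your proof is valid and is a legitimate alternative to the argument in the text.
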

\begin{proof}
    First note that if \(\gamma'\in\FRM(F_v')\) is any point that has the same
    image as \(A\) in \(\SYMS_n^\AD\), we always have
    \begin{align}
        \val_v(\Disc(\gamma'))\equiv \val_v(\Disc(a_\sM))\bmod 2,
    \end{align}
    because the difference of these two valuations is equal to
    \(\Pair{2\rho}{\mu}\) where \(\mu\) lies in the coroot lattice. This claim
    is easily deduced from \cite[Lemma~2.5.1]{Ch19} (see also
    \cite[Lemma~4.1.5]{Wa25}). Then we may assume that \(\gamma'\in\bG(F_v')\)
    and the lemma is immediate because then the valuation of
    \(\det{\gamma'}/\det{A}\) must lie in \(n\bbZ\), whose \((n-1)\)-th
    multiple is even.
\end{proof}

\begin{corollary}
    There exists a point \(A'\in G'(F_v)\) lying over \(a\) if and only if
    \(\Ob_v(a_\sM)\) is trivial for any \(a_\sM\) induced by \(a\).
\end{corollary}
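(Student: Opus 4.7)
The plan is to simply chain together the three ingredients already assembled just above the statement: the cohomological criterion for existence of $A'$ expressed in terms of $\val_v(\Disc_n(a))$, the parity comparison \Cref{lem:disc_of_group_vs_monoid} between $\val_v(\Disc_n(a))$ and $\val_v(\Disc(a_\sM))$, and \Cref{prop:local_obstruction_and_valuation_parity} identifying the parity of $\val_v(\Disc(a_\sM))$ with the non-triviality of $\Ob_v(a_\sM)$.

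First I would dispose of the split case: if $v'$ is split over $v$, then $H^1(F_v,G')=0$ and the existence of $A'$ is automatic; on the other side, $\Ob_v(a_\sM)$ lives in $H^1(F_v,G')$ (or equivalently in $\pi_0(Z_{\dual\bG}^{\Gamma_v})^*$ with the Frobenius acting trivially on $Z_{\dual\bG}=\Gm$) and is therefore trivial. So both sides of the asserted equivalence hold vacuously in this case.

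Next, for the non-split case, I would combine the three facts directly. The preceding discussion says $A'\in G'(F_v)$ lying over $a$ exists if and only if $\val_v(\Disc_n(a))$ is even. By \Cref{lem:disc_of_group_vs_monoid}, for any $a_\sM$ induced by $a$ one has $\val_v(\Disc_n(a))\equiv \val_v(\Disc(a_\sM))\pmod 2$, so the existence condition is equivalent to $\val_v(\Disc(a_\sM))$ being even. Then \Cref{prop:local_obstruction_and_valuation_parity} identifies this evenness with the triviality of $\Ob_v(a_\sM)$.

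The only subtle point (and the ``main obstacle'', such as it is) is to observe that although the choice of $a_\sM\in\FRC_\sM(F_v)^{\srs}$ lying over $a$ is not unique, the parity of $\val_v(\Disc(a_\sM))$ is independent of the choice: this is exactly what \Cref{lem:disc_of_group_vs_monoid} grants, since the parity is pinned to $\val_v(\Disc_n(a))$, which depends only on $a$. Hence the statement ``$\Ob_v(a_\sM)$ is trivial for any $a_\sM$ induced by $a$'' is unambiguous and equivalent to the existence of $A'\in G'(F_v)$ lying over $a$, completing the proof.
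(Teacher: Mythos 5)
Your proof is correct and is exactly the intended (implicit) argument: the paper states this corollary without proof precisely because it is the direct chaining of the preceding discussion (existence of $A'$ iff $v'$ split or $\val_v(\Disc_n(a))$ even), \Cref{lem:disc_of_group_vs_monoid}, and \Cref{prop:local_obstruction_and_valuation_parity}. Your remark that \Cref{lem:disc_of_group_vs_monoid} simultaneously guarantees the parity is independent of the choice of $a_\sM$ is exactly the observation that makes the ``for any'' quantifier in the statement well-posed.
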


\begin{definition}
    \label[definition]{def:matching_orbits}
    A point \(a\in\FRC_{n,\JR}^\srs(F_v)\) is called a \notion{matching pair}
    if \(\Ob_v(a_\sM)\) is trivial for any (and every) \(a_\sM\) induced by \(a\).
\end{definition}


\subsection{Local transfer map} 
\label{sub:local_transfer_map}

We have established the matching between conjugacy classes, and we now describe
how to match functions. Consider the space \(\cH\) (resp.~\(\cH'\)) of smooth
(equivalently, locally constant) \(\Qlb\)-valued
functions with compact support on \(\SYMS_n(F_v)\) (resp.~\(G'(F_v)\)).

\subsubsection{}
The \notion{spherical Hecke module} of \(\SYMS_n(F_v)\) is the subspace
\(\cH_0\subset\cH\) consisting of functions that are invariant under
\(\bG'(\cO_v)\)-action. In other words, if \(f\in\cH_0\), then for any \(x\in
\SYMS_n(F_v)\) and any \(g\in\bG'(\cO_v)=\GL_n(\cO_v')\), we have
\begin{align}
    (gf)(x)=f(g^{-1}x\bar{g})=f(x).
\end{align}
Similarly, the \notion{spherical Hecke algebra} of \(G'(F_v)\) is the subspace
\(\cH_0'\subset\cH'\) consisting of functions invariant under \(G'(\cO_v)\x
G'(\cO_v)\): if \(f\in\cH_0'\), \(x\in G'(F_v)\) and \((g_1,g_2)\in G'(\cO_v)\x
G'(\cO_v)\), then
\begin{align}
    ((g_1,g_2)f)(x)=f(g_1^{-1}xg_2)=f(x).
\end{align}

\subsubsection{}
The spherical Hecke algebra \(\cH_0'\) can easily be described by the Cartan
decomposition of \(G'(F_v)\). For \(\cH_0\), there is a similar
decomposition of \(\SYMS_n(F_v)\) into \(\bG'(\cO_v)\)-cosets. We shall describe
both with the help of the Cartan decomposition of \(\bG'(F_v)=\GL_n(F_v')\).

Given a \(W\)-orbit of cocharacter \([\lambda]\) in \(\bG'\), we have the
corresponding Cartan double coset
\begin{align}
    \Cartan^{\lambda}\defeq \GL_n(\cO_v')\pi_v^\lambda\GL_n(\cO_v'),
\end{align}
which does not depend on the representative \(\lambda\). Note that if \(v'\) is
split over \(v\), then \(\lambda\) is represented by two cocharacters
\((\lambda_1,\lambda_2)\) in \(\GL_n\), and a single cocharacter in \(\GL_n\) if
\(v'\) is non-split.
If \(C\subset \SYMS_n(F_v)\) is a \(\bG'(\cO_v)\)-coset, then by definition it must
be contained in \(\Cartan^\lambda\) for a unique \([\lambda]\). Moreover, if we
choose \(\lambda\) to be dominant, then since
\(C=\bar{C}^{-1}\), we must have \(\lambda_2=-\lambda_1\) if \(v'\) is split, or
\(-w_0(\lambda)=\lambda\) if \(v'\) is non-split. For convenience, we will
denote this relation by
\begin{align}
    \lambda=\sigma_{\Out}(\lambda).
\end{align}
We also have the
analogous setup by passing to the adjoint group.
The following result is crucial in describing the Cartan decomposition of
\(\SYMS_n(F_v)\):

\begin{lemma}
    \label[lemma]{lem:Cartan_decomp_S_n_crucial}
    Suppose \(\lambda\) is a dominant cocharacter of \(\PGL_n\) and
    \begin{align}
        x_1,x_2\in\Cartan_\AD^\lambda\cap \SYMS_n^\AD(F_v),
    \end{align}
    then we can find \(g\in\PGL_n(\cO_v')\) such that
    \begin{align}
        \label{eqn:Cartan_decomp_S_n_desired_equality}
        g^{-1}x_1\bar{g}=x_2.
    \end{align}
\end{lemma}
\begin{proof}
    The \(\SYMS_n\)-version of this lemma was first proved in \cite{Of04}.
    Here we give a different proof using reductive monoids. It is a slightly
    stronger result because although we can always lift \(\lambda\) to a
    cocharacter of \(\GL_n\), we cannot always find one that is fixed by
    \(\sigma_{\Out}\).

    As discussed before, we have \(\SYMS_n^\AD(F_v)=\FRM^\x/Z_\FRM\), and \(x_1\)
    (resp.~\(x_2\)) may be lifted to a point \(\gamma_1\)
    (resp.~\(\gamma_2\)) in \(\FRM^\x(F_v)\). Moreover, we can always choose
    such lifts so that the abelianizations of \(\gamma_1\) and \(\gamma_2\) are
    both contained in \(\pi_v^\lambda\FRA_\FRM^\x(\cO_v)\). This way, both \(\gamma_1\)
    and \(\gamma_2\) may be extended to \(\cO_v\)-points of the big-cell locus
    \(\FRM^\circ\) (cf.~\cite[\S~2.4.15]{Wa25}).

    The group \(\bM^\x(\cO_v')\) acts on \(\FRM(\cO_v)\) by
    \begin{align}
        g\colon x\longmapsto gx\bar{g}^{-1}.
    \end{align}
    This action is induced by the
    \(\OGT_*\bM^\x\)-action on \(\FRM\), which preserves the
    big-cell locus.
    By Lang's theorem, we may replace \(\gamma_2\) by
    \(g\gamma_2\bar{g}^{-1}\) for some
    \(g\in\bM^\x(\cO_v')\) (correspondingly, \(x_2\) is replaced by \(g_\AD
    x_2\bar{g}_\AD^{-1}\)) so that \(\gamma_1\) and \(\gamma_2\)
    lies over the same point in \(\FRA_\FRM(\cO_v)\).

    Consider the action map
    \begin{align}
        \OGT_*\bG^\SC \x\FRM^\circ\longto \FRM^\circ.
    \end{align}
    After base change to \(\bar{k}\), it becomes isomorphic to the
    \(\bG^\SC\x\bG^\SC\)-action (by left and right multiplications) on
    \(\bM^\circ\), which is a smooth map. It is well-known for very flat
    reductive monoids that the geometric fibers of \(\bM^\circ\to\bA_\bM\) are
    homogeneous \(\bG^\SC\x\bG^\SC\)-spaces with connected stabilizers. This
    means that the transporter in \(\OGT_*\bG^\SC\) from \(\gamma_1\) to
    \(\gamma_2\) is a torsor under a smooth group scheme over \(\cO_v\) with
    connected fibers. By Lang's theorem again, such torsor must be trivial, and
    so we can find some \(g\in\bG^\SC(\cO_v')\) such that
    \(g\gamma_1\bar{g}^{-1}=\gamma_2\). This finishes the proof.
\end{proof}

\begin{corollary}
    If \(C_1,C_2\subset \SYMS_n(F_v)\) are two \(\bG'(\cO_v)\)-cosets
    contained in \(\Cartan^\lambda\) for some dominant cocharacter \(\lambda\)
    in \(\GL_n\), then
    \begin{align}
        C_1=C_2.
    \end{align}
\end{corollary}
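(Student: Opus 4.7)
The plan is to reduce the statement to the preceding lemma by lifting from the adjoint quotient to $\GL_n$, and then killing the resulting central ambiguity via Hilbert~90 over $\cO_v'$.

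First I would pick representatives $x_1 \in C_1$ and $x_2 \in C_2$ and project them to $\SYMS_n^{\AD}(F_v)$. Their images lie in $\Cartan_{\AD}^{\lambda_{\AD}} \cap \SYMS_n^{\AD}(F_v)$, where $\lambda_{\AD}$ is the (dominant) image of $\lambda$ under $\GL_n \to \PGL_n$. The preceding lemma then produces $g \in \PGL_n(\cO_v')$ such that $g^{-1} \bar x_1 \bar g = \bar x_2$ in $\SYMS_n^{\AD}(F_v)$, where $\bar x_i$ denotes the image of $x_i$.

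Next I would lift $g$ to some $\tilde g \in \bG'(\cO_v) = \GL_n(\cO_v')$; this is possible because $\Pic(\cO_v') = 0$ (as $\cO_v'$ is a semi-local PID), so $\GL_n(\cO_v') \twoheadrightarrow \PGL_n(\cO_v')$ is surjective. By construction there exists a unique scalar $\zeta \in (F_v')^\times$ with
\begin{equation*}
    \tilde g^{-1} x_1 \bar{\tilde g} = \zeta \cdot x_2.
\end{equation*}
A direct computation using $y \bar y = 1$ for $y \in \SYMS_n$ shows that the left-hand side still satisfies this identity, forcing $\zeta \bar\zeta = 1$; so $\zeta$ is norm-one. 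Comparing determinants and using that both $x_1, x_2$ lie in $\Cartan^\lambda$ (hence have determinants of the same valuation) while $\det \tilde g \in (\cO_v')^\times$, we get $\zeta^n \in (\cO_v')^\times$, and since $\Char(k) > 2n$ this forces $\zeta \in (\cO_v')^\times$.

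Finally, I would apply Hilbert~90 to the quadratic \'etale extension $\cO_v'/\cO_v$ (which is either split or unramified): every norm-one unit in $(\cO_v')^\times$ is of the form $\bar c / c$ for some $c \in (\cO_v')^\times$. Picking $c$ with $\bar c / c = \bar\zeta$ and setting $\tilde h \defeq c \tilde g \in \bG'(\cO_v)$, one computes $\tilde h^{-1} x_1 \bar{\tilde h} = c^{-1} \bar c \cdot \zeta \cdot x_2 = x_2$, so $x_1$ and $x_2$ lie in the same $\bG'(\cO_v)$-coset and $C_1 = C_2$. The only step requiring any care is the valuation argument showing $\zeta$ is integral rather than merely in $(F_v')^\times$; everything else is formal once the $\PGL_n$-version from the lemma is in hand.
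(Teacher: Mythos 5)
Your proof is correct and follows the paper's argument essentially step by step: lift $g$ through the surjection $\GL_n(\cO_v')\twoheadrightarrow\PGL_n(\cO_v')$, observe the discrepancy scalar is a norm-one unit, and remove it using Hilbert~90 for $\cO_v'/\cO_v$ (which is the same statement the paper obtains from Lang's theorem). One small remark: the invocation of $\Char(k)>2n$ to pass from $\zeta^n\in(\cO_v')^\times$ to $\zeta\in(\cO_v')^\times$ is unnecessary, since $n\,\val_v(\zeta)=0$ already forces $\val_v(\zeta)=0$ over $\bbZ$; the paper instead deduces integrality directly from $\zeta\bar\zeta=1$ and unramifiedness, which also works.
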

\begin{proof}
    It suffices to show that if
    \begin{align}
        x_1, x_2\in \Cartan^\lambda\cap \SYMS_n(F_v),
    \end{align}
    then we can find some \(g\in \GL_n(\cO_v')\) such that
    \eqref{eqn:Cartan_decomp_S_n_desired_equality} holds.

    By \Cref{lem:Cartan_decomp_S_n_crucial},
    we have already proved the same equality in \(\PGL_n(F_v')\) for some
    \(g\in\PGL_n(\cO_v')\). Since \(\GL_n(\cO_v')\) maps surjectively onto
    \(\PGL_n(\cO_v')\), we can lift \(g\) to \(\GL_n(\cO_v')\), still denoted by
    \(g\) for simplicity. Then there exists some \(c\in (F_v')^\x\) such that
    \(g^{-1}x_1\bar{g}=cx_2\). Moreover, \(c\) must have norm \(1\), hence is
    contained in
    \((\cO_v')^\x\). By Lang's theorem, there exists some \(z\in
    (\cO_v')^\x\) such that \(z\bar{z}^{-1}=c\). Consequently, we may replace
    \(g\) by \(z^{-1}g\) to make \eqref{eqn:Cartan_decomp_S_n_desired_equality}
    hold in \(\GL_n(F_v')\), and we are done.
\end{proof}

\subsubsection{}
Using Cartan decomposition of \(\SYMS_n\), we see that \(\cH_0\) has a natural
vector basis in bijection with dominant cocharacters \(\lambda\) of \(\GL_n\)
such that \(\lambda=\sigma_{\Out}(\lambda)\). Recall the element \(\dot{w}_0\) defined by
\eqref{eqn:w_0_matrix} satisfying \(\dot{w}_0^2=(-1)^{n-1}\). If \(v'\)
is non-split over \(v\), choose a
square root of \((-1)^{n-1}\) in \(F_v'\), and let
\begin{align}
    \ddot{w}_0\defeq\sqrt{(-1)^{n-1}}\dot{w}_0,
\end{align}
otherwise let \(\ddot{w}_0=1\) if \(v'\) is split.
Then \(\ddot{w}_0^2=1\), and \(\pi_v^{\lambda}\ddot{w}_0\in \SYMS_n(F_v)\) for
any \(\lambda=\sigma_{\Out}(\lambda)\). Let
\begin{align}
    \Cartan_{n}^\lambda\defeq\Cartan^\lambda\cap
    \SYMS_n(F_v)=\bG'(\cO_v)\pi_v^\lambda\ddot{w}_0.
\end{align}

\begin{corollary}
    \label[corollary]{cor:desc_of_Hecke_0_for_S_n}
    We have a disjoint union of \(\bG'(\cO_v)\)-orbits
    \begin{align}
        \SYMS_n(F_v)=\coprod_{\lambda}\Cartan_n^\lambda,
    \end{align}
    where \(\lambda\) ranges over dominant cocharacters of \(\bG'\) fixed by
    \(\sigma_{\Out}\). Consequently, we have a direct sum decomposition
    \begin{align}
        \cH_0=\bigoplus_\lambda \Qlb\cdot\One_{\Cartan_n^\lambda}.
    \end{align}
\end{corollary}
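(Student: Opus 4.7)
The plan is to combine the standard Cartan decomposition of \(\bG'(F_v) = \GL_n(F_v')\) with the preceding corollary, which already handles the uniqueness of a \(\bG'(\cO_v)\)-coset inside a given Cartan piece. First I would use the usual Cartan decomposition to partition
\begin{align}
    \SYMS_n(F_v) = \coprod_{[\lambda]}\bigl(\SYMS_n(F_v)\cap\Cartan^\lambda\bigr),
\end{align}
where \([\lambda]\) ranges over \(W\)-orbits of cocharacters of \(\bG'\), and then identify which \([\lambda]\) yield a non-empty intersection.

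The constraint on \(\lambda\) should come from the defining condition \(\bar{x} = x^{-1}\) of \(\SYMS_n(F_v) \subset \bG'(F_v)\): any \(x \in \Cartan^\lambda\) satisfies \(x^{-1} \in \Cartan^{-w_0\lambda}\), while \(\bar{x}\) lies in the Galois conjugate of \(\Cartan^\lambda\), so equating these two cosets under the dominant normalization yields exactly \(\lambda = \sigma_{\Out}(\lambda)\), recovering the description already given above the statement.

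For the reverse direction I would exhibit \(\pi_v^\lambda\ddot{w}_0\) as a canonical representative inside \(\SYMS_n(F_v) \cap \Cartan^\lambda\). Using the conjugation identity \(\ddot{w}_0\pi_v^\lambda\ddot{w}_0^{-1} = \pi_v^{w_0\lambda}\) together with the relation \(w_0\lambda = -\lambda\), the verification of \(\iota\sigma(\pi_v^\lambda\ddot{w}_0) = \pi_v^\lambda\ddot{w}_0\) should collapse to the scalar identity \(\sigma(\ddot{w}_0)\ddot{w}_0 = 1\), which is precisely why the factor \(\sqrt{(-1)^{n-1}}\) was built into the definition of \(\ddot{w}_0\); this small but case-split computation, trivial when \(v'\) is split and needing some care when \(v'\) is non-split, is the most delicate technical point. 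Once non-emptiness is established, the preceding corollary immediately upgrades \(\SYMS_n(F_v) \cap \Cartan^\lambda\) to a single \(\bG'(\cO_v)\)-coset, necessarily \(\Cartan_n^\lambda = \bG'(\cO_v)\pi_v^\lambda\ddot{w}_0\).

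The direct-sum decomposition of \(\cH_0\) is then formal: each \(\Cartan_n^\lambda\) is open in \(\SYMS_n(F_v)\) since \(\bG'(\cO_v)\) is an open subgroup of \(\bG'(F_v)\), so any \(f \in \cH_0\) is constant on each orbit and, by compact support, supported on only finitely many; the indicator functions \(\One_{\Cartan_n^\lambda}\) are linearly independent by disjointness of supports.
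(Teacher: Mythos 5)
Your overall structure matches the route the paper takes (the corollary is stated immediately after the preparatory material, so there is no separate proof in the text, but the implicit argument is exactly yours): partition \(\SYMS_n(F_v)\) by the Cartan decomposition of \(\GL_n(F_v')\), deduce the constraint \(\lambda=\sigma_{\Out}(\lambda)\) from \(C=\bar{C}^{-1}\), establish non-emptiness by a representative, quote the preceding corollary for uniqueness, and conclude formally.

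There is, however, a genuine gap at exactly the point you flag as ``the most delicate technical point.'' You correctly reduce the verification \(\iota\sigma(\pi_v^\lambda\ddot{w}_0)=\pi_v^\lambda\ddot{w}_0\) to the scalar identity \(\sigma(\ddot{w}_0)\ddot{w}_0=1\), but your claim that this is ``precisely why the factor \(\sqrt{(-1)^{n-1}}\) was built in'' is not right. Writing \(c=\sqrt{(-1)^{n-1}}\), the factor arranges that \(\ddot{w}_0^2=c^2\dot{w}_0^2=(-1)^{n-1}(-1)^{n-1}=1\), which is a different identity. Since \(\dot{w}_0\) has entries in the prime field and \(c\) is a scalar, one instead gets
\begin{align}
\sigma(\ddot{w}_0)\ddot{w}_0=\bar{c}\,c\,\dot{w}_0^2=\Nm_{F_v'/F_v}(c)\,(-1)^{n-1}.
\end{align}
If \(c\in F_v\) then \(\Nm(c)=c^2=(-1)^{n-1}\) and the product is \(1\); but if \(c\notin F_v\) (forced when \(n\) is even and \(-1\) is not a square in \(F_v\), i.e.\ \(\Cnt(k_v)\equiv 3\bmod 4\)), then \(\bar{c}=-c\), \(\Nm(c)=-(-1)^{n-1}\), and the product is \(-1\). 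In that case \(\pi_v^\lambda\ddot{w}_0\notin\SYMS_n(F_v)\), and your non-emptiness argument breaks down.

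The corollary is still correct, and the fix is cheap: replace \(\ddot{w}_0\) by the \emph{unsigned} antidiagonal permutation matrix \(J\) (all entries \(+1\)). One has \(J^2=1\), \(\sigma(J)=J\), and \(J\pi_v^\lambda J^{-1}=\pi_v^{w_0\lambda}=\pi_v^{-\lambda}\) since \(\lambda=-w_0(\lambda)\), so \(\pi_v^\lambda J\cdot\overline{\pi_v^\lambda J}=\pi_v^\lambda J\pi_v^\lambda J=1\) unconditionally. This is exactly the representative from \cite{Of04} which the paper mentions in its remark. To be fair to you, the paper itself asserts \(\pi_v^\lambda\ddot{w}_0\in\SYMS_n(F_v)\) without checking it, so you have faithfully reproduced a small imprecision in the source; but a complete proof needs to either restrict to \(\sqrt{(-1)^{n-1}}\in F_v\) or use \(J\) (or some other unconditional representative) for the non-emptiness step.
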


\subsubsection{}
The Cartan decomposition of \(G'(F_v)\) is well-known since \(G'\) is a
reductive group. It is even quasi-split when restricted to \(F_v\) because it
is unramified. To better compare with the story on \(\SYMS_n\)-side, however, we
will give a slightly different description using monoids.

Indeed, if we examine the proofs on the \(\SYMS_n\)-side, we see that the crucial
ingredient is simply that \(\FRM\) is a Frobenius twist of the split form
\(\bM\), and the \(\OGT_*\bG^\SC\)-action is geometrically isomorphic to the
\(\bG^\SC\x\bG^\SC\)-action on \(\bM\). The same is also true for \(\FRM'\) and
the \((G')^\SC\x (G')^\SC\)-action on \(\FRM'\). Therefore, by repeating the same argument, one can show that
\begin{proposition}
    \label[proposition]{cor:desc_of_Hecke_0_for_G_prime}
    For any dominant \(\lambda\) of \(\bG'\) fixed by
    \(\sigma_{\Out}\), let
    \begin{align}
        \Cartan_n^{\prime\lambda}\defeq \Cartan^\lambda\cap G'(F_v),
    \end{align}
    then it is a single \(G'(\cO_v)\x G'(\cO_v)\)-orbit, and we have
    a disjoint union
    \begin{align}
        G'(F_v)=\coprod_\lambda\Cartan_n^{\prime\lambda}.
    \end{align}
    As a result, we have a direct sum decomposition
    \begin{align}
        \cH_0'=\bigoplus_\lambda\Qlb\cdot\One_{\Cartan_n^{\prime\lambda}}.
    \end{align}
\end{proposition}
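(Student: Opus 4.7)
The plan is to replay the two-step argument used on the symmetric side (\Cref{lem:Cartan_decomp_S_n_crucial} and its corollary) essentially verbatim, with $\FRM$ replaced by the unitary monoid $\FRM'$ and the $\sigma$-twisted single-sided action replaced by genuine two-sided multiplication. As observed just before the statement, $\FRM'$ becomes isomorphic to $\bM$ after base change to $\bar{k}$, and the left--right $(G')^\SC \x (G')^\SC$-action on $\FRM'$ matches the standard $\bG^\SC \x \bG^\SC$-action on $\bM$ geometrically, so every geometric input carries over. The partition $G'(F_v) = \coprod_\lambda \Cartan_n^{\prime\lambda}$ is immediate from the Cartan decomposition of $\bG'(F_v) = \GL_n(F_v')$ intersected with $G'(F_v)$, together with the observation that $\Cartan^\lambda \cap G'(F_v)$ is non-empty only if $\lambda = \sigma_{\Out}(\lambda)$; the substance is therefore the single-orbit statement for each $\Cartan_n^{\prime\lambda}$.

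For the adjoint analogue, given $x_1, x_2 \in \Cartan_\AD^\lambda \cap (G')^\AD(F_v)$ I would lift each to $\gamma_i \in (\FRM')^\x(F_v)$ with abelianization in $\pi_v^\lambda \FRA_\FRM^\x$, so that each $\gamma_i$ extends to an $\cO_v$-point of the big-cell locus of $\FRM'$. Translating $\gamma_2$ by a suitable central element via Lang's theorem (the center of $\FRM'$ being a smooth unramified torus), one arranges that $\gamma_1$ and $\gamma_2$ lie over the same point of $\FRA_\FRM(\cO_v)$. The transporter from $\gamma_1$ to $\gamma_2$ under the left--right action of $(G')^\SC \x (G')^\SC$ on the big-cell locus of $\FRM'$ is then a torsor over $\cO_v$ under a smooth $\cO_v$-group scheme whose geometric fibers are the stabilizers of the $\bG^\SC \x \bG^\SC$-action on $\bM^\circ$; these are connected by the structure theory of very flat reductive monoids. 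Lang's theorem trivializes this torsor and produces $(g_1, g_2) \in (G')^\SC(\cO_v) \x (G')^\SC(\cO_v)$ with $g_1 \gamma_1 g_2^{-1} = \gamma_2$, and projecting to $(G')^\AD$ gives the required adjoint equality.

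To pass from $(G')^\AD$ back to $G'$ I would mirror the final step in the symmetric corollary: lift the adjoint pair to $(\tilde g_1, \tilde g_2) \in G'(\cO_v) \x G'(\cO_v)$ along the smooth surjection $G' \twoheadrightarrow (G')^\AD$, obtaining $\tilde g_1 x_1 \tilde g_2^{-1} = c \, x_2$ for some $c$ in the center $Z_{G'}$, the norm-one torus $\UNI_1 = \ker(\Nm\colon \OGT_* \Gm \to \Gm)$. The constraint that both sides lie in the integral Cartan stratum $\Cartan^\lambda$ forces $c$ to be a norm-one integral unit, i.e.\ $c \in Z_{G'}(\cO_v)$, and Lang's theorem applied to the smooth connected unramified torus $Z_{G'}$ yields $z \in Z_{G'}(\cO_v)$ that absorbs $c$ into one of the $\tilde g_i$. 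The Hecke-algebra decomposition $\cH_0' = \bigoplus_\lambda \Qlb \cdot \One_{\Cartan_n^{\prime\lambda}}$ is then formal. The main obstacle I anticipate is the bookkeeping around the Galois twist in this central-lifting step: one must check that the element $c$ produced by the adjoint step is genuinely integral and norm-one, and that the relevant central torus remains smooth, connected, and unramified at $v$, all standard for unramified unitary groups but requiring care in conjunction with the two-sided action and the condition $\lambda = \sigma_{\Out}(\lambda)$.
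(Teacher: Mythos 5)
Your proof is correct and follows exactly the route the paper intends: the paper's own ``proof'' is the remark, immediately preceding the statement, that the $\SYMS_n$-side argument (\Cref{lem:Cartan_decomp_S_n_crucial} and its corollary) carries over once the $\sigma$-twisted adjoint action of $\bG'$ on $\FRM$ is replaced by the two-sided $(G')^\SC\times(G')^\SC$-action on $\FRM'$, both being geometrically the $\bG^\SC\times\bG^\SC$-action on the split monoid $\bM$, and you have written out faithfully the argument that the paper leaves implicit. One small simplification: in your final central-absorption step, Lang's theorem is not actually needed --- once you know $c\in Z_{G'}(\cO_v)$ (norm-one and integral), you may directly replace $\tilde g_1$ by $c^{-1}\tilde g_1$, which stays in $G'(\cO_v)$ since $c$ is central and fixed by $\tau\sigma$; contrast this with the symmetric side, where Lang genuinely enters because the $\sigma$-twisted action $g\mapsto g^{-1}x\bar g$ forces one to solve $z\bar z^{-1}=c$ before the scalar can be absorbed, while Lang \emph{is} needed on the unitary side one step earlier, in the surjectivity of $G'(\cO_v)\to(G')^\AD(\cO_v)$, since $Z_{G'}=\UNI_1$ is a nontrivial torus unlike the automatically-split $\Gm$ on the $\GL_n$-side.
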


\subsubsection{}
One can also try to find a representative of the orbit
\(\Cartan_n^{\prime\lambda}\) similar to \(\pi_v^\lambda\ddot{w}_0\) for
\(\Cartan_n^\lambda\), and we will only describe the non-split case as the split
case is the same as on the \(\SYMS_n\)-side. Let \(c\in\cO_v'\) be such that
\(c\bar{c}^{-1}=(-1)^{n-1}\),
then the map \(\sigma\mapsto c\dot{w}_0\) gives a Frobenius cocycle
in \(G'\). Since \(G'\) is connected, this cocycle is trivial, and so we can
find \(u_0\in G'(\cO_v')\) such that \(c\dot{w}_0=u_0{\tp{\bar{u}}_0}\).
Then one can verify that \(u_0^{-1}\pi_v^\lambda u_0\in G'(F_v)\) and represents
\(\Cartan_n^{\prime\lambda}\).

\begin{remark}
    In \cite{Of04}, there is a simpler system of representatives: one first
    changes every \(-1\) entry in \(\dot{w}_0\) to \(1\), and then
    \(\pi_v^\lambda\dot{w}_0\) (with the modified \(\dot{w}_0\)) will represent
    both \(\Cartan_n^\lambda\) and \(\Cartan_n^{\prime\lambda}\). We choose a
    different system because we want to avoid matrix embedding (which works
    best for classical types only) as much as possible.
\end{remark}

\subsubsection{}
Combining \Cref{cor:desc_of_Hecke_0_for_S_n,cor:desc_of_Hecke_0_for_G_prime},
we see there exists a canonical \notion{spherical transfer map}
\begin{align}\label{eqn:spherical_transfer}
    \cH_0&\longto\cH_0'\\
    \One_{\Cartan_n^\lambda}&\longmapsto \One_{\Cartan_n^{\prime\lambda}}.
\end{align}
This is the same map given by \cite{Of04}. In geometric settings, however, it is
not very convenient to use characteristic functions
\(\One_{\Cartan_n^\lambda}\) and \(\One_{\Cartan_n^{\prime\lambda}}\); instead,
we use an alternative basis given by geometric Satake.

\subsubsection{}
The geometric Satake for reductive group \(G'\) (which becomes quasi-split after
restricting to \(F_v\)) is well-known and
we review this theory following \cite{Zh17}, and then we state its analogy for
\(\SYMS_n\). We will also reformulate both using monoidal language following
\cite{Wa25}.

Let \(\Loop_v{G'}\) (resp.~\(\Arc_v{G'}\)) be the loop (resp.~arc) group at
place \(v\). The affine Grassmannian of \(G'\) is the fpqc quotient
\begin{align}
    \Gr_{v,G'}=\Loop_v{G'}/\Arc_v{G'}.
\end{align}
The arc group \(\Arc_v{G'}\) acts on \(\Gr_{v,G'}\) on the left, and we may
consider the category of \(\Arc_v{G'}\)-equivariant perverse sheaves on
\(\Gr_{v,G'}\). Note that any Frobenius element of \(F_v\) acts on those
sheaves. For arithmetic purposes, such a category is too large; instead, we
consider the full subcategory \(\Sat_{G'}\) consisting of objects where the
Frobenius acts with finite order. This way there is an equivalence between
\(\Sat_{G'}\) and the category of \emph{algebraic} representations of the
\(L\)-group of \(G'\).

By Grothendieck's sheaf-function dictionary, any \(\cF\in\Sat_{G'}\) induces a
\(\Qlb\)-valued function on \(\Gr_{v,G'}(k_v)\) that is \(G'(\cO_v)\)-invariant,
which pulls back to a function on \(G'(F_v)\) that is \(G'(\cO_v)\x
G'(\cO_v)\)-invariant. In other words, we have a natural map
\begin{align}
    \abs{\Sat_{G'}}\longto \cH_0',
\end{align}
where \(\abs{\Sat_{G'}}\) means the set of isomorphism classes in \(\Sat_{G'}\).
For any dominant cocharacter \(\lambda\) in \(G'\),
or equivalently, for any dominant cocharacter of \(\bG'\) fixed by
\(\sigma_{\Out}\), we have a corresponding Schubert variety
\begin{align}
    \Gr_{v,G'}^{\le\lambda}
\end{align}
which is the closure of the \(\Arc_v{G'}\)-orbit corresponding to
\(\Cartan_n^{\prime\lambda}\). 
\begin{definition}
Let \(f_\lambda'\in\cH_0'\) be the function
given by the intersection complex of \(\Gr_{v,G'}^{\le\lambda}\).
\end{definition} 
Then one can
easily show by induction (on dominance order of \(\lambda\)) that \(f_\lambda'\)
for various \(\lambda\) form an alternative basis of \(\cH_0'\).

Sometimes, it is also convenient to consider, in addition to
\(\sigma_{\Out}\)-fixed \(\lambda'\), any
\(\sigma_{\Out}\)-orbit of dominant cocharacters \(\Set{\mu}\) of
\(G'_{\bar{k}}\). Each
cocharacter in such an orbit induces a Schubert variety in \(\Gr_{v,G'}\) over
\(\bar{k}\), and their union \(\SFQ\) is stable under the Frobenius, hence
defined over \(k_v\). The intersection complex of \(\SFQ\) then also induces a
function
\begin{align}
    f_{\Set{\mu}}'
    \in\cH_0',
\end{align}
which is a linear combination of those \(f_\lambda'\)
such that \(\lambda=\sigma_{\Out}(\lambda)\le\mu\) for every \(\mu\) in the
orbit.

\subsubsection{}
\label{ssub:symmetric_equiv_affGr}
For \(\SYMS_n\), we have something similar. Although there is no affine
Grassmannian, we may still consider the quotient stack
\begin{align}
    \Stack*{\Arc_v{\bG'}\backslash\Loop_v{\SYMS_n}},
\end{align}
which is pro-ind-algebraic: indeed, after passing to a quadratic extension of
\(k_v\), we see that the above is isomorphic to the quotient
\begin{align}
    \Stack*{\Arc_v{\bG}\backslash\Loop_v{\bG}/\Arc_v{\bG}}
    =\Stack*{\Arc_v{\bG}\backslash\Gr_{v,\bG}}.
\end{align}
For each \(\lambda=\sigma_{\Out}(\lambda)\), the Cartan coset
\(\Cartan_n^\lambda\) induces a closed substack
\begin{align}
    \label{eqn:truncated_Hecke_stack_for_S_n}
    \Stack*{\Arc_v{\bG'}\backslash\Loop_v{\SYMS_n}^{\le\lambda}},
\end{align}
which is isomorphic to
\begin{align}
    \label{eqn:truncated_Hecke_stack_split}
    \Stack*{\Arc_v{\bG}\backslash \Gr_{v,\bG}^{\le\lambda}}
\end{align}
after passing to a quadratic extension of \(k_v\). Since the dimension of
these stacks are not well-defined, there is no notion of ``intersection
complex'' in the conventional sense. However, the intersection complex
on \(\Gr_{v,\bG}^{\le\lambda}\) descends to \eqref{eqn:truncated_Hecke_stack_split},
and for simplicity we shall call this sheaf the intersection complex of the
latter. On the open orbit, the sheaf is
\begin{align}
    \Qlb[\Pair{2\rho}{\lambda}](\Pair{\rho}{\lambda}).
\end{align}
Since \(\lambda=\sigma_{\Out}(\lambda)\), it further descends (through the
quadratic Galois action) to
\eqref{eqn:truncated_Hecke_stack_for_S_n}. We denote this sheaf by
\(\IC_n^\lambda\).

%
\begin{definition}
Let \(f_\lambda\) be the function on \(\Loop_v\SYMS_n(k_v)\) induced by \(\IC_n^\lambda\).
\end{definition}
Under the canonical spherical transfer
map \(\cH_0\to\cH_0'\), the element \(f_\lambda\) maps to \(f_\lambda'\).
. More generally, we can similarly define for any
\(\sigma_{\Out}\)-orbit \(\Set{\mu}\), an ``intersection complex'' on
\(\Stack*{\Arc_v\bG'\backslash \Loop_v\SYMS_n}\), which is geometrically isomorphic
to the intersection complex on the stack
\begin{align}
    \bigcup_\mu \Stack*{\Arc_v\bG\backslash\Gr_{v,\bG}^{\le\mu}}.
\end{align}
Its induced function \(f_{\Set{\mu}}\) transfers to \(f_{\Set{\mu}}'\).

\subsubsection{}
The same discussions also apply to \(\SYMS_n^\AD\) and \((G')^\AD\). For any
dominant cocharacter \(\lambda\) of \(\SYMS_n^\AD\) (resp.~\((G')^\AD\)),
or more generally a \(\sigma_{\Out}\)-orbit \(\Set{\mu}\) of dominant
cocharacters of \((\SYMS_n^\AD)_{\bar{k}}\cong (G')^\AD_{\bar{k}}\), we have
matching Satake functions
\begin{align}
    f_{\Set{\mu}}\longmapsto f_{\Set{\mu}}',
\end{align}
compatible with the transfer map for \(\cH_0\) and \(\cH_0'\).


\subsection{Jacquet--Rallis fundamental lemma} 
\label{sub:jacquet_rallis_fundamental_lemma}

In this subsection, we formulate the spherical Jacquet--Rallis fundamental lemma
in full detail and interface it with the language of monoids that we have been
using throughout this paper. As always in this section, we
fix a place \(v\in \abs{X}\). Without loss
of generality we may assume that its residue field \(k_v\) is equal to \(k\).

\subsubsection{}
By class field theory, the quadratic \'etale algebra \(F_v'\) over \(F_v\)
induces a quadratic character
\begin{align}
    \eta\colon F_v^\x\longto F_v^\x/\Nm_{F_v'/F_v}(F_v')^\x\longto \Set{\pm 1}.
\end{align}
Since \(F_v'\) is unramified over \(F_v\), \(\eta\) is trivial if \(v'\) is
split over \(v\), and otherwise, \(\eta\) has a
simple description:
\begin{align}
    \eta(x)=(-1)^{\val_v(x)}.
\end{align}

\subsubsection{}
Let \(A\in \SYMS_n^\srs(F_v)\) be a strongly regular semisimple element, and
\(f\in\cH_0\) a spherical function. We fix a Haar measure \(\dd h\) on
\(H(F_v)\) so that the volume of \(H(\cO_v)\) is \(1\). We define the
\(\eta\)-twisted \(H\)-orbital integral associated with \(\dd h\), \(A\) and
\(f\) as
\begin{align}
    \OI_{A,H}^\eta(f,\dd h)=\int_{H(F_v)}f(\Ad_h^{-1}(A))\eta(\det{h})\dd h.
\end{align}
We shall simply use \(\OI_{A,H}^\eta(f)\) from now on and omit \(\dd h\) because the
latter is always fixed. The convergence of this integral will be apparent once
we relate it to affine Jacquet--Rallis fibers.

\subsubsection{}
Similarly, let \(A'\in G^{\prime,\srs}(F_v)\) be strongly regular semisimple,
and \(f'\in \cH_0'\). We fix a Haar measure \(\dd h'\) on \(H'(F_v)\) so that
the volume of \(H'(\cO_v)\) is \(1\). We define the \(H'\)-orbital
integral associated with \(\dd h'\), \(A'\) and \(f'\) as
\begin{align}
    \OI_{A',H'}(f',\dd h')=\int_{H'(F_v)}f'(\Ad_{h'}^{-1}(A'))\dd h'.
\end{align}
As in the \(\SYMS_n\)-case, we will omit \(\dd h'\) and simply use
\(\OI_{A',H'}(f')\) from now on. The convergence of this integral will also be
proved using affine Jacquet--Rallis fibers.

\subsubsection{}
The main theorem of this paper is as follows:
\begin{theorem}
    [Jacquet--Rallis Fundamental Lemma]
    \label[theorem]{thm:main_theorem}
    Let \(a\in\FRC_{n,\JR}^\srs(F_v)\) be a matching pair
    (cf.~\Cref{def:matching_orbits}), and choose an
    arbitrary lift \(A\in \SYMS_n(F_v)\) and \(A'\in G'(F_v)\) of \(a\).
    Then for any pair of matching functions \((f,f')\in \cH_0\x\cH_0'\), we have
    equality in orbital integrals
    \begin{align}
        \Delta(A)\OI_{A,H}^{\eta}(f)=\OI_{A',H'}(f'),
    \end{align}
    where \(\Delta(A)=\eta(\pi_v)^{\val_v(\bFe^\vee\wedge \bFe^\vee
    A\wedge\cdots\wedge \bFe^\vee A^{n-1})}\). Moreover, if \(a\) is not a
    matching pair, then both sides equal to \(0\).
\end{theorem}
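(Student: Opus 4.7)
The plan is to prove the fundamental lemma through a global geometric argument, translating local orbital integrals into point counts on affine Jacquet--Rallis fibers and then assembling these into a global Jacquet--Rallis mH-fibration. First, I would geometrize both sides: the lattice descriptions \eqref{eqn:JR_fiber_lattice_description_symmetric} and \eqref{eqn:JR_fiber_lattice_description_unitary} realize \(\OI_{A,H}^\eta(f)\) and \(\OI_{A',H'}(f')\) as weighted point counts on \(\cM_v(a)\) and \(\cM_v'(a)\), where the Satake functions \(f_{\Set{\mu}}, f_{\Set{\mu}}'\) correspond to local Satake sheaves \(\cF^{\lambda_v}, \cF^{\prime \lambda_v}\) on these fibers and the quadratic character \(\eta\) is encoded by a local system \(L_{v,\eta}\) whose Frobenius trace matches the transfer factor \(\Delta(A)\) via \Cref{prop:local_obstruction_and_valuation_parity}. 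The vanishing case (when no matching \(A'\) exists) must be handled separately by functional equations using the involution \eqref{eqn:involution_on_symmetric_side} and \Cref{lem:valuation_under_involution}, which show the two sides cancel in pairs.

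The next step is to embed the local datum \(a \in \FRC_\sM^\heartsuit(\cO_v)\) into a global \(a^X \in \cA(k)\) lying in the well-behaved open subset \(\cA^\ddagger \subset \cA\) described in the notation list, so that one obtains a global Jacquet--Rallis mH-fibration \(h \colon \cM \to \cA\) (and its unitary counterpart \(h'\)). A product formula of the type established in \Cref{sec:global_formulations} will factor the global point count \(\#\cM_{a^X}(k)\) (weighted by the relevant global Satake sheaf and the global Kummer system \(L_\eta^X\)) as a product of local contributions, one of which is precisely the orbital integral we wish to compute; similarly on the unitary side. The existence of the approximation with controlled behavior at every auxiliary place, and the nonvanishing of the auxiliary local factors, is the content of the global matching argument alluded to in \Cref{sec:the_proof_of_the_fundamental_lemma}.

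The heart of the argument is then to compare the two global counts using the support theorem: stratified smallness of \(h\) and \(h'\) over \(\cA^\ddagger\) implies that \(Rh_*\IC_\cM\) and \(Rh'_*\IC_{\cM'}\) (suitably twisted by the Satake sheaves and by \(L_\eta^X\)) are determined by their restrictions to any dense open subset of any irreducible component of support. Consequently, it suffices to verify the pointwise matching of trace functions on one such dense open. Here one chooses the \(\cA^\SIM\)-locus studied in \Cref{sub:the_simple_locus}, where global ellipticity (encoded by the pure (co)tensors \(\bFf_i\), \(\bFf_i^\vee\)) forces the mHiggs bundle to be simple, and at such simple points the fibers are small enough that a direct computation gives the desired identity. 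Resolving the global Galois obstruction (which, by \Cref{prop:local_obstruction_and_valuation_parity}, is the product over places of the local matching obstructions) is the key mechanism that allows the global approximation to realize a genuine matching pair.

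The main obstacle will be the combination of two things: (i) producing the global approximation \(a^X\) inside \(\cA^\ddagger\) with the prescribed local behavior at \(v\) and nonvanishing point counts at all other places, which requires controlling both the \(\delta\)-invariant and the parity of \(\val_w(\Disc)\) at auxiliary places \(w\); and (ii) proving the strong form of stratified smallness needed for the support theorem, which in the multiplicative setting is complicated by the singularities of \(\FRM\) and \(\FRM'\) and the absence of a spectral cover. Both are addressed by the framework of \cite{Wa24} adapted via the deformed quotient stack \(\bsM\) and the invariant theory of \Cref{sec:invariant_theory}; the role of \(\bFf_n^\vee \bFf_n\) in cutting out \(\bC_\bsM^\srs\) via \Cref{prop:srs_implies_iso_to_GIT} is precisely what makes the deformed formulation tractable globally.
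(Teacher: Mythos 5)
Your proposal follows essentially the same route as the paper: geometrize orbital integrals as weighted point counts on affine Jacquet--Rallis fibers, handle the non-matching case by a functional equation built from the involution and \Cref{lem:valuation_under_involution}, globalize via the Jacquet--Rallis mH-fibration and the product formula, use stratified smallness and the support theorem to reduce to a dense open in each component, verify pointwise there by direct computation of a small set of particularly simple local situations, and resolve the global Galois obstruction by showing the adelic sum of local obstructions is the degree of an always-even line bundle. The one small inaccuracy is a citation slip: the identification of the transfer factor \(\Delta(A)\) with the Frobenius trace of \(L_{v,\eta}\) is established in \Cref{lem:OI_vs_pt_count_in_affine_JR_fiber_symmetric} (a direct computation with the pure co-tensor \(f_n^\vee\) and the companion-matrix section), not via \Cref{prop:local_obstruction_and_valuation_parity}, which instead governs the parity of the obstruction \(\Ob_v(a)\); the two are related but play distinct roles.
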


\subsubsection{}
We now reformulate \Cref{thm:main_theorem} in geometric terms using affine
Jacquet--Rallis fibers. For simplicity, we restrict to functions \(f_\lambda\)
and \(f_\lambda'\), and the general case is similar by taking linear
combinations.

We first look at \(\SYMS_n\)-side. For an element \(h\in H(F_v)\) to contribute to
the orbital integral, it is necessary that \(h^{-1}Ah\) to be contained in the
closure of the coset \(\Cartan_n^\lambda\). Since we normalize \(\dd h\) so that
\(H(\cO_v)\) has volume \(1\), the orbital integral
\(\OI_{A,H}^\eta(f_\lambda)\) is the same as ``\((f_\lambda,\eta)\)-weighted''
point count of the set
\begin{align}
    \label{eqn:set_related_to_OI_symmetric}
    \Set*{h\in H(F_v)/H(\cO_v)\given \Ad_h^{-1}(A)\in\Cartan_n^{\le\lambda}=\bar{\Cartan_n^\lambda}}.
\end{align}
More precisely, we have
\begin{align}
    \OI_{A,H}^\eta(f_\lambda)=\sum_{h}f_\lambda\bigl(\Ad_h^{-1}(A)\bigr)\eta(\det{h}),
\end{align}
where \(h\) ranges over the set \eqref{eqn:set_related_to_OI_symmetric}.

Let \(A_\AD\) be the image of \(A\) in \(\SYMS_n^\AD(F_v)\), and \(\lambda_\AD\)
that of \(\lambda\). Then it is clear that the \((f_\lambda,\eta)\)-weighted
point count of \eqref{eqn:set_related_to_OI_symmetric} is the same as
\((f_{\lambda_\AD},\eta)\)-weighted point count of the set
\begin{align}
    \label{eqn:adjoint_set_related_to_OI}
    \Set*{h\in H(F_v)/H(\cO_v)\given
    \Ad_h^{-1}(A_\AD)\in\Cartan_{n,\AD}^{\le\lambda_\AD}}.
\end{align}
As we discussed in
\Cref{sub:matching_orbits}, we may find a lift \(\gamma\in\FRM(F_v)\) of
\(A_\AD\). When \eqref{eqn:set_related_to_OI_symmetric} is non-empty, by
\cite[Lemma~2.5.1 and Proposition~3.1.6]{Ch19} (see also
\cite[Lemma~4.1.5 and Proposition~4.1.9]{Wa25}), we may choose \(\gamma\) so
that its abelianization lies in \(\pi_v^{\lambda_\AD}\FRA_\FRM^\x(\cO_v)\). As
in the discussion before \Cref{lem:disc_of_group_vs_monoid}, \(\gamma\),
together with standard vector \(\bFe\) and co-vector \(\bFe^\vee\), induces a
point
\begin{align}
    a=\chi_{\sM}(\gamma,\bFe,\bFe^\vee)\in\FRC_\sM(\cO_v).
\end{align}
Similar to the case of MASFs (see
\cite[\S~3.2]{Ch19} or \cite[\S~4.1]{Wa25}), we shall see that
\(\OI_{A,H}^\eta(f_\lambda)\) is the same as certain weighted point count of the
set
\begin{align}
    \cM_{v}(a)(k_v).
\end{align}
To do this, we need to geometrize both \(f_\lambda\) and \(\eta\)
on \(\cM_v(a)\) via sheaf-function dictionary.

\subsubsection{}
The geometrization of \(f_\lambda\) is straightforward: we have a local
evaluation map
\begin{align}
    \ev_v\colon \cM_v(a)\longto \Stack*{\Arc_v(\bG')^\AD\backslash \Loop_v \SYMS_n^\AD},
\end{align}
and we pull back the sheaf \(\IC_n^\lambda\).

The geometrization of \(\eta\) is less obvious. We let \(\cF_\eta\) be the
rank-\(1\) local system on \(\Spec{k_v}\) associated with \(v'/v\).
By \Cref{def:affine_jacquet_rallis_fiber} and invariant theory in
\Cref{sub:deformed_quotient_stack}, for any \(\breve{\cO}_{\bar{v}}\)-point
\((\cE,x,e,e^\vee)\) of \(\sM\),  we have a pure co-tensor
\begin{align}
    f_n^\vee\colon \wedge^n \cE\longto \bbA^1,
\end{align}
whose valuation is well-defined. Since the valuation is discrete,
\(\val_v(f_n^\vee)\) is locally constant on \(\cM_v(a)\). Let
\begin{align}
    \cM_v^i(a)\subset\cM_v(a)
\end{align}
be the open and closed subscheme of points
with \(\val_v(f_n^\vee)=i\). Then we define \(L_{v,\eta}^i\) on \(\cM_v^i(a)\) to be
the local system \(\cF_\eta^{\otimes i}\). Let \(L_{v,\eta}\) be the disjoint union
of various \(L_{v,\eta}^i\).

We may consider the
\(\IC_n^\lambda\otimes L_{v,\eta}\)-weighted point count on \(\cM_v(a)\), denoted by
\begin{align}
    \Cnt_{\IC_n^\lambda\otimes L_{v,\eta}}\cM_v(a)(k_v)\defeq
    \sum_{x\in\cM_v(a)(k_v)}\Tr_{\IC_n^\lambda\otimes L_{v,\eta}}(x),
\end{align}
where \(\Tr_{\IC_n^\lambda\otimes L_{v,\eta}}\) is the function induced by
\(\IC_n^\lambda\otimes L_{v,\eta}\) by taking \(k_v\)-Frobenius traces.
\begin{lemma}
    \label[lemma]{lem:OI_vs_pt_count_in_affine_JR_fiber_symmetric}
    With \(a=\chi_\sM(\gamma,\bFe,\bFe^\vee)\) induced by \(A\) as above (in
    particular, the set \eqref{eqn:set_related_to_OI_symmetric} is non-empty),
    we have an equality
    \begin{align}
        \label{eqn:OI_vs_pt_count_in_affine_JR_fiber_symmetric}
        \Delta(A)\OI_{A,H}^\eta(f_\lambda)=\Cnt_{\IC_n^\lambda\otimes L_{v,\eta}}\cM_v(a)(k_v).
    \end{align}
\end{lemma}
\begin{proof}
    Since the vector \(\bFe\) and the co-vector \(\bFe^\vee\) are standard, any
    \(\cO_v\)-point in \(\sM\) lying over \(a\) is necessarily contained in
    \(\Stack*{\FRM/H}\). Given a point in \(\Stack*{\FRM/H}(\cO_v)\), in other
    words, an \(H\)-torsor \(\cE_H\) over \(\cO_v\) and an
    \(H\)-equivariant map \(\phi\colon\cE_H\to\FRM\). Trivialize \(\cE_H\) over
    \(\cO_v\), we obtain via \(\phi\) a point \(x\in\FRM(\cO_v)\). By
    assumption, \(x\) lies over \(a\), and so we can find \(h\in H(F_v)\) such
    that \(hxh^{-1}=\gamma\).

    By Cartan decomposition of \(\FRM(\cO_v)\cap\FRM^\x(F_v)\) (see
    \cite[Lemma~2.5.1]{Ch19} or \cite[Lemma~4.1.5]{Wa25}), \(x_\AD\) is
    contained in \(\Cartan_{n,\AD}^{\le\lambda_\AD}\).
    If we change the trivialization of \(\cE_H\), then
    \(x\) is changed by an \(H(\cO_v)\)-conjugation, and the coset
    \(hH(\cO_v)\) is unchanged. Therefore, we have a well-defined map
    \begin{align}
        \cM_v(a)(k_v)\longto \Set*{h\in H(F_v)/H(\cO_v)\given \Ad_h^{-1}(A_\AD)\in\Cartan_{n,\AD}^{\le\lambda_\AD}}.
    \end{align}
    Running the argument backwards, we obtain the map in the opposite direction,
    and one can easily verify that this is an isomorphism.

    It remains to verify that
    \begin{align}
        \Delta(A)\eta(\det{h})=\eta(\pi_v)^{\val_v(f_n^\vee(\cE_H,x,\bFe,\bFe^\vee))},
    \end{align}
    or equivalently,
    \begin{align}
        \val_v(\bFe^\vee\wedge \bFe^\vee A\wedge\cdots\wedge\bFe^\vee
        A^{n-1})+\val_v(\det{h})\equiv\val_v\bigl(f_n^\vee(\cE_H,x,\bFe,\bFe^\vee)\bigr)\bmod
        2.
    \end{align}
    Since we are computing the valuations, we fix a trivialization of \(\cE_H\)
    over \(\cO_v\). Let \(x_1\) be the image of \(x\) in \(\End_n(\cO_v')\). Let
    \(z_i\) be the coordinates of the abelianization of \(x\) (and \(\gamma\)).
    Then, similar to \Cref{prop:local_obstruction_and_valuation_parity}, we have
    \begin{align}
        \label{eqn:eta_twisting_f_n_vs_lattice}
        f_n^\vee(\cE_H,x,\bFe,\bFe^\vee)
        &=\prod_{i=1}^{n-1}z_i^{\frac{(n-i)(n-i-1)}{2}}(\bFe^\vee\wedge\bFe^\vee x_1\wedge\cdots\wedge\bFe^\vee x_1^{n-1})\\
        &=\prod_{i=1}^{n-1}z_i^{\frac{(n-i)(n-i-1)}{2}}(\bFe^\vee\wedge\bFe^\vee
        h^{-1}\gamma_1h\wedge\cdots\wedge\bFe^\vee h^{-1}\gamma_1^{n-1}h)\\
        &=\prod_{i=1}^{n-1}z_i^{\frac{(n-i)(n-i-1)}{2}}(\bFe^\vee\wedge\bFe^\vee
        \gamma_1\wedge\cdots\wedge\bFe^\vee \gamma_1^{n-1})\det{h},
    \end{align}
    and so we reduce to prove that
    \begin{align}
        \val_v(\bFe^\vee\wedge \bFe^\vee A\wedge\cdots\wedge\bFe^\vee A^{n-1})
        \equiv\val_v\left[\prod_{i=1}^{n-1}z_i^{\frac{(n-i)(n-i-1)}{2}}(\bFe^\vee\wedge\bFe^\vee
        \gamma_1\wedge\cdots\wedge\bFe^\vee \gamma_1^{n-1})\right]\bmod 2.
    \end{align}
    Since \(A_\AD=\gamma_\AD\), we may find \(c\in(F_v')^\x\) such that
    \(A=c\gamma_1\). So the left-hand side becomes
    \begin{align}
        \val_v(\bFe^\vee\wedge \bFe^\vee A\wedge\cdots\wedge\bFe^\vee A^{n-1})
        &= \val_v(\bFe^\vee\wedge \bFe^\vee (c\gamma_1)\wedge\cdots\wedge\bFe^\vee
        (c\gamma_1)^{n-1})\\
        &= \val_v(\bFe^\vee\wedge \bFe^\vee \gamma_1\wedge\cdots\wedge\bFe^\vee
        \gamma_1^{n-1})+\frac{n(n-1)}{2}\val_v(c).
    \end{align}
    In the proof of \Cref{prop:local_obstruction_and_valuation_parity}, we computed
    \begin{align}
        \det{\gamma_1}=\prod_{i=1}^{n-1}z_i^{n-i},
    \end{align}
    and since \(A\bar{A}=1\), we have \(\val_v(\det{\gamma_1})=-n\val_v(c)\).
    Therefore, it suffices to prove that
    \begin{align}
        \frac{n-1}{2}\sum_{i=1}^{n-1}(n-i)\val_v(z_i)
        -\sum_{i=1}^{n-1}\frac{(n-i)(n-i-1)}{2}\val_v(z_i)
        =\sum_{i=1}^{n-1}\frac{(n-i)i}{2}\val_v(z_i)
        \equiv 0\bmod 2.
    \end{align}
    Since \(\lambda\) is a cocharacter of \(\SYMS_n\),
    we necessarily have \(\Pair{\rho}{\lambda_\AD}\in\bbZ\). It is easy to
    compute that
    \begin{align}
        \sum_{i=1}^{n-1}\frac{(n-i)i}{2}\val_v(z_i)=\Pair{2\rho}{\lambda_\AD}\in
        2\bbZ.
    \end{align}
    This finishes the proof.
\end{proof}

\begin{remark}
    \label[remark]{rmk:f_n_of_base_pt_vs_actual_pt}
    Given a point in \(\cM_v(a)(\bar{k})\), we have a lattice description
    \eqref{eqn:JR_fiber_lattice_description}. The latter depends on the choice
    of \(m=(\gamma,\bFe,\bFe^\vee)\). By the same computation in
    \eqref{eqn:eta_twisting_f_n_vs_lattice}, we see that modulo \(2\), the
    valuation of \(f_n^\vee\) at a point \(\Lambda\) is the same as the
    valuation of \(f_n^\vee(m)/\wedge^n\Lambda\). This part of the computation
    clearly does not depend on the fact that \(a\) is induced by \(A\), hence
    valid for any \(a\).
\end{remark}

\subsubsection{}
On the unitary side, we start with \(A'\in G'(F_v)\) and
\(f_\lambda'\in\cH_0'\). We have the set similar to
\eqref{eqn:set_related_to_OI_symmetric}:
\begin{align}
    \label{eqn:set_related_to_OI_unitary}
    \Set*{h'\in H'(F_v)/H'(\cO_v)\given \Ad_{h'}^{-1}(A')\in\Cartan_n^{\prime\le\lambda}}.
\end{align}
We likewise have \(\gamma'\in \FRM'(F_v)\) lifting \(A'_\AD\), and if
\eqref{eqn:set_related_to_OI_unitary} is non-empty, \(\gamma'\) may be so chosen
that its abelianization has boundary divisor \(\lambda\), and together with the
standard vector \(\bFe\) and standard Hermitian form, it induces
\(a'\in\FRC_\sM(\cO_v)\). If \(A'\) matches \(A\in
\SYMS_n(F_v)\), then \(a'\) coincides with \(a\) in
\Cref{lem:OI_vs_pt_count_in_affine_JR_fiber_symmetric} and the preceding
discussions.

We may pull back the intersection complex \(\IC_n^{\prime\lambda}\) to
\(\cM_v'(a)\) through the evaluation map
\begin{align}
    \ev_v'\colon \cM_v'(a)\longto\Stack*{\Arc_v(G')^\AD\backslash
    \Gr_{v,(G')^\AD}}.
\end{align}
Unsurprisingly, we have the following result, whose proof is similar to that
of \Cref{lem:OI_vs_pt_count_in_affine_JR_fiber_symmetric}:
\begin{lemma}
    \label[lemma]{lem:OI_vs_pt_count_in_affine_JR_fiber_unitary}
    We have an equality
    \begin{align}
        \label{eqn:OI_vs_pt_count_in_affine_JR_fiber_unitary}
        \OI_{A',H'}(f_\lambda')=\Cnt_{\IC_n^{\prime\lambda}}\cM_v'(a')(k_v).
    \end{align}
\end{lemma}

\begin{remark}
    If the set \eqref{eqn:set_related_to_OI_symmetric} is empty, then the left-hand side
    of \eqref{eqn:OI_vs_pt_count_in_affine_JR_fiber_symmetric} is \(0\), and the
    right-hand side is \textit{a priori} not defined since there is no \(a\). We
    use the convention that the right-hand side is counting the empty set for
    consistency. The case of \eqref{eqn:set_related_to_OI_unitary} and
    \eqref{eqn:OI_vs_pt_count_in_affine_JR_fiber_unitary} is similar.
\end{remark}

\subsubsection{}
Conversely, given \emph{any} generically strongly regular semisimple
\(a\in\FRC_\sM(\cO_v)\cap\FRC_\sM^\srs(F_v)\), observe that the point counts on
the right-hand sides of \eqref{eqn:OI_vs_pt_count_in_affine_JR_fiber_symmetric}
and \eqref{eqn:OI_vs_pt_count_in_affine_JR_fiber_unitary} are still
well-defined.
Therefore, \Cref{thm:main_theorem} is a corollary of the following:
\begin{theorem}
    \label[theorem]{thm:main_theorem_geometric}
    Let \(a\in\FRC_\sM(\cO_v)\cap\FRC_\sM^\srs(F_v)\). Then we have equality
    \begin{align}
        \label{eqn:pt_count_version_of_FL}
        \Cnt_{\IC_n^\lambda\otimes
        L_{v,\eta}}\cM_v(a)(k_v)=\Cnt_{\IC_n^{\prime\lambda}}\cM_v'(a)(k_v),
    \end{align}
    and both sides are \(0\) if \(a\) is not a matching pair (i.e.,
    \(\Ob_v(a)\) is non-trivial).
\end{theorem}


\subsection{Some simple cases} 
\label{sub:some_simple_cases}

We shall prove \Cref{thm:main_theorem_geometric} using global methods. In order
to deduce the local result from the global result, we need to compute a few
cases of \Cref{thm:main_theorem_geometric} that are particularly simple.

\subsubsection{}
The simplest case is when \(v'\) is split over \(v\). In this case, the
character \(\eta\) is trivial hence so is \(L_{v,\eta}\). Moreover, it is
straightforward to see that
\(\FRM\) (resp.~\(G\), resp.~\(H\)) is isomorphic to \(\FRM'\) (resp.~\(G'\),
resp.~\(H'\)), and these isomorphisms are compatible. It implies that we have a
\(k_v\)-isomorphism
\begin{align}
    \cM_v(a)\cong\cM_v'(a),
\end{align}
as well as
\begin{align}
    \IC_n^\lambda\otimes L_{v,\eta}\cong \IC_n^\lambda\cong \IC_n^{\prime\lambda}.
\end{align}
Therefore, it is trivial to show the following:
\begin{lemma}
    \Cref{thm:main_theorem_geometric} holds when \(v'\) is split over \(v\).
\end{lemma}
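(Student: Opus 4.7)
The strategy is to exploit the splitness of \(v'\) over \(v\) to explicitly identify everything on the two sides. When \(v'\) is split over \(v\), we have \(\cO_v' \cong \cO_v \times \cO_v\), and the nontrivial Galois element \(\sigma\in\Gal(k_v'/k_v)\) swaps the two factors. Consequently \(\bM' = \OGT_*(\bM \times X')\) becomes \(\bM \times \bM\) over \(\cO_v\), and similarly \(\bG' \cong \bG \times \bG\), \(\bH' \cong \bH \times \bH\). The first step of the plan is therefore to check that the fixed-point conditions defining the twisted forms degenerate as expected: for \((x,y)\in \bM \times \bM\), the condition \(\iota\sigma(x,y)=(x,y)\) amounts to \(y = \iota(x)\), and \(\tau\sigma(x,y)=(x,y)\) amounts to \(y = \tau(x)\). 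In either case, projection to the first factor yields canonical isomorphisms \(\FRM \xrightarrow{\sim} \bM\) and \(\FRM' \xrightarrow{\sim} \bM\), and analogously \(H \cong \bH \cong H'\), \(G \cong \bG \cong G'\). These isomorphisms intertwine the adjoint actions.

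Next I would promote this to the deformed stacks \(\sM\) and \(\sM'\). In the split case, both the symmetric-side data \((\cE, x, e, e^\vee)\) classified by \(\sM\) and the unitary-side data \((\cE', x', e')\) classified by \(\sM'\) reduce, via the projection-to-first-factor isomorphisms above, to tuples of the form \((\cE_0, x_0, e_0, e_0^\vee)\) with \(\cE_0\) a \(\bG\)-bundle over \(X_v\), \(x_0\in\bM\times^{\bG}\cE_0\), and \(e_0, e_0^\vee\) a vector and co-vector. (On the unitary side, the Hermitian form collapses to the tautological pairing between a vector space and its dual.) This yields a canonical isomorphism of \(k_v\)-stacks \(\cM_v(a) \xrightarrow{\sim} \cM_v'(a)\), compatible with the evaluation maps into the respective loop stacks, so that the pullbacks of \(\IC_n^\lambda\) and \(\IC_n^{\prime\lambda}\) are identified (both descend from the same sheaf on \(\Stack{\Arc_v\bG\backslash\Gr_{v,\bG}^{\le\lambda}}\)).

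Finally, since \(F_v'/F_v\) is split, the quadratic character \(\eta\) is trivial and hence \(L_{v,\eta}\) is the constant sheaf \(\Qlb\). Combining with the previous step, we obtain an isomorphism of sheaves \(\IC_n^\lambda \otimes L_{v,\eta} \cong \IC_n^{\prime\lambda}\) on the two identified stacks, and taking Frobenius trace functions gives equation \eqref{eqn:pt_count_version_of_FL} termwise. The only item that requires a small check is Frobenius equivariance: one should verify that the canonical Hermitian element \(h\) and the twist element \(s\) from \Cref{sub:frobenius_structures_and_obstructions} trivialize in the split case so that \eqref{eqn:JR_fiber_lattice_description_unitary} and \eqref{eqn:JR_fiber_lattice_description_symmetric} reduce to the common description \eqref{eqn:JR_fiber_lattice_description}. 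Since \(\Ob_v(a)\) automatically vanishes in the split case by \Cref{prop:local_obstruction_and_valuation_parity} (or directly from \(\RH^1(F_v, G')=0\) when \(G'\) is split), this compatibility is immediate. There is no genuine obstacle here; the lemma is essentially a bookkeeping consequence of the fact that both sides are base-changed from the same split geometric object.
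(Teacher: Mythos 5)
Your proof takes essentially the same route as the paper's (which is a brief observation rather than a detailed argument): the splitness of \(v'/v\) gives compatible \(k_v\)-isomorphisms \(\FRM\cong\FRM'\), \(G\cong G'\), \(H\cong H'\), hence \(\cM_v(a)\cong\cM_v'(a)\), while \(\eta\) and therefore \(L_{v,\eta}\) trivialize and \(\IC_n^\lambda\cong\IC_n^{\prime\lambda}\) under this identification. Your version just spells out the projection-to-first-factor identifications and the Frobenius-equivariance check that the paper leaves implicit in the phrase ``these isomorphisms are compatible.''
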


\subsubsection{}
Another particularly simple case is when \(\Disc(a)\) has an odd
valuation. In this case, we have the following:
\begin{lemma}\label[lemma]{lem:vanishing_odd_val}
    Suppose \(v'\) is non-split over \(v\) and \(\Disc(a)\) has odd valuation.
    Then \Cref{thm:main_theorem_geometric} holds and both sides of
    \eqref{eqn:pt_count_version_of_FL} equal \(0\).
\end{lemma}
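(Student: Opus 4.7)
The plan is to reduce both sides of \eqref{eqn:pt_count_version_of_FL} to zero by separate arguments: the unitary side vanishes trivially, and the symmetric side vanishes by an involution-cancellation argument.

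\textbf{Unitary side.} I would invoke \Cref{prop:local_obstruction_and_valuation_parity}: when $v'$ is non-split over $v$ and $\val_v(\Disc(a))$ is odd, the obstruction class $\Ob_v(a) \in \RH^1(F_v,G')$ is non-trivial. The lattice description \eqref{eqn:JR_fiber_lattice_description_unitary} requires the existence of a lattice self-dual with respect to a Hermitian form whose Gram matrix $h$ has $\det h$ of odd valuation, which forces triviality of $\Ob_v(a)$ — a contradiction. Hence $\cM_v'(a)(k_v) = \emptyset$ and the right-hand side is trivially $0$.

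\textbf{Symmetric side.} I would deduce vanishing from the involution $\iota\colon \Lambda \mapsto \Lambda^* = \FRd^{-1}\tp{g}^{-1}\Lambda_0$ of \eqref{eqn:involution_on_symmetric_side}. The argument has three ingredients.

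\emph{(i) $\iota$ preserves the $\IC_n^\lambda$-trace.} By the observation immediately following \Cref{lem:valuation_under_involution}, the images of $\Ad_g^{-1}(\gamma)$ and $\Ad_{\tp{g}\FRd}(\gamma)$ in $G^\AD$ lie in the same Cartan double coset $\Cartan_{n,\AD}^{\le \lambda_\AD}$, since Cartan cosets are stable under transposition. Because the trace of $\IC_n^\lambda$ at a point depends only on the Cartan position of $\Ad_{g}^{-1}(\gamma)$ in $G^\AD$, it is unchanged under $\iota$.

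\emph{(ii) $\iota$ multiplies the $L_{v,\eta}$-trace by $(-1)^{\val_v(\Disc(a))}$.} By \Cref{rmk:f_n_of_base_pt_vs_actual_pt}, the value of $\val_v(f_n^\vee)$ at a point $\Lambda$ agrees modulo $2$ with $\val_v(f_n^\vee(\tilde{m})/\wedge^n\Lambda)$, which via the relation $\Disc = f_n^\vee f_n$ (see \Cref{def:extended_disc_divisor}) equals $\val_v(\Disc(a)) - \val_v(\wedge^n\Lambda/f_n(\tilde{m}))\pmod 2$. The same formula with $\tilde{m},\Lambda$ replaced by $\tilde{m}^*,\Lambda^*$ then yields, upon applying \Cref{lem:valuation_under_involution},
\begin{align}
    \val_v(f_n^\vee)|_\Lambda + \val_v(f_n^\vee)|_{\Lambda^*}
    \equiv \val_v(\Disc(a)) \pmod 2.
\end{align}

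\emph{(iii) $\iota$ is fixed-point-free.} A fixed point $\Lambda = \Lambda^*$ would force $2\val_v(f_n^\vee)|_\Lambda \equiv \val_v(\Disc(a))\pmod 2$, contradicting the oddness hypothesis.

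Combining (i)–(iii), we pair up the points of $\cM_v(a)(k_v)$ by $\iota$ and compute
\begin{align}
    \Cnt_{\IC_n^\lambda\otimes L_{v,\eta}}\cM_v(a)(k_v)
    = \sum_{\{x,\iota(x)\}} \bigl(1 + (-1)^{\val_v(\Disc(a))}\bigr)\Tr_{\IC_n^\lambda\otimes L_{v,\eta}}(x)
    = 0,
\end{align}
finishing the proof. The main technical obstacle is ingredient (ii): carefully tracking how the change of base point $\tilde{m}\leadsto\tilde{m}^*$ interacts with the computation of $L_{v,\eta}$ at $\Lambda^*$, and matching it against \Cref{lem:valuation_under_involution} via the identity $\Disc = f_n^\vee f_n$. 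Ingredient (i) is essentially already in the paper, and (iii) is a formal consequence of (ii).
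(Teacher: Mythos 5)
Your proof is correct and follows essentially the same route as the paper: the unitary side vanishes because $\Ob_v(a)\neq 0$ forces $\cM_v'(a)(k_v)=\emptyset$, and the symmetric side vanishes because the involution of \eqref{eqn:involution_on_symmetric_side} preserves the $\IC_n^\lambda$-trace while flipping the sign of the $L_{v,\eta}$-trace (your ingredient (ii) unwinds to exactly the paper's computation via \Cref{lem:valuation_under_involution} and \Cref{rmk:f_n_of_base_pt_vs_actual_pt}). One small remark: ingredient (iii) is superfluous — even at a hypothetical fixed point $\Lambda=\Lambda^*$ the sign flip would force $\Tr_{\IC_n^\lambda\otimes L_{v,\eta}}(\Lambda)=0$, which is why the paper simply concludes $\Cnt = -\Cnt$ without needing to pair up $\iota$-orbits.
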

\begin{proof}
    By \Cref{prop:local_obstruction_and_valuation_parity}, \(\Ob_v(a)\) is
    non-trivial, and in this case \(\cM_v'(a)(k_v)\) is empty. Therefore, the
    right-hand side of \eqref{eqn:pt_count_version_of_FL} is \(0\).

    On the other hand, we have an involution
    \eqref{eqn:involution_on_symmetric_side} on \(\cM_v(a)(k_v)\), such that
    \begin{align}
        \val_v\bigl(\wedge^n \Lambda^*/f_n(\tilde{m}^*)\bigr)=\val_v(\Disc(a))-
        \val_v\bigl(\wedge^n \Lambda/f_n(\tilde{m})\bigr)
    \end{align}
    by \Cref{lem:valuation_under_involution}. Let \(g^*\in\Gr_{v,G}(k_v)\) be
    the image of \(g\) under this involution, then we have
    \begin{align}
        \val_v(\det{g^*})-\val_v(f_n(\tilde{m}^*))=\val_v(\Disc(a))-\val_v(\det{g})+\val_v(f_n(\tilde{m})).
    \end{align}
    By definition of \(\tilde{m}^*\), \(f_n(\tilde{m}^*)\) and
    \(f_n(\tilde{m})\) are \(\cO_v\)-duals, and so their valuations are
    inverses of each other. This means that
    \begin{align}
        \eta(\det{g^*})=\eta(\Disc(a))\eta(\det{g})=-\eta(\det{g}).
    \end{align}
    The function \(f_\lambda\) remains unchanged under this involution (see the
    discussion following \Cref{lem:valuation_under_involution}). As a result, we
    have
    \begin{align}
        \Cnt_{\IC_n^\lambda\otimes
        L_{v,\eta}}\cM_v(a)(k_v)=-\Cnt_{\IC_n^\lambda\otimes L_{v,\eta}}\cM_v(a)(k_v)=0.
    \end{align}
    This finishes the proof.
\end{proof}

\begin{remark}
    In the lemma above, we in fact proved a \notion{functional equation} for any
    \(v'/v\) and any generically strongly regular semisimple \(a\):
    \begin{align}
        \Cnt_{\cF\otimes
        L_{v,\eta}}\cM_v(a)(k_v)=\eta(\Disc(a))\Cnt_{\cF\otimes
    L_{v,\eta}}\cM_v(a)(k_v)
    \end{align}
    for any \(\Arc_v{\bG'}\)-equivariant constructible complex \(\cF\) on
    \(\Loop_v{\SYMS_n}\).
\end{remark}

\subsubsection{}
When \(v'\) is non-split over \(v\) and \(\val_v(\Disc(a))\) is even, the
situation is more subtle and we identify two different simple cases. It relies
on deeper knowledge of the MASFs
associated with \(a\) (more precisely, its image \(a^\Hit\in\FRC_\FRM(\cO_v)\)).

As before, let \(\lambda\) be the boundary divisor of \(a\), which is just
the boundary divisor of \(a^\Hit\). Let \(m\in\sM(F_v)\) be the unique point
over \(a\), which extends to a point \(\tilde{m}\in\sM(\cO_v')\). This, in
particular, induces a point \(\gamma\in\FRM(F_v)\) which is
\(G(F_v')\)-conjugate to a point \(\tilde{\gamma}\in\FRM(\cO_v')\).
By the construction of \(\tilde{m}\) in \Cref{sub:deformed_quotient_stack},
\(\tilde{\gamma}\) lies in the \(G\)-regular locus of \(\FRM\).

We view \(\gamma\) as a point in \((\bM')^\x(F_v)\). Following
\cite[\S~4]{Wa25}, there is a canonical \(W\)-orbit of rational cocharacters
associated with \(\gamma_\AD\), represented by a dominant rational cocharacter
\(\nu\in\CoCharG((\bT')^\AD)_\bbQ\). 

\begin{definition}
Both \(\nu\) and its \(W\)-orbit are
called the \notion{Newton point} of \(\gamma_\AD\). Similarly, the pair
\((\lambda,\nu)\) will be called the \notion{Newton point} of \(a^\Hit\).   
\end{definition}

Since
any \(W\)-orbit of rational
cocharacters contains a unique dominant element, \(\nu\) is necessarily fixed by
outer automorphism \(\sigma_{\Out}\).

\subsubsection{}
By \cite[Proposition~3.1.6]{Ch19} (see also
\cite[Proposition~4.1.9]{Wa25}), for \(\cM_a\) to be non-empty, it is necessary
that \(\cM_a^\Hit\) to be non-empty, which implies
\(\nu\le_\bbQ\lambda=-w_0(\lambda)\). In
other words, \(\lambda-\nu\) must be a non-negative \(\bbQ\)-combination of
positive roots. This condition also holds if \(\cM_a'\) is non-empty, because
over \(\bar{k}\) the scheme \(\cM_a\) and \(\cM_a'\) are isomorphic.

If \(\nu\) is not integral (i.e., not contained in the lattice
\(\CoCharG((\bT')^\AD)\)), the geometry of MASF \(\cM_a^\Hit\) is quite
complicated; fortunately in this subsection, we need only consider the case of
integral \(\nu\). More precisely, we will only consider the following two cases:
\begin{description}
    \item[Case A] \(\lambda=\nu=0\), and the discriminant valuation \(d_{v+}(a^\Hit)\) is
        \(1\) (and so the local \(\delta\)-invariant \(\delta_v(a^\Hit)=0\));
    \item[Case B] \(\lambda=\nu\) is either \(0\) or one of the fundamental coweights,
        and \(a^\Hit\) is unramified and
        \(\nu\)-regular semisimple in the sense of
        \cite[Definition~3.4.6]{Wa25}. Roughly speaking, it means that
        \(a^\Hit\) is ``as general as possible'' among all those with Newton
        point \((\lambda,\nu)\).
\end{description}
These cases are part of the table in \cite[(12.2.1)]{Wa25}. In both cases,
\(\cM_v^\Hit(a)\) is a torsor of the local Picard \(\cP_v^\Hit(a)\), and
similarly \(\cM_v^{\prime\Hit}(a)\) is a \(\cP_v^{\prime\Hit}(a)\)-torsor. See
\cite[Corollaries~3.5.3, 3.8.2, 3.8.3]{Ch19} (also \cite[Theorem~4.3.5,
Corollary~4.3.6]{Wa25}), as well as
\Cref{sub:the_cameral_cover_and_the_picard_action}. Moreover, the local Picard
\(\cP_v^\Hit(a)\) and \(\cP_v^{\prime\Hit}(a)\) are geometrically homeomorphic to a
\(\bbZ\)-lattice. Since we only care about point count problems, we will ignore
the non-reducedness of these schemes.

In both cases, the restriction of the Satake function \(f_\lambda\) and
\(f_\lambda'\) simplify to constant functions, because the image of the local
evaluation map in these cases always lands in the big affine Schubert cell.
Therefore, we may use functions \(\One_{\Cartan_n^\lambda}\) and
\(\One_{\Cartan_n^{\prime\lambda}}\) instead.

\subsubsection{}
In what follows, we will compute both cases A and B in two steps: first, we
compute the geometric structure of \(\cM_v(a)\) (resp.~\(\cM_v'(a)\),
resp.~\(\cM_v^\Hit(a)\), resp.~\(\cM_v^{\prime\Hit}(a)\)), namely the
\(\bar{k}\)-scheme \(\cM_{\bar{v}}(a)\) (resp.~\(\cM_{\bar{v}}'(a)\),
resp.~\(\cM_{\bar{v}}^\Hit(a)\), resp.~\(\cM_{\bar{v}}^{\prime\Hit}(a)\))
This step is the same for both the symmetric side and the unitary side.
Second, we compute the \(k_v\)-Frobenius structures and compare them to deduce
\Cref{thm:main_theorem_geometric} in these special cases.

\subsubsection{}
We first look at Case A. By the results in
\cite[\S~4.5]{Wa25}, the isomorphism class of \(\cM_{\bar{v}}^\Hit(a)\) (as
\(\bar{k}\)-variety) is determined by the cameral cover
\begin{align}
    \pi_a\colon \tilde{X}_{\bar{v},a^\Hit}\longto \breve{X}_{\bar{v}}
\end{align}
of \(a^\Hit\) around \(\bar{v}\).
In this case, the ramification of the cameral
cover is given by a single pair of roots \(\pm\Rt\). The root \(\Rt\) depends on
the choice of a base point on \(\tilde{X}_{\bar{v},a^\Hit}\), which may be
represented by an element \(t\in\FRT_\FRM(\bar{k}\powser{\pi_v^{1/2}})\) lying over
\(a^\Hit\). We fix such a \(t\), so that \(\Rt\) is the highest root.

\subsubsection{}
Let \(W_{\pm\Rt}\cong \bbZ/2\bbZ\) be the Weyl
subgroup generated by \(\pm\Rt\), then
\begin{align}
    \pi_{a}^{\pm\Rt}\colon\tilde{X}_{\bar{v},a^\Hit}/W_{\pm\Rt}\longto \breve{X}_{\bar{v}}
\end{align}
is an \'etale covering, hence a trivial covering since the residue field of
\(\bar{v}\) is algebraically closed.
Consequently, we can reduce to the Levi subgroup generated by \(\pm\Rt\),
and easily see (by looking at the cameral covers) that \(\cM_{\bar{v}}^\Hit(a)\)
is isomorphic to the geometric MASF associated with
\begin{align}
    \begin{pmatrix}
        0 & & & & \pi_v\\
          & t_2 & & &\\
          & & \ddots & &\\
          & & & t_{n-1} &\\
        1 & & & & 0
    \end{pmatrix}\in \Mat_n(\cO_v),
\end{align}
where \(t_2,\ldots,t_{n-1}\) are distinct elements in \(\cO_v^\x\).
By direct computation, one can see that
\begin{align}
    \label{eqn:geometric_MASF_case_A}
 \cM_{\bar{v}}^\Hit(a)(\bar{k})= \Set*{\begin{pmatrix}
            0 & & \pi_v\\
              & I_{n-2} &\\
            1 & & 0
        \end{pmatrix}^{d_1}\Diag(1,\pi_v^{d_2},\ldots,\pi_v^{d_{n-1}},1)\Lambda_0\given
    d_i\in \bbZ}\cong \bbZ\x\bbZ^{n-2}.
\end{align}
By taking the diagonal conjugate over \(\breve{F}_{\bar{v}}[\pi_v^{1/2}]\) and
then taking the inertia invariants, we see that
\eqref{eqn:geometric_MASF_case_A} is isomorphic to the set
\begin{align}
    \label{eqn:geometric_MASF_case_A_ramified}
    \Set*{\Diag(\pi_v^{d_1/2},\pi_v^{d_2},\ldots,\pi_v^{d_{n-1}},\pi_v^{d_1/2})\Lambda_0\given
    d_i\in \bbZ}.
\end{align}
Note that MASF for ramified elements usually cannot be computed using inertia
invariants, but in this especially simple case there is no problem.

\subsubsection{}
Let \(\bM_{\pm\Rt}^\x\) be the Levi subgroup of \(\bM^\x\) generated by
\(\pm\Rt\), and it is stable under \(\iota\sigma\). Let \(\FRM_{\pm\Rt}^\x\)
be the corresponding twist in \(\FRM^\x\), and let \(\FRM_{\pm\Rt}\subset\FRM\)
be its closure. There exists
\(\gamma_{\pm\Rt}\in\FRM_{\pm\Rt}^\x(\breve{F}_{\bar{v}})\) lying over
\(a^\Hit\), and it is \(G(\breve{F}_{\bar{v}})\)-conjugate to \(\gamma\). Let
\(e_{\pm\Rt}\) (resp.~\(e_{\pm\Rt}^\vee\)) be the corresponding conjugate of
\(e\) (resp.~\(e^\vee\)) associated with \(a\) (or rather, \(m\)).

Diagonalize \(\gamma_{\pm\Rt}\) over
\(\breve{F}_{\bar{v}}[\pi_v^{1/2}]\), then the geometric MASF may be regarded as
being \emph{equal} to the set
\eqref{eqn:geometric_MASF_case_A}. As a result,
\begin{align}
    \label{eqn:geometric_AJRF_case_A}
    \cM_{\bar{v}}(a)(\bar{k})\cong\Set*{\Lambda=\Diag(\pi_v^{d_1/2},\pi_v^{d_2},\ldots,\pi_v^{d_{n-1}},\pi_v^{d_1/2})\Lambda_0\given
    e_{\pm\Rt}\in\Lambda, e_{\pm\Rt}^\vee\in\Lambda^\vee}.
\end{align}
Clearly, this set depends only on the \(v\)-valuations of the coordinates of
\(e_{\pm\Rt}\) and \(e_{\pm\Rt}^\vee\). Let \(e_1,\ldots,e_n\) and
\(e_1^\vee,\ldots,e_n^\vee\) be those valuations of \(e_{\pm\Rt}\) and
\(e_{\pm\Rt}^\vee\) respectively. Since \(a\) is generically strongly regular
semisimple, \(e_2,\ldots,e_{n-1},e_2^\vee,\ldots,e_{n-1}^\vee\) are all
finite, and both \(\min\Set{e_1,e_n}\) and \(\max\Set{-e_1^\vee,-e_n^\vee}\) lie
in \(\bbZ/2\).
As a result, \(\cM_{\bar{v}}(a)(\bar{k})\) is isomorphic to the set
\begin{align}
    \label{eqn:geometric_AJRF_case_A_simplified}
    \Set*{(d_1,\ldots,d_{n-1})\given \begin{array}{l}
         e_i\ge d_i\ge -e_i^\vee,\forall\, 2\le i\le n-1,\\
         e_1\ge d_1 /2\ge -e_1^\vee,\\
         e_n\ge d_1 /2\ge -e_n^\vee
    \end{array} }.
\end{align}

\subsubsection{}
It remains to compute the respective Frobenius structures corresponding to
\(\cM_v(a)\) and \(\cM_v'(a)\) on \eqref{eqn:geometric_AJRF_case_A}.
The computation is very similar to the discussion leading to
\eqref{eqn:JR_fiber_lattice_description_unitary} and
\eqref{eqn:JR_fiber_lattice_description_symmetric}, except we replace \(m\) and
\(m'\) by \((\gamma_{\pm\Rt},e_{\pm\Rt},e_{\pm\Rt}^\vee)\).

On the symmetric side, since \(a\) as an \(F_v\)-point is strongly regular
semisimple, the tuple \((\gamma_{\pm\Rt},e_{\pm\Rt},e_{\pm\Rt}^\vee)\) is
defined over \(F_v\); namely we can find \(s\in G(F_{v}'[\pi_v^{1/2}])\) such that
\(s\bar{s}=1\) and
\begin{align}
    \bigl(\Ad_s^{-1}(\gamma_{\pm\Rt}),s^{-1}e_{\pm\Rt},e_{\pm\Rt}^\vee s\bigr)
    =\bigl(\iota\sigma(\gamma_{\pm\Rt}),\bar{e_{\pm\Rt}},\bar{e_{\pm\Rt}^\vee}\bigr).
\end{align}
In particular, \(s\) must also lie in the normalizer of the Levi subgroup
generated by \(\pm\Rt\). Therefore, \(s\) has the form
\begin{align}
    \begin{pmatrix}
        x_1 & & & & x_2\\
          & t_2 & & &\\
          & & \ddots & &\\
          & & & t_{n-1} &\\
        x_3 & & & & x_4
    \end{pmatrix}\dot{w},
\end{align}
where \(\dot{w}\) is a rank-\((n-2)\) permutation matrix placed in the middle
block, \(t_2,\ldots,t_{n-1}\in (F_v')^\x\), and
\begin{align}
    s_0\defeq\begin{pmatrix}
        x_1 & x_2\\
        x_3 & x_4
    \end{pmatrix}
\end{align}
is either diagonal or anti-diagonal.
Let \(c_2,\ldots,c_{n-1}\) be the respective valuation of
\(t_2,\ldots,t_{n-1}\). We treat \(\dot{w}\) as a permutation of the set
\(\Set{2,\ldots,n-1}\).
Since \(s\bar{s}=1\), we necessarily have \(\dot{w}^2=1\) and
\(c_i+c_{\dot{w}(i)}=0\).

The Frobenius-fixed point in \eqref{eqn:geometric_AJRF_case_A} are
those lattices \(\Lambda\) such that
\(s\Lambda=\bar{\Lambda}\). As \(\Lambda\) is always diagonalized,
we have \(\bar{\Lambda}=\Lambda\). In
this way, the effect of \(s_0\) may be replaced by
a number in \(F_v'[\pi_v^{1/2}]^\x\) whose square has norm \(1\). Using the
description \eqref{eqn:geometric_AJRF_case_A_simplified}, it translates to
\begin{align}
    \label{eqn:arithmetic_AJRF_case_A_symmetric}
    \cM_v(a)(k_v)\cong\Set*{(d_1,\ldots,d_{n-1})\given \begin{array}{l}
         e_i\ge d_{\dot{w}(i)}+c_i=d_i\ge -e_i^\vee, \forall\,2\le i\le n-1,\\
         e_1\ge d_1/2\ge -e_1^\vee,\\
         e_n\ge d_1/2\ge -e_n^\vee
    \end{array}}.
\end{align}
In particular, it turns out that \(s_0\) has no effect at all.

\subsubsection{}
On the unitary side, we similarly have a Hermitian matrix
\begin{align}
    h=\begin{pmatrix}
        x_1' & & & & x_2'\\
          & t_2' & & &\\
          & & \ddots & &\\
          & & & t_{n-1}' &\\
        x_3' & & & & x_4'
    \end{pmatrix}\dot{w}',
\end{align}
with \(c_2',\ldots,c_{n-1}'\) defined similarly.
Similar to \(s_0\), we have the Hermitian matrix
\begin{align}
    h_0\defeq\begin{pmatrix}
        x_1' & x_2'\\
        x_3' & x_4'
    \end{pmatrix},
\end{align}
and without loss of generality, its net effect on the Frobenius structure
may be replaced by an element in
\(F_v'[\pi_v^{1/2}]^\x\) whose square lies in \(F_v[\pi_v^{1/2}]\). Let \(c_0\)
be the valuation of such
number. As \(h\) is Hermitian and since \(\dot{w}'\tp{\dot{w}'}=1\), we still
have \((\dot{w}')^2=1\), which implies \(c_i'=c_{\dot{w}'(i)}'\).
Then \(\cM_v'(a)(k_v)\) is isomorphic to the set
\begin{align}
    \label{eqn:arithmetic_AJRF_case_A_unitary}
    \Set*{(d_1,\ldots,d_{n-1})\given \begin{array}{l}
         e_i\ge -d_{\dot{w}'(i)}+c_i'=d_i\ge -e_i^\vee, \forall\,2\le i\le n-1,\\
         e_1\ge d_1/2\ge -e_1^\vee, e_n\ge d_1/2\ge -e_n^\vee,\\
         -2d_1=c_0
    \end{array}}.
\end{align}

\subsubsection{}
Looking at the conditions involving \(d_1\) on the unitary side. When
\(2(e_1^\vee-e_1)=2(e_n^\vee-e_n)=-2c_0\) is odd, the set
\eqref{eqn:arithmetic_AJRF_case_A_unitary} is empty;
otherwise, there is a unique
\begin{align}
    d_1=e_1^\vee-e_1=e_n^\vee-e_n^\vee=-c_0
\end{align}
appearing in the set \eqref{eqn:arithmetic_AJRF_case_A_unitary}.

\subsubsection{}
Then we examine the conditions involving \(d_2,\ldots,d_{n-1}\) on both sides.
This requires us to relate the tuple \((\dot{w},c_i)\) to
\((\dot{w}',c_i')\). We claim that in fact \(\dot{w}=\dot{w}'\) and
\(c_i=c_i'\). Indeed, the image of \(\dot{w}\) in \(W\) is uniquely determined
by \(\gamma_{\pm\Rt}\) alone by regular-semisimplicity, and similarly for
\(\dot{w}'\).

Moreover, they are both determined by the image of \(\gamma_{\pm\Rt}\)
modulo the \(\SL_2\)-subgroup generated by \(\pm\Rt\), and such image is
diagonal. Note that modulo the same \(\SL_2\), \(\iota\sigma(\gamma_{\pm\Rt})\)
and \(\tau\sigma(\gamma_{\pm\Rt})\) coincide, which implies that
\(\dot{w}=\dot{w}'\). As a consequence, the valuations \(c_i\) and \(c_i'\) must
also be equal by comparing the relevant diagonal matrix entries.

In particular, we in fact have \(c_i=c_i'=0\) since \(c_i+c_{\dot{w}(i)}=0\) and
\(c_i'=c_{\dot{w}'(i)}'\). Since \(\Ob_v(a)=0\), we necessarily have
\begin{align}
    \val_v(\det{h})=\val_v(\det{h_0})=2c_0\equiv 0\bmod 2.
\end{align}
As a result, if we rearrange indices from \(2\) to
\(n-1\) and suppose that \(2,\ldots,l_1\) are representatives of the
\(\dot{w}\)-orbits of order \(2\), and \(l_1+1,\ldots,l_2\) are indices fixed
by \(\dot{w}\),
then \eqref{eqn:arithmetic_AJRF_case_A_symmetric} is isomorphic to the set
\begin{align}
    \Set*{(d_1,\ldots,d_{l})\in\bbZ^{l}\given \begin{array}{l}
         e_i\ge d_i\ge -e_i^\vee, \forall\,2\le i\le l_2,\\
         e_1\ge d_1/2\ge -e_1^\vee,\\
         e_n\ge d_1/2\ge -e_n^\vee
    \end{array}},
\end{align}
By \Cref{rmk:f_n_of_base_pt_vs_actual_pt}, we have
\begin{align}
    \Cnt_{L_{v,\eta}}\cM_v(a)(k_v)
    &=\sum_{d_1=2\max\Set{-e_1^\vee,-e_n^\vee}}^{2\min\Set{e_1,e_n}}
    \sum_{\substack{e_i\ge d_i\ge -e_i^\vee\\2\le i\le l_2}}
    (-1)^{(d_1-2\max\Set{-e_1^\vee,-e_n^\vee})+2\sum_{j=2}^{l_1}(d_j+e_j^\vee)+\sum_{j=l_1+1}^{l_2}(d_j+e_j^\vee)}\\
    &= (e_2+e_2^\vee+1)\cdots (e_{l_1}+e_{l_1}^\vee+1).
\end{align}
\subsubsection{}
Similarly, \eqref{eqn:arithmetic_AJRF_case_A_unitary} is isomorphic to the set
\begin{align}
    \Set*{(d_1,\ldots,d_{l})\in\bbZ^{l}\given \begin{array}{l}
         e_i\ge d_i\ge -e_i^\vee, \forall\,2\le i\le l_1,\\
         d_i=0, \forall\,l_1+1\le i\le l_2,\\
         d_1=-c_0
    \end{array}},
\end{align}
and so
\begin{align}
 \Cnt_{\IC_n^{\prime\lambda}}  \cM_v' (a)(k_v)
    = (e_2+e_2^\vee+1)\cdots (e_{l_1}+e_{l_1}^\vee+1).
\end{align}
This finishes the proof of \Cref{thm:main_theorem_geometric} in Case A.

\subsubsection{}
Then we look at Case B. In this case, we can choose \(\gamma\) to be a diagonal
element in \(\FRT_\FRM(\breve{\cO}_{\bar{v}})\). The geometric MASF of
\(\gamma\) is simply the lattice of the diagonal torus of \(\GL_n\). In other
words, we have
\begin{align}
\label{eqn:geometric_MASF_case_B}
    \cM_{\bar{v}}^\Hit(a)(\bar{k})\cong\bbZ^n,
\end{align}
and similarly to the part involving indices \(2,\ldots,n-1\) in Case A, we obtain
\begin{align}
\label{eqn:geometric_AJRF_case_B}
    \cM_{\bar{v}}(a)(\bar{k})
    \cong \Set*{(d_1,\ldots,d_n)\given e_i\ge d_i\ge -e_i^\vee,\forall\, 1\le i\le n }.
\end{align}

\subsubsection{}
The computation of the Frobenius structures is also similar to the
\(\Set{2,\ldots,n-1}\)-part of Case A. In particular, there is a rank-\(n\)
permutation matrix \(\dot{w}=\dot{w}'\) and diagonal matrices
\(\Diag(t_1,\ldots,t_n)\) and \(\Diag(t_1',\ldots,t_n')\) respectively on both
sides. We still have \(\dot{w}^2=1\), and the valuations of \(t_i\) and
\(t_i'\) are all \(0\). Rearranging the indices, we let \(1,\ldots,l_1\) be the
indices that are not fixed by \(\dot{w}\), then we similarly have
\begin{align}
    \Cnt_{L_{v,\eta}}\cM_v(a)(k_v)=\Cnt_{\IC_n^{\prime\lambda}} \cM_v'(a)(k_v)=(e_1+e_1^\vee+1)\cdots(e_{l_1}+e_{l_1}^\vee+1).
\end{align}
This proves \Cref{thm:main_theorem_geometric} in Case B.



\section{Global Formulations} 
\label{sec:global_formulations}

Let \(X\) be a smooth, projective, and geometrically connected curve over \(k\)
of genus \(g_X\). Let \(\OGT\colon X'\to X\) be an \'etale double cover  such
that as a curve over \(k\), \(X'\) is also geometrically connected. Let
\(\sigma\in\Gal(X'/X)\) be the unique non-trivial element. We let \(\breve{X}=X_{\bar{k}}\) be the
base change of \(X\) to \(\bar{k}\).

\subsection{The Jacquet--Rallis mH-fibrations} 
\label{sub:the_jacquet_rallis_mh_fibrations}

The stack \(\sM\) admits an action of \(Z_\FRM\x\Gm\x\Gm\), where \((z,t,t^\vee )\)
sends the tuple \((\cE,x,e,e^\vee )\) to \((\cE,zx,te,t^\vee e^\vee )\). Note that
\(x\in\FRM\x^G\cE\) and \(Z_\FRM\) acts on \(\FRM\x^G\cE\) by translation
(because the \(Z_\FRM\)-action on \(\FRM\) commutes with that of \(G\)), and
so \(zx\) is well-defined. For our purposes, we will only consider the action of
the subgroup \(Z_\FRM\x\Gm\subset Z_\FRM\x\Gm\x\Gm\) where the \(\Gm\)-factor of
the former embeds diagonally into \(\Gm\x\Gm\).

\subsubsection{}
Consider the \(2\)-stack \(\Stack*{\sM/Z_\FRM\x\Gm}\) over \(X\). Its
\(S\)-points (\(S\) being an \(X\)-scheme) are tuples
\begin{align}
    (\cL,\cD,\cE,x,e,e^\vee ),
\end{align}
where \(\cL\) is a \(Z_\FRM\)-torsor, \(\cD\) is a line bundle (associated with
a \(\Gm\)-torsor), \(\cE\) is a \(G\)-torsor, all over \(S\), \(x\) is a section
\begin{align}
    S\longto \FRM\x^{Z_\FRM\x G}(\cL\x_{S}\cE),
\end{align}
and \(e,e^\vee \) are morphisms:
\begin{align}
    \cD^{-1}\stackrel{e}{\longto}V_\cE\stackrel{e^\vee }{\longto}\cD.
\end{align}
It is easily verified that \(\Stack*{\sM/Z_\FRM\x\Gm}\) is in fact an algebraic
\(1\)-stack.

\subsubsection{}
Similarly, the algebraic stack \(\Stack*{\sM'/Z_\FRM\x\Gm}\) classifies tuples
\begin{align}
    (\cL,\cD,\cE',x',e'),
\end{align}
where \(\cL\) and \(\cD\) are as above, \(\cE'\) is a \(G'\)-torsor
corresponding to a rank-\(n\) vector bundle \(V_{\cE'}'\) on \(S'=S\x_X X'\)
with a Hermitian form, namely an isomorphism
\begin{align}
    h\colon V_{\cE'}'\stackrel{\sim}{\longto}\bigl(\sigma^*V_{\cE'}'\bigr)^\vee 
\end{align}
such that \((\sigma^*h)^\vee \simeq h\), \(x'\) is a section
\begin{align}
    S\longto \FRM'\x^{Z_\FRM\x G'}(\cL\x_{S}\cE'),
\end{align}
and \(e'\) is a linear map
\begin{align}
    e'\colon\cD_{S'}^{-1}\longto V_{\cE'}',
\end{align}
which, through \(h\), also induces a unique linear map
\begin{align}
    e^{\prime\vee}\colon V_{\cE'}'\longto \cD_{S'}.
\end{align}

\subsubsection{}
The action of \(Z_\FRM\x\Gm\) descends to an action on \(\FRC_\sM\),
which is also compatible with the \(Z_\FRM\)-actions on \(\FRM\), \(\FRC_\FRM\)
and \(\FRA_\FRM\). Thus, we have commutative diagram of algebraic stacks
\begin{equation}
    \label{eqn:absolute_stacks}
    \begin{tikzcd}
        \Stack*{\sM/Z_\FRM\x\Gm} \ar[r]\ar[d] & \BG{\Gm}\x\Stack*{\FRM/G\x Z_\FRM}
        \ar[d] & \\
        \Stack*{\FRC_\sM/Z_\FRM\x\Gm}\ar[r] & \BG{\Gm}\x\Stack*{\FRC_\FRM/Z_\FRM} \ar[r] &
        \BG{\Gm}\x\Stack*{\FRA_\FRM/Z_\FRM}
    \end{tikzcd},
\end{equation}
and similarly for the unitary version.

\subsubsection{}
Apply the mapping \(k\)-stack functor \(\IHom_k(X,\blank)\) to
\eqref{eqn:absolute_stacks}, we obtain the 
following diagram:
\begin{equation}
    \begin{tikzcd}
        \cM^+ \ar[r]\ar[d] & \PicS_X\x\cM^{\Hit+} \ar[d] &\\
        \cA^+  \ar[r] & \PicS_X\x\cA^{\Hit+}\ar[r] & \PicS_X\x\cB^+
    \end{tikzcd}.
\end{equation}
Let \(\cB\subset\cB^+\) be the open substack of boundary divisors, and take its
preimages, then we reach the diagram
\begin{equation}
    \begin{tikzcd}
        \cM \ar[r]\ar[d, "h", swap] & \PicS_X\x\cM^{\Hit} \ar[d, "h^\Hit"] &\\
        \cA  \ar[r] & \PicS_X\x\cA^{\Hit}\ar[r] & \PicS_X\x\cB
    \end{tikzcd}.
\end{equation}
Again, we also have the unitary version
\begin{equation}
    \begin{tikzcd}
        \cM' \ar[r]\ar[d, "h'", swap] & \PicS_X\x\cM^{\prime\Hit} \ar[d, "h^{\prime\Hit}"] &\\
        \cA  \ar[r] & \PicS_X\x\cA^{\Hit}\ar[r] & \PicS_X\x\cB
    \end{tikzcd}.
\end{equation}

\subsubsection{}
More concretely, if we fix a \(Z_\FRM\)-torsor \(\cL\) and line bundle
\(\cD\), the fiber of \(\cA\) over \((\cL,\cD)\) is the affine space
\(\cA_\cL^\Hit\x \cA_{\cL,\cD}''\) where
\begin{align}
    \cA_\cL^\Hit&=\bigoplus_{i=1}^{n-1}\RH^0\bigl(X,\Wt_i(\cL)\bigr),\\
    \cA_{\cL,\cD}''&=\RH^0\bigl(X,\cD^{2}\bigr)\oplus\bigoplus_{i=1}^{n-1}\RH^0\bigl(X,\Wt_i(\cL)\otimes\cD^{2}\bigr).
\end{align}
Note that due to the outer twist, each individual line bundle \(\Wt_i(\cL)\)
may only make sense over \(X'\), but the sum \(\Wt_i(\cL)\oplus\Wt_{n-i}(\cL)\)
is always well-defined over \(X\).

\subsubsection{}
The geometric and topological properties of \(h^\Hit\) and \(h^{\prime\Hit}\) are
studied in \Cref{sec:supplement_on_the_usual_mh_fibrations}. We let
\(\cA^\heartsuit\) be the open subset of \(\cA\) such that the generic point of
\(X\) is mapped to the strongly regular semisimple locus. By definition, the
image of \(\cA^\heartsuit\) in \(\cA^\Hit\) is contained in
\(\cA^{\Hit,\heartsuit}\). Similar to \(\cA^{\Hit,\heartsuit}\),
\(\cA^\heartsuit\) is non-empty when \(\cL\) is sufficiently \(G\)-ample
(equivalently, \(G'\)-ample) and \(\cD\) is sufficiently ample.
Let \(\cA^\SIM\) (resp.~\(\cA^\diamondsuit\)) be the
preimage \emph{in \(\cA^\heartsuit\)} of the corresponding simple loci in \(\cA^\Hit\) (see \Cref{sub:the_simple_locus}),
and \(\cM^\SIM\), \(\cM^{\prime\SIM}\), etc. are defined similarly.


\subsection{The geometry of \texorpdfstring{\(\cM^\SIM\)}{Msim}} 
\label{sub:the_geometry_of_cM_sim}

In this subsection, we study some important geometric properties of
\(\cM^\SIM\), including the properness of \(h^\SIM\) and a local model of
singularity of \(\cM^\SIM\). We also have a product formula that connects local
and global geometry.

\subsubsection{}
For any \(m=(\cL,\cD,\cE,x,e,e^\vee )\in\cM(S)\), we have the pure tensors
\begin{align}
    f_n(m)\colon \bigl(\cD^n\otimes\rho(\cL)\bigr)^{-1}\longto \wedge^n V_\cE
\end{align}
and
\begin{align}
    f_n^\vee (m)\colon \wedge^n V_\cE\longto \cD^n\otimes\rho(\cL),
\end{align}
where \(\rho\) is the half-sum of positive roots of \(G\).
The pairing \(f_n^\vee f_n\) is just the discriminant divisor and hence depends only on the
image of \(m\) in \(\cA\). If \(m\in\cM^\SIM\), then both \(e\) and \(e^\vee \) are
necessarily non-zero because \(f_n^\vee f_n\) is non-zero over any geometric point
\(s\in S\).

\subsubsection{}
Over \(X'\x S\),
we also have pure tensors
\begin{align}
    \bFf_i(\OGT^*m)\colon \OGT^*\bigl(\cD^i\otimes\rho_i(\cL)\bigr)^{-1}\longto \wedge^i \OGT^*V_\cE
\end{align}
and
\begin{align}
    \bFf_i^\vee (\OGT^*m)\colon \wedge^i \OGT^*V_\cE\longto \OGT^*\bigl(\cD^i\otimes\rho_i(\cL)\bigr),
\end{align}
where \(\rho_i\) is
\begin{align}
    \rho_i=\sum_{j=1}^i\Wt_j.
\end{align}
Again, \(\rho_i(\cL)\) may not be well-defined on \(X\x S\), but its pullback
to \(X'\x S\) is.

\subsubsection{}
By definition, the total stack \(\cM\) fits into the Cartesian diagram
\begin{equation}
    \begin{tikzcd}
        \cM \ar[r, "\orto{h}"]\ar[d, "\olto{h}", swap] & \orto{\cM} \ar[d]\\
        \olto{\cM} \ar[r] & \PicS_X\x \cM^\Hit
    \end{tikzcd}
\end{equation}
where \(\olto{\cM}\) (resp.~\(\orto{\cM}\)) classifies tuples
\((\cL,\cD,\cE,x,e)\) (resp.~\((\cL,\cD,\cE,x,e^\vee )\)). We define
\(\olto{\cM}^\heartsuit\) (resp.~\(\orto{\cM}^\heartsuit\)) be the substack
where the mHiggs bundle lies in \(\cM^{\Hit,\heartsuit}\) and the tensor
\(f_n(m)\) (resp.~\(f_n^\vee(m)\)) is non-zero.

Recall the \(N\)-truncated evaluation map relative to \(\cB\)
(cf.~\Cref{sub:the_local_model_of_singularity}):
\begin{align}
    \ev_N^\Hit\colon \cM^\Hit\longto \Stack*{\SFQ_X}_N,
\end{align}
and by \Cref{thm:topological_local_model_of_singularity}, we have, up to
a cohomological shift and Tate twist, an isomorphism
\begin{align}
    \bigl(\ev_N^{\Hit*}\IC_{\Stack*{\SFQ_X}_N}\bigr)_{|\cM^{\Hit,\dagger}}\simeq \IC_{\cM^{\Hit,\dagger}},
\end{align}
where \(\cA^{\Hit,\dagger}\subset\cA^{\Hit,\heartsuit}\) is an open substack
whose complement has bounded dimension. Let \(\cA^{\dagger}\) be the pullback of \(\cA^{\Hit,\dagger} \) to \(\cA\).
\begin{definition}
Let \(\cA^{\ddagger}=\cA^{\SIM}\cap \cA^{\dagger}\) and \(\cM^\ddagger\) its
preimage under \( h: \cM \to \cA \).
\end{definition}
We also have \(\olto{\cM}^\ddagger\) and \(\orto{\cM}^\ddagger\)
defined in an obvious way. We let \(\ev_N\) (resp.~\(\olto{\ev}_N\),
resp.~\(\orto{\ev}_N\)) the restriction of \(\ev_N^\Hit\) to \(\cM\)
(resp.~\(\olto{\cM}\), resp.~\(\orto{\cM}\)).

\begin{proposition}
    \label[proposition]{prop:one_sided_local_model_of_singularity}
    We have an isomorphism up to cohomological shifts and Tate twists
    \begin{align}
        \bigl(\olto{\ev}_N^{*}\IC_{\Stack*{\SFQ_X}_N}\bigr)_{|\olto{\cM}^{\ddagger}}&\simeq
        \IC_{\olto{\cM}^{\ddagger}},\\
        \bigl(\orto{\ev}_N^{*}\IC_{\Stack*{\SFQ_X}_N}\bigr)_{|\orto{\cM}^{\ddagger}}&\simeq
        \IC_{\orto{\cM}^{\ddagger}}.
    \end{align}
\end{proposition}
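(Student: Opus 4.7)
The plan is to reduce both isomorphisms to \Cref{thm:topological_local_model_of_singularity} via the natural forgetful morphisms. Let $p_\ell\colon \olto{\cM}\to \PicS_X\times \cM^\Hit$ be the morphism sending $(\cL,\cD,\cE,x,e)$ to $(\cD,\cL,\cE,x)$, and let $p_r\colon \orto{\cM}\to \PicS_X\times \cM^\Hit$ be its analogue on the right. Since the $N$-truncated evaluation map depends only on the mHiggs data $(\cL,\cE,x)$, we have the factorizations $\olto{\ev}_N=\ev_N^{\Hit}\circ p_\ell$ and $\orto{\ev}_N=\ev_N^{\Hit}\circ p_r$. Combined with \Cref{thm:topological_local_model_of_singularity}, this gives, up to cohomological shifts and Tate twists,
\begin{align}
    \bigl(\olto{\ev}_N^{*}\IC_{\Stack*{\SFQ_X}_N}\bigr)_{|\olto{\cM}^\ddagger}\simeq \bigl(p_\ell^{*}\IC_{\cM^{\Hit,\dagger}}\bigr)_{|\olto{\cM}^\ddagger},
\end{align}
and similarly on the right, since $\cA^\ddagger=\cA^\SIM\cap\cA^\dagger$ maps to $\cA^{\Hit,\dagger}$ by construction. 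It therefore suffices to show that these pullbacks agree, up to shift and twist, with $\IC_{\olto{\cM}^\ddagger}$ and $\IC_{\orto{\cM}^\ddagger}$ respectively.

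The remaining step is to verify that $p_\ell$ and $p_r$ are smooth on the relevant loci. Over $\PicS_X\times \cM^{\Hit,\heartsuit}$, the stack $\olto{\cM}^\heartsuit$ is the open substack of the linear stack parametrizing sections $e\colon \cD^{-1}\to V_\cE$ cut out by the non-vanishing of the pure tensor $f_n(m)$; symmetrically, $\orto{\cM}^\heartsuit$ is open in the linear stack of co-sections, cut out by $f_n^\vee(m)\neq 0$. These linear stacks are smooth as soon as the relevant $R^1$-pushforward along $\pi\colon X\times S\to S$ vanishes, which follows by Serre vanishing provided $\cD$ is sufficiently ample relative to $\deg V_\cE$. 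Since $\deg V_\cE$ is locally constant on $\PicS_X\times\cM^\Hit$, and the standing ampleness hypothesis on $(\cL,\cD)$ (the same one that makes $\cA^\heartsuit$ non-empty) is in force, the vanishing holds uniformly on our loci. Hence $p_\ell$ and $p_r$ are smooth of locally constant relative dimensions $d_\ell$ and $d_r$, and the standard formula
\begin{align}
    p_\ell^{*}\IC_{\cM^{\Hit,\dagger}}\simeq \IC_{\olto{\cM}^\ddagger}[-d_\ell](-d_\ell/2),
    \qquad
    p_r^{*}\IC_{\cM^{\Hit,\dagger}}\simeq \IC_{\orto{\cM}^\ddagger}[-d_r](-d_r/2)
\end{align}
completes the argument.

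The main obstacle I anticipate is the uniform $H^1$-vanishing claim across all of $\cA^\ddagger$: one must pin down the locally constant degree of $V_\cE$ component-wise on $\PicS_X\times\cM^\Hit$ and check that the ampleness of $\cD$ already in force throughout the paper is indeed enough on each component. Should Serre vanishing fail on some component, one can fall back on the simplicity hypothesis packaged into $\cA^\SIM$ (integrality of the cameral cover) and analyze the linear stacks via a local-to-global spectral sequence along the cameral cover, where the regularity of the generic fiber controls the higher cohomology. A secondary technical point is that the open conditions $f_n(m)\neq 0$ and $f_n^\vee(m)\neq 0$ do not destroy smoothness or affect the relative dimension, but since openness preserves smoothness this is automatic.
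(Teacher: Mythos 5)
The approach does not match the paper's, and there is a genuine gap. Your proof rests on the simultaneous smoothness of both forgetful maps $p_\ell$ and $p_r$ over $\PicS_X\times\cM^{\Hit,\dagger}$, i.e., the simultaneous vanishing of $\RH^1(\breve X, V_\cE\otimes\cD)$ and $\RH^1(\breve X, V_\cE^\vee\otimes\cD)$. This is not true in general: the proof of \Cref{lem:simple_relative_smoothness} shows (via Serre duality and the pure tensor $f_n$) that one of these vanishings is guaranteed when $\deg V_\cE\ge 0$ and the other when $\deg V_\cE\le 0$, and the degree of $V_\cE$ has no fixed sign as the mHiggs bundle varies over $\cM^{\Hit,\dagger}$. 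That is precisely why \Cref{thm:local_model_of_singularity_JR_symmetric} has a two-case structure, choosing whichever of $\olto{h}$ or $\orto{h}$ happens to be smooth at a given point. If both $p_\ell$ and $p_r$ were smooth, the entire $\olto{\cM}/\orto{\cM}$ architecture would be unnecessary and one could deduce the theorem directly without this proposition. Your fallback remark about invoking $\cA^\SIM$ and a spectral sequence along the cameral cover does not fill the gap: simplicity only controls $\RH^1$ via degree considerations, which again splits into the two sign cases.

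The paper's actual argument is deformation-theoretic rather than a smooth-pullback argument. It identifies the truncated tangent complex $\cT^{\le 0}$ of $\orto{\cM}$ (resp.~$\olto{\cM}$) relative to $\PicS_X\times\cB$, forms the kernel $\cC$ of the Serre-dual map and the corresponding kernel $\cC'$ obtained by dropping the $e^\vee$ (resp.~$e$) direction, and shows $\cC=\cC'$ by a computation at the generic point of $X$: after diagonalizing the mHiggs field, the contribution of the co-vector (resp.~vector) to the kernel of the dual map vanishes because all the $e_i$ are non-zero. This yields that the obstruction to deforming $\orto{m}$ relative to $\orto{\ev}_N$ coincides with the obstruction for the usual $\ev_N^\Hit$, which is controlled by \Cref{prop:weak_local_model_of_singularity} (or \Cref{prop:strong_local_model_of_singularity} via $\ev_N^1$ away from the very ample locus). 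Note also that this argument is completely insensitive to the sign of $\deg V_\cE$, which is exactly what is needed since the proposition must hold for both $\olto{\cM}^\ddagger$ and $\orto{\cM}^\ddagger$ uniformly. You should rework your proof along these lines, starting from the tangent complex and proving $\cC=\cC'$, rather than attempting to show $p_\ell$ and $p_r$ are smooth.
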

\begin{proof}
    The proof is similar to Yun's Claim~1 in the proof of
    \cite[Proposition~3.2.6]{Yu11}.
    We first prove this for \(\orto{\cM}^\ddagger\). Let \(S\) be the spectrum
    of an Artinian \(\bar{k}\)-algebra with residue field \(\bar{k}\), and
    \(m=(\cL,\cD,\cE,x,e^\vee)\in \orto{\cM}^\ddagger(S)\) is an \(S\)-point.
    Similar to the case of \(\cM^\Hit\), since \(X\) is a curve, the deformation
    of \(m\) relative to \(\PicS_X\x\cB\) (so that \(\cD\) is fixed) up to its
    obstructions, is controlled by the truncated (tangent) complex
    \begin{equation}
        \begin{tikzcd}[column sep=large]
            \cT^{\le 0}\defeq [\ad(\cE) \ar[r, "\symup{act}\oplus e^{\vee*}"] &
            m^*\CoTB_{\FRM/\FRA_\FRM}^\vee\oplus \bigl(V_\cE^\vee\otimes\cD\bigr)],
        \end{tikzcd}
    \end{equation}
    where \(\CoTB_{\FRM/\FRA_\FRM}\) is the relative cotangent sheaf of \(\FRM\)
    over \(\FRA_\FRM\), and the terms are placed at degrees \(-1\) and \(0\)
    respectively. Here we use the fact that \(\ad(\cE)=\End(V_\cE)\) and the
    map
    \begin{align}
        e^{\vee*}\colon\ad(\cE)\longto V_\cE^\vee\otimes\cD
    \end{align}
    sends \(\psi\) to \(e^\vee\circ\psi\).

    Let
    \begin{align}
        \cC&=\ker\Bigl(m^*\CoTB_{\FRM/\FRA_\FRM}^{\vee\vee}\oplus \bigl(V_\cE\otimes\cD^{-1}\bigr)\longto \ad(\cE)^\vee\Bigr)
    \end{align}
    and
    \begin{align}
        \cC'&=\ker\Bigl(m^*\CoTB_{\FRM/\FRA_\FRM}^{\vee\vee}\longto \ad(\cE)^\vee\Bigr),
    \end{align}
    then we have an exact sequence
    \begin{align}
        0\longto \cC'\longto \cC\longto V_\cE\otimes\cD^{-1},
    \end{align}
    and by Serre duality,
    \begin{align}
        \RH^1(X\x S, \cT^{\le 0})=\RH^0(X\x S,\cC\otimes\omega_X)^\vee,
    \end{align}
    where \(\omega_X\) is the canonical sheaf of \(X\).

    We claim that \(\cC'\) is in fact equal to \(\cC\). Since \(\cC/\cC'\) is
    torsion-free, it suffices to show that \(\cC\) and \(\cC'\) coincide after
    restricting to \(\Spec{R}\defeq\Spec{K}\x S\) where \(\Spec{K}\) is the
    generic point of \(X\). Over \(\Spec{R}\), we may trivialize \(\OGT\),
    \(\cL\), \(\cD\), and \(\cE\) in such a way that the mHiggs field \(x\) may be
    identified with an element in \(\SL_n(R)\). Since \(m\) is contained in
    \(\cM^\heartsuit\), it
    is  strongly regular semisimple over \(\Spec{R}\), and so we may further
    assume that \(x\) is a diagonal matrix
    \begin{align}
        \Diag(x_1,\ldots,x_n)\in\SL_n(R)
    \end{align}
    with \(x_i\in R\), and \(e^\vee\) is a row vector
    \begin{align}
        (e_1,\ldots,e_n)\in (R^n)^\vee,
    \end{align}
    such that \(\Set*{e^\vee,e^\vee x,\ldots,e^\vee x^{n-1}}\) form a basis of
    \((R^n)^\vee\). In particular, this implies that \(e_i\neq 0\) for all
    \(1\le i\le n\).

    The relative cotangent sheaf \(\CoTB_{\FRM/\FRA_\FRM}\) may be identified
    with \(\La{sl}_n^\vee\) at \(x\), and so the map of coherent sheaves given
    by the transpose of \(\cT^{\le 0}\) can be written as
    \begin{align}
        \label{eqn:dual_map_in_deform_over_R}
        \La{sl}_n(R)^\vee\oplus R^n&\longto \La{gl}_n(R)^\vee.
    \end{align}
    Since \(x\) is diagonal, we may describe the extended map
    as follows: if \(a_{ij}\in\La{gl}_n(R)^\vee\) is the \((i,j)\)-th matrix coefficient
    on \(\La{gl}_n(R)\) and \(u=\tp(u_1,\ldots,u_n)\in R^n\), then
    \eqref{eqn:dual_map_in_deform_over_R}
    sends \(A\oplus u\) (where \(A=(a_{ij})\)) to the function
    \begin{align}
        y=(y_{ij})\longmapsto \sum_{i,j=1}^n y_{ij}(1-x_i x_j^{-1}+e_iu_j).
    \end{align}
    Therefore, if \(A\oplus u\) is sent to the zero function, we must have
    \begin{align}
        1-x_i x_j^{-1}+e_iu_j=0
    \end{align}
    for each \(i,j\). In particular, if \(i=j\), then \(e_iu_i=0\), and
    so \(u_i=0\) for all \(i\) since \(e_i\neq 0\). As a result, the kernel of
    \eqref{eqn:dual_map_in_deform_over_R} must lie in the direct summand
    \(\La{sl}_n(R)^\vee\), which implies \(\cC=\cC'\) as claimed.

    Let \(a\) be the image of \(m\) in \(\cA^\Hit\). If \(m\) is very
    \((G,\delta_a)\)-ample, then this proposition is proved in a neighborhood of
    \(m\) because our previous discussion shows that the obstruction to
    deforming \(m\) relative to the map \(\orto{\ev}_N\) is the same as that
    related to the mHitchin version \(\ev_N^\Hit\), which vanishes by
    \Cref{prop:weak_local_model_of_singularity}. In the case that \(m\) is not
    necessarily very \((G,\delta_a)\)-ample, we may replace \(\ev_N\) by
    \(\ev_N^1\)
    (cf.~\Cref{prop:strong_local_model_of_singularity}). Since the above
    analysis on \(\cC\) and \(\cC'\) is insensitive to the change of the
    boundary divisor, we still have the relevant obstruction for the
    map \(\orto{\ev}_N^1\) coincides with the obstruction related to the map
    \(\ev_N^1\), which vanishes on \(\cM^{\Hit,\dagger}\). In any case, we
    always have the isomorphism as claimed in the proposition.

    The case for \(\olto{\cM}^\ddagger\)
    can be proved similarly by replacing \(V_\cE\) with \(V_\cE^\vee\) and
    \(e^\vee\) by the transpose of \(e\). The discussion about
    \eqref{eqn:dual_map_in_deform_over_R} is insensitive to the vector bundle
    structure of \(V_\cE\) but depends only on the fact that its generic fiber
    is a vector space. This finishes the proof.
\end{proof}

\begin{lemma}
    \label[lemma]{lem:simple_relative_smoothness}
    Let \(m=(\cL,\cD,\cE,x,e,e^\vee)\in \cM^\SIM(\bar{k})\). Suppose
    \begin{align}
        \label{eqn:degree_for_relative_smoothness}
        n\deg\cD>\rho(\cL)+2n(g_X-1)
    \end{align}
    and \(\deg{V_\cE}\ge 0\) (resp.~\(\deg{V_\cE}\le 0\)), then the map
    \(\olto{h}\) (resp.~\(\orto{h}\)) is smooth at \(m\).
\end{lemma}
\begin{proof}
    The deformation of \(\cM\) relative to \(\olto{\cM}\) is controlled by the
    locally-free sheaf \(V_\cE\otimes\cD\). More precisely, \(\olto{h}\) is
    smooth at \(m\) if
    \begin{align}
        \RH^1(\breve{X},V_\cE\otimes\cD)
    \end{align}
    vanishes. By Serre duality, it is the same as the condition
    \begin{align}
        \label{eqn:the_vectors_to_vanish_for_deformation}
        \Hom_{\breve{X}}(\cD\otimes\omega_X^{-1},V_\cE^\vee)=0.
    \end{align}
    Recall the pure tensors \(f_n(m)\) and \(f_n^\vee(m)\), which fits
    \(\wedge^nV_\cE\) in between a line bundle and its dual as follows:
    \begin{align}
        \bigl(\cD^n\otimes\rho(\cL)\bigr)^{-1}\longto \wedge^n V_\cE \longto \cD^n\otimes\rho(\cL).
    \end{align}
    In fact, the construction of \(f_n(m)\) still makes sense if we replace
    \(e\) by an arbitrary (twisted) vector
    \begin{align}
        u\colon \cD'\longto V_\cE,
    \end{align}
    where \(\cD'\) is an arbitrary line bundle. We denote the resulting pure
    tensor by \(f_n(m,u)\). Similarly, we have a pure (co-)tensor
    \(f_n^\vee(m,u^\vee)\) if we replace \(e^\vee\) by another co-vector.
    In addition, by the definition of \(\FRM\), the mHiggs field
    \(\sigma^*x=\iota^*x\) inside \(\OGT_*\bsM\) (recall \(\bsM\) is the split
    form of \(\sM\)) also induces pure tensors \((\sigma^*f_n)(m,u)\) from a
    vector
    \begin{align}
        u\colon\cD'&\longto V_\cE^\vee,
    \end{align}
    and similarly we have \((\sigma^*f_n^\vee)(m,u^\vee)\).

    Let \(\cD'=\cD\otimes\omega_X^{-1}\) and suppose for the sake of
    contradiction we have a non-zero vector \(u\) in
    \eqref{eqn:the_vectors_to_vanish_for_deformation}. If the pure tensor
    \((\sigma^*f_n)(m,u)\neq 0\), then we have an inclusion of line bundles
    \begin{align}
        (\sigma^*f_n)(m,u)\colon (\cD')^n\otimes\rho(\cL)^{-1}\longto\wedge^n
        V_\cE^\vee.
    \end{align}
    However, if \(\deg{V_\cE}\ge 0\), then by the assumption on the degree of
    \(\cD\) we have
    \begin{align}
        \deg{\cD'}-\deg{\cL}>0\ge \deg{V_\cE^\vee}=\deg{\wedge^n V_\cE^\vee},
    \end{align}
    which is a contradiction. If \((\sigma^*f_n)(m,u)=0\), then since \(u\neq
    0\), we may find the largest \(i\le n\) such that
    \(\bFf_i(\OGT^*m,\OGT^*u)\neq 0\). Restricting to the generic point of
    \(\breve{X}'\), we see that generically \(\OGT^*x\) (viewed as an
    element of \(\Mat_n\) via the first fundamental representation of
    \(\bM\)) stabilizes a non-trivial partial flag, which is impossible by
    \Cref{cor:simple_locus_no_reduction} because \(m\in\cM^\SIM\).

    Finally, the case of \(\orto{h}\) is similar: the analogue of
    \eqref{eqn:the_vectors_to_vanish_for_deformation} is the condition
    \begin{align}
        \Hom_{\breve{X}}(\cD\otimes\omega_X^{-1},V_\cE)=0,
    \end{align}
    and the rest of proof proceeds accordingly when \(\deg{V_\cE}\le 0\).
\end{proof}

\begin{theorem}
    \label[theorem]{thm:local_model_of_singularity_JR_symmetric}
    Over each irreducible component of \(\cM^\ddagger\) such that
    \begin{align}
        n\deg\cD>\rho(\cL)+2n(g_X-1)
    \end{align}
    holds, we have an isomorphism
    up to a cohomological shift and a Tate twist:
    \begin{align}
        \bigl(\ev_N^{*}\IC_{\Stack*{\SFQ_X}_N}\bigr)_{|\cM^{\ddagger}}\simeq
        \IC_{\cM^{\ddagger}}.
    \end{align}
\end{theorem}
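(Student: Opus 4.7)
The plan is to reduce the two-sided statement to the one-sided statements already established in \Cref{prop:one_sided_local_model_of_singularity} by using the smoothness provided by \Cref{lem:simple_relative_smoothness}. The key dichotomy is: over any given irreducible component of \(\cM^\ddagger\), the degree \(\deg V_\cE\) is a locally constant (hence constant) integer invariant of the \(G\)-torsor, so we must have either \(\deg V_\cE\ge 0\) or \(\deg V_\cE\le 0\) on the whole component. In the first case \(\olto{h}\) is smooth pointwise, in the second \(\orto{h}\) is smooth pointwise, and in either case we can pull back a known local-model-of-singularity result along a smooth map.

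Concretely, I would fix an irreducible component \(Z\subset \cM^\ddagger\) for which the inequality \(n\deg\cD>\rho(\cL)+2n(g_X-1)\) holds, and, relabeling if necessary, assume \(\deg V_\cE\ge 0\) on \(Z\). Since \(\cM^\ddagger\subset\cM^\SIM\) by the very definition \(\cA^\ddagger=\cA^\SIM\cap\cA^\dagger\), the hypotheses of \Cref{lem:simple_relative_smoothness} are satisfied at every \(\bar k\)-point of \(Z\); openness of the smooth locus then upgrades this to smoothness of \(\olto{h}\) along all of \(Z\). From the Cartesian diagram defining \(\cM\), the evaluation map factors as
\begin{align}
    \ev_N \;=\; \olto{\ev}_N\circ \olto{h},
\end{align}
and the image of \(Z\) in \(\olto{\cM}\) lies in \(\olto{\cM}^\ddagger\), because the ``\(\ddagger\)''-conditions are pulled back from the common mHitchin base \(\cM^{\Hit,\ddagger}\) through \(\olto{\cM}\to \PicS_X\x\cM^\Hit\). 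Applying \Cref{prop:one_sided_local_model_of_singularity} to the left-hand side moduli gives
\begin{align}
    \bigl(\olto{\ev}_N^{*}\IC_{\Stack*{\SFQ_X}_N}\bigr)_{|\olto{\cM}^{\ddagger}}\simeq \IC_{\olto{\cM}^{\ddagger}}
\end{align}
up to shift and Tate twist. Since smooth pullback of relative dimension \(d\) carries IC sheaves to IC sheaves up to the shift \([2d]\) and twist \((d)\), pulling back along the smooth restriction of \(\olto{h}\) over \(Z\) yields
\begin{align}
    \bigl(\ev_N^{*}\IC_{\Stack*{\SFQ_X}_N}\bigr)_{|Z}\simeq \IC_{Z}
\end{align}
up to shift and twist, which is what we want. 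The symmetric case \(\deg V_\cE\le 0\) is identical with \(\orto{h}\) and \(\orto{\ev}_N\).

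The main obstacle I anticipate is not the overall strategy but the verification that \(\olto{h}\) (respectively \(\orto{h}\)) actually lands in \(\olto{\cM}^\ddagger\) (respectively \(\orto{\cM}^\ddagger\)) rather than in the larger one-sided heart, so that \Cref{prop:one_sided_local_model_of_singularity} may be invoked. This should be a direct consequence of the fact that the ``\(\dagger\)''-condition and the simplicity condition only depend on the underlying mHiggs bundle, together with non-vanishing of \(f_n\) (respectively \(f_n^\vee\)) which is part of being in \(\cM^\SIM\). A secondary but routine check is that the cohomological shift and Tate twist coming from the smooth-pullback step and those in \Cref{prop:one_sided_local_model_of_singularity} combine consistently; since the theorem is only stated up to such shift and twist, this is just bookkeeping via the relative dimension of \(\olto{h}\) (or \(\orto{h}\)) computed from the rank of \(V_\cE\otimes\cD\).
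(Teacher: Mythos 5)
Your proposal is correct and takes essentially the same approach as the paper's (one-sentence) proof, which likewise invokes the dichotomy \(\deg V_\cE\ge 0\) or \(\deg V_\cE\le 0\), \Cref{lem:simple_relative_smoothness}, and \Cref{prop:one_sided_local_model_of_singularity}. Your verification that \(\olto{h}\) (resp.\ \(\orto{h}\)) lands in \(\olto{\cM}^\ddagger\) (resp.\ \(\orto{\cM}^\ddagger\)) is exactly the right point to flag and is handled correctly; the only minor slip is that smooth pullback of relative dimension \(d\) preserves IC up to a shift by \(d\), not \(2d\), which is immaterial since the statement is only up to shift and twist.
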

\begin{proof}
    Since we either have \(\deg{V_\cE}\ge 0\) or \(\deg{V_\cE}\le 0\), the
    theorem immediately follows from
    \Cref{prop:one_sided_local_model_of_singularity,lem:simple_relative_smoothness,thm:topological_local_model_of_singularity}.
\end{proof}

\subsubsection{}
Another important aspect of \(\cM^\ddagger\) is its equidimensionality, which is
essential for proving smallness in \Cref{prop:smallness}.

\begin{proposition}
    \label[proposition]{prop:equidimensionality_of_cM_sim}
    For any irreducible component of \(\PicS_X\x\cB\) such that
    \eqref{eqn:degree_for_relative_smoothness} holds, its preimage in
    \(\cM^\ddagger\) is equidimensional.
\end{proposition}
\begin{proof}
    Once the component of \(\PicS_X\x\cB\) is fixed, the degrees of
    \(\rho(\cL)\) and \(\cD\) are fixed. For simplicity, we replace
    \(\cM^\ddagger\) (resp.~\(\olto{\cM}^\ddagger\), \(\orto{\cM}^\ddagger\)) by
    the corresponding preimage. We first compute the local dimensions of
    \(\olto{\cM}^\ddagger\) and \(\orto{\cM}^\ddagger\). The proof of
    \Cref{prop:one_sided_local_model_of_singularity} shows that the preimage
    of the big-cell locus of \(\Stack*{\SFQ_X}\) in \(\olto{\cM}^\ddagger\)
    (resp.~\(\orto{\cM}^\ddagger\)) is smooth and dense, and so we only need to
    compute the dimension over the smooth locus. We compute this for
    \(\orto{\cM}^\ddagger\); the case for \(\olto{\cM}^\ddagger\) is similar.

    Over the smooth locus, the dimension at any point is the difference between
    the dimension of the tangent space and that of the infinitesimal
    automorphism group. Similar to
    \Cref{prop:one_sided_local_model_of_singularity} (and the discussions in
    \cite[\S~7.1]{Wa25}), for any \(\orto{m}\in \orto{\cM}(\bar{k})\), the
    infinitesimal automorphism group is given by
    \begin{align}
        \RH^0\bigl(\breve{X},\ker\cT^{\le 0}\bigr),
    \end{align}
    whereas before \(\cT^{\le 0}\) is the two-step complex
    \begin{align}
        \ad(\cE)\longto \orto{m}^*\CoTB_{\FRM/\FRA_\FRM}^\vee\oplus \bigl(V_\cE^\vee\otimes\cD\bigr),
    \end{align}
    also viewed as a map between coherent sheaves. Since \(m\) lies in the
    big-cell locus, the sheaf \(\orto{m}^*\CoTB_{\FRM/\FRA_\FRM}\) is in fact
    locally-free. The tangent space fits into a short exact sequence
    \begin{align}
        0\longto\RH^1(\breve{X},\ker{\cT^{\le 0}})\longto \RH^0(\breve{X},\cT^{\le 0})\longto
        \RH^0(\breve{X},\coker{\cT^{\le 0}})\longto 0.
    \end{align}
    As discussed, we have
    \begin{align}
        \dim_{\orto{m}}\orto{\cM}^\ddagger
        &=\dim_{\bar{k}}\RH^0(\breve{X},\cT^{\le
        0})-\dim_{\bar{k}}\RH^0(\breve{X},\ker\cT^{\le 0})\\
        &=\dim_{\bar{k}}\RH^0(\breve{X},\coker{\cT^{\le
        0}})+\dim_{\bar{k}}\RH^1(\breve{X},\ker\cT^{\le 0})
        -\dim_{\bar{k}}\RH^0(\breve{X},\ker\cT^{\le 0}).
    \end{align}
    Since we proved in \Cref{prop:one_sided_local_model_of_singularity} that
    \begin{align}
        \RH^1(\breve{X},\coker{\cT^{\le 0}})=0,
    \end{align}
    we can write \(\dim_{\orto{m}}\orto{\cM}^\ddagger\) in terms of Euler
    characteristics
    \begin{align}
        \dim_{\orto{m}}\orto{\cM}^\ddagger
        &=\chi(\coker{\cT^{\le 0}})-\chi(\ker{\cT^{\le 0}})\\
        &=-\chi\bigl(\ad(\cE)\bigr)+\chi\bigl(m^*\CoTB_{\FRM/\FRA_\FRM}^\vee\oplus
        (V_\cE^\vee\otimes\cD)\bigr).
    \end{align}
    Since we know \(\cM^{\Hit,\dagger}\) is equidimensional (recall we
    already fixed the component in \(\cB\)), with the same
    argument we know that
    \begin{align}
        \dim{\cM^{\Hit,\dagger}}=-\chi\bigl(\ad(\cE)\bigr)+\chi\bigl(m^*\CoTB_{\FRM/\FRA_\FRM}^\vee\bigr)
    \end{align}
    is constant, and as a result
    \begin{align}
        \dim_{\orto{m}}{\orto{\cM}^\ddagger}=\dim{\cM^{\Hit,\dagger}}+\chi(V_\cE^\vee\otimes\cD).
    \end{align}
    Similarly,
    \begin{align}
        \dim_{\olto{m}}{\olto{\cM}^\ddagger}=\dim{\cM^{\Hit,\dagger}}+\chi(V_\cE\otimes\cD).
    \end{align}
    If \(\deg{V_\cE}\ge 0\), then the map \(\orto{h}^\ddagger\) is
    smooth by \Cref{lem:simple_relative_smoothness} and obviously
    schematic. In particular, we know
    \begin{align}
        \RH^1(\breve{X},V_\cE\otimes\cD)=0.
    \end{align}
    Let \(m\in\cM^\ddagger(\bar{k})\) be any point with
    \(\orto{h}(m)=\orto{m}\). Then
    \begin{align}
        \dim_m{\cM^\ddagger}
        &=\dim_{\orto{m}}{\orto{\cM}^\ddagger}+\dim_{\bar{k}}\RH^0(\breve{X},V_\cE\otimes\cD)\\
        &=\dim_{\orto{m}}{\orto{\cM}^\ddagger}+\chi(V_\cE\otimes\cD)\\
        &=\dim{\cM^{\Hit,\dagger}}+\chi(V_\cE^\vee\otimes\cD)+\chi(V_\cE\otimes\cD)\\
        &=\dim{\cM^{\Hit,\dagger}}+2n\deg\cD-2n(g_X-1),
    \end{align}
    which is a constant as \(\deg\rho(\cL)\) and \(\deg\cD\) are fixed. If
    \(\deg{V_\cE}\le 0\), we switch the role of \(\olto{\cM}^\ddagger\) and
    \(\orto{\cM}^\ddagger\) and we are done.
\end{proof}

\subsubsection{}
As the last part of this subsection, we show that \(h^\heartsuit\) is a proper
relative algebraic space using valuative criteria.

\begin{proposition}
    \label[proposition]{prop:h_heartsuit_is_proper_alg_space}
    The map \(h^\heartsuit\colon\cM^\heartsuit\to \cA^\heartsuit\) is
    representable by proper algebraic spaces.
\end{proposition}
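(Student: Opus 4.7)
The plan is to verify the two defining conditions: that $h^\heartsuit$ is representable by algebraic spaces, and that it satisfies the valuative criterion of properness. These together (with finite-typeness, which is automatic from the mapping-stack construction combined with \Cref{lem:coarse_lattice_bounds}) give the statement.

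For representability, I would take a $T$-point $m=(\cL,\cD,\cE,x,e,e^\vee)\in\cM^\heartsuit(T)$ lying over some $a\in\cA^\heartsuit(T)$. Any automorphism of $m$ over $a$ reduces to an automorphism $\phi$ of $\cE$ fixing $x$, $e$, and $e^\vee$, because $\cL$ and $\cD$ are pinned down by $a$. Since $a\in\cA^\heartsuit$, the tuple at the generic point of $X_T$ is strongly regular semisimple, so \Cref{prop:srs_implies_iso_to_GIT} forces $\phi$ to be the identity there. The sheaf of automorphisms of $\cE$ embeds into the torsion-free sheaf $\IEnd(V_\cE)$, so $\phi$ must be trivial on all of $X_T$.

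For the valuative criterion, I would start with a DVR $R$ with fraction field $K$, an $a\in\cA^\heartsuit(R)$, and a $K$-lift $m_K\in\cM^\heartsuit(K)$ of $a|_K$, and produce a unique $m_R\in\cM^\heartsuit(R)$ extending $m_K$. Uniqueness follows from the generic srs condition via \Cref{prop:srs_implies_iso_to_GIT} combined with the torsion-free argument above. For existence, I would work locally at each closed point $v\in|X|$. Over the formal completion $\Spec(\cO_v\hat\otimes R)$, after choosing trivializations, the problem becomes one of extending an $R\lauser{\pi_v}$-lattice satisfying the integrality conditions (in the spirit of \eqref{eqn:JR_fiber_lattice_description}) from $K$ to $R$. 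By \Cref{lem:coarse_lattice_bounds}, the admissible lattices lie between two fixed $R\powser{\pi_v}$-lattices depending only on $a$, so the relevant locus embeds as a closed subscheme of a finite-type Grassmannian, hence is projective over $R$, and the extension exists at $v$. Outside a finite set of $v$'s where $\alpha_R$ fails to be srs, the isomorphism $\sM^\srs\simeq\FRC_\sM^\srs$ already provides a canonical extension of $m_K$.

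The hard part will be confirming the global gluing: one must check that the locally extended torsor, mHiggs field, and vector/co-vector data patch coherently over $X_R$. This should follow from a Beauville--Laszlo style descent at each of the finitely many bad places, glued to the canonical extension over the srs locus. The generic uniqueness of the extension, coming from $\sM^\srs\simeq\FRC_\sM^\srs$, rigidifies the choices sufficiently to force compatibility between the local and the generic extensions. No additional obstructions should arise, since all the local extension spaces are projective and the global structure on $X_R$ is determined by the $K$-point together with the local integrality conditions.
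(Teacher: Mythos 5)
Your uniqueness and representability arguments coincide in substance with the paper's: generic triviality of automorphisms via the isomorphism $\sM^\srs\simeq\FRC_\sM^\srs$, propagated globally by torsion-freeness of $\IEnd(V_\cE)$, and the same rigidity for extensions over any open dense $U\times\Spec R$ landing in the srs locus.

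Your existence argument, however, departs from the paper's and has a gap. You cite \Cref{lem:coarse_lattice_bounds} to embed the extension problem in a Grassmannian over $\Spec R$, but that lemma bounds lattices for a \emph{fixed} $a\in\FRC_\sM^\heartsuit(\cO_v)$ with $R$ a varying $k$-algebra; in the valuative setup the invariant $a_R$ itself varies over $\Spec R$, and the bound depends on valuations (of the tensors $f_i$, of the abelianization) that can jump from the generic to the special fiber of $\Spec R$. Without a semi-continuity argument showing the bound at the special fiber dominates the one at the generic fiber, there is no single finite-type Grassmannian capturing the whole family. Your gluing step also implicitly asserts that the non-srs locus of $a_R$ in $X\times\Spec R$ is supported on $\bigcup_{v\in S} X_{v,R}$ for a finite $S\subset\abs{X}$; this is true --- the vanishing locus of $\Disc(a_R)$ contains neither fiber of $X\times\Spec R\to\Spec R$, hence is finite over $\Spec R$ and has finite image in $X$ --- but you do not establish it, and it is exactly the point that makes the local-to-global patching well-posed.

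The paper's existence argument sidesteps the lattice analysis entirely. It chooses $U\subset X$ open dense so that $a_R|_{U\times\Spec R}$ is strongly regular semisimple, pulls $a_R$ back along $\sM^\srs\simeq\FRC_\sM^\srs$ to extend $m_K$ over $U\times\Spec R$, and then observes that $X\times\Spec R$ is a normal surface in which the complement of $(U\times\Spec R)\cup(X\times\Spec K)$ has codimension $2$: so the torsors $(\cL,\cD,\cE)$ extend by normality, and the affine data $(x,e,e^\vee)$ extend by Hartogs. No Grassmannian embedding, uniform bound, or Beauville--Laszlo bookkeeping is needed. If you wish to keep your route, the missing ingredients are the semi-continuity of the lattice bound over $\Spec R$ and the finiteness of the image of the discriminant locus in $X$; but the paper's normality/Hartogs argument is both shorter and more robust.
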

\begin{proof}
    Since \(\cM^\Hit\) is an algebraic stack locally of finite type, it is clear
    that so is \(\cM\). Thus, we only need to verify valuative criteria for
    properness. Let \(R\) be a discrete valuation ring over \(k\) and \(K\)
    its field of fractions.

    Suppose \(m=(\cL,\cD,\cE,x,e,e^\vee)\in\cM^\heartsuit(K)\) that maps to a
    point \(a\in\cA^\heartsuit(R)\). We view \(m\) as a map
    \begin{align}
        X\x\Spec{K}\longto \Stack*{\sM/Z_\FRM\x \Gm}.
    \end{align}
    Since over the strongly regular semisimple
    locus, the map \(\sM^\srs\to \FRC_\sM^\srs\) is an isomorphism by (the
    Galois twisted version of) \Cref{prop:srs_implies_iso_to_GIT},
    we can find some open dense subset \(U\subset X\) such that \(m_{|U\x\Spec{K}}\)
    extends to a map
    \begin{align}
        U\x\Spec{R}\longto \Stack*{\sM/Z_\FRM\x \Gm}.
    \end{align}
    This gives a \(Z_\FRM\)-torsor, a line bundle, and a \(G\)-torsor, all over
    \(U\x\Spec{R}\) and compatible with \(\cL\), \(\cD\), \(\cE\), respectively
    over \(U\x\Spec{K}\). Since \(X\x \Spec{R}\) is normal of dimension \(2\)
    and the complement \(X\x\Spec{R}-U\x\Spec{R}-X\x\Spec{K}\) has codimension
    \(2\), we may extend the tuple \((\cL,\cD,\cE)\) to the entire
    \(X\x\Spec{R}\). Then by Hartogs' theorem, we can also extend the mHiggs
    field \(x\), the vector \(e\), and the co-vector \(e^\vee\) as well. This
    shows the existence part of the valuative criteria.

    For the uniqueness part, again since \(\sM\) is generically isomorphic
    to \(\FRC_\sM\), the extension of \((\cL,\cD,\cE)\) to \(U\x\Spec{R}\) is
    uniquely determined by \(a\). The rest part of the extension is then
    uniquely determined as well. The same consideration also shows that the
    automorphism groups of the fibers of \(\cM^\heartsuit\) over
    \(\cA^\heartsuit\) are trivial. This finishes the proof.
\end{proof}


\subsection{The geometry of \texorpdfstring{\(\cM^{\prime\SIM}\)}{M'sim}} 
\label{sub:the_geometry_of_cM_sim_prime}

In this subsection, we prove the results for \(\cM^{\prime\SIM}\) analogous to
those in \Cref{sub:the_geometry_of_cM_sim}. The proofs will be essentially the
same, and in some cases easier.

\subsubsection{}
Similar to the \(\cM\) case, for any \(m'=(\cL,\cD,\cE',x',e')\in\cM'(S)\), we
have a pure tensor
\begin{align}
    f_n'(m')\colon \OGT^*\bigl(\cD^n\otimes\rho(\cL)\bigr)^{-1}\longto \wedge^n V'_{\cE'},
\end{align}
which induces a pure co-tensor (and vice versa)
\begin{align}
    f_n^{\prime\vee}(m')\colon\wedge^n V'_{\cE'}\longto
    \OGT^*\bigl(\cD^n\otimes\rho(\cL)\bigr).
\end{align}
Since \(f_n'(m')\) and \(f_n^{\prime\vee}(m')\) determines each other (with
given \(m'\)), we do not have analogues to \(\olto{\cM}\) or
\(\orto{\cM}\). Instead, deformations of \(m'\) are much simpler than
\(m\in\cM\).

\begin{lemma}
    \label[lemma]{lem:unitary_relative_smoothness}
    Let \(m'=(\cL,\cD,\cE',x',e')\in \cM^{\SIM}(\bar{k})\). Suppose
    \eqref{eqn:degree_for_relative_smoothness} holds,
    then the map \(\cM'\to\cM^{\prime\Hit}\) is smooth at \(m'\).
\end{lemma}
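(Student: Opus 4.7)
My plan is to mirror the proof of Lemma~\ref{lem:simple_relative_smoothness}, while exploiting a structural simplification specific to the unitary setting: the Hermitian structure on \(V'_{\cE'}\) automatically forces \(\deg V'_{\cE'}=0\), so no case analysis on the sign of the degree is required, and there is no need to factor the argument through one-sided auxiliary stacks like \(\olto{\cM}\) and \(\orto{\cM}\).

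First, I would set up the deformation theory of the forgetful map \(\cM'\to\PicS_X\times\cM^{\prime\Hit}\) at \(m'\). Because \(e^{\prime\vee}\) is uniquely determined from \(e'\) via the Hermitian isomorphism \(h\colon V'_{\cE'}\simeq(\sigma^*V'_{\cE'})^\vee\), deforming \(m'\) over the data \((\cL,\cD,\cE',x')\) amounts to deforming the single vector \(e'\colon\cD_{X'}^{-1}\to V'_{\cE'}\) on \(X'\). This deformation problem is controlled by \(\RH^*(X', V'_{\cE'}\otimes\OGT^*\cD)\), so smoothness at \(m'\) reduces to the vanishing of the obstruction space \(\RH^1(X',V'_{\cE'}\otimes\OGT^*\cD)\).

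Next, I would dualize: Serre duality on \(X'\), combined with the Hermitian identification \((V'_{\cE'})^\vee\simeq\sigma^*V'_{\cE'}\), the étaleness of \(\OGT\) (so \(\omega_{X'}\simeq\OGT^*\omega_X\)), and the invariance of global sections under \(\sigma\), identify the dual of the obstruction with \(\RH^0(X', V'_{\cE'}\otimes\OGT^*(\cD^{-1}\otimes\omega_X))\). It therefore suffices to preclude any non-zero morphism
\[u\colon\OGT^*(\cD\otimes\omega_X^{-1})\longto V'_{\cE'}\]
on \(X'\). Assume for contradiction that such a \(u\) exists. Since \(\FRM'\) splits on \(X'\) into the universal monoid \(\bM\), the pure tensors \(\bFf_i(\OGT^*m',u)\) are well-defined, and I would dichotomize on whether \(\bFf_n(\OGT^*m',u)\) vanishes. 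If it does not vanish, \(u\) yields an inclusion of line bundles
\[\OGT^*\bigl((\cD\otimes\omega_X^{-1})^n\otimes\rho(\cL)^{-1}\bigr)\hookrightarrow\wedge^n\OGT^*V'_{\cE'}\]
on \(X'\); the Hermitian form gives \(\deg V'_{\cE'}=-\deg V'_{\cE'}=0\), so comparing degrees on \(X'\) (and dividing by \(2\)) yields \(n\deg\cD\le\deg\rho(\cL)+2n(g_X-1)\), contradicting \eqref{eqn:degree_for_relative_smoothness}. If it vanishes, I would take the largest \(i<n\) with \(\bFf_i(\OGT^*m',u)\ne 0\); restricting to the generic point of \(X'\) then produces a non-trivial partial flag stabilized by \(\OGT^*x'\), contradicting the simplicity of \(m'\) via Corollary~\ref{cor:simple_locus_no_reduction}.

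The only point requiring care, rather than a genuine obstacle, is the verification that the Hermitian compatibility \((\sigma^*h)^\vee\simeq h\) does not impose additional first-order constraints on deformations of \(e'\); this is immediate since \(e^{\prime\vee}\) carries no independent degree of freedom once \(e'\) and \(h\) are fixed. With that in hand, everything else is a direct transcription of the symmetric-side argument, with the single Hermitian bundle replacing the independent pair \((V_\cE,V_\cE^\vee)\) and the automatic identity \(\deg V'_{\cE'}=0\) collapsing the two cases of Lemma~\ref{lem:simple_relative_smoothness} into one.
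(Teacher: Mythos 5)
Your proof is correct and takes essentially the same approach as the paper: obstruction space controlled by the (push-forward of the) sheaf \(V'_{\cE'}\otimes\OGT^*\cD\), Serre duality, then the dichotomy on vanishing of the pure tensor \(\bFf_n\), with the non-vanishing branch killed by \(\deg V'_{\cE'}=0\) and the degree hypothesis \eqref{eqn:degree_for_relative_smoothness}, and the vanishing branch killed by \Cref{cor:simple_locus_no_reduction}. The only cosmetic difference is that you work on \(X'\) and apply the Hermitian identification \((V'_{\cE'})^\vee\simeq\sigma^*V'_{\cE'}\) before dualizing, while the paper keeps everything on \(X\) via \(\OGT_*\) and lands the pure tensor in \(\wedge^n(V'_{\cE'})^\vee\); these are equivalent by the projection formula and \(\OGT\) being finite \'etale.
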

\begin{proof}
    The proof is similar to \Cref{lem:simple_relative_smoothness}. The
    deformation of \(\cM'\) relative to \(\cM^{\prime\Hit}\) is controlled by
    the locally-free sheaf \(\OGT_*V'_{\cE'}\otimes\cD\), and by Serre duality,
    it suffices to prove that
    \begin{align}
        \label{eqn:unitary_case_the_vectors_to_vanish_for_deformation}
        \Hom_{\breve{X}}(\cD\otimes\omega_X^{-1},\OGT_*(V'_{\cE'})^\vee)=0.
    \end{align}
  
    Let \(\cD'=\cD\otimes\omega_X^{-1}\) and suppose for the sake of
    contradiction we have a non-zero vector \(u'\) in
    \eqref{eqn:unitary_case_the_vectors_to_vanish_for_deformation}. Similar
    to \Cref{lem:simple_relative_smoothness}, we have the pure tensor
    \begin{align}
        f_n'(m',u')\colon \OGT^*\bigl((\cD')^n\otimes
        \rho(\cL)^{-1}\bigr)\longto \wedge^n (V'_{\cE'})^\vee\simeq\wedge^n\sigma^*V'_{\cE'}.
    \end{align}
    If \(f_n'(m',u')\neq 0\), then it is an inclusion of line bundles.
    However, since \(V'_{\cE'}\) is Hermitian, \(\deg{V'_{\cE'}}=0\). Then by
    the assumption on the degree of \(\cD\) we have
    \begin{align}
        \deg{\OGT^*\cD'}-\deg{\OGT^*\cL}>0=\deg{(V'_{\cE'})^\vee},
    \end{align}
    a contradiction. If \(f_n'(m',u')=0\), then since \(u'\neq 0\), we may find
    the largest \(i\le n\) such that \(\bFf_i(m',u')\neq 0\). Then the restriction
    of \(\OGT^*x'\) to the generic point of \(\breve{X}'\) stabilizes a
    non-trivial partial flag, which is impossible by
    \Cref{cor:simple_locus_no_reduction} because \(m'\in\cM^{\prime\SIM}\). This
    finishes the proof.
\end{proof}

\subsubsection{}
The evaluation map \(\ev_N^{\prime\Hit}\) on \(\cM^{\prime\Hit}\) induces the
evaluation map
\begin{align}
    \ev_N'\colon \cM'\longto \Stack*{\SFQ_X}_N.
\end{align}
We define \(\cM^{\prime\ddagger}\) to be the preimage of
\(\cM^{\prime\Hit,\dagger}\) in \(\cM^{\prime\SIM}\).

\begin{theorem}
    \label[theorem]{thm:local_model_of_singularity_JR_unitary}
    Over each irreducible component of \(\cM^{\prime\ddagger}\) such that
    \eqref{eqn:degree_for_relative_smoothness}
    holds, we have an isomorphism up to a cohomological shift and a Tate twist:
    \begin{align}
        \bigl(\ev_N^{\prime*}\IC_{\Stack*{\SFQ_X}_N}\bigr)_{|\cM^{\prime\ddagger}}\simeq
        \IC_{\cM^{\prime\ddagger}}.
    \end{align}
\end{theorem}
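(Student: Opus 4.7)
The plan is to prove this as a direct analogue of \Cref{thm:local_model_of_singularity_JR_symmetric}, but in fact with a simpler argument because on the unitary side we have only one datum $e'$ (rather than the pair $(e, e^\vee)$), and the Hermitian form turns $e'$ into its own dual. So there is no need to go through an intermediate ``one-sided'' stack analogous to $\olto{\cM}$ or $\orto{\cM}$; the forgetful map $\cM' \to \PicS_X \times \cM^{\prime\Hit}$ can be analyzed in a single step.

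The key ingredients are already in place. First, \Cref{lem:unitary_relative_smoothness} shows that under the degree hypothesis $n\deg\cD > \rho(\cL) + 2n(g_X - 1)$, the forgetful map $\cM' \to \PicS_X \times \cM^{\prime\Hit}$ is smooth at any point of $\cM^{\prime\SIM}(\bar{k})$. Hence its restriction to $\cM^{\prime\ddagger}$ is smooth onto its image in $\PicS_X \times \cM^{\prime\Hit,\dagger}$. Second, \Cref{thm:topological_local_model_of_singularity} (applied on the unitary side, as explained in \Cref{sec:supplement_on_the_usual_mh_fibrations}) gives, up to shift and Tate twist,
\begin{align}
    \bigl(\ev_N^{\prime\Hit *}\IC_{\Stack*{\SFQ_X}_N}\bigr)_{|\cM^{\prime\Hit,\dagger}} \simeq \IC_{\cM^{\prime\Hit,\dagger}}.
\end{align}

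The steps I would carry out are: (i) observe that the evaluation map $\ev_N'$ factors as the composition of the forgetful map $\cM^{\prime\ddagger} \to \PicS_X \times \cM^{\prime\Hit,\dagger}$ followed by the projection and then $\ev_N^{\prime\Hit}$; (ii) use \Cref{lem:unitary_relative_smoothness} to conclude that the forgetful map is smooth (of some relative dimension determined by $\chi(\OGT_*V_{\cE'}' \otimes \cD)$, which is locally constant thanks to the vanishing of $\RH^1$ that we also get from the same lemma); (iii) appeal to the fact that along a smooth morphism the pullback of an IC sheaf is the IC sheaf of the source, up to the appropriate shift and Tate twist; (iv) combine with the mHitchin local model of singularity to conclude.

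The one substantive point that requires care is checking that $\cM^{\prime\ddagger}$ is actually equidimensional over the chosen component of $\PicS_X \times \cB$, so that the IC sheaf is unambiguously defined. This is the analogue of \Cref{prop:equidimensionality_of_cM_sim}, and the argument is in fact easier: on any smooth fiber of $\cM^{\prime\ddagger} \to \cM^{\prime\Hit,\dagger}$, the relative dimension is $\chi(\OGT_*V_{\cE'}' \otimes \cD)$, which depends only on the degrees of $\cL$ and $\cD$ once we fix a component, because $\deg V_{\cE'}' = 0$ thanks to the Hermitian form. Combined with the known equidimensionality of $\cM^{\prime\Hit,\dagger}$ from \Cref{sec:supplement_on_the_usual_mh_fibrations}, this gives the desired equidimensionality.

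I do not expect any real obstacle here: the argument is formal once \Cref{lem:unitary_relative_smoothness} and \Cref{thm:topological_local_model_of_singularity} are in hand. The ``hardest'' step is merely bookkeeping of the shift and Tate twist under the smooth pullback, and confirming that the smooth locus in $\cM^{\prime\ddagger}$ is dense (which follows from the proof of \Cref{prop:one_sided_local_model_of_singularity}, whose argument works verbatim on the unitary side since it reduces to a generic computation that is insensitive to the Hermitian structure).
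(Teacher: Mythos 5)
Your proposal is correct and takes the same route as the paper, which disposes of the theorem as ``Immediate from \Cref{lem:unitary_relative_smoothness,thm:topological_local_model_of_singularity}'': the forgetful map $\cM' \to \PicS_X \times \cM^{\prime\Hit}$ is smooth on $\cM^{\prime\SIM}$ by \Cref{lem:unitary_relative_smoothness}, and pulling back the IC along a smooth map composed with the mHitchin local model gives the result. Your final paragraph about density of the smooth locus is unnecessary here (smoothness of the forgetful map already holds everywhere on $\cM^{\prime\ddagger}$ under the degree hypothesis, so there is no separate local-model input on the $\cM'$ side to verify), but it does no harm.
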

\begin{proof}
    Immediate from \Cref{lem:unitary_relative_smoothness,thm:topological_local_model_of_singularity}.
\end{proof}

\subsubsection{}
Similar to \Cref{prop:equidimensionality_of_cM_sim}, we also have
\begin{proposition}
    \label[proposition]{prop:equidimensionality_of_cM_prime_sim}
    For any irreducible component of \(\PicS_X\x\cB\) such that
    \eqref{eqn:degree_for_relative_smoothness} holds, its preimage in
    \(\cM^{\prime\ddagger}\) is equidimensional.
\end{proposition}
\begin{proof}
    The proof is similar to \Cref{prop:equidimensionality_of_cM_sim} and in fact
    much simpler. We leave it to the reader.
\end{proof}

\subsubsection{}
Lastly, we have the results about properness and representability of
\(h^{\prime\heartsuit}\). The proof proceeds exactly the same as in
\Cref{prop:h_heartsuit_is_proper_alg_space} and so we
omit.
\begin{proposition}
    \label[proposition]{prop:h_prime_heartsuit_is_proper_alg_space}
    The map \(h^{\prime\heartsuit}\colon\cM^{\prime\heartsuit}\to \cA^\heartsuit\) is
    representable by proper algebraic spaces.
\end{proposition}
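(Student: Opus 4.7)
The plan is to mirror the argument for \Cref{prop:h_heartsuit_is_proper_alg_space}, adapted to the unitary setting where the vector and co-vector are coupled by the Hermitian form. Since \(\cM^{\prime\Hit}\) is known to be an algebraic stack locally of finite type and \(\cM'\to\cM^{\prime\Hit}\) is representable (the fiber being a section space for the Hermitian bundle twisted by \(\cD\)), the total space \(\cM^{\prime\heartsuit}\) is also an algebraic stack locally of finite type. Hence it suffices to verify the valuative criterion for properness over a discrete valuation ring \(R\) with fraction field \(K\).

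Concretely, I would start with a point \(m'=(\cL,\cD,\cE',x',e')\in\cM^{\prime\heartsuit}(K)\) lying over some \(a\in\cA^\heartsuit(R)\), viewed as a morphism
\begin{equation*}
    X\x\Spec{K}\longto \Stack*{\sM'/Z_\FRM\x\Gm}.
\end{equation*}
By the twisted form of \Cref{prop:srs_implies_iso_to_GIT}, the map \(\sM^{\prime\srs}\to\FRC_\sM^\srs\) is an isomorphism; since \(a\) lies in \(\cA^\heartsuit\), there is an open dense \(U\subset X\) such that \(a_{|U\x\Spec{R}}\) factors through \(\FRC_\sM^\srs\), which yields a canonical extension of \(m'\) to a morphism \(U\x\Spec{R}\to \Stack*{\sM'/Z_\FRM\x\Gm}\) compatible with the given one over \(U\x\Spec{K}\). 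This produces candidate extensions over \(U\x\Spec{R}\) of the \(Z_\FRM\)-torsor \(\cL\), the line bundle \(\cD\), and the \(G'\)-torsor \(\cE'\), the last being a Hermitian vector bundle on \(X'\x_X (U\x\Spec R)\).

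Next, since the complement of \(U\x\Spec R\cup X\x\Spec K\) inside the normal surface \(X\x\Spec R\) has codimension \(2\), I would extend \(\cL\), \(\cD\) and the Hermitian bundle \(V'_{\cE'}\) to all of \(X\x\Spec R\) (the Hermitian form, being a section of a reflexive sheaf, likewise extends, giving back a \(G'\)-torsor \(\cE'\)). Hartogs' lemma then extends the mHiggs field \(x'\) and the vector \(e'\) (defined on \(X'\x_X(X\x\Spec R)\)) across the missing codimension-\(2\) locus; the co-vector \(e^{\prime\vee}\) is automatically determined by \(e'\) through the extended Hermitian form, so no separate extension argument is needed. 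This establishes existence in the valuative criterion.

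For uniqueness (and triviality of infinitesimal automorphisms over \(\cA^\heartsuit\)), the key observation is that all data is uniquely reconstructed from \(a\) on the dense open \(U\x\Spec R\) via the generic isomorphism \(\sM^{\prime\srs}\simeq\FRC_\sM^\srs\); any two extensions to \(X\x\Spec R\) then agree by normality and Hartogs. The one point to watch—really the only place this differs meaningfully from \Cref{prop:h_heartsuit_is_proper_alg_space}—is the extension of the Hermitian form across the codimension-\(2\) complement, but this follows from reflexivity on the normal two-dimensional scheme \(X'\x_X(X\x\Spec R)\), so I do not expect any genuine obstacle.
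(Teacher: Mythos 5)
Your proof is correct and follows essentially the same route the paper indicates — the paper itself omits the argument, noting it proceeds exactly as in \Cref{prop:h_heartsuit_is_proper_alg_space}. You have correctly identified and handled the one genuinely new point (extending the Hermitian form across the codimension-two locus via reflexivity on the normal two-dimensional scheme), which is precisely the detail that makes the unitary case go through.
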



\subsection{The product formulae} 
\label{sub:the_product_formulae}

In this subsection, we connect the global formulations with the local ones using
the \notion{product formula}. The statement for Jacquet--Rallis mH-fibers is
much simpler than that for the usual mH-fibers, because there are no Picard
stacks in the picture.

\subsubsection{}
Given \(a\in\cA^\heartsuit(k')\) be a \(k'\)-point where \(k'\) is a
\(k\)-field, we have the mH-fibers \(\cM_a\) and \(\cM'_a\). Let \(\abs{X_{k'}}\) be
the set of closed points in \(X_{k'}\). For any point \(v\in\abs{X_{k'}}\), we
also have the Jacquet--Rallis fibers \(\cM_{v}(a)\) and \(\cM'_v(a)\). Using
Beauville-Laszlo descent, we may define maps of functors defined over \(k'\):
\begin{align}
    \label{eqn:product_formulae_gluing_map}
    \prod_{v\in\abs{X_{k'}}}\cM_v(a)\longto \cM_a,\quad\text{resp.~}
    \prod_{v\in\abs{X_{k'}}}\cM'_v(a)\longto \cM'_a,
\end{align}
where the direct products on the left-hand side are necessarily finite.

\subsubsection{}
Concretely, the above gluing map is described as
follows: the \(k'\)-scheme \(\cM_v(a)\) classifies tuples, for any
\(k'\)-algebra \(R\)
\begin{align}
    (\cL,\cD,\cE,x,e,e^\vee)
\end{align}
over the formal disc \(X_{k',v}\hat{\x}_{k'}R\) whose image in
\(\Stack*{\FRC_\sM/Z_\FRM\x\Gm}\) is the restriction of \(a\) to \(X_{k',v}\hat{\x}_{k'}R\).
Since \(\cL\) and \(\cD\) are already determined by \(a\), we only need to
extend the tuple \((\cE,x,e,e^\vee)\) from the formal discs to the whole
curve compatibly. However, since \(a\in\cA^\heartsuit\) and
\(\sM^\srs\to\FRC_\sM^\srs\) is an isomorphism, such extension exists and is
uniquely determined by the restriction of \(a\) to \emph{any} punctured formal
disc \(X_{k',v}^\bullet\). The case of \(\cM_v'(a)\) is similar.

\subsubsection{}
Conversely, the restriction functor gives the maps in the opposite direction
\begin{align}
    \label{eqn:product_formulae_restriction_map}
    \cM_a\longto \prod_{v\in\abs{X_{k'}}}\cM_v(a),\quad\text{resp.~}
    \cM'_a\longto\prod_{v\in\abs{X_{k'}}}\cM'_v(a).
\end{align}
It is clear that \eqref{eqn:product_formulae_gluing_map} and
\eqref{eqn:product_formulae_restriction_map} are inverses to each
other as maps of sheaves. Therefore, we have proved that:

\begin{proposition}
    \label[proposition]{prop:product_formulae}
    For any \(k\)-field \(k'\) and \(a\in\cA^\heartsuit(k')\), we have
    isomorphisms
    \begin{align}
        \cM_a\simeq\prod_{v\in\abs{X_{k'}}}\cM_v(a),\quad\text{resp.~}
        \cM'_a\simeq\prod_{v\in\abs{X_{k'}}}\cM'_v(a).
    \end{align}
    In particular, \(\cM_a\) and \(\cM_a'\) are projective schemes.
\end{proposition}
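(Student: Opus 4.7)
The plan is to produce mutually inverse maps \eqref{eqn:product_formulae_gluing_map} and \eqref{eqn:product_formulae_restriction_map} and verify they are isomorphisms of sheaves, then deduce projectivity from the local case.

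First I would check that the product on the right-hand side is effectively finite. Since \(a\in\cA^\heartsuit(k')\), by definition the generic point of \(X_{k'}\) is mapped into \(\FRC_\sM^\srs\), so the extended discriminant divisor \(\bD_\sM^+\) (pulled back via \(a\)) is supported at only finitely many closed points of \(X_{k'}\). At every remaining \(v\in\abs{X_{k'}}\), the restriction \(a_{|X_{k',v}}\) factors through \(\FRC_\sM^\srs(\cO_v)\); by \Cref{prop:srs_implies_iso_to_GIT} (and its unitary/symmetric twisted variants in \Cref{sub:deformed_quotient_stack}), the section \(\epsilon_\bsM\) (together with its twisted forms) provides a unique lift to \(\sM(\cO_v)\) and the local Jacquet--Rallis fiber \(\cM_v(a)\) is a point. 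Thus the product is a finite direct product of finite-type \(k'\)-schemes.

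Next I would construct the restriction map \eqref{eqn:product_formulae_restriction_map} simply by pulling back data from \(X_{k'}\) to each formal disc \(X_{k',v}\), and the gluing map \eqref{eqn:product_formulae_gluing_map} by Beauville--Laszlo descent. Concretely, given a compatible collection \((\cE_v,x_v,e_v,e_v^\vee)\) at each \(v\), the \(Z_\FRM\)-torsor \(\cL\) and line bundle \(\cD\) are already given globally by \(a\). Over the open \(U\subset X_{k'}\) where \(a\) factors through \(\FRC_\sM^\srs\), \Cref{prop:srs_implies_iso_to_GIT} furnishes a canonical global lift \((\cE_U,x_U,e_U,e_U^\vee)\) over \(U\). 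Beauville--Laszlo descent for the \(G\)-torsor, the section of a vector bundle, and the section of its dual then allows us to glue \((\cE_U,x_U,e_U,e_U^\vee)\) with the local pieces \((\cE_v,x_v,e_v,e_v^\vee)\) along each \(X_{k',v}^\bullet\), provided that on each punctured disc the restrictions agree. This agreement, however, is automatic: both the restriction of the global \(U\)-piece and the restriction of a local \(\cO_v\)-point lie in \(\sM^\srs(F_{k',v})\simeq\FRC_\sM^\srs(F_{k',v})\), and both map to the same point of invariants.

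The same construction works verbatim on the unitary side using \(\sM'\) in place of \(\sM\); the Hermitian structure on \(V'_{\cE'}\) descends because Beauville--Laszlo applies over the double cover \(X'_{k'}\to X_{k'}\) compatibly. I expect the main technical obstacle to be the careful bookkeeping in the Beauville--Laszlo step: one must descend not only the \(G\)-torsor (resp.~\(G'\)-torsor with its Hermitian form) but simultaneously the mHiggs field as a section of a twisted version of \(\FRM\) (resp.~\(\FRM'\)), and the (co)vector data, checking cocycle compatibility on each punctured disc. Once both maps are constructed, their mutual inverseness is a matter of checking that gluing-then-restricting and restricting-then-gluing each recover the original datum up to unique isomorphism, and uniqueness follows from the triviality of automorphism groups established in the proofs of \Cref{prop:h_heartsuit_is_proper_alg_space,prop:h_prime_heartsuit_is_proper_alg_space}.

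Finally, projectivity of \(\cM_a\) and \(\cM_a'\) follows at once: by the corollary in \Cref{sub:affine_jacquet_rallis_fibers} each \(\cM_v(a)\) and \(\cM_v'(a)\) is representable by a projective \(k_v\)-scheme, and a finite product of projective schemes is projective.
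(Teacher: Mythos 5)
Your proposal is correct and follows the paper's argument: both construct the gluing map via Beauville--Laszlo descent and invoke the isomorphism \(\sM^\srs\to\FRC_\sM^\srs\) (\Cref{prop:srs_implies_iso_to_GIT}) to produce the canonical lift over the generic locus and to guarantee agreement on the punctured discs. Your added details — the finiteness of the product, the explicit cocycle compatibility, and the triviality of automorphisms — are all implicit in the paper's terser statement, and the projectivity conclusion is deduced the same way.
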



\subsection{Stratified smallness} 
\label{sub:stratified_smallness}

Similar to the Lie algebra case, the fibrations \(h\) and \(h'\) turn out to be
small maps after restricting to a certain open subset of \(\cA^\SIM\). In fact, it
is even stratified-small in the sense of \cite{MiVi07}, which is necessary for
the support theorem since the total stacks \(\cM\) and \(\cM'\) are not smooth.
The proof is based
on a simple dimension estimate with the help with the usual mH-fibration.
Although the result is admittedly rather crude, it is sufficient for the
purposes of this paper. It would be interesting to see a more conceptual proof
(likely with a stronger conclusion), preferably without the use of the usual
mH-fibration.

\subsubsection{}
To avoid flooding the argument with notations, we will start by proving a
weaker result in smallness rather than stratified smallness. The latter will be
a consequence of the former by utilizing the inductive structure in
mH-fibrations.

Let \(a\in\cA^\heartsuit(\bar{k})\) and \(a^\Hit\) its image in \(\cA^\Hit\). To
simplify notations, we will effectively not distinguish between \(a\) and
\(a^\Hit\) and denote the usual mH-fiber \(\cM_{a^\Hit}^\Hit\) by
\(\cM_a^\Hit\) and the global (resp.~local) \(\delta\)-invariants
\(\delta_{a^\Hit}\) (resp.~\(\delta_v(a^\Hit)\)) by \(\delta_a\)
(resp.~\(\delta_v(a)\)), etc. This should not cause any confusion.

\begin{lemma}
    \label[lemma]{lem:dimension_esitmate_using_delta}
    For any \(a\in\cA^\heartsuit(\bar{k})\) and any closed point \(\bar{v}\in
    \abs{\breve{X}}\), we have
    \begin{align}
        \dim\cM_{\bar{v}}(a)=\dim\cM_{\bar{v}}'(a)\le\delta_{\bar{v}}(a).
    \end{align}
    Consequently, we have
    \begin{align}
        \dim\cM_a=\dim\cM'_a\le\delta_a.
    \end{align}
\end{lemma}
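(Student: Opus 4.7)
The plan is to reduce both the equality and the inequality to the corresponding facts for the usual multiplicative affine Springer fibers (MASFs), and then globalize via the product formula.

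First, for the equality $\dim\cM_{\bar{v}}(a)=\dim\cM_{\bar{v}}'(a)$, I pass to $\bar{k}$. The \'etale double cover $\OGT$ becomes trivial over $\bar{k}$, and the twisted forms $\FRM$ and $\FRM'$ both become canonically isomorphic to the split monoid $\bM$; likewise $\sM$ and $\sM'$ both become identified with $\bsM$. Since the data defining $\cM_{\bar{v}}(a)$ and $\cM_{\bar{v}}'(a)$ is cut out by the same point $a\in\FRC_\sM(\breve{\cO}_{\bar{v}})$ in the common invariant space, these identifications produce a canonical $\bar{k}$-isomorphism $\cM_{\bar{v}}(a)\cong\cM_{\bar{v}}'(a)$, which yields the equality of dimensions.

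For the local inequality $\dim\cM_{\bar{v}}(a)\le\delta_{\bar{v}}(a)$, I use the forgetful morphism $\cM_{\bar{v}}(a)\to\cM_{\bar{v}}^\Hit(a)$ (and analogously $\cM_{\bar{v}}'(a)\to\cM_{\bar{v}}^{\prime\prime\Hit}(a)$ on the unitary side), which was shown to be a monomorphism of sheaves. A monomorphism between locally finite-type algebraic spaces is radicial and unramified, hence quasi-finite, and so cannot increase dimension on any quasi-compact piece. Because $\cM_{\bar{v}}(a)$ is a projective $\bar{k}$-scheme, its image lies in a quasi-compact open substack of the MASF. The known bound $\dim\cM_{\bar{v}}^\Hit(a)\le\delta_{\bar{v}}(a)$ in the multiplicative setting (the multiplicative analog of Bezrukavnikov's theorem, established in \cite{Ch19} and adapted in \cite{Wa24} to the framework used here) then gives the desired inequality.

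For the global statement, I apply the product formula (\Cref{prop:product_formulae}), $\cM_a\simeq\prod_v\cM_v(a)$ with the product finite. Taking dimensions yields $\dim\cM_a=\sum_v\dim\cM_v(a)\le\sum_v\delta_v(a)=\delta_a$, where the last equality is the standard decomposition of the global $\delta$-invariant as a sum of local contributions. The same argument, using the parallel product formula for $\cM_a'$, handles the unitary case.

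The main obstacle is the dimension-transfer across the forgetful monomorphism, because the MASF is not globally of finite type: its connected components form an infinite set indexed essentially by a piece of the local Picard lattice. This is resolved by the projectivity of $\cM_{\bar{v}}(a)$ (proved earlier via the coarse lattice bounds in \Cref{lem:coarse_lattice_bounds}), which confines its image to a quasi-compact portion of $\cM_{\bar{v}}^\Hit(a)$ where the classical dimension bound is directly applicable.
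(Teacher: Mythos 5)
Your proof is correct and takes essentially the same route as the paper: identify the two local fibers over $\bar{k}$ via the split lattice description (the paper's \Cref{lem:JR_fiber_lattice_description}), bound by the MASF dimension $\delta_{\bar{v}}(a)$, and globalize via the product formula together with $\delta_a=\sum_v\delta_v(a)$. The only stylistic difference is in the local inequality: the paper observes directly that $\cM_{\bar{v}}(a)$ is a \emph{closed subscheme} of $\cM_{\bar{v}}^\Hit(a)$ (as the lattice conditions $e\in\Lambda$, $e^\vee\in\Lambda^\vee$ are closed), which immediately bounds the dimension; you instead invoke the weaker fact that the forgetful map is a monomorphism, deduce quasi-finiteness, and patch the non-quasi-compactness of the MASF using projectivity of $\cM_{\bar{v}}(a)$. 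Your route is valid, but a touch more circuitous than necessary.
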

\begin{proof}
    For the local statement: the first equality follows from the fact that
    \(\cM_{\bar{v}}(a)\) and \(\cM_{\bar{v}}'(a)\) are isomorphic
    \(\bar{k}\)-schemes, and the inequality is because that \(\cM_{\bar{v}}(a)\)
    is a closed subscheme of \(\cM_{\bar{v}}^\Hit(a)\). The global statement
    follows from the local one and \Cref{prop:product_formulae}.
\end{proof}

\subsubsection{}
For each \(\delta\ge 0\), let \(\cA_{\ge\delta}^\Hit\)
(resp.~\(\cA_{\le\delta}^\Hit\)) be the closed (resp.~open) subset of
\(\cA^{\Hit,\heartsuit}\) such that \(\delta_a\ge
\delta\) (resp.~\(\delta_a\le\delta\)). By \Cref{prop:delta_regularity}, for a
given \(\delta\), there exists \(N=N(\delta)\) such that for all
\(\delta'\le\delta\), any irreducible component of \(\cA_{\ge\delta'}\) that is
very \((G,N)\)-ample has codimension at least \(\delta'\) in \(\cA^\Hit\). Let
\(\cA_{\ge\delta}\) (resp.~\(\cA_{\le\delta}\)) be the preimage of
\(\cA_{\ge\delta}^\Hit\) (resp.~\(\cA_{\le\delta}^\Hit\)) in \(\cA^\heartsuit\).

\begin{proposition}
    \label[proposition]{prop:smallness}
    Fix \(\delta\ge 0\) and \(N=N(\delta)\) as above. Let
    \(\cA_{(\delta)}\subset\cA_{\le\delta}^\ddagger\) be the union of
    irreducible components that are very \((G,N)\)-ample and satisfies
    \begin{align}
        \label{eqn:deg_of_cD_relative_to_cL_and_delta}
        n\deg\cD\ge \rho(\cL)+2n(g_X-1)+n\delta.
    \end{align}
    Then the maps
    \begin{align}
        h_{(\delta)}\colon \cM_{(\delta)}&\longto\cA_{(\delta)},\\
        h_{(\delta)}'\colon \cM_{(\delta)}'&\longto\cA_{(\delta)}
    \end{align}
    are small.
\end{proposition}
\begin{proof}
    The proof is similar to that of \cite[Proposition~3.5.2]{Yu11}.
    Since the statement is topological, we base change to \(\bar{k}\) and omit
    \(\bar{k}\) from notations for simplicity. For the same reason, we will
    not distinguish \(X\) with the geometric curve \(\breve{X}\). By
    \Cref{lem:dimension_esitmate_using_delta}, for the codimension-estimate part
    of smallness, it suffices to prove  for \(h_{(\delta)}'\).

    Recall that the pairing \(f_n^\vee f_n=f_n^{\prime\vee}f_n'\) is a morphism
    of line bundles on \(X\):
    \begin{align}
        f_n^\vee f_n\colon\bigl(\cD^n\otimes\rho(\cL)\bigr)^{-1}\longto
        \cD^n\otimes\rho(\cL),
    \end{align}
    which depends only on the invariant \(a\in\cA\). If
    \(a\in\cA^\heartsuit\), then \(f_n^\vee f_n\) is an inclusion. Let
    \(c\in\cA^\Hit\) be the image of \(a\), then we have
    \(\FRD_c\) be the usual extended discriminant divisor. Let
    \(\FRD_c'\subset\FRD_c\) be the \emph{reduced} subdivisor such that
    \(v\in\abs{\FRD_c}\) if and only if \(\delta_v(c)\neq 0\).

    For each fixed \(c\in\cA^{\Hit}(\bar{k})\) in the image of
    \(\cA_{(\delta)}\) (so that \(Z_\FRM\)-torsor \(\cL\) is fixed) and line
    bundle \(\cD\) such that \(\deg\cD\) satisfies
    \eqref{eqn:deg_of_cD_relative_to_cL_and_delta}, let
    \(\cA_{c,\cD}''\subset\cA_{\cL,\cD}''\) be the open subset where \(f_n^\vee
    f_n\) is an isomorphism at all points in \(\FRD_c'\). By
    \Cref{prop:product_formulae}, for any \(c''\in\cA_{c,\cD}''\) and
    \(a=(c,c'')\), the Jacquet--Rallis mH-fiber \(\cM_a'\) (and \(\cM_a\)) is
    non-empty and finite.

    By \Cref{lem:unitary_relative_smoothness}, the map from \(\cM_c'\) (the
    fiber of \(\cM'\) over \(c\)) to \(\cM_c^{\prime\Hit}\) is smooth. Moreover,
    the proof shows that the fibers of this map are open dense subsets of
    vector spaces, therefore, it induces an injection of sets of irreducible
    components:
    \begin{align}
        \Irr\cM_c'\longto\Irr\cM_c^{\prime\Hit}.
    \end{align}
    As a result, \(\cM_c'\) is equidimensional since \(\cM_c^{\prime\Hit}\) is by
    \Cref{prop:usual_mH_fiber_equidimensional}. Since \(\cM_c'\to\cA_{\cL,\cD}''\)
    is generically over the target a quasi-finite map with non-empty fibers, we
    have
    \begin{align}
        \dim Z=\dim \cA_{c,\cD}'',
    \end{align}
    for any irreducible component \(Z\) of \(\cM_c'\), because of
    equidimensionality.

    We shall prove in \Cref{lem:generic_dominance_for_smallness} that for any
    such \(Z\), the map \(Z\to\cA_{\cL,\cD}''\) is dominant, hence generically
    finite. Assuming this result, then for any \(c\in\cA^\Hit\) and any \(j\ge
    0\), the locus of \(c''\in\cA_{\cL,\cD}''\) such that
    \(\dim\cM_{(c,c'')}'=j\) has codimension at least \(j+1\).

    The codimension-estimate part of smallness is
    then straightforward: for any \(a\in\cA_{(\delta)}\), we know by
    \Cref{lem:dimension_esitmate_using_delta} that \(\dim{\cM_a'}\le\delta\).
    For any \(0\le j\le \delta\), let \(\cA_{(\delta),j}\subset\cA_{(\delta)}\)
    be the locus where \(\dim\cM_a'=j\). Then the image \(A_j\) of \(\cA_{(\delta),j}\)
    in \(\cA^\Hit\) is contained in \(\cA_{\ge
    j}^\Hit\cap\cA_{\le\delta}^\Hit\). Therefore, we have
    \begin{align}
        \codim_{\cA_{(\delta)}}\cA_{(\delta),j}\ge\codim_{\cA_{\le\delta}^\Hit}A_j+(j+1)\ge 2j+1.
    \end{align}

    Finally, the codimension estimates above together with
    \Cref{prop:equidimensionality_of_cM_sim,prop:equidimensionality_of_cM_prime_sim}
    imply that every irreducible component of \(\cM_{(\delta)}\)
    (resp.~\(\cM_{(\delta)}'\)) must map dominantly to
    \(\cA_{(\delta)}^\ddagger\). This finishes the proof of smallness.
\end{proof}

\begin{lemma}
    \label[lemma]{lem:generic_dominance_for_smallness}
    For any \(Z\) as in the proof of \Cref{prop:smallness}, the map
    \(Z\to\cA_{\cL,\cD}''\) is dominant, hence generically finite.
\end{lemma}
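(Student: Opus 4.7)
The aim is to show that any irreducible component $Z \subset \cM_c'$ maps with dense image to $\cA_{\cL,\cD}''$. Since base change to $\bar{k}$ splits the double cover $\OGT$ and induces an isomorphism $\cM_c \simeq \cM_c'$ compatible with the maps to $\cA$, I will work on the symmetric side, where the invariants $b_0,\ldots,b_{n-1}$ are bilinear (rather than quadratic via a Hermitian form) in the pair $(e,e^\vee)$. This bilinearity is what ultimately reduces the problem to linear algebra.

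I would factor through the $e$-forgetful map $\orto{h}\colon \cM_c \to \orto{\cM}_c$. By the computation in \Cref{prop:one_sided_local_model_of_singularity} combined with \Cref{lem:simple_relative_smoothness}, this is a Zariski-locally trivial vector bundle whose fiber over $\orto{m}=(\cE,x,e^\vee)$ is the vector space $\RH^0(X, V_\cE \otimes \cD)$; the hypothesis \eqref{eqn:degree_for_relative_smoothness} guarantees the relevant $\RH^1$ vanishes. Hence $Z$ is the preimage under $\orto{h}$ of a unique irreducible component $\orto{Z} \subset \orto{\cM}_c$, and for $\orto{m}$ in the preimage of the big-cell locus of $\cM_c^\Hit$, the restriction of the invariant map to the fiber $Z_{\orto{m}}$ is, by the explicit formulae in \Cref{sub:deformed_quotient_stack} for $b_0=e^\vee e$ and $b_i=\Tr(e^\vee\circ x_i\circ(e\wedge\cdot))$, a linear map of $\bar{k}$-vector spaces
\begin{align}
    L_{\orto{m}}\colon \RH^0(X, V_\cE \otimes \cD) \longto \cA_{\cL,\cD}''.
\end{align}
It suffices to show $L_{\orto{m}}$ is surjective for some (equivalently, generic) $\orto{m}\in\orto{Z}$.

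Surjectivity should come from the section $\epsilon_{\bsM}\colon \bC_{\bsM}\to\bsM$ of \Cref{sub:deformed_quotient_stack}, which satisfies $\chi_{\bsM}\circ\epsilon_{\bsM}=\Id$. I would globalize $\epsilon_{\bsM}$ to a section $\cA_{\cL,\cD}''\to\cM_c$ by fixing a $G$-torsor $\cE_0$ determined by $c$ and realizing the $(e,e^\vee)$-data as global sections over $X$; existence of such sections again uses the degree condition together with Riemann-Roch. By construction the composite $\cA_{\cL,\cD}''\to\cM_c\to\cA_{\cL,\cD}''$ is the identity, so at any point in the image of this globalized section the tangent map of the invariant map is surjective onto $\cA_{\cL,\cD}''$. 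The main obstacle, and the bulk of the calculation, is to check that this surjectivity is already achieved by varying $e$ alone with $e^\vee$ held fixed at some $e_0^\vee$ — that is, that $L_{\orto{m}}$ at $\orto{m}=(\cE_0,x_0,e_0^\vee)$ is itself surjective. This reduces to a coordinate computation exploiting the cyclic form of the companion matrix $\epsilon(a)$ underlying $\epsilon_{\bsM}$ and the triangular effect of the shear $\beta(a'')$ on the basis of each $V_i$.

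Once $L_{\orto{m}}$ is known to be surjective for some $\orto{m}\in\orto{Z}$, the fiber $Z_{\orto{m}}$ — being the full vector-space fiber of the bundle $\cM_c\to\orto{\cM}_c$ over a point of $\orto{Z}$ — already maps onto all of $\cA_{\cL,\cD}''$, so $Z$ surjects onto $\cA_{\cL,\cD}''$. Generic finiteness is then immediate from the equality $\dim Z=\dim\cA_{\cL,\cD}''$ recorded in the proof of \Cref{prop:smallness}.
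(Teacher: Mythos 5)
Your approach has a dimension obstruction that makes the key step fail. You want to reduce dominance of $Z\to\cA_{\cL,\cD}''$ to surjectivity of a \emph{single} fiber map $L_{\orto{m}}\colon \RH^0(X,V_\cE\otimes\cD)\to\cA_{\cL,\cD}''$. But over a fixed $\orto{m}=(\cE,x,e^\vee)$ with, say, $\deg V_\cE=0$, Riemann--Roch gives
\begin{align}
\dim\RH^0(X,V_\cE\otimes\cD)=n\deg\cD+n(1-g_X),\qquad
\dim\cA_{\cL,\cD}''=2n\deg\cD+\deg\rho(\cL)+n(1-g_X),
\end{align}
and the gap $n\deg\cD+\deg\rho(\cL)$ is strictly positive under the ampleness hypotheses. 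So $L_{\orto{m}}$ can never be surjective, and no ``coordinate computation exploiting the companion matrix'' will fix that. The same gap is already visible from $\dim Z=\dim\cA_{\cL,\cD}''$: the fiber $Z_{\orto{m}}$ is a \emph{proper} subvariety of $Z$, so it cannot surject onto a target of the same dimension as $Z$. You need to vary $\orto{m}\in\orto{Z}$ as well as the vector $e$; fixing $\orto{m}$ throws away too many degrees of freedom.

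The secondary argument via a globalized section $\epsilon_\bsM$ has a different problem: the image of such a section need not lie in the particular irreducible component $Z$ you started with, so surjectivity of the tangent map along the section does not establish that $Z$ itself dominates.

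The paper's proof avoids both issues by not trying to produce surjectivity at all. Instead it shows the much weaker (and sufficient) statement that the preimage of the dense open $\cA_{c,\cD}''$ in $Z$ is \emph{non-empty}: it restricts to evaluating the pure tensor $f_n'$ at the finitely many points of the reduced discriminant $\FRD_c'$, shows (using the companion-matrix section) that one can prescribe a non-degenerate value at each such point, and uses the same ``no non-trivial parabolic reduction on the simple locus'' argument from \Cref{lem:unitary_relative_smoothness} to kill the relevant $\RH^1$, so the evaluation map $\RH^0(X,\OGT_*V'_{\cE'}\otimes\cD)\to\bigoplus_v L_v$ is surjective. Combined with equidimensionality and the fact that $\cM_c'$ has finite non-empty fibers over $\cA_{c,\cD}''$ (via the product formula), this gives dominance. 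If you want to salvage your proposal, replace the single-fiber surjectivity target by the finite-dimensional evaluation target $\bigoplus_{v\in\abs{\FRD_c'}}L_v$, whose dimension is bounded by $\delta$ rather than by $\dim\cA_{\cL,\cD}''$ — this is exactly what makes the degree hypothesis \eqref{eqn:deg_of_cD_relative_to_cL_and_delta} enter.
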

\begin{proof}
    We retain all the notations from the proof of \Cref{prop:smallness}.
    We will show that the preimage of \(\cA_{c,\cD}''\) in \(Z\) is non-empty
    (in particular, it also shows that \(\cA_{c,\cD}''\) must be dense in
    \(\cA_{\cL,\cD}''\)). Let \(Z^\Hit\) be the image of
    \(Z\) in \(\cM_c^{\prime\Hit}\), then \(Z\) is the preimage of \(Z^\Hit\).
    Pick a geometric point in \(Z^\Hit\) such that its (usual) mHiggs bundle
    \((\cL,\cD,\cE',x')\) is regular. We want to show that there exists some
    element
    \begin{align}
        e'\in\Hom_X(\cD^{-1},\OGT_*V'_{\cE'})=\RH^0(X,\OGT_*V'_{\cE'}\otimes\cD)
    \end{align}
    such that its induced \(f_n'(m')\) is non-vanishing at any point in
    \(\FRD_c'\). By duality, this would imply that
    \(f_n^{\prime\vee}(m')f_n'(m')\neq 0\), hence proving this lemma.

    By \eqref{eqn:delta_local_and_global_summation}, we have that
    \begin{align}
        \delta\ge \delta_c\ge\deg{\FRD_c'}=\Cnt\abs{\FRD_c'},
    \end{align}
    where \(\Cnt\abs{\FRD_c'}\) is the number of closed points contained in
    \(\FRD_c'\). At each \(v\in \abs{\FRD_c'}\), we look at the fiber of the
    sheaf \(\OGT_*V'_{\cE'}\otimes\cD\), which is a \(2n\)-dimensional
    \(\bar{k}\)-vector space \(V_v'\). For any vector \(u_v'\in V_v\), the fiber
    of \((\cL,\cD,\cE',x')\) at
    \(v\) induces a \(\bar{k}\)-point \(m_v'\in \Stack*{\sM/Z_\FRM\x \Gm}\) for
    which the construction of the pure tensor \(f_n'(m_v',u_v')\) makes sense.
    Since \(\bar{k}\) is algebraically closed, we may trivialize \(\OGT\), \(\cL\),
    \(\cD\) and \(\cE\) at \(v\) and identify the fiber of mHiggs field \(x'\)
    at \(v\) with a diagonal point in \(\bM\x \bM\) and \(u_v'\) with a vector in
    \(\bar{k}^n\oplus\bar{k}^n\).

    Since we are free to choose among the mHiggs bundles over \(c\), we may so
    choose it that \(x'\) is in fact the (diagonal embedding of the) image of
    the section \(\epsilon_\bM\) induced by the companion matrix. Then, if we
    choose \(u_v'=(\bFe,\bFe)\), we
    can explicitly compute (similar to what is done in
    \Cref{sec:invariant_theory}) that \(f_n'(m_v',u_v')\neq 0\). Let
    \(L_v\subset V_v'\) be the line spanned by such a \(u_v\).

    Consider the short exact sequence of coherent sheaves on \(X\) induced by
    the evaluation maps at \(v\in\abs{\FRD_c'}\):
    \begin{align}
        0\longto \cK\longto \OGT_*V'_{\cE'}\otimes\cD\longto
        \bigoplus_{v\in\abs{\FRD_c'}}L_v\longto 0.
    \end{align}
    Then \(\cK\) is locally-free of rank \(2n\). We claim that
    \begin{align}
        \RH^1(X,\cK)\simeq\RH^0(X,\cK^\vee\otimes\omega_X)^\vee=0.
    \end{align}
    Indeed, suppose we have a non-zero map \(\omega_X^{-1}\to\cK^\vee\), then
    the intersection of \(\omega_X^{-1}\) with
    \((\OGT_*V'_{\cE'}\otimes\cD)^\vee\) is a line bundle \(\omega'\) of
    degree at least \(2-2g_X-\delta_c\), and we have inclusion
    \begin{align}
        u_{\omega'}\colon\omega'\longto (\OGT_*V'_{\cE'}\otimes\cD)^\vee.
    \end{align}
    Form the pure tensor \(f_n'\) using the mHiggs bundle and \(u_{\omega'}\),
    which is a map of line bundles
    \begin{align}
        (\omega')^n\otimes\rho(\cL)^{-1}\longto \wedge^n(\OGT_*V'_{\cE'}\otimes\cD)^\vee.
    \end{align}
    Since we have
    \begin{align}
        n\deg{(\omega')}-\deg\rho(\cL)\ge
        n(2-2g_X-\delta_c)-\deg\rho(\cL)> -n\deg\cD=\deg\bigl((\OGT_*V'_{\cE'}\otimes\cD)^\vee\bigr),
    \end{align}
    this pure tensor must be \(0\). Similar to
    \Cref{lem:unitary_relative_smoothness}, one can then use \(\bFf_i'\) on
    \(X'\) to show that the mHiggs bundle has a non-trivial parabolic reduction
    over \(X'\), which is a contradiction since we assume that \(c\) lies in the
    simple locus \(\cM^{\Hit,\SIM}\). Therefore, \(\RH^1(X,\cK)\) must vanish,
    and as a result we have a surjective linear map
    \begin{align}
        \RH^0(X,\OGT_*V'_{\cE'}\otimes\cD)\longto \bigoplus_{v\in\abs{\FRD_c'}}L_v.
    \end{align}
    We may then take \(e'\) to be the preimage of any point in the right-hand
    side that does not vanish at any \(v\in\abs{\FRD_c'}\). This finishes the
    proof.
\end{proof}

\subsubsection{}
Suppose we have a map of irreducible varieties over \(\bar{k}\)
\begin{align}
    f\colon S\longto T.
\end{align}
Assume that \(\cF\defeq\Qlb[\dim S]\) is a pure perverse sheaf of weight \(0\) (e.g.,
when \(S\) is smooth) and \(f\) is proper and small. Then it is well-known that
\(f_*\cF\) is pure perverse of weight \(0\) whose support is a singleton
\(\Set{T}\). In other words, for any open dense subset \(j\colon U\to T\), we
have
\begin{align}
    f_*\cF\cong j_{!*}j^*(f_*\cF).
\end{align}
The proof of this fact is a simple dimension counting coupled with
Poincar\'e-Verdier duality, which we briefly review here.

First off, \(\cF\) is self-dual by assumption, and since \(f\) is proper,
\(f_*\cF\) is also self-dual of pure weight \(0\). By the decomposition theorem,
we have a non-canonical isomorphism
\begin{align}
    f_*\cF\cong\bigoplus_n \PH^n(f_*\cF)[-n].
\end{align}
Suppose \(Z\subset T\) is an irreducible closed subset that supports a perverse
summand. By upper-semicontinuity of fiber dimensions, we may find an open subset
\(U\subset T\) with \(U\cap Z\neq \emptyset\), such that the fibers of \(f\)
have constant dimension \(d_Z\) over \(U\cap Z\). Let \(n\in \bbZ\) be such that
there exists an irreducible direct summand \(\cK\subset\PH^n(f_*\cF)\) that is
supported on \(Z\). Since \(f_*\cF\) is self-dual, we may assume \(n\ge 0\).
Shrink \(U\) if necessary, we may also assume that \(K\defeq\cK_{|U\cap Z}\) is
an irreducible local system; in particular, \(K[-n]\) is supported on a single
cohomological degree \(n-\dim Z\).

Taking the usual cohomology, we see that \(K[-n]\) is a direct
summand of \(\RH^{n-\dim Z}(f_*\cF)_{|U}\). Since the fibers of \(f\) have
constant dimension \(d_Z\) over \(U\cap Z\), we must have \(n-\dim Z\le
2d_Z-\dim S\), or in other words,
\begin{align}
    \dim S-\dim Z\le 2d_Z-n\le 2d_Z.
\end{align}
Since \(f\) is small, we have by definition \(\dim S=\dim T\), and
so we obtain a variant of the \notion{Goresky-MacPherson inequality}:
\begin{align}
    \label{eqn:GM_inequality}
    \codim_T Z=\dim T-\dim Z\le 2d_Z,
\end{align}
which is impossible unless \(d_Z=0\) and \(Z=T\) due to smallness.

\subsubsection{}
For a general irreducible variety \(S\), the constant sheaf \(\Qlb[-\dim
S]\) is usually not self-dual or pure, so we need to replace it with the
intersection complex \(\IC_S\). However, doing so may render the elementary
amplitude estimate \(n-\dim Z\le 2d_Z-\dim S\) in the above discussion to
fail. Therefore, in this more general setting, we need a more sophisticated
version of smallness condition in order to reach a similar conclusion about the
perverse supports. The formulation we use here is slightly different from
\cite{MiVi07} but functionally the same.

\begin{definition}
    Let \(f\colon S\to T\) be a proper map of algebraic varieties over
    \(\bar{k}\). We call \(f\) a \notion{stratified-small map} if the followings
    hold:
    \begin{enumerate}
        \item for any irreducible component \(Z\subset S\), the image \(f(Z)\)
            is an irreducible component of \(T\), and \(\dim{Z}=\dim{f(Z)}\);
        \item there exists a finite stratification \(\Set{S_i}\) of \(S\) and
            \(\Set{T_j}\) of \(T\) into \emph{smooth}, irreducible,
            locally-closed subsets, such that for every \(i\), \(f(S_i)\) is a
            union of some strata \(T_j\);
        \item for any \(i\), the induced map \(f_{|\bar{S_i}}\colon\bar{S_i}\to
            f(\bar{S_i})\) is small.
    \end{enumerate}
\end{definition}

\begin{lemma}
    \label[lemma]{lem:stratified_small_supports}
    If \(f\colon S\to T\) is stratified-small, then for any open dense
    subset \(j\colon U\to T\), we have
    \begin{align}
        f_*\IC_S\cong j_{!*}j^*(f_*\IC_S).
    \end{align}
\end{lemma}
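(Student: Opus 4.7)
The plan is to apply the Bernstein--Beilinson--Deligne decomposition theorem to \(f_*\IC_S\) and then verify, stratum-by-stratum, that no simple perverse summand can be supported on a proper closed subvariety of an irreducible component of \(T\). Since \(f\) is proper and \(\IC_S\) is pure of geometric origin, BBD gives
\[
f_* \IC_S \;\cong\; \bigoplus_n \PH^n(f_* \IC_S)[-n],
\]
with each \(\PH^n(f_* \IC_S)\cong\bigoplus_\alpha \IC_{Z_{n,\alpha}}(L_{n,\alpha})\) semisimple. The desired conclusion \(f_*\IC_S \cong j_{!*}j^*(f_*\IC_S)\) for every open dense \(j\colon U\to T\) is then equivalent to the assertion that every support \(Z_{n,\alpha}\) is a full irreducible component of \(T\).

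Next, I would localize using the finite stratification of \(T\): it suffices to check, for each stratum \(n_j\colon T_j\hookto T\) that is not open in \(T\), the perverse \(*\)-support condition \(n_j^* f_* \IC_S \in {}^{\mathrm{p}}D^{<0}(T_j)\); the dual \(!\)-condition follows from self-duality of \(\IC_S\). By proper base change, \(n_j^* f_* \IC_S \cong (f_j)_* (\IC_S|_{f^{-1}(T_j)})\) where \(f_j \colon f^{-1}(T_j) \to T_j\), and condition (ii) decomposes \(f^{-1}(T_j)\) into smooth locally closed pieces \(S_i \cap f^{-1}(T_j)\), on each of which \(\IC_S\) has a well-controlled cohomological amplitude tied to \(\dim \bar{S_i}\).

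For each \(i\) with \(T_j \subsetneq f(\bar{S_i})\), condition (iii) supplies the Goresky--MacPherson-style fiber dimension bound
\[
\dim (f|_{\bar{S_i}})^{-1}(t) \;<\; \tfrac{1}{2}\bigl(\dim f(\bar{S_i})-\dim T_j\bigr)
\]
for generic \(t\in T_j\). Combining this with the amplitude bounds on \(\IC_S|_{S_i\cap f^{-1}(T_j)}\) via a standard Leray/stalk cohomological estimate, and summing the contributions over all relevant strata \(S_i\), one obtains the required vanishing in degrees \(\ge -\dim T_j\). (When \(S\) is equidimensional, one can alternatively decompose \(\IC_S\) over the irreducible components \(S^{(\alpha)}\), invoke (i) to identify \(f(S^{(\alpha)})\) with an irreducible component of \(T\), and quote the classical single-component result for small proper maps.)

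The main obstacle will be the amplitude bookkeeping when \(S\) is singular or non-equidimensional: one has to track how \(\IC_S\) is assembled from its restrictions to smooth strata of possibly different dimensions, and combine their individual fiber-dimension bounds into a single perverse amplitude estimate for \((f_j)_*(\IC_S|_{f^{-1}(T_j)})\). This is precisely why the definition of stratified smallness imposes (iii) for \emph{every} stratum \(S_i\) and not merely the top-dimensional ones: it provides uniform Goresky--MacPherson control along the entire stratification, so that no lower-dimensional or singular contribution can produce a summand of \(f_*\IC_S\) supported on a proper subvariety of \(T\). Once this combinatorial estimate is carried out, the rest of the argument is routine.
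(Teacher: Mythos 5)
Your proposal follows essentially the same approach as the paper: decomposition theorem, amplitude bounds on $\IC_S$ stratum-by-stratum, the Goresky--MacPherson fiber dimension estimate from smallness of $f_{|\bar{S_i}}$ for each $i$, and self-duality to handle the $!$-side. The difference is purely in framing — you propose to verify perverse support conditions directly, while the paper argues by contradiction from a hypothetical support $Z \subsetneq T$ — and the underlying estimates are identical.

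One thing worth noting: the condition you propose to check, $n_j^* f_*\IC_S \in {}^{\mathrm{p}}D^{<0}(T_j)$ together with its $!$-dual, is actually \emph{stronger} than the stated lemma — it would show $f_*\IC_S$ is a single perverse sheaf equal to the intermediate extension of its generic restriction, whereas the lemma only asserts that every perverse summand $\PH^n(f_*\IC_S)$ has full support (allowing nonzero summands in several perverse degrees). If you chase the amplitude estimates through carefully you will find that your stronger claim is in fact correct under stratified smallness (for $S$, $T$ irreducible, $f^{-1}(T_0)$ is forced to land inside the open stratum $S_0$, which handles the open stratum; the non-open strata follow from the bound $-\dim S + n_i - 1 + 2d_{Z,i} < -\dim T_j$ as in the paper's computation), so your plan goes through. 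But the paper's phrasing is slightly more economical in that it never needs to establish perversity of $f_*\IC_S$. Also, the parenthetical shortcut for equidimensional $S$ — ``quote the classical single-component result for small proper maps'' — requires $S$ to be \emph{smooth}, not merely equidimensional; a singular equidimensional $S$ still requires the full stratified-amplitude argument, which is precisely the point of the lemma.
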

\begin{proof}
    Without loss of generality, we may assume that both \(S\) and \(T\) are
    irreducible. Suppose \(Z\subset T\) appears in the set of supports. The
    argument for \eqref{eqn:GM_inequality} works for
    \(\IC_S\) verbatim until the estimate of cohomological amplitude, namely the
    inequality \(n-\dim Z\le 2d_Z-\dim S\). Therefore, we only need to show
    that this inequality holds. We retain open subset \(U\subset T\) in the
    discussion about \eqref{eqn:GM_inequality}.

    Let \(S_i\) be any stratum such that \(f(S_i)\cap Z\) is dense in
    \(Z\). For convenience, we denote by \(S_0\) the unique open stratum. Shrink
    \(U\) if necessary, we may assume that \(f_{|S_i}\)
    still has constant fiber dimension over \(U\cap Z\), which we denote by
    \(d_{Z,i}\). Let \(n_i\) be the codimension of \(S_i\) in \(S\), then by the
    construction of \(\IC_S\), we know that \((\IC_S)_{|S_i}\) is supported on
    cohomological degrees in the interval \([-\dim S,-\dim S+n_i-1]\) if
    \(n_i>0\), or is the constant sheaf \(\Qlb[-\dim S]\) if \(n_i=0\) (i.e.,
    when \(i=0\)).

    If \(n_i>0\), an elementary use of the spectral sequence for cohomology with
    compact support (see
    \cite[\href{https://stacks.math.columbia.edu/tag/0BKK}{Tag 0BKK}]{stackP25},
    applied to the filtration of \(S\) by open subsets formed by unions of
    strata) shows that the complex
    \begin{align}
        \bigl[f_{|S_i!}(\IC_S)_{|S_i}\bigr]_{|U\cap Z}
    \end{align}
    is bounded above by cohomological degree \(-\dim S+n_i-1+2d_{Z,i}\).
    If \(n_i=0\) (and \(f(S_0)\cap Z\) is dense in \(Z\)), then the same complex
    is bounded above by \(-\dim S+2d_{Z,0}\le -\dim S+2d_Z\).

    The same spectral sequence argument shows then \((f_*\IC_S)_{|Z\cap U}\) is
    then bounded above by degree
    \begin{align}
        \max\Set*{\max_{i\neq 0}\Set{-\dim S+n_i-1+2d_{Z,i}}, -\dim S+2d_Z},
    \end{align}
    where \(i\) ranges over indices such that \(f(S_i)\cap Z\) is dense in
    \(Z\). If the maximum is achieved at \(-\dim S+2d_Z\), then we obtain
    inequality \eqref{eqn:GM_inequality}, and it implies \(Z=T\) as before. If
    the maximum is achieved at some \(i\neq 0\), then we have
    \begin{align}
        n-\dim Z\le -\dim S+n_i-1+2d_{Z,i},
    \end{align}
    where \(n\ge 0\) as before. This implies that
    \begin{align}
        \codim_T Z=\dim T-\dim Z=\dim S-\dim Z\le 2d_{Z,i}+n_i-1.
    \end{align}
    On the other hand, since \(f\) is
    stratified-small, \(f_{|\bar{S_i}}\) is small, which implies that
    \begin{align}
        \dim f(\bar{S_i})=\dim \bar{S_i}=\dim S-n_i=\dim T-n_i.
    \end{align}
    Thus, we have
    \begin{align}
        \codim_{f(\bar{S_i})}Z=\codim_T Z-\codim_T f(\bar{S_i})\le 2d_{Z,i}-1,
    \end{align}
    which is impossible because \(f_{|\bar{S_i}}\) is small. This finishes the
    proof.
\end{proof}

\begin{remark}
    It is clear that \Cref{lem:stratified_small_supports} still holds if \(S\)
    and \(T\) are separated Deligne-Mumford stacks of finite types over
    \(\bar{k}\).
\end{remark}

\begin{theorem}
    \label[theorem]{thm:stratified_smallness}
    With the notations and assumptions in \Cref{prop:smallness}, the maps
    \(h_{(\delta)}\) and \(h_{(\delta)}'\) are stratified-small.
\end{theorem}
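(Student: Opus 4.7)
The plan is to deduce stratified smallness from the smallness already established in \Cref{prop:smallness} by building compatible stratifications of source and target that mesh with the local model of singularity. The component-matching condition in the definition of stratified-small follows immediately from the equidimensionality results (\Cref{prop:equidimensionality_of_cM_sim,prop:equidimensionality_of_cM_prime_sim}) combined with the dominance already established in the proof of \Cref{prop:smallness}, so the real work is to construct the stratifications and verify smallness on each stratum closure.

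First, I would stratify $\cA_{(\delta)}$ using local Newton-point data at each point of the boundary divisor, obtaining finitely many locally closed substacks $\cA^{\SFA}_{(\delta)}$. By the general theory of mH-fibrations in \cite{Wa24}, these Newton strata are smooth (after possibly refining using monodromy data of the cameral cover, so that the closure of each stratum is cut out cleanly by conditions on local invariants). Second, using the evaluation maps $\ev_N$ and $\ev_N'$ together with the Schubert stratification of $\Stack{\SFQ_X}_N$, I would further stratify $\cM^{\ddagger}_{(\delta)}$ and $\cM^{\prime\ddagger}_{(\delta)}$. The local models of singularity (\Cref{thm:local_model_of_singularity_JR_symmetric,thm:local_model_of_singularity_JR_unitary}) identify $\ev_N^*\IC_{\Stack{\SFQ_X}_N}$ with $\IC_{\cM^{\ddagger}}$ (up to shift and twist), so the preimages of Schubert cells are smooth locally closed substacks; intersecting with preimages of $\cA^{\SFA}_{(\delta)}$ then gives the required finite stratification into smooth, irreducible, locally closed pieces $\cM^{\SFS}_{(\delta)}$ and $\cM^{\prime\SFS}_{(\delta)}$.

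Next, for each stratum closure, I would verify smallness of the restricted map onto the corresponding target stratum closure. Since the Newton stratification refines the $\delta$-stratification, the fiber-dimension bound $\dim\cM_a \le \delta_a$ from \Cref{lem:dimension_esitmate_using_delta} restricts to each stratum closure; the codimension estimate needed for smallness is then inherited from the $\delta$-regularity used in \Cref{prop:smallness}, applied now inside a Newton stratum closure. The main obstacle will be confirming that the $\delta$-regularity codimension bound does pass to each Newton stratum closure rather than only to the ambient $\cA_{(\delta)}$. This amounts to a stratified refinement of the $\delta$-regularity statement, which I expect to follow either from the construction in \cite{Wa24} (which is local on $\cB$ and compatible with the Newton stratification) or by a direct transversality argument using that the Newton strata are cut out by conditions on local invariants that respect the $\delta$-invariant. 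Once this refined codimension bound is in hand, the smallness of each restricted map follows by repeating verbatim the dimension count in the proof of \Cref{prop:smallness}.
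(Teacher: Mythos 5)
Your approach diverges from the paper's in two places, and the second of these is a genuine gap.

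First, a mismatch: the paper does not stratify the base by Newton-point data. It stratifies $\cB$ (hence $\cA$, $\cM$, $\cM'$) by loci where the cocharacter-valued boundary divisor $\lambda_b = \sum_i \lambda_i\cdot v_i$ is locally constant, and then refines by the Schubert cells of the local model $\Stack{\SFQ_X}$. The induced stratification of $\cA_{(\delta)}$ is explicitly described via linear subspaces of $\cA''_{\cL,\cD}$ indexed by the smaller divisors $\mu_b \le \lambda_b$ coming from Schubert cells. Your proposal stratifies the base by Newton points and the source by Schubert cells, and it is not at all obvious (and you do not argue) that these are compatible in the sense required by the definition of stratified-small, namely that each $f(S_i)$ is a union of target strata. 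Newton strata in the base and Schubert-cell strata in the source do not naturally interlock this way; the former see the cameral-cover data while the latter see the boundary multiplicity data, and these are distinct.

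Second, and more importantly, you have not supplied the key reduction the paper uses and acknowledge this yourself as "the main obstacle." The paper's crucial observation is an \emph{inductive structure}: when you pass to the closure of a Schubert stratum (given by a smaller divisor $\mu_b$), the restricted map is \emph{locally isomorphic to another instance of} $h_{(\delta)}$ with $\cL$ replaced by a smaller $Z_\FRM$-torsor $\cL'$ lifting $\mu_b$. The numerical hypotheses of \Cref{prop:smallness} are automatically inherited, so one simply re-applies \Cref{prop:smallness} to this new instance; no "stratified refinement of $\delta$-regularity" is needed. Your proposal to prove such a refinement — that the codimension estimates pass to Newton strata closures either "by transversality" or because the construction in \cite{Wa24} is "local on $\cB$" — is hand-waving at a statement that is not established anywhere and whose truth is not clear. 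The $\delta$-regularity bound in \Cref{prop:delta_regularity} is a global codimension estimate in $\cA^\heartsuit$, and there is no a priori reason it should survive intersection with a Newton stratum closure, which can concentrate $\delta$ in nontrivial ways. Without the paper's inductive reduction or a worked-out proof of the refined $\delta$-regularity, your argument does not close.

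To repair the proposal: replace the Newton stratification of the base with the boundary-divisor-multiplicity stratification, stratify the source by the Schubert cells of the evaluation map, observe that the strata are smooth via the local model of singularity, and then establish (or invoke) the inductive self-similarity of $h_{(\delta)}$ under restriction to Schubert-stratum closures, after which \Cref{prop:smallness} does the rest.
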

\begin{proof}
    As in \Cref{prop:smallness}, we base change everything to \(\bar{k}\) and drop
    \(\bar{k}\) from the notations for simplicity.
    We first stratify the moduli of boundary divisors \(\cB\) in an obvious way:
    topologically, each irreducible component of \(\cB\) is homeomorphic to a
    direct product of some symmetric power of curves that are finite \'etale
    covers of \(X\) (see \cite[\S~5]{Wa25}), and we stratify it in such a way
    that the boundary divisor is locally constant along each stratum.
    We stratify \(\cA\), \(\cM\), and \(\cM'\) accordingly.

    We then refine these stratifications using the local model of
    singularity. For this purpose we may fix the \(Z_\FRM\)-torsor \(\cL\) and
    the boundary divisor \(b\). Write the cocharacter-valued divisor \(\lambda_b\)
    as
    \begin{align}
        \lambda_b=\sum_{i=1}^d \lambda_i\cdot v_i,
    \end{align}
    where \(\lambda_i\) stays locally constant. The local model
    \(\Stack*{\SFQ_X}\) may then be identified with the stack
    \begin{align}
        \prod_{i=1}^d\Stack*{\Arc_{v_i}{G^\AD}\backslash\Gr_{G^\AD,v_i}^{\le
        -w_0(\lambda_i)}},
    \end{align}
    where \(w_0\) is the longest element in the Weyl group, and similarly for
    \(\Stack*{\SFQ_X'}\). The Schubert cells in \(\Stack*{\SFQ_X}\)
    (resp.~\(\Stack*{\SFQ_X'}\)) induces a stratification on \(\cM_{(\delta)}\)
    (resp.~\(\cM_{(\delta)}'\)) into smooth locally-closed substacks. Both
    stratifications on \(\cM_{(\delta)}\) and \(\cM_{(\delta)}'\) induce the
    same stratification on \(\cA_{(\delta)}\), which we can easily describe
    below using the inductive structure on the usual mH-fibrations.

    We look at the Schubert cell
    \begin{align}
        \prod_{i=1}^d\Stack*{\Arc_{v_i}{G^\AD}\backslash\Gr_{G^\AD,v_i}^{-w_0(\mu_i)}},
    \end{align}
    where \(\mu_i<\lambda_i\), and we denote its closure by
    \(\Stack*{\SFQ_\mu}\). The divisor
    \begin{align}
        \mu_b=\sum_{i=1}^d \mu_i\cdot v_i
    \end{align}
    can be lifted to a \(Z_\FRM\)-torsor \(\cL'\) because \(\lambda_b\) can.
    Then locally along the strata where \(\lambda_b\) is locally constant, the
    preimage of \(\Stack*{\SFQ_\mu}\) in \(\cM_{(\delta)}\) is isomorphic to
    the analogue of \(\cM_{(\delta)}\) with \(\cL\) replaced by \(\cL'\).
    As a result, when \(\mu_b\) and \(\lambda_b\) are fixed,
    the corresponding stratum in \(\cA_{(\delta)}\) is induced by the
    linear embedding
    \begin{align}
        \RH^0\Bigl(X,\cD^{2}\oplus\bigoplus_{i=1}^{n-1}\cO_X(\Pair{\Wt_i}{\mu_b})\otimes\bigl(\cO_X\oplus\cD^{2}\bigr)\Bigr)
        \subset
        \RH^0\Bigl(X,\cD^{2}\oplus\bigoplus_{i=1}^{n-1}\cO_X(\Pair{\Wt_i}{\lambda_b})\otimes\bigl(\cO_X\oplus\cD^{2}\bigr)\Bigr).
    \end{align}
    The numerical conditions in \Cref{prop:smallness} clearly still hold for
    \(\cL'\), and so we may apply \Cref{prop:smallness} to the restriction of
    \(h_{(\delta)}\) to the preimage of \(\Stack*{\SFQ_\mu}\), which is
    locally isomorphic to an analogue of \(h_{(\delta)}\) with \(\cL\) replaced
    by \(\cL'\). This proves that \(h_{(\delta)}\) is stratified-small, and the
    case of \(h_{(\delta)}'\) is similar.
\end{proof}



\section{The Proof of the Fundamental Lemma} 
\label{sec:the_proof_of_the_fundamental_lemma}

In this section, we prove \Cref{thm:main_theorem_geometric} by
combining local results in \Cref{sec:local_formulations} and global results in
\Cref{sec:global_formulations}. In particular, we also prove the Jacquet--Rallis
fundamental lemma stated in \Cref{thm:main_theorem}.

\subsection{Approximation property} 
\label{sub:approximation_property}

In order to connect local results with global ones, we will need the following
local-constancy result about affine Jacquet--Rallis fibers:

\begin{proposition}
    \label[proposition]{prop:approximation_property}
    Let \(a_0\in\FRC_\sM(\cO_v)\) be generically strongly regular semisimple.
    Then there exists some \(N\) such that for
    any \(a\in\FRC_\sM(\cO_v)\) with \(a_0\equiv a\bmod \pi_v^N\), the
    followings hold:
    \begin{enumerate}
        \item \(a\) is generically strongly regular semisimple.
        \item \(\delta_v(a^\Hit)=\delta_v(a_0^\Hit)\).
        \item For any field extension \(K/k\), we have canonical isomorphisms
            \begin{align}
                \cM_v^i(a_0)(K)&\simeq\cM_v^i(a)(K),\forall i,\\
                \cM_v'(a_0)(K)&\simeq\cM_v'(a)(K).
            \end{align}
    \end{enumerate}
\end{proposition}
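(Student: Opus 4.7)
The plan is to reduce everything to finite congruence conditions via the lattice description of the affine Jacquet--Rallis fibers, then invoke local constancy results for the usual mH-fibrations from \cite{Wa24} for the boundary conditions.

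First, I would dispose of (1) and (2). Since $\Disc$ is a polynomial function on $\FRC_\sM$, choosing $N > \val_v(\Disc(a_0))$ forces $\val_v(\Disc(a)) = \val_v(\Disc(a_0))$, so $a$ is generically strongly regular semisimple and moreover $\Ob_v(a) = \Ob_v(a_0)$ by \Cref{prop:local_obstruction_and_valuation_parity}. Assertion (2) is local constancy of the $\delta$-invariant on $\FRC_\FRM(\cO_v)$ for generically regular semisimple points, which is part of the geometric theory of multiplicative affine Springer fibers from \cite{Wa24} (upper semicontinuity of $\delta$ combined with boundedness of $\cM_v^\Hit(a)$).

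For (3), I would argue as follows. By Galois descent, it suffices to treat the case $K = \bar k$. Use the section $\epsilon_\sM$ (the Galois-twisted analogue of $\epsilon_\bsM$ from \Cref{sub:deformed_quotient_stack}) to choose compatible lifts $\tilde m_0, \tilde m \in \sM(\cO_v')$ of $a_0, a$; by construction these are polynomial in $a_0, a$, hence $\tilde m \equiv \tilde m_0 \bmod \pi_v^N$ in a controlled sense. Write $\tilde m = (V,\gamma,e,e^\vee)$ with $\gamma_i \in \End(V_i)(\cO_v')$ and analogously for $\tilde m_0$. The crucial observation is that by \Cref{lem:coarse_lattice_bounds}, the lattices appearing in $\cM_v(a)(\bar k)$ and $\cM_v'(a)(\bar k)$ are bounded inside a fixed range $\pi_v^M \Lambda_0 \subset \Lambda \subset \pi_v^{-M}\Lambda_0$, where $M$ depends only on $\gamma$, hence (by continuity of the Newton point) only on $a_0$ for $N$ sufficiently large. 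Within this bounded range, each of the conditions in the descriptions \eqref{eqn:JR_fiber_lattice_description_symmetric} and \eqref{eqn:JR_fiber_lattice_description_unitary}---namely $\gamma_i\Lambda_i \subset \Lambda_i$, $e \in \Lambda$, $e^\vee \in \Lambda^\vee$, together with the Frobenius constraints $s\Lambda = \bar\Lambda$ or self-duality with respect to $(\blank,\blank)_h$---is a finite list of congruences modulo $\pi_v^{2M+1}$ in the entries of $\gamma_i, e, e^\vee, s, h$. Taking $N$ large enough that $\gamma, e, e^\vee$ and the canonically attached Frobenius cocycles $s$ (resp.\ $h$) for $a$ and $a_0$ agree modulo $\pi_v^{2M+1}$ produces a tautological identification on the nose: the same lattice $\Lambda$ belongs to $\cM_v(a_0)(\bar k)$ iff it belongs to $\cM_v(a)(\bar k)$, and similarly on the unitary side. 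For the refinement into $\cM_v^i$, apply \Cref{rmk:f_n_of_base_pt_vs_actual_pt}: the integer $\val_v(f_n^\vee)$ at a point $\Lambda$ equals $\val_v(f_n^\vee(\tilde m)/\wedge^n\Lambda)$, and since $f_n^\vee(\tilde m)$ is polynomial in $\tilde m$, this valuation stabilizes under further enlarging $N$, so the stratification by $i$ is preserved.

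The main technical obstacle is verifying that the canonical Frobenius cocycles $s \in G(F_v'[\pi_v^{1/2}])$ on the symmetric side and $h \in G'(F_v')$ on the unitary side, extracted from the equations $\sigma(\tilde m) = (\Ad_h^{-1}\gamma, h^{-1}e, e^\vee h)$ etc., depend in an effectively continuous way on $a$. Since these cocycles are uniquely determined by $\tilde m$ (using that $a$ is generically strongly regular semisimple, so the centralizer is trivial), and the defining equations are polynomial, this reduces to a linear-algebra continuity statement: solving a system of equations whose coefficient matrix is invertible at $a_0$ yields a solution depending continuously on the right-hand side. Once this is established, one simply takes $N$ larger than every finite threshold appearing in the arguments above.
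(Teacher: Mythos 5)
Your approach is essentially the same as the paper's on all three parts: control the discriminant valuation for (1), invoke the local constancy of MASFs from \cite{Wa24} for (2), and for (3) use the coarse lattice bounds together with the observation that the section $\tilde m$ and the Frobenius cocycles $s$, $h$ are polynomial in $a$ (in particular, that $s$ and $h$ are the unique solutions of linear systems with trivial kernel by generic strong regular semisimplicity, hence vary continuously).

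One small imprecision in (1): to establish that $a$ is generically \emph{strongly} regular semisimple you must control the full extended discriminant $\Disc_\bsM^+ = \Disc_+ \cdot \Disc$ (\Cref{def:extended_disc_divisor}), not just the factor $\Disc = \bFf_n^\vee\bFf_n$. Strong regular semisimplicity requires both nonvanishing of $\Disc$ and ordinary regular semisimplicity, the latter being governed by $\Disc_+$. The paper takes $N > \val_v(\Disc_\bsM^+(a_0))$ for this reason; replacing $\Disc$ by $\Disc_\bsM^+$ in your argument fixes the gap, since $\Disc_\bsM^+$ is equally a polynomial function on $\FRC_\sM$. On the other hand, your explicit treatment of the stratification by $\val_v(f_n^\vee)$ via \Cref{rmk:f_n_of_base_pt_vs_actual_pt} is a worthwhile addition: the paper's proof asserts the isomorphisms $\cM_v^i(a_0)(K)\simeq\cM_v^i(a)(K)$ component-by-component but does not spell out why the index $i$ is preserved.
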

\begin{proof}
    Let \(d\) be the valuation of the \emph{extended} \(H\)-discriminant
    \(\Disc_{\bsM}^+\) (cf.~\Cref{def:extended_disc_divisor}) at \(a_0\).
    Then if \(N>d\), the valuation of \(\Disc_{\bsM}^+\) stays the same for any
    such \(a\), and so \(a\) is generically regular semisimple.

    For the second claim, by \cite[Proposition~4.5.1]{Wa25}, we may enlarge
    \(N\) if necessary, so that the isomorphism class of MASF stays constant
    among those \(a\). In particular, we also have
    \(\delta_v(a)=\delta_v(a_0)\).

    For the third claim, we use the lattice descriptions
    \eqref{eqn:JR_fiber_lattice_description_unitary} and
    \eqref{eqn:JR_fiber_lattice_description_symmetric}.
    We prove the symmetric
    case and the unitary case is similar. We also assume \(K=k=k_v\), because
    the general case is also similar. The lattice description depends on the
    fixed section \(\tilde{m}=(\cE,\gamma,e,e^\vee)\) over \(\cO_v'\), which is
    defined uniformly over \(\FRC_\sM\) because over \(\cO_v'\) the stack
    \(\sM\) becomes split. By the construction of \(\tilde{m}\), \(\cE\) is
    always trivial, and the matrix coefficients of any element in the tuple
    \(\tilde{m}\) do not change modulo \(\pi_v^N\).

    By \Cref{lem:coarse_lattice_bounds}, the lattices appearing in \(\cM_v(a)(k)\)
    are bounded both below and above, and the bounds can be explicitly given by
    \(\gamma_1\), \(e\) and \(e^\vee\). Let \(N_0\) be one such bound.
    Therefore, if \(N\) is sufficiently large compared to \(N_0\), the bounds
    for \(a\) are the same for \(a_0\). Then we claim that any
    lattice \(\Lambda\in\cM_v(a_0)(k)\) is also contained in \(\cM_v(a)(k)\),
    and vice versa. Let \((\cE,\gamma,e,e^\vee)\) be the section over \(a\) and
    \((\cE_0,\gamma_0,e_0,e_0^\vee)\) the one over \(a_0\). Let \(s\) and
    \(s_0\) be the matrices appearing in
    \eqref{eqn:JR_fiber_lattice_description_symmetric} (where it is denoted by
    \(s\)) associated with \(a\) and
    \(a_0\) respectively. Then as long as
    \(N\) is much larger than \(N_0\), the conditions
    \begin{align}
        \gamma_i\Lambda\subset\Lambda, e\in\Lambda, e^\vee\in\Lambda^\vee
    \end{align}
    and
    \begin{align}
        \gamma_{0,i}\Lambda\subset\Lambda, e_0\in\Lambda, e_0^\vee\in\Lambda^\vee
    \end{align}
    are equivalent. Therefore, we only need to show that the conditions
    \begin{align}
        s\Lambda=\bar{\Lambda}
    \end{align}
    and
    \begin{align}
        s_0\Lambda=\bar{\Lambda}
    \end{align}
    are also equivalent. To see this, we note that \(s\) is the (necessarily)
    unique solution of the equation
    \begin{align}
        (\Ad_s^{-1}(\gamma),s^{-1}e,e^\vee s)=\sigma(\gamma,e,e^\vee),
    \end{align}
    which is a system of linear equations on the matrix coefficients of \(s\).
    Consequently, when \(N\) is sufficiently large, the difference between \(s\)
    and \(s_0\) is sufficiently small. Taking \(N\) large enough so that
    \(s\equiv s_0\bmod \pi_v^{2N_0}\), then it is easy to see that the
    conditions \(s\Lambda=\bar{\Lambda}\) and \(s_0\Lambda=\bar{\Lambda}\) are
    the same.
\end{proof}


\subsection{Global matching} 
\label{sub:global_matching}

We retain the setup at the beginning of \Cref{sec:global_formulations}. In
particular, we fix a smooth, projective, geometrically connected curve \(X\)
over \(k\) and an \'etale double cover \(\OGT\colon X'\to X\) such that \(X'\)
is also geometrically connected.

\subsubsection{}
Form the Jacquet--Rallis mH-fibrations \(h\colon\cM\to\cA\) and
\(h'\colon\cM'\to\cA\) associated with the cover \(\OGT\). We now define a local
system \(L_\eta^X\) on \(\cM^\heartsuit\) which is the global incarnation of the
character \(\eta\).

We have a
canonical line bundle \(\det\) over \(X\x\Bun_n\) by taking the determinant of
the universal vector bundle. We abuse notation and still denote the pullback of
\(\det\) to \(X\x\cM\) by \(\det\). There is also another canonical line bundle over
\(X\x\cM\) given by \(\cD^n\otimes\rho(\cL)\) where \(\cD\) is the
line bundle and \(\cL\) is the \(Z_\FRM\)-torsor in the definition of \(\cM\).
The co-tensor \(f_n^\vee\) is then a section of the line bundle
\(\det^{-1}\otimes\cD^n\otimes\rho(\cL)\) over \(X\x\cM\) that is non-vanishing
over \(X\x\cM^\heartsuit\). Taking the associated divisor, we have a canonical
map
\begin{align}
    \Div(f_n^\vee)\colon \cM^\heartsuit\longto \coprod_{i\ge 0}X_i,
\end{align}
where \(X_i\) is the \(i\)-th symmetric power of \(X\) over \(k\).

\subsubsection{}
In \cite{Yu11}, there is a canonical line bundle on \(X_i\) for each \(i\),
denoted by \(L_i^X\), which we now briefly review. For \(i=0\), we let
\(L_i^X=\Qlb\). For each \(i>0\), we
have a finite cover
\begin{align}
    (X')^i\longto X_i,
\end{align}
on which \((\bbZ/2\bbZ)^i\rtimes\FRS_i\) acts (\(\FRS_i\) is the \(i\)-th
permutation group). Let
\(\Gamma\subset(\bbZ/2\bbZ)^i\rtimes\FRS_i\) be the index-\(2\) subgroup
consisting of elements whose \((\bbZ/2\bbZ)^i\) component lies in the kernel
of the summation map \((\bbZ/2\bbZ)^i\to\bbZ/2\bbZ\). The GIT quotient
\((X')^i\git \Gamma\) is an \'etale double cover of \(X_i\), for which we can
associate a local system \(L_i^X\). Let \(L_\eta^X\) be the union of all
\(L_i^X\).

\subsubsection{}
Let \(\cA^\ddagger\subset\cA^\heartsuit\) be the locus in
\Cref{thm:local_model_of_singularity_JR_symmetric,thm:local_model_of_singularity_JR_unitary},
over which we have a local model of singularity for \(\cM^\ddagger\)
(resp.~\(\cM^{\prime\ddagger}\)). Let \(K/k\) be a finite extension, and fix a
point \(a\in\cA^\ddagger(K)\). We denote the boundary divisor of \(a^\Hit\) by
\begin{align}
    \lambda=\sum_{v\in\abs{X_K}}\lambda_v\cdot v.
\end{align}
For each \(\lambda_v\), we write it as a unique linear combination of
fundamental coweights:
\begin{align}
    \lambda_v=\sum_{i=1}^{n-1}d_{v,i}\CoWt_i,
\end{align}
and let \(\cF^{\lambda_v}\) (resp.~\(\cF^{\prime\lambda_v}\)) be the local
Satake sheaf of \(\SYMS_n^\AD\) (resp.~\((G')^\AD\)) corresponding to the dual
representation
\begin{align}
    \label{eqn:symmetric_power_of_reps}
    V^{\lambda_v}\defeq\bigoplus_{i=1}^{n-1}\Sym^{d_{v,i}}V_{\CoWt_i},
\end{align}
where \(V_{\CoWt_i}\) is the \(i\)-th fundamental representation of the group
\(\SL_n\), dual to \(\bG^\AD\). Explicitly, if \(\Set{\mu_v}\) is the set of highest
coweights in the decomposition of
\eqref{eqn:symmetric_power_of_reps} into irreducible \(\SL_n\)-representations
(which contains \(\lambda_v\)), then we have
\begin{align}
    \cF^{\lambda_v}=\IC^{\lambda_v}\oplus\bigoplus_{\mu_v\neq
    \lambda_v}\IC^{\mu_v}\otimes\Hom_{\SL_n}(V_{\mu_v},V^{\lambda_v}).
\end{align}

\begin{lemma}
    \label[lemma]{lem:point_count_product_formulae}
    We have equalities in point counts
    \begin{align}
        \bigl(-q_K^{\OneHalf}\bigr)^{\dim_a\cA}\Cnt_{\IC_{\cM^\ddagger}\otimes L_\eta^X}\cM_a(K)
        &=\prod_{v\in\abs{X_K}}\bigl(-q_K^{\OneHalf}\bigr)^{[K_v:K]\Pair{2\rho}{\lambda_v}}\Cnt_{\cF^{\lambda_v}\otimes L_{v,\eta}}\cM_v(a)(K),\\
        \bigl(-q_K^{\OneHalf}\bigr)^{\dim_a\cA}\Cnt_{\IC_{\cM^{\prime\ddagger}}}\cM_a'(K)
        &=\prod_{v\in\abs{X_K}}\bigl(-q_K^{\OneHalf}\bigr)^{[K_v:K]\Pair{2\rho}{\lambda_v}}\Cnt_{\cF^{\prime\lambda_v}}\cM_v'(a)(K),
    \end{align}
    where \(q_K\) is the number of elements in \(K\), \(K_v\) is the residue
    field of \(v\), and \(\dim_a{\cA}\) is the
    local dimension of \(\cA\) at \(a\).
\end{lemma}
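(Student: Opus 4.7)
The symmetric case is treated below; the unitary version proceeds identically but is strictly simpler since \(L_\eta^X\) does not appear. The argument combines three ingredients prepared earlier in the paper.

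First, the product formula of Proposition \ref{prop:product_formulae} gives \(\cM_a \cong \prod_{v \in \abs{X_K}} \cM_v(a)\), reducing the global point set to a product of local ones. Second, the local model of singularity (Theorem \ref{thm:local_model_of_singularity_JR_symmetric}) asserts that, up to a single cohomological shift and Tate twist, \(\ev_N^{*} \IC_{\Stack*{\SFQ_X}_N}\) coincides with \(\IC_{\cM^\ddagger}\). On a neighborhood of \(a\), the local model stack \(\Stack*{\SFQ_X}_N\) factors as the product over the finitely many closed points \(v \in \abs{X_K}\) with \(\lambda_v \neq 0\) of the truncated local Schubert stacks \(\Stack*{\Arc_v (\bG')^{\AD} \backslash \Gr^{\le \lambda_v}_{v, G^{\AD}}}\); the evaluation map \(\ev_N\) factors correspondingly as a product of the local evaluations \(\ev_v\), and on each local factor \(\IC_{\Stack*{\SFQ_X}_N}\) restricts to the Satake sheaf \(\cF^{\lambda_v}\). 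Hence \((\ev_N^{*} \IC_{\Stack*{\SFQ_X}_N})|_{\cM_a}\) is the external product \(\boxtimes_v \ev_v^{*} \cF^{\lambda_v}\) with one global normalizing shift and twist.

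Third, I would establish the factorization of \(L_\eta^X\): restricted to \(\cM_a\), the divisor morphism \(\Div(f_n^\vee)\) factors through \(\prod_v \coprod_{i} X_{K,v,i}\) because \(f_n^\vee\) vanishes only at the discriminant locus of \(a\), a finite set of closed points. The double cover \((X')^i \git \Gamma \to X_i\) defining \(L_i^X\) then decomposes, at a point supported as \(\sum_v i_v \cdot v\), into the product over \(v\) of the local Kummer covers whose associated local systems are \(\cF_\eta^{\otimes i_v}\); this is by definition \(L_{v,\eta}|_{\cM_v^{i_v}(a)}\). In essence, the global quadratic character attached to \(X'/X\) factorizes as the product of its local incarnations, a standard class-field-theoretic fact.

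Combining the three ingredients, the Grothendieck--Lefschetz trace of \(\IC_{\cM^\ddagger} \otimes L_\eta^X\) at a \(K\)-point \((m_v)_v \in \cM_a(K)\) factors as a product of local traces of \(\ev_v^{*} \cF^{\lambda_v} \otimes L_{v,\eta}\) at each \(m_v\), modulo one global shift-and-twist renormalization. The IC-normalizing shift on the left equals \(\dim_a \cM^\ddagger = \dim_a \cA\), the equality coming from the generic finiteness of \(h^\heartsuit\) (Proposition \ref{prop:h_heartsuit_is_proper_alg_space}), while each local factor \(\cF^{\lambda_v}\) carries the shift \([K_v : K]\,\Pair{2\rho}{\lambda_v}\), the \(K\)-dimension of the Schubert variety \(\Gr^{\le \lambda_v}_{v,G^{\AD}}\). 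The relative dimension of \(\ev_N|_{\cM^\ddagger}\) accounts exactly for the difference \(\dim_a \cA - \sum_v [K_v : K]\,\Pair{2\rho}{\lambda_v}\), so multiplying by the appropriate powers of \(-q_K^{1/2}\) cancels all normalizing shifts and yields the asserted identity. The main obstacle will be the local-global compatibility asserted in the third step: one must carefully verify that the double cover \((X')^i \git \Gamma \to X_i\), pulled back along the discriminant divisor morphism, matches the external tensor product of the local quadratic local systems \(\cF_\eta^{\otimes i_v}\). Everything else is dimension bookkeeping.
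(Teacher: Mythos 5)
Your proposal matches the paper's proof: both rest on the product formula (\Cref{prop:product_formulae}), the local model of singularity (\Cref{thm:local_model_of_singularity_JR_symmetric,thm:local_model_of_singularity_JR_unitary}) factored as an external product of local Satake sheaves over the support of the boundary divisor, the factorization \(L_\eta^X \simeq \bigotimes_v L_{v,\eta}\), and a final matching of Tate twists and shifts using generic finiteness of \(h\) over \(\cA\). The paper treats the \(L_\eta^X\)-factorization as immediate rather than as a potential obstacle (it is indeed straightforward from the definition of the double cover of \(X_i\) and the decomposition of the discriminant divisor of \(a\) into local pieces), and it attributes \(\dim_a\cM = \dim_a\cA\) to generic finiteness of the fibration itself rather than to \Cref{prop:h_heartsuit_is_proper_alg_space} (which is about properness/representability); with those two minor adjustments your argument is the same as the one in the paper.
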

\begin{proof}
    Without loss of generality, we may base change and replace \(k\) by \(K\),
    and so we can assume that \(K=k\).
    By product formulae \Cref{prop:product_formulae}, we have canonical
    isomorphisms
    \begin{align}
        \cM_a(k) &=\prod_{v\in\abs{X}}\cM_v(a)(k),\\
        \cM_a'(k) &=\prod_{v\in\abs{X}}\cM_v'(a)(k).
    \end{align}
    Under the above isomorphisms, by
    \Cref{thm:local_model_of_singularity_JR_symmetric,thm:local_model_of_singularity_JR_unitary},
    as well as \cite[\S~5.5]{Wa25},
    we also have canonical isomorphisms up to shifts and Tate twists
    \begin{align}
        \bigl(\IC_{\cM^\ddagger}\bigr)_{|\cM_a}
        &\simeq \bigotimes_{v\in\abs{X}}\bigl(\cF^{\lambda_v}\bigr)_{|\cM_v(a)},\\
        \bigl(\IC_{\cM^{\prime\ddagger}}\bigr)_{|\cM_a'}
        &\simeq
        \bigotimes_{v\in\abs{X}}\bigl(\cF^{\prime\lambda_v}\bigr)_{|\cM_v'(a)}.
    \end{align}
    It is straightforward to see that
    \begin{align}
        L_\eta^X\simeq \bigotimes_{v\in\abs{X}}L_{v,\eta}.
    \end{align}
    Therefore, to prove the lemma, we only need to match up the \(q_K\)-power
    factors on both sides, which are induced by Tate twists, and the signs,
    which are induced by shifts.

    On the right-hand side, the Tate twist is induced by
    the dimension of the product of all relevant affine Schubert varieties,
    while on the left-hand side, it is induced by the (local) dimension of \(\cM\)
    (resp.~\(\cM'\)). Since \(h\) and \(h'\) are both generically
    finite, the latter is the same as the (local) dimension of \(\cA\). This
    explains both the \(q_K\)-power factors and signs on both sides, and we are done.
\end{proof}

\subsubsection{}
We fix a \(\delta\)
which will be given later in \Cref{sub:from_global_to_local} when we prove
\Cref{thm:main_theorem_geometric}.
We fix an arbitrary component of \(\cA\) such that the
boundary divisor is very \((G,N(\delta))\)-ample.
Then, we restrict to the case such
that the line bundle \(\cD\) is sufficiently ample so that
\eqref{eqn:deg_of_cD_relative_to_cL_and_delta} holds.
This way, we obtain an open substack \(U\subset\cA_{(\delta)}\), so that the local models of
singularity in
\Cref{thm:local_model_of_singularity_JR_symmetric,thm:local_model_of_singularity_JR_unitary}
hold over \(U\) and the restrictions of \(h\) and \(h'\) to \(U\) are stratified
small by \Cref{thm:stratified_smallness}. Let \(\cM_U=h^{-1}(U)\) and
\(\cM_U'=h^{\prime-1}(U)\).

\begin{proposition}
    \label[proposition]{prop:global_matching}
    We have an isomorphism of complexes
    \begin{align}
        \label{eqn:global_matching}
        h_{|U*}(\IC_{\cM_U}^\ssim\otimes L_\eta^X)\cong h_{|U*}'\IC_{\cM_U'}^\ssim,
    \end{align}
    where the superscript \(\ssim\) denotes semisimplification with respect to
    \(\Gal(\bar{k}/k)\).
\end{proposition}
\begin{proof}
    By \Cref{lem:stratified_small_supports} and stratified smallness of
    \(h_{|U}\) and \(h_{|U}'\), both complexes are pure perverse sheaves with
    full support. Therefore, it suffices to prove the isomorphism over any open
    dense subset \(U_0\) of \(U\). We restrict to the locus where:
    \begin{enumerate}
        \item the boundary divisor is multiplicity-free in the sense of
            \cite[Definition~5.1.28]{Wa25}; concretely, at any point \(v\),
            the boundary divisor is either \(0\) or a fundamental
            coweight;
        \item the usual extended discriminant divisor (given by \(\Disc_+\)) is
            multiplicity-free and disjoint from the boundary divisor.
    \end{enumerate}
    This locus is necessarily non-empty by our \(G\)-ampleness assumption.

    Further shrinking \(U_0\), we may assume that both sides of
    \eqref{eqn:global_matching} are geometrically semisimple local systems.
    Thus, by Grothendieck-Lefschetz trace formula and Chebotarev's density
    theorem, it suffices to prove that for any finite extension \(K\) of \(k\)
    and \(a\in U_0(K)\), we have
    \begin{align}
        \Cnt_{\IC_{\cM^\ddagger}\otimes L_\eta^X}\cM_a(K)
        =\Cnt_{\IC_{\cM^{\prime\ddagger}}}\cM_a'(K).
    \end{align}
    By \Cref{lem:point_count_product_formulae}, we may replace the above global
    point counts by local ones, and by our assumptions on \(U_0\), every local
    case is isomorphic to one of the four cases in
    \Cref{sub:some_simple_cases}: the split case, the \(\Ob_v(a)\neq 0\) case,
    and Case A or Case B.
\end{proof}


\subsection{From global to local} 
\label{sub:from_global_to_local}

In this subsection, we finish the proof of \Cref{thm:main_theorem_geometric}.

\subsubsection{}
We fix a \(k\)-point \(v\in \abs{X}\) such that \(\OGT\) is non-split over
\(v\). The split case is already covered by \Cref{sub:some_simple_cases}, as is
the case where \(\Ob_v(a)\neq 0\).
Let \(a_0\in\FRC_\sM(\cO_v)\) be the point (denoted by \(a\)) in the statement of
\Cref{thm:main_theorem_geometric} such that \(\Ob_v(a)=0\), and let
\(\delta_0=\delta_v(a_0^\Hit)\).
Let \(N\) be the number that is larger than
both the number in \Cref{prop:approximation_property} and \(N(\delta_0)\), and
let \(U\subset\cA\) be the open subset in \Cref{sub:global_matching} associated
with \(\delta=\delta_0\).

Let \(Z\subset U\) be the locally closed subset consisting of points \(a\) such that:
\begin{enumerate}
    \item its restriction to \(\FRC_\sM(\cO_v/\pi_v^N\cO_v)\) is isomorphic to
        \(a_0\);
    \item for any place other than \(v\), the boundary divisor is disjoint from
        the usual extended discriminant divisor (induced by \(\Disc_+\)), and
        the latter is either \(0\) or multiplicity-free.
\end{enumerate}
This set is non-empty by our ampleness assumptions and Riemann-Roch theorem. Note
that \(Z\) is geometrically irreducible, and there exists some number \(e\) such
that for any \(K/k\) that \([K:k]\ge e\), \(Z(K)\) is non-empty.

By Chebotarev's density theorem, to prove \Cref{thm:main_theorem_geometric} for
\(a_0\), it suffices to prove the same statement for all sufficiently large
\(K\). Therefore, we may replace \(a_0\) by the restriction of any \(a\in Z(K)\)
to \(\cO_v\otimes_k K\).

\subsubsection{}
We aim to prove the equality
\begin{align}
    \Cnt_{\IC_n^{\lambda_v}\otimes L_{v,\eta}}\cM_v(a)(K)
    =\Cnt_{\IC_n^{\prime\lambda_v}}\cM_v'(a)(K),
\end{align}
however, it is easy to see that by induction on the height of \(\lambda_v\), it
suffices to replace the irreducible representation of the dual group \(\SL_n\)
with highest-coweight \(\lambda_v\) by the representation
\eqref{eqn:symmetric_power_of_reps}, and instead prove the alternative equality
\begin{align}
    \Cnt_{\cF^{\lambda_v}\otimes L_{v,\eta}}\cM_v(a)(K)
    =\Cnt_{\cF^{\prime\lambda_v}}\cM_v'(a)(K).
\end{align}
We choose to do this because the latter equality has a more direct connection
with the global setting via \Cref{lem:point_count_product_formulae}.

\subsubsection{}
By \Cref{prop:global_matching} and \Cref{lem:point_count_product_formulae}, we
have an equality
\begin{align}
    \label{eqn:prod_of_local_matching}
    \prod_{u\in\abs{X_K}}\Cnt_{\cF^{\lambda_u}\otimes L_{u,\eta}}\cM_u(a)(K)
    =\prod_{u\in\abs{X_K}}\Cnt_{\cF^{\prime\lambda_u}}\cM_u'(a)(K),
\end{align}
and for any place \(u\) other than \(v\), we also have
\(\cF^{\lambda_u}=\IC^{\lambda_u}\) and
\(\cF^{\prime\lambda_u}=\IC^{\prime\lambda_u}\). By \Cref{sub:some_simple_cases},
\begin{align}
    \Cnt_{\IC_n^{\lambda_u}\otimes L_{u,\eta}}\cM_u(a)(K)
    =\Cnt_{\IC_n^{\prime\lambda_u}}\cM_u'(a)(K).
\end{align}
In order to deduce the equality at \(v\), we need to find \(a\in Z(K)\) such
that the local point counts are non-zero at every \(u\neq v\).

\subsubsection{}
We first choose a random point \(a\in Z(K)\). Replacing \(k\) by \(K\), we may
assume that \(k=K\). Let \(\cL\) be the associated \(Z_\FRM\)-torsor, and
\(\cD\) the \(\Gm\)-torsor.

For any place \(u\neq v\) not contained in \(\FRC_\sM^\srs\), we have a local
obstruction \(\Ob_u(a)\), while \(\Ob_v(a)= 0\) by assumption. Let
\(Y\subset X\) be the open subset contained in strongly regular semisimple
locus, then we have a canonical point \((\cE_Y,x_Y,e_Y)\in
\sM'_{\cL,\cD}(Y)\). At each \(u\neq v\), we choose a point
\(\gamma_u\in\FRM'(F_u)\), and so we have an MASF
\(\cM_u^{\prime\Hit}(a^\Hit)\). Moreover, by the lemma below, we may even choose
\(\gamma_u\in\FRM'(\cO_u)\).

\begin{lemma}
    For any \(u\neq v\) as above, the MASF \(\cM_u^{\prime\Hit}(a^\Hit)\)
    contains at least one \(K\)-point.
\end{lemma}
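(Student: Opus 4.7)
The freedom in choosing the basepoint $\gamma_u \in \FRM'(F_u)$ of the MASF means it suffices to exhibit some integral element $\gamma_u \in \FRM'(\cO_u)$ lying over $a^\Hit|_{\cO_u}$: adopting such a lift as the basepoint, the triple $(E_0, \gamma_u, \id)$ consisting of the trivial $G'$-torsor, the integral section $\gamma_u$, and the tautological identification on the punctured disc, is a $K_u$-rational point of $\cM_u^{\prime\Hit}(a^\Hit)$ by construction.

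The plan is to produce such an integral lift via a Kostant--Steinberg-type section $\epsilon_{\FRM'}\colon \FRC_\FRM \to \FRM'$ defined over $X$, obtained by promoting the split companion-matrix section $\epsilon_\bM\colon \bC_\bM \to \bM$ of \Cref{sub:review_of_the_usual_adjoint_action}. After Weil restriction along $\OGT$, $\epsilon_\bM$ induces a section of $\bM' = \OGT_*(\bM \x X')$ over its invariant quotient; one then checks $\tau\sigma$-equivariance, so that the section descends to $\FRM' = (\bM')^{\tau\sigma}$ over $\FRC_\FRM$. Evaluating at $a^\Hit|_{\cO_u}$ yields the required $\gamma_u$.

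The main technical point is the $\tau\sigma$-equivariance check. Recalling from \Cref{sub:the_involutions_and_twisted_forms} that $\tau = \Ad_{\dot{w}_0} \circ \tilde{\tau}$ with $\tilde{\tau}$ the transpose-inverse, the verification reduces to the identity that $\dot{w}_0 \invtp{\epsilon(a)} \dot{w}_0^{-1}$ equals the companion matrix $\epsilon(\iota_\bA(a))$, an elementary statement reflecting the Dynkin-diagram flip at the level of characteristic polynomials of companion matrices. If one wishes to bypass even this calculation, a purely local variant works: since $\OGT$ is \'etale at $u$, the group $G'$ is unramified and quasi-split over $F_u$, and the classical Kostant--Steinberg section for quasi-split reductive groups applies integrally over $\cO_u$ and extends to the monoid $\FRM'$ via the same companion-matrix formula. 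As a concrete fallback, the hypotheses at $u$ restrict $a^\Hit|_{\cO_u}$ to one of three mild regimes---strongly regular semisimple, Case A multiplicity-one ramification, or Case B regular with nontrivial boundary---in each of which an explicit integral lift can be written down along the lines of \Cref{sub:some_simple_cases}.
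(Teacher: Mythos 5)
Your central reduction—exhibiting an integral $\gamma_u\in\FRM'(\cO_u)$ lying over $a^\Hit|_{\cO_u}$—is the right target, but the way you propose to produce it does not work, and your fallbacks are precisely the claims the paper is careful to avoid.

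The concrete error is the asserted identity $\dot{w}_0\invtp{\epsilon(a)}\dot{w}_0^{-1}=\epsilon(\iota_\bA(a))$. This already fails for $n=3$: writing $\epsilon(a)=\begin{pmatrix} a_1 & -a_2 & 1 \\ 1 & 0 & 0 \\ 0 & 1 & 0\end{pmatrix}$ and $\dot{w}_0=\begin{pmatrix}0&0&1\\0&-1&0\\1&0&0\end{pmatrix}$, a direct computation gives
\begin{align}
\dot{w}_0\invtp{\epsilon(a)}\dot{w}_0^{-1}=\begin{pmatrix} a_2 & -1 & 0 \\ a_1 & 0 & -1 \\ 1 & 0 & 0\end{pmatrix},
\end{align}
which has the correct characteristic polynomial (i.e.\ it is \emph{conjugate} to $\epsilon(a_2,a_1)$) but is not equal to it. So $\epsilon_\bM$ is not $\tau\sigma$-equivariant and does not descend to a global section $\FRC_\FRM\to\FRM'$. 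Getting from "conjugate" to "equal" would require a Galois-twisted cocycle, and resolving that cocycle is exactly the content of the paper's obstruction analysis—so the descent you want cannot be asserted for free.

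Your first fallback ("the classical Kostant--Steinberg section for quasi-split reductive groups applies integrally over $\cO_u$ and extends to the monoid $\FRM'$") is unjustified and, as far as the paper is concerned, is not true in the generality you need. The paper invokes Steinberg's theorem only to get a point of $\FRM'$ over the field $F_u$ (see the discussion around \Cref{sub:affine_jacquet_rallis_fibers} and the cited Steinberg result), not over $\cO_u$, and not on the monoid boundary. The paper's own proof is a case analysis specifically because a uniform integral quasi-section to the monoid is not available: in the split case $\FRM'\cong\FRM$ and the companion-matrix section works integrally; in Case~B the argument reduces to rank-$1$ quasi-split groups, where an integral Steinberg quasi-section for the monoid does exist; in Case~A the argument instead exploits the twisted-Frobenius description from \cite[\S~4.6.14]{Wa24}, using that $a^\Hit$ is regular semisimple at the special point so the twisted Frobenius lies in $\Norm_{G'}(T')(\cO_u)\rtimes\sigma$, whence a $K_u$-rational point of the (very simple, essentially $\bbZ$-lattice) MASF can be extracted. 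Your second fallback ("an explicit integral lift can be written down along the lines of \Cref{sub:some_simple_cases}") gestures at this structure but does not carry out the Case~A step, which is the one genuine subtlety the lemma resolves.
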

\begin{proof}
    We know that the MASF at \(u\) is especially
    simple, and is geometrically just a \(\bbZ\)-lattice
    (cf.~\Cref{sub:some_simple_cases}).
    If \(\OGT\) is split over \(u\), then locally we have a Steinberg
    quasi-section \(\FRC_\FRM\to\FRM'\) over \(\cO_u\) and so the conclusion is
    trivial.

    If \(u\) belongs to Case A of \Cref{sub:some_simple_cases}, since \(G'\)
    becomes quasi-split over \(\cO_u\),
    we may choose \(\gamma'\in\FRM'(F_u)\)
    lying over \(a^\Hit\), such that \(\gamma'\) is contained (and elliptic) in
    a subgroup of \((\FRM')^\x\) of semisimple rank \(1\) containing a maximal
    torus of \((\FRM')^\x\) that is induced by a pinning on \(G'\). Such
    a semisimple-rank-\(1\) subgroup is also a Levi subgroup induced by a pair of
    roots of \(G'\). Let \(L'\subset G'\) be the corresponding Levi subgroup.
    Passing to the Levi monoid (see \cite[\S~2.5.39]{Wa25}),
    the MASF in question is then isomorphic to certain MASF of \(L'\).
    Since Steinberg quasi-sections always exist for very flat monoids
    associated with a quasi-split rank-\(1\) groups (see \cite[\S\S~2.2.7,
    2.4.23--2.4.24]{Wa25}), the lemma follows in this special case.

    If \(u\) belongs to Case B, then similar to the considerations in
    \cite[\S~4.6.14]{Wa25}, we may consider the twisted Frobenius at \(u\), and
    unlike the more general setting in \textit{loc.~cit.}, we necessarily have that
    \(a^\Hit\) is regular semisimple at the special point \(u\). This means
    that the twisted Frobenius is contained in
    \begin{align}
        \Norm_{G'}(T')(\cO_u)\rtimes\sigma,
    \end{align}
    namely, the ``cocharacter'' portion of the twisted Frobenius is trivial.
    This means that the element \(h_{\bar{v}}\) (\(\bar{v}\) corresponds to a
    \(\bar{K}\)-point over \(u\)) as in \cite[\S~4.6.14]{Wa25} in
    fact descends to a \(K\)-point in the affine Grassmannian that is
    also a geometric point in \(\cM_u^{\prime\Hit}(a^\Hit)\). In other words, we
    obtain a \(K\)-point in \(\cM_u^{\prime\Hit}(a^\Hit)\).
\end{proof}

\subsubsection{}
The restriction of the pair \((\cE_Y,x_Y)\) to
\(F_u\) induces a map
\begin{align}
    x_{F_u}\colon\cE_{F_u}\longto\FRM_{\cL,\cD,F_u}'.
\end{align}
Both \(\cL\) and \(\cD\) may be trivialized over \(F_u\) since \(Z_\FRM\) is an
induced torus, and we shall fix an arbitrary choice of trivializations. Then
the preimage \(x_{F_u}^{-1}(\gamma_u)\) is a torsor of the regular centralizer
\(\FRJ_{a^\Hit,F_u}'\), whose isomorphism class does not depend on the
trivializations of \(\cL\) and \(\cD\), and is precisely the obstruction
\(\Ob_u(a)\) (see the discussions in
\Cref{sub:frobenius_structures_and_obstructions}).

The collection \(\Ob(a)\defeq (\Ob_u(a))_{u\in\abs{X}}\) (including \(u=v\))
induces a class in the product
\begin{align}
    \prod_u \RH^1(F_u,\FRJ_{a^\Hit}').
\end{align}
At each place \(u\), the cohomology class factors through
\(\Aut(X'/X)\), and for all but finitely many places, the obstruction is trivial
by \Cref{prop:local_obstruction_and_valuation_parity}.
Consequently, we obtain an adelic cohomology class in
\begin{align}
    \RH^1\bigl(F,\FRJ_{a^\Hit}'(\bbA_{\bar{F}})\bigr).
\end{align}
By \cite{Ko86}, we have a commutative diagram
\begin{equation}
    \begin{tikzcd}
        \RH^1\bigl(F,\FRJ_{a^\Hit}'(\bbA_{\bar{F}})\bigr) \ar[r]\ar[d,"{\det}",swap] &
        \RH^1\bigl(F,\FRJ_{a^\Hit}'(\bbA_{\bar{F}})/\FRJ_{a^\Hit}'(\bar{F})\bigr)\simeq\pi_0\bigl(\dual{\FRJ}^{\prime\Gal(\bar{F}/F)}\bigr)^*
        \ar[d,"{\det}"]\\
        \RH^1(F,\bbA_{\bar{F}}^\x) \ar[r] &
        \RH^1(F,\bbA_{\bar{F}}^\x/\bar{F}^\x)\simeq\pi_0\bigl(Z_{\dual{G'}}^{\Gal(\bar{F}/F)}\bigr)^*
    \end{tikzcd}.
\end{equation}
Since \(a^\Hit\) lies in the simple locus, \(\FRJ_{a^\Hit,F}\) is an elliptic
torus, and so we have
\begin{align}
    \pi_0\bigl(\dual{\FRJ}^{\prime\Gal(\bar{F}/F)}\bigr)^*\simeq\pi_0\bigl(Z_{\dual{G'}}^{\Gal(\bar{F}/F)}\bigr)^*\cong\bbZ/2\bbZ.
\end{align}
The image of \(\Ob(a)\) in
\(\pi_0\bigl(Z_{\dual{G'}}^{\Gal(\bar{F}/F)}\bigr)^*\) is then identified with
the summation map
\begin{align}
    \bigoplus_u \bbZ/2\bbZ\longto \bbZ/2\bbZ.
\end{align}
But by \Cref{prop:local_obstruction_and_valuation_parity}, such summation is
simply the degree of the discriminant divisor
\(f_n^{\prime\vee}f_n'=\bFf_n^\vee\bFf_n\) modulo \(2\), and the degree of such
divisor is the same as the degree of the line bundle \(\rho(\cL)^{\otimes
2}\cD^{\otimes 2n}\), which is necessarily even.

This shows that \(\Ob(a)\) is
trivial, and so there is a \(\FRJ_{a^\Hit}'\)-torsor over \(F\) compatible with
all the local torsors. Thus, we obtain a point in the global Picard
\(\cP_a'(K)\), such that it transports the mHiggs bundle \((\cE_Y,x_Y)\) to
another one \((\cE_Y',x_Y')\) that is compatible with the elements \(\gamma_u\).
This way we obtain an mHiggs bundle over \(X-\Set{v}\), whose image in
\(\FRC_{\FRM,\cL,\cD}(X-\Set{v})\) is the same as that of \(a\). Note that over
\(F_v\), the restrictions of \((\cE_Y,x_Y)\) and \((\cE_Y',x_Y')\) are the same,
because \(\Ob_v(a)\) is trivial.

\subsubsection{}
Over the double cover \(\cO_v'\) of \(\cO_v\), the group \(G'\) is split, and so
we have a point in \(\cM_v'(a)(K')\) (\(K'\) being the quadratic extension of
\(K\) in \(F_v'\)). Since \(\GL_n\) does not have endoscopy
(namely, \(\RH^1(F_v',\FRJ_{a^\Hit}')\) always vanishes),
this point can be glued with the mHiggs bundle over \(X-\Set{v}\) above to get
an mHiggs bundle \((\cE',x')\) (\(\cL\) is fixed) over \(X'\).
We also have a vector \(e_v'\in(\cO_v')^n\) using the explicit section in
\Cref{sub:deformed_quotient_stack}.

Again using the fact that \(a^\Hit\) is simple and using
the similar argument as in \Cref{lem:unitary_relative_smoothness}, we may lift
\(e_v'\) to a vector
\begin{align}
    e'\colon \cD^{-1}\longto \cE'.
\end{align}
The restriction of the tuple \((\cE',x',e')\) to \(X-\Set{v}\) maps to a
point in \(\FRC_{\sM,\cL,\cD}(X-\Set{v})\), and its restriction to the punctured
disc \(X_v^\bullet\) is the same as that of \(a\). Gluing them together, we
obtain another point in \(Z(K)\) and we replace \(a\) by this point.

\subsubsection{}
For any \(u\neq v\), the restriction of the tuple \((\cE',x',e')\) to \(X_u\) is
a \(K\)-point in \(\cM_u'(a)\), and so
\begin{align}
    \Cnt_{\IC_n^{\lambda_u}\otimes L_{u,\eta}}\cM_u(a)(K)=
    \Cnt_{\IC_n^{\prime\lambda_u}}\cM_u'(a)(K)\neq 0.
\end{align}
Cancelling out these terms in \eqref{eqn:prod_of_local_matching}, we
finally obtain the desired equality
\begin{align}
    \Cnt_{\cF^{\lambda_v}\otimes L_{v,\eta}}\cM_v(a)(K)=
    \Cnt_{\cF^{\prime\lambda_v}}\cM_v'(a)(K).
\end{align}
This finishes the proof of \Cref{thm:main_theorem_geometric} hence our main result \Cref{thm:main_theorem}.



\appendix

\section{Supplement on the Usual MH-fibrations} 
\label{sec:supplement_on_the_usual_mh_fibrations}

Since the usual multiplicative Hitchin fibration associated with
\(\Stack*{\FRM/G}\) (as \(\FRM\) here is a monoidal symmetric space rather than
a monoid), is not covered by \cite{Wa25}, we prove the relevant results here.
All results will just be a slight
tweak from \textit{loc.~cit.} Similarly, we also need to cover the case
\(\Stack*{\FRM'/G'}\) because the unitary group \(G'\) may not be quasi-split
globally.

We will try to refer to \cite{Wa25} as much as possible to keep this
section concise. As a result, we will keep the notations
consistent with \textit{loc. cit.}, and may deviate slightly from the ones we
use in the main body of this paper.
As before, we let \(\OGT\colon X'\to X\) be the \'etale double cover inducing
\(\FRM\) from \(\bM\), and we still denote by \(\bM'\) (resp.~\(\bC_{\bM'}\),
resp.~\(\bA_{\bM'}\), resp.~\(\bG'\)) the Weil restriction of \(\bM\) (resp.~\(\bC_\bM\),
resp.~\(\bA_\bM\), resp.~\(\bG\)) along \(\OGT\).

\subsection{The basic construction} 
\label{sub:the_basic_construction}

Before we start, we note that since later on we will need to study similar
constructions related to twisted Levi subgroups, we can no
longer afford to restrict ourselves to the case where \(\bM\) is the universal
monoid. Instead, we let \(\bM\) to be an arbitrary very flat monoid (see
\cite[\S~2.3]{Wa25}) associated
with \(\bG^\SC\) such that the \(\sigma\)-action on the abelianization of the
universal monoid lifts to an action on \(\bA_\bM\). In this case, one can
easily verify that the monoidal symmetric space \(\FRM\) is well-defined, as is
the unitary monoid \(\FRM'\).

\subsubsection{}
Recall we have maps of quotient stacks
\begin{align}
    \Stack*{\FRM/G\x Z_\FRM}\longto
    \Stack*{\FRC_\FRM/Z_\FRM}\longto\Stack*{\FRA_\FRM/Z_\FRM},
\end{align}
whose mapping stacks from curve \(X\) give
\begin{align}
    \cM_X^+\longto \cA_X^+\longto \cB_X^+.
\end{align}
There is an open substack \(\cB_X\subset \cB_X^+\) consisting of
\notion{boundary divisors}, and by taking its preimages we obtain maps of
algebraic stacks
\begin{align}
    \cM_X\stackrel{h_X}{\longto} \cA_X\longto \cB_X.
\end{align}

\subsubsection{}
Note that the abelianization \(\FRA_\FRM\) is the same as that of the
(quasi-split) unitary
monoid \(\FRM'\), and \(Z_\FRM\simeq Z_{\FRM'}\) as well, so the moduli
\(\cB_X\) is the same as the moduli of boundary divisors associated with the
unitary monoid, which is covered by the general framework in \cite{Wa25}
(because the boundary divisors are insensitive of inner twists).
Similarly, \(\FRC_\FRM\) is also the same as \(\FRC_{\FRM'}\), and so \(\cA_X\)
is also the mH-base for \(\FRM'\). Therefore, we have maps of moduli stacks
\begin{align}
    \cM_X'\stackrel{h_X'}{\longto} \cA_X\longto \cB_X.
\end{align}

\subsubsection{}
Both the extended discriminant divisor and the regular semisimple locus are
well-defined on \(\FRC_\FRM\) and are \(Z_\FRM\)-equivariant, and we let
\(\cA_X^\heartsuit\) to be the open substack of \(\cA_X\) where the image of
the generic point of \(X\) is contained in the regular semisimple
locus. Note that the notion of being regular semisimple is the same for both
\(\FRM\) and \(\FRM'\).


\subsection{The cameral cover and the Picard action} 
\label{sub:the_cameral_cover_and_the_picard_action}

\subsubsection{}
We have the regular centralizer group scheme for the split model
\begin{align}
    \bJ_\bM\longto \bC_\bM,
\end{align}
which is smooth and commutative, and its generic fiber is a torus. Since
the anti-involution \(\iota\) of \(\bM\) takes \(\bG\)-orbits to \(\bG\)-orbits,
\(\bJ_\bM\) induces a unique group scheme
\begin{align}
    \FRJ_\FRM\longto\FRC_\FRM
\end{align}
such that there is a canonical \(G\)-equivariant homomorphism
\begin{align}
    \chi_\FRM^*\FRJ_\FRM\longto I_\FRM,
\end{align}
where \(I_\FRM\to \FRM\) is the universal stabilizer group scheme of \(G\)
acting on \(\FRM\). Moreover, the above homomorphism is an isomorphism over the
regular locus \(\FRM^\reg\) (meaning where the dimension of the stabilizer
reaches minimal value).

Similarly, we have the regular centralizer group scheme
\begin{align}
    \FRJ_{\FRM'}\longto \FRC_\FRM,
\end{align}
as well as a canonical \(G'\)-equivariant homomorphism
\begin{align}
    \chi_{\FRM'}^*\FRJ_{\FRM'}\longto I_{\FRM'}.
\end{align}

\subsubsection{}
The maximal toric variety \(\bar{\bT}_\bM\) of \(\bM\) is preserved by
\(\iota\), so it induces a twisted toric variety \(\FRT_\FRM\subset \FRM\),
which further induces a canonical isomorphism
\begin{align}
    \FRT_\FRM\git W\stackrel{\sim}{\longto}\FRC_\FRM.
\end{align}

\begin{definition}
    The GIT quotient map \(\pi_\FRM\colon\FRT_\FRM\to\FRC_\FRM\) is called the
    \notion{cameral cover}.
\end{definition}

When restricted to \(\bar{\bT}_\bM\), the anti-involution \(\iota\) is
the same as the involution \(\tau\), and so \(\FRT_\FRM\) is the same as the
maximal toric variety \(\FRT_{\FRM'}\) of \(\FRM'\), and
so the cameral cover \(\pi_\FRM\) coincides with the cameral cover
\(\pi_{\FRM'}\) of \(\FRM'\). In addition, we note that the Weyl group of
\(G'\) is canonically isomorphic to that of \(G\), which we denote by \(W\),
and both can be identified with the (split) Weyl group \(\bW\) of \(\bG\),
because the Galois action on \(\bW\) is trivial.

\subsubsection{}
Like in the case of quasi-split groups, we have a Galois description of
\(\FRJ_\FRM\) (resp.~\(\FRJ_{\FRM'}\)):
\begin{lemma}
    We have canonical isomorphisms
    \begin{align}
        \FRJ_\FRM&\stackrel{\sim}{\longto}\pi_{\FRM*}T^W,\\
        \FRJ_{\FRM'}&\stackrel{\sim}{\longto}\pi_{\FRM*}(T')^W,
    \end{align}
    where the Weyl group \(W\) acts on \(\pi_{\FRM*}T\)
    (resp.~\(\pi_{\FRM*}T'\)) diagonally.
\end{lemma}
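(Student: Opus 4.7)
The plan is to deduce both isomorphisms from the known split case by Galois descent, carefully tracking how the involutions $\iota\sigma$ and $\tau\sigma$ act on the two sides.

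First I would recall that for the split universal monoid $\bM$ over a field, the Galois description $\bJ_\bM \stackrel{\sim}{\longto} \pi_{\bM*}\bT^W$ is already established in \cite{Wa24} (and originates in Ng\^o's work for the Lie algebra case). This isomorphism is canonical and in particular functorial with respect to automorphisms of $\bM$ that preserve the Kostant section and the pinning that fixes $\bT$. Pulling back along the \'etale cover $X'\to X$, both $\FRM$ and $\FRM'$ become isomorphic to $\bM\x X'$, and the cameral cover $\pi_\FRM$ becomes identified with $\pi_\bM\x X'$. Thus $\pi_{\FRM*}T^W$ and $\pi_{\FRM*}(T')^W$ become canonically isomorphic to the pullback of $\pi_{\bM*}\bT^W$ to $X'$. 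This gives the two isomorphisms \'etale-locally, and it remains to verify that the respective Galois descent data match.

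Next I would unpack the descent data. On the geometric level $\FRJ_\FRM$ is the quotient of $\OGT_*\bJ_\bM$ by the $\iota\sigma$-action; explicitly, since $\iota$ restricts to the inversion on $\bar{\bT}_\bM$, the induced action on $\bJ_\bM = \pi_{\bM*}\bT^W$ is given by composing the Galois transposition of the $X'/X$-factor with the automorphism of $\bT$ induced by $\iota$. On the target $\pi_{\FRM*}T^W$, the descent datum is precisely the same: $T$ is the $\iota\sigma$-twist of $\bT\x X'$, and the sheaf-theoretic Weyl invariants transform accordingly because $W\simeq\bW$ with trivial outer Galois action (as noted in the paragraph preceding the lemma). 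The analogous statement for $\FRM'$ replaces $\iota\sigma$ with $\tau\sigma$, and one uses that $\tau$ and $\iota$ coincide on $\bar{\bT}_\bM$ so that the cameral cover $\pi_{\FRM}=\pi_{\FRM'}$, while the two tori $T$ and $T'$ differ precisely by whether we descend by $\iota\sigma$ or $\tau\sigma$.

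The main step is to check that the homomorphism $\chi_\FRM^*\FRJ_\FRM\to I_\FRM$ induced by descent factors through the regular stabilizer isomorphism over $\FRM^\reg$. This is essentially automatic: the corresponding statement holds for the split model by \cite{Wa24}, and the descent datum is compatible with the $G$-equivariance because $\iota\sigma$ (resp.~$\tau\sigma$) commutes with the diagonal $\sigma$-action on $\bG'$. The only mild subtlety is to verify that the splitting of the regular centralizer by the Kostant-type section used in the split case is stable under the relevant involutions when restricted to $\bar\bT_\bM$; this follows from the explicit description in \Cref{sub:review_of_the_usual_adjoint_action}, where $\iota$ acts as inversion on $\bar\bT_\bM$ and the section $\delta_\bM$ is equivariant.

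The hard part is not any single calculation but rather keeping the two twists straight: both $\FRJ_\FRM$ and $\FRJ_{\FRM'}$ sit over the same base $\FRC_\FRM$ and have the same cameral cover, yet they are genuinely different group schemes; the difference is encoded entirely in the choice of torus descent ($T$ vs.\ $T'$). Once this bookkeeping is in place, the two isomorphisms follow simultaneously by applying $\OGT_*(\blank)^{\iota\sigma}$ and $\OGT_*(\blank)^{\tau\sigma}$ to the split isomorphism, noting that $W$ acts diagonally because it already does so in the split case and the Galois action on $W$ is trivial.
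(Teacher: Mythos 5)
Your approach---descending the known split isomorphism along $\OGT$---reaches the same conclusion as the paper, but the paper's proof is considerably shorter by sidestepping the descent-data verification: it defines the map over the base (functorially, ``the same way as in the group case'') and then observes that being an isomorphism can be checked after flat base change to $\bar{k}$, where the split result applies directly. The paper also records the structural ingredient you rely on only implicitly via the cited split-case result: the map is \emph{a priori} just an open embedding, and it is an isomorphism precisely because $G$ (resp.\ $G'$) has simply-connected derived subgroup. One small inaccuracy in your writeup: $\iota$ does not restrict to inversion on $\bar{\bT}_\bM$. On $\bT_\bM=(\bT^\SC\x\bT^\SC)/\bZ^\SC$ it acts by $(z,t)\mapsto(\iota_\bA(z),t^{-1})$, where $\iota_\bA$ is the Dynkin-diagram flip on the abelianization factor, so only the second factor is inverted. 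This does not change the conclusion, but it would need correcting if you wanted to carry out the descent-data matching in full detail---another reason the paper's base-change argument is preferable: it avoids having to track the action of $\iota$ on $\bJ_\bM$ at all.
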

\begin{proof}
    The maps are defined the same way as in the group case, and isomorphism can be
    checked by base changing to algebraically closed fields. Note that the maps
    are isomorphisms (instead of merely open embeddings) because the groups \(G\)
    and \(G'\) have simply-connected derived subgroups.
\end{proof}

\subsubsection{}
Using \(\FRJ_\FRM\) (resp.~\(\FRJ_{\FRM'}\)), we have the global Picard stack
\(p_X\colon\cP_X\to \cA_X\) (resp.~\(p_X'\colon\cP_X'\to \cA_X\)), acting on
\(h_X\) (resp.~\(h_X'\)).
Using the cameral cover \(\pi_\FRM\), we have the universal cameral curve just
like the case of quasi-split groups
\begin{align}
    \tilde{\pi}\colon \tilde{X}\longto X\x \cA_X,
\end{align}
and the basic topological properties of \(\cP_X\) (resp.~\(\cP_X'\)) can be
similarly studied using \(\tilde{\pi}\) and the Galois description of
\(\FRJ_\FRM\) (resp.~\(\FRJ_{\FRM'}\)). The argument will be verbatim so we will
only list a small subset of the results essential to the current paper and the
proofs will be omitted. The reader can refer to \cite[\S~6]{Wa25} for more
detailed argument.

\subsubsection{}
By replacing the curve with a formal disc \(X_v\), where \(v\in\abs{X}\) is a
closed point, we can define the multiplicative affine Springer fibers
\(\cM_v(a)\) and the local Picard scheme \(\cP_v(a)\). Let
\(\breve{X}_v=X_v\otimes_k\bar{k}\) be the unramified closure of \(X_v\).

Since \(\FRM\) becomes
split after base changing to \(\breve{X}_v\), all the
topological and geometric properties of \(\cM_v(a)\) (resp.~\(\cP_v(a)\)) that
we need in this paper are already included in \cite[\S~4]{Wa25}. In particular,
we have the local \(\delta\)-invariant \(\delta_v(a)\) which is the common
dimension of \(\cP_v(a)\) and \(\cM_v(a)\), \emph{viewed as a \(k\)-scheme}.

Given a geometric point \(a\in \cA_X^\heartsuit(\bar{k})\), we have the cameral
curve \(\tilde{X}_a\) and the N\'eron model \(\FRJ_a^\flat\) of the regular
centralizer \(\FRJ_a\). Let \(\cP_a^\flat\) be the stack of
\(\FRJ_a^\flat\)-bundles on \(\breve{X}\defeq X\x_k\bar{k}\), then we have a
surjective homomorphism of abelian stacks
\begin{align}
    \cP_a\longto\cP_a^\flat,
\end{align}
whose kernel \(\cR_a\) is an affine group scheme over \(\bar{k}\). The
dimension of \(\cR_a\) is the global \(\delta\)-invariant \(\delta_a\), and we
have the summation formula
\begin{align}
    \label{eqn:delta_local_and_global_summation}
    \delta_a=\sum_{v\in\abs{X}}\delta_{v}(a).
\end{align}

\subsubsection{}
Similarly, since \(\cA_X\) is also the mH-base for \(\FRM'\), we have
the global \(\delta\)-invariant \(\delta_a'\) for the mH-fibration
\(h_X'\). Since over any \(\breve{X}_v\), \(\FRM'\)
(resp.~\(G'\)) is isomorphic to \(\FRM\) (resp.~\(G\)), we have
\begin{align}
    \delta_v(a)=\delta_v'(a),
\end{align}
and so
\begin{align}
    \delta_a=\delta_a'
\end{align}
for any \(a\in\cA_X^\heartsuit(\bar{k})\).

\subsubsection{}
We may also consider the mH-fibration and multiplicative affine Springer fiber
associated with monoid \(\bM'\) and \(\bG'\). The mH-base \(\cA_X\) naturally
embeds into the mH-base \(\cA_{\bG',X}\), and for any \(a\in
\cA_X^\heartsuit(\bar{k})\), its image in \(\cA_{\bG',X}\) is necessarily
contained in \(\cA_{\bG',X}^\heartsuit\), and we have respective relations of
local and global \(\delta\)-invariants:
\begin{align}
    \delta_{\bG',v}(a)&=2\delta_v(a),\\
    \delta_{\bG,a}&=2\delta_a.
\end{align}
All these claims can be proved by
noticing that over \(\breve{X}_v\) the inclusion \(G\subset
\bG'\) is isomorphic to the diagonal embedding \(\GL_n\subset\GL_n\x\GL_n\), and
similarly for the monoids \(\FRM\subset\bM'\).

\subsubsection{}
One may stratify the mH-base \(\cA_X^\heartsuit\) by the \(\delta\)-invariants,
and with the same argument as in \cite{Wa25} one can show that \(\delta\) is
upper-semicontinuous. Therefore, we have the closed subset \(\cA_{\ge
\delta}\subset\cA_X^\heartsuit\) consisting of \(a\) such that \(\delta_a\ge
\delta\). Using the same local-global argument as in
\cite[Proposition~8.4.5]{Wa25}, we have:
\begin{proposition}
    \label[proposition]{prop:delta_regularity}
    For any \(\delta\ge 0\), there exists some \(N=N(\delta)\) such
    that if \(A\subset\cA_X^\heartsuit\) is an irreducible component that is
    very \((G,N)\)-ample, we have
    \begin{align}
        \codim_{A}(A_{\ge\delta'})\ge \delta'
    \end{align}
    for all \(\delta'\le\delta\).
\end{proposition}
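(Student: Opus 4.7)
The plan is to reduce the global codimension estimate to a collection of local codimension estimates and then combine them using the ampleness hypothesis. The starting point is the summation formula
\begin{equation}
    \delta_a = \sum_{v \in \abs{X}} \delta_v(a),
\end{equation}
together with upper-semicontinuity of $\delta$ (both local and global). Because $\FRM$ and $\FRM'$ become split after base change to $\breve{X}_v$, the local $\delta$-invariants coincide with the ones studied in \cite{Wa24}, so all the local codimension estimates from \textit{loc.~cit.} apply verbatim at each place.

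First, I would stratify $A_{\ge \delta'}$ by prescribing the places of poor behavior. For each tuple $(\delta'_v)_{v \in |X|}$ of non-negative integers with $\sum_v \delta'_v \ge \delta'$ and only finitely many $\delta'_v \ne 0$, let $A_{(\delta'_v)} \subset A$ be the locally closed subset where $\delta_v(a) \ge \delta'_v$ for every $v$. Only finitely many tuples can occur in any bounded range, because the locus where the extended discriminant divisor is supported has bounded degree in each fixed component of $\cB_X$. So it is enough to bound $\codim_A A_{(\delta'_v)}$ below by $\sum_v \delta'_v$ whenever $\sum_v \delta'_v \le \delta$.

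Next I would invoke the local codimension estimate from \cite{Wa24}: for each $v$ and each local $\delta'_v$, the locus in the local base $\FRC_\FRM(\cO_v) \cap \FRC_\FRM^\heartsuit(F_v)$ where $\delta_v \ge \delta'_v$ has codimension at least $\delta'_v$ once one restricts to a sufficiently (\(G\),\(N\))-ample family of local invariants (here the integer $N$ depends only on $\delta$ because only finitely many local strata of interest contribute). The key input is that under very $(G,N)$-ampleness with $N = N(\delta)$ large enough, the restriction map from $A$ to the product of the local bases at the finitely many relevant places is surjective with smooth fibers of constant dimension, so that codimensions of independent local conditions add. This is the essential role of very ampleness: one has enough global sections to realize arbitrary prescribed local behavior with the expected number of independent constraints.

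The main obstacle is ensuring that $N$ can be chosen uniformly for all $\delta' \le \delta$ and all tuples $(\delta'_v)$ with $\sum \delta'_v \le \delta$ simultaneously. This is handled by the fact that for a fixed $\delta$, the local $\delta$-strata with $\delta'_v \le \delta$ only depend on a bounded jet of the local invariants, so a single $N = N(\delta)$ suffices. A second minor point is to check that the Galois twist by $\OGT$ does not interfere: this is automatic because both the local and global $\delta$-invariants are computed after base change to $\breve{X}_v$ (resp.~$\breve{X}$), and the Galois action does not alter the combinatorics of the $\delta$-stratification since $\FRM \otimes \breve{X}_v \cong \bM \otimes \breve{X}_v$. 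Combining these ingredients then yields
\begin{equation}
    \codim_A A_{(\delta'_v)} \ge \sum_v \delta'_v \ge \delta',
\end{equation}
and taking the union over tuples proves the proposition.
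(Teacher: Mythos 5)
Your sketch follows precisely the local-global strategy the paper invokes by reference to \cite[Proposition~8.4.5]{Wa24}: stratify $A_{\ge\delta'}$ by tuples of local $\delta$-invariants, use the local codimension estimates from \cite{Wa24} at each place (valid after reduction to the split case over $\breve{X}_v$), and combine them via the $H^1$-vanishing guaranteed by very $(G,N)$-ampleness to make the local jet conditions independent. The one small imprecision is the phrase ``restricts to a sufficiently $(G,N)$-ample family of local invariants'' — $(G,N)$-ampleness is a global condition controlling surjectivity of the jet map, not a local restriction, and the local codimension estimate is a statement about finite jets in the local base independent of any ampleness — but the surrounding sentences make clear you understand this, so the argument as a whole matches the paper's intent.
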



\subsection{The local model of singularity} 
\label{sub:the_local_model_of_singularity}

The total stacks \(\cM_X\) and \(\cM_X'\) are not smooth in general; instead,
they admit respective local models of singularity similar to the case of
quasi-split groups.

\subsubsection{}
First, there is a global affine Schubert scheme \(\SFQ_X'\)
parametrized by \(\cB_X\),
defined in the same way using \(\FRM'\) as in
\cite[\S~5.3]{Wa25}. If a geometric point \(b\in\cB_X(\bar{k})\) has an
associated cocharacter \(\lambda_{\bar{v}}\) at each \(\bar{v}\in X(\bar{k})\),
then \((\SFQ_X')_b\) is just a (necessarily finite) direct product of
\(\Gr_{\bar{v},G'}^{\le -w_0(\lambda)}\) for all \(\bar{v}\).
We also have the equivariant
version \(\Stack*{\SFQ_X'}\) by quotienting the arc
group \(\Arc_{\cB_X}(G')^\AD\).

For the symmetric space, there is no \(\SFQ_X\)
defined globally over \(\cB_X\), because there is no group acting ``on one
side'' of the monoidal symmetric space \(\FRM\); rather we have \((\bG')^\SC\) acting
``on both sides'' of \(\FRM\), and \(G\) acting by conjugation (similar to
\Cref{ssub:review_of_SYMS}). Therefore, the stack
\(\Stack*{\SFQ_X}\) is still well-defined over all of \(\cB_X\), but \(\SFQ_X\)
is only defined over some \'etale chart of \(\cB_X\), which also depends on
local splittings \(\FRM\cong \bM\), \(G\cong \bG\), and so on. In any case, it
is not a serious obstacle because our statements in this subsection is local in
\(\cB_X\). We have the canonical evaluation maps
\begin{align}
    \ev\colon \cM_X&\longto \Stack*{\SFQ_X},\\
    \ev'\colon \cM_X'&\longto \Stack*{\SFQ_X'}.
\end{align}

\begin{proposition}
    \label[proposition]{prop:weak_local_model_of_singularity}
    Let \(m\in\cM_X^\heartsuit(\bar{k})\)
    (resp.~\(m'\in\cM_X^{\prime\heartsuit}(\bar{k})\)) and let \(a=h_X(m)\)
    (resp.~\(a=h_X'(m')\)). Suppose
    \(a\) is very \((G,\delta_a)\)-ample (equivalently, very
    \((G',\delta_a)\)-ample), then the evaluation map \(\ev\) (resp.~\(\ev'\)) is
    formally smooth at \(m\) (resp.~\(m'\)).
\end{proposition}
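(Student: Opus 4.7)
The plan is to reduce the statement to the split-group version of the weak local model of singularity already proved in \cite{Wa24}, using faithfully flat descent along the \'etale double cover $\OGT\colon X' \to X$. First I would set up the deformation theory: for $m = (\cL, \cD, \cE, x)$ in $\cM_X^\heartsuit(\bar k)$ with $a = h_X(m)$, deformations of $m$ over a square-zero thickening compatible with a prescribed deformation of $a$ and of $\ev(m)$ are controlled by the truncated tangent complex
\[
    \cT^{\le 0}_m = \bigl[\,\ad(\cE) \longto m^{*}\CoTB_{\FRM/\FRA_\FRM}^{\vee}\,\bigr]
\]
on $X$, twisted by vanishing to sufficient order along the boundary divisor $D_\partial$ so as to match the data remembered by the $N$-truncated evaluation. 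Formal smoothness of $\ev$ at $m$ is then equivalent to the vanishing of the first hypercohomology of this twisted complex.

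Next I would descend via $\OGT$. Since $\OGT$ is finite \'etale with $\Char(k)>2$, the pullback $\OGT^{*}$ is exact and conservative on quasi-coherent cohomology, so it suffices to prove the vanishing on $X'$. On $X'$ the twisted monoidal symmetric space $\FRM$ is canonically isomorphic to the split universal monoid $\bM$, the group $G$ to $\bG$, and the truncated evaluation map corresponds to the standard one for $\bM$ used in \cite{Wa24}. Moreover the very $(G, \delta_a)$-ample condition on $a$ is, by design, equivalent after pullback to the very $(\bG, \delta_a)$-ample condition on $\OGT^{*}a$: the abelianization $\FRA_\FRM$ and the boundary moduli $\cB_X$ are identical to those for the quasi-split model, and the identification $\delta_a = \delta_a'$ noted earlier in the excerpt ensures the numerical invariant is preserved. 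The weak local model of singularity for the split monoid in \cite{Wa24} then provides the desired vanishing on $X'$, which descends to $X$. The unitary case is handled by the same argument: pulling back $\FRM'$ and $G'$ along $\OGT$ again recovers the split pair $(\bM, \bG)$, and the obstruction complex pulls back to its split counterpart.

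The main technical point I expect to require care is verifying the $\sigma$-equivariance of the pullback of the tangent complex: one must check that $m^{*}\CoTB_{\FRM/\FRA_\FRM}^{\vee}$ pulls back $\sigma$-equivariantly to the analogous sheaf for $\bM$, and that the boundary divisor $D_\partial$ pulls back to a $\sigma$-stable divisor matching the split one. Both follow from the explicit description of $\iota$ and $\tau$ on $\bM^\times$ as $(z,g) \mapsto (\iota_\bA(z), g^{-1})$ and $(z,g) \mapsto (\iota_\bA(z), \tp{g}^{-1})$ recalled in \Cref{sub:the_involutions_and_twisted_forms}, together with the $\iota$-stability of $\bar{\bT}_\bM$ used to construct $\FRT_\FRM$. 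Once these identifications are in place, the proposition reduces verbatim to its split counterpart.
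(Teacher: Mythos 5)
Your overall strategy is sound and, via the Weil-restriction adjunction $\Bun_{\bG}(X')\cong\Bun_{\bG'}(X)$, is essentially the same as the paper's: both exhibit the obstruction group $\RH^1\bigl(\breve X,\coker(\ad(\cE)\to m^*\CoTB_{\FRM/\FRA_\FRM}^\vee)\bigr)$ as a piece of the corresponding group for the split model and then invoke the split vanishing result. You phrase this as pulling back to $X'$, the paper as a direct-summand decomposition of the tangent sheaf along $\iota\sigma$-eigenspaces over $X$; the two are interchangeable, and your observation that $\RH^1(X',\OGT^*\cF)\cong\RH^1(X,\cF)\oplus\RH^1(X,\cF\otimes L)$ (so that vanishing upstairs forces vanishing downstairs) is correct. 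Likewise the identification $\OGT^*\FRM\cong\bM_{X'}$, $\OGT^*G\cong\bG_{X'}$ is right, and the equivariance checks you flag are the right ones to worry about.

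The genuine gap is in how you transport the numerical hypothesis. You assert that very $(G,\delta_a)$-ampleness of $a$ is "equivalent after pullback" to very $(\bG,\delta_a)$-ampleness of $\OGT^*a$, citing the identity $\delta_a=\delta_a'$. That identity compares the symmetric-space side with the unitary side \emph{over $X$}; it is not the relevant comparison. What you actually need is the relation between $\delta_a$ computed with respect to $(G,\FRM)$ over $X$ and the $\delta$-invariant of $\OGT^*a$ with respect to $(\bG,\bM)$ over $X'$, i.e.\ of $a$ with respect to $(\bG',\bM')$ over $X$. The paper records this as $\delta_{\bG,a}=2\delta_a$: each geometric place of $X$ splits into two places of $X'$, each contributing $\delta_v(a)$. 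Consequently the ampleness hypothesis you must supply to the split theorem is very $(\bG',2\delta_a)$-ampleness, not very $(\bG,\delta_a)$-ampleness. The paper explicitly makes this step ("its induced mHiggs bundle for $\bM'$ is very $(\bG',\delta_{\bG,a})$-ample"); your version silently drops the factor of $2$, which makes the appeal to the split result unjustified as written. Relatedly, the boundary moduli for $\bM$ over $X'$ is $\cB_{X'}$, not $\cB_X$; the identification $\cB_X\simeq\cB_{X,\FRM'}$ you invoke is again the symmetric-vs.-unitary comparison over $X$ and does not address the passage to $X'$. To repair the argument you should replace both invocations of "$\delta_a=\delta_a'$" by the correct relation $\delta_{\bG,a}=2\delta_a$ and verify (or cite, as the paper does) that very $(G,\delta_a)$-ampleness over $X$ yields very $(\bG',2\delta_a)$-ampleness for the Weil-restricted datum.
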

\begin{proof}
    We first prove this for \(\ev\).
    Let \(\Omega=\CoTB_{\FRM/\FRA_\FRM}\) be the relative cotangent sheaf of
    \(\FRM\) over \(\FRA_\FRM\), and
    \(\boldsymbol{\Omega}'=\CoTB_{\bM/\bA_{\bM'}}\). Let
    \(\symup{T}_m\) be the \(\cO_{\breve{X}}\)-dual of \(m^*\Omega\), and
    \(\symbf{T}_m'\) that of \(m^*\boldsymbol{\Omega}'\). Then since
    \(\breve{X}\) is a curve, \(\symup{T}_m\) (resp.~\(\symbf{T}_m\)) must be
    locally-free. Moreover, since \(\symup{T}_m\) is the \(\iota\sigma\)-fixed-point
    subbundle of \(\symbf{T}_m\), it is also a direct summand of the latter
    since \(\Char(k)\neq 2\).

    As we have seen in \cite[\S~7.1]{Wa25}, the obstruction to the local model of
    singularity is contained in the cohomological group
    \begin{align}
        \label{eqn:local_model_of_singularity_coh_obstruction}
        \RH^1\bigl(\breve{X},\coker(\ad(E)\to \symup{T}_m)\bigr).
    \end{align}
    The tangent action map
    \begin{align}
        \Lie{\bG'}\x^G E\longto \symup{\bT}_m
    \end{align}
    is equivariant with respect to the \(\sigma\)-action on the source and the
    \(\iota\sigma\)-action on the target. Taking the fixed-point subbundles, one
    realizes \eqref{eqn:local_model_of_singularity_coh_obstruction} as a direct
    summand of the cohomological group
    \begin{align}
        \RH^1\bigl(\breve{X},\coker(\Lie{\bG'}\x^G E\to \symup{\bT}_m)\bigr).
    \end{align}
    But this is none other than the obstruction to the local model of
    singularity for the mH-fibration associated with the monoid \(\bM'\) and
    group \(\bG'\). If \(m\) is very \((G,\delta_a)\)-ample, then its induced
    mHiggs bundle for \(\bM'\) is very \((\bG',\delta_{\bG',a})\)-ample, and so
    we have
    \begin{align}
        \RH^1\bigl(\breve{X},\coker(\Lie{\bG'}\x^G E\to
        \symup{\bT}_m)\bigr)=0
    \end{align}
    by \cite[Theorem~6.10.2]{Wa25}. This implies that
    \eqref{eqn:local_model_of_singularity_coh_obstruction} also vanishes. The
    case of \(\ev'\) is similar, and we only need to replace both the
    \(\sigma\)-action on \(\bG'\) and the \(\iota\sigma\)-action by the
    respective \(\tau\sigma\)-actions. This finishes the proof.
\end{proof}

\subsubsection{}
There is also a strong version of the local model of singularity similar to
\cite[Theorem~6.10.10]{Wa25} which does not
depend on \(\delta_a\) and the proof can be similarly deduced with the
help of \(\bG'\). Such a proof will be very long to completely spell out, but
its core idea is very simple and not very different from
\Cref{prop:weak_local_model_of_singularity}. In order to state the result, we
will need a bit of technical preparation.

Recall the moduli of boundary divisors \(\cB_X\), which is a smooth
Deligne-Mumford stack. Its coarse space \(B_X\) is a countable disjoint
union of direct product of symmetric powers of smooth projective curves, and
the map \(\cB_X\to B_X\) is an \'etale gerbe. As such, for the purpose of
deformation theory, the difference between \(\cB_X\) and \(B_X\) is not very
important, so for conciseness we will not distinguish the two. A more
rigorous exposition can be found in \cite[\S~5.4, \S\S~7.3.9--7.3.13]{Wa25}.

Let \(m\in\cM_X^\heartsuit(\bar{k})\) be a geometric point. To further simplify
this exposition, we assume (and the general case is not very different):
\begin{enumerate}
    \item the monoid \(\bM\) is the universal monoid of \(\bG^\SC\);
    \item the irreducible component \(B\) of \(\cB_X\) containing the image of
        \(m\) is isomorphic to \(X_{d_1}\x X_{d_2}\), where \(X_d\) means
        \(d\)-th symmetric power of \(X\);
    \item suppose \(b=(D_1,D_2)\) is the image of \(m\) in \(X_{d_1}\x X_{d_2}\),
        then \(D_1\) and \(D_2\) are disjoint divisors on \(X\);
    \item The mHiggs field of \(m\) is regular semisimple at every point in
        \(D_2\).
\end{enumerate}
We let \(p_i\colon B\to X_{d_i}\) be the projection map.

We let \(b\) vary in a neighborhood \(U\) in \(\cB_X\) in such a way that
\(D_1\) and \(D_2\) stay disjoint. Then passing to an \'etale cover \(U'\) of
\(U\), the global affine Schubert
scheme \(\SFQ_X\) is defined and can be written as a direct product
\begin{align}
    (\SFQ_X)_{|U'}=p_1^*\SFQ_{D_1}\x_{U'} p_2^*\SFQ_{D_2},
\end{align}
where \(\SFQ_{D_i}\) is similarly defined as \(\SFQ_X\), with \(\cB_X\) replaced
by \(X_{d_i}\). We similarly have the equivariant
version which is defined over \(U\):
\begin{align}
    \Stack*{\SFQ_X}_{|U}=p_1^*\Stack*{\SFQ_{D_1}}\x p_2^*\Stack*{\SFQ_{D_2}}.
\end{align}
Therefore, we have a modified evaluation map by projecting to the first factor:
\begin{align}
    \ev^1 \colon (\cM_X)_{|U}\longto \Stack*{\SFQ_{D_1}},
\end{align}
which is a map relative to \(X_{d_1}\).
The following is an analogue of \cite[Theorem~6.10.10]{Wa25} in this
simplified setting:
\begin{proposition}
    \label[proposition]{prop:strong_local_model_of_singularity}
    Suppose \(\bM\) is the universal monoid, and
    \begin{align}
        d_2> 2\abs{\bW}(n-1)(g_X-1),
    \end{align}
    then \(\ev^1\) is formally smooth, and a similar result holds for \(\cM_X'\)
    as well.
\end{proposition}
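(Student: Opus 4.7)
The plan is to mirror the argument of \Cref{prop:weak_local_model_of_singularity} but invoke the stronger \cite[Theorem~6.10.10]{Wa24} in place of \cite[Theorem~6.10.2]{Wa24} as the quasi-split input. The obstruction to formal smoothness of $\ev^1$ at $m$ will be computed by the same tangent-complex formalism as in \cite[\S~7.3]{Wa24}, and then realized as a direct summand of the corresponding obstruction for the mH-fibration associated with the Weil-restricted pair $(\bM',\bG')$, at which point the strong local model for the quasi-split case applies.

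First, I would set up the deformation theory of $\ev^1$ along the lines of \cite[\S~7.3]{Wa24}: the difference from \Cref{prop:weak_local_model_of_singularity} is that we are only quotienting by the arc group at points of $D_1$, so the first-order obstruction to smoothness of $\ev^1$ at $m$ lives in $\RH^1(\breve{X},\cG)$ for a coherent sheaf $\cG$ that differs from $\coker(\ad(E)\to\symup{T}_m)$ by a twist supported near $D_2$. Because the mHiggs field is regular semisimple at every point of $D_2$, the tangent action map $\ad(E)\to\symup{T}_m$ is a local isomorphism there, and a standard local computation identifies $\cG$ with an explicit coherent sheaf of the type that appears in \cite[\S~6.10]{Wa24}, carrying an effective twist by the divisor determined by $D_2$.

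Next, I would use the $G$-equivariant, $\sigma$-versus-$\iota\sigma$-equivariant tangent action
\begin{align}
    \Lie\bG'\x^G E\longto \symbf{T}_m
\end{align}
to realize $\cG$ as the $\iota\sigma$-fixed-point direct summand of the analogous sheaf $\cG'$ built from the $(\bM',\bG')$-mH-fibration; this splitting is legitimate because $\Char(k)>2n>2$. Consequently $\RH^1(\breve{X},\cG)$ embeds as a direct summand of $\RH^1(\breve{X},\cG')$, and it suffices to show the latter vanishes. The $\bG'$-side data correspond, via Weil restriction, to a $(\bM,\bG)$-mHiggs bundle on $X'$, whose boundary divisor has $D_2$-part of degree $2d_2$ on $X'$, and $X'$ has genus $g_{X'}=2g_X-1$ by Riemann--Hurwitz.

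Finally, I would apply \cite[Theorem~6.10.10]{Wa24} in this quasi-split setting. The numerical hypothesis there is a bound of the form $\deg(D_2\text{ on }X')>2|\bW|(n-1)(g_{X'}-1)$, which translates back to $d_2>2|\bW|(n-1)(g_X-1)$, precisely our assumption. The unitary counterpart for $\cM_X'$ follows by the identical argument with $\tau\sigma$ in place of $\iota\sigma$. The main obstacle I anticipate is the bookkeeping in the last two steps: verifying that the sheaf $\cG$ matches the expected shape from \cite[\S~6.10]{Wa24} (in particular that the twist by the $D_2$-divisor does appear with the correct sign and multiplicity), and confirming the compatibility of the Weil-restriction translation of the numerical hypothesis, including Riemann--Hurwitz and the doubling of ranks and degrees. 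Once these are in place, the cohomological vanishing is immediate and the proposition follows.
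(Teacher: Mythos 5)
Your high-level plan matches the paper's: compute the obstruction to formal smoothness of $\ev^1$, identify it as a direct summand (via $\iota\sigma$-invariants) of the corresponding obstruction for the quasi-split pair $(\bM',\bG')$, and invoke the strong local model of singularity from \cite[Theorem~6.10.10]{Wa24}. The paper does exactly this (it even says the vanishing of $\RH^1(\cF_{\bG'})$ follows from the ``subtle analysis'' in \cite[\S~7.2, \S\S~7.3.9--7.3.13]{Wa24}, which is the proof of that theorem), and then takes Galois invariants. Your Weil-restriction-to-$X'$ plus Riemann--Hurwitz sanity check of the numerical hypothesis ($2d_2 > 2|\bW|(n-1)\cdot 2(g_X-1)$ simplifying to $d_2 > 2|\bW|(n-1)(g_X-1)$) is a concrete explicitization the paper leaves implicit, and it is a valid reformulation since $\bG'$-bundles on $X$ are $\bG$-bundles on $X'$ and $\OGT$ is finite \'etale.

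One justification in your middle step is wrong, though. You claim that since the mHiggs field is regular semisimple at each point of $D_2$, the tangent action $\ad(E)\to\symup{T}_m$ is a \emph{local isomorphism} there. It is not: at a regular semisimple point the stabilizer in $G$ is a maximal torus, so $\coker(\ad(E)\to\symup{T}_m)$ has rank equal to $\rank G$ generically, not zero. The paper does not rely on such a local-iso statement; instead it asserts that the modified obstruction sheaf $\cF$ \emph{contains} $\coker(\ad(E)\to\symup{T}_m)$ as a subsheaf and is ``$d_2$-more ample,'' with the extra positivity produced by the $\bW$-equivariant analysis on cameral curves, not by the cokernel vanishing near $D_2$. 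Your conclusion (that $\cG$ is an effective twist near $D_2$) is in the right direction, but the stated reason for it would not survive scrutiny; you should instead trace through how relaxing the evaluation constraint over $D_2$ enlarges the relevant coherent sheaf, as in \cite[\S\S~7.3.9--7.3.13]{Wa24}.
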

\begin{proof}
    The proof is a modified version of that of
    \Cref{prop:weak_local_model_of_singularity} and we sketch the gist of it.
    The problem with \Cref{prop:weak_local_model_of_singularity} is that when
    \(m\) is not very \((G,\delta_a)\)-ample, the cohomology group
    \eqref{eqn:local_model_of_singularity_coh_obstruction} may not vanish,
    mainly due to lack of sufficient ampleness. By considering \(\ev^1\) instead
    of \(\ev\), we effectively are making a sheaf \(\cF\) of which
    \begin{align}
        \coker(\ad(E)\to \symup{T}_m)
    \end{align}
    is a subsheaf and is ``\(d_2\)-more ample'' than the latter. Similarly, we
    also have a sheaf \(\cF_{\bG'}\) containing
    \begin{align}
        \coker(\Lie{\bG'}\x^G E\to \symup{\bT}_m).
    \end{align}
    A subtle analysis (see \cite[\S~7.2, \S\S~7.3.9--7.3.13]{Wa25}) involving the
    cameral curves and \(\bW\)-equivariance
    shows that when \(d_2\) is larger than the given threshold,
    the first cohomology of \(\cF_{\bG'}\) vanishes, and so by taking
    Galois invariants, the same is true for \(\cF\). As a result, \(\ev^1\) is
    formally smooth, and the argument for \(\cM_X'\) is the same.
\end{proof}

\begin{remark}
    The assumption that \(\bM\) is the universal monoid is not
    essential. The result remains true (with a slightly modified condition for
    \(d_2\) depending on \(\bM\)) as long as the boundary divisor \(D_2\) is
    ``good'' in the sense of \cite[Definition~6.10.8]{Wa25}.
\end{remark}

\subsubsection{}
Recall from \cite[\S~5.3]{Wa25} that we have a truncated stack
\(\Stack*{\SFQ_X'}_N\) as well as truncated
evaluation map \(\ev_N'\), where \(N\)
is a sufficiently large number (depending on the degree of the boundary
divisor). The \(N\)-truncation here serves a
purely technical purpose so that the intersection complexes are well-defined on
\(\Stack*{\SFQ_X'}_N\), because the stack \(\Stack*{\SFQ_X'}\) has
infinite-dimensional automorphism groups.

For the symmetric space case, the truncation is more involved since it can only
be defined with a ``one-sided'' action on \(\FRM\), which does not exist. It is
the same issue we faced in \Cref{ssub:symmetric_equiv_affGr}, and the workaround
is essentially the same: we first pass to an \'etale cover \(U\) of \(\cB_X\) so that
\(\Stack*{\SFQ_X}_{|U,N}\) is defined, then we take the constant sheaf with a
predetermined shift (related to the boundary divisor) on the big-cell locus, and
do intermediate extension on \(\Stack*{\SFQ_X}_{|U,N}\) (which makes sense
because the latter is finite dimensional); finally we pull this sheaf back to
\(\Stack*{\SFQ_X}_{|U}\) and now it descends to \(\Stack*{\SFQ_X}\).

We denote these ``intersection complex'' on \(\Stack*{\SFQ_X}\)
(resp.~\(\Stack*{\SFQ_X'}\)) by \(\IC_{\Stack*{\SFQ_X}}\)
(resp.~\(\IC_{\Stack*{\SFQ_X'}}\)). Now the \(N\)-truncation has served its
purpose and we forget about it. The following theorem is a corollary of
\Cref{prop:weak_local_model_of_singularity,prop:strong_local_model_of_singularity}.

\begin{theorem}
    \label[theorem]{thm:topological_local_model_of_singularity}
    There is an open substack \(\cA_X^\dagger\subset\cA_X^\heartsuit\) such that
    the \emph{dimension} of its complement \(\cA_X^\heartsuit-\cA_X^\dagger\) is
    bounded by a number depending only on \(X\) and \(\bM\), over which we have
    isomorphisms up to cohomological shifts and Tate twists:
    \begin{align}
        \ev^{\dagger*}\IC_{\Stack*{\SFQ_X}}&\cong\IC_{\cM_X^\dagger},\\
        \ev^{\prime\dagger*}\IC_{\Stack*{\SFQ_X'}}&\cong\IC_{\cM_X^{\prime\dagger}}.
    \end{align}
\end{theorem}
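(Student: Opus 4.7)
The plan is to combine \Cref{prop:weak_local_model_of_singularity,prop:strong_local_model_of_singularity,prop:delta_regularity} to produce an open locus on which the evaluation map is formally smooth, and to control the dimension of its complement. Throughout, the key mechanism is that if $\ev_N$ (or the modified map $\ev_N^1$) is formally smooth at $m$, then since the intersection complex is compatible with smooth pullback up to shift and Tate twist, we obtain the desired isomorphism on a Zariski neighborhood of $m$. So everything reduces to locating a good open substack of $\cA_X^\heartsuit$ over which one of the two smoothness criteria applies.

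First I would fix an irreducible component of $\cB_X$ and bound $\delta$. For any fixed $\delta_0\ge 0$, \Cref{prop:delta_regularity} provides an $N(\delta_0)$ such that on very $(G,N(\delta_0))$-ample components, the locus $\cA_{\ge \delta'}$ has codimension at least $\delta'$ for every $\delta'\le \delta_0$. On $\cA_{\le\delta_0}$, the points that are already very $(G,\delta_a)$-ample are directly covered by \Cref{prop:weak_local_model_of_singularity}, so $\ev_N$ is formally smooth there and the pullback identity for IC sheaves follows. For components that are not very $(G,\delta_0)$-ample, or for the remaining non-very-ample locus, I would use the decomposition of the boundary divisor into $D_1+D_2$ with $D_2$ concentrated at regular semisimple points of the mHiggs field: \Cref{prop:strong_local_model_of_singularity} then gives formal smoothness of $\ev_N^1$ (relative to the $X_{d_1}$-factor) provided $d_2$ exceeds an explicit linear threshold in $g_X$. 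Since IC sheaves behave well under smooth base change, the pullback of $\IC_{\Stack*{\SFQ_{D_1}}_N}$ (which differs from $\IC_{\Stack*{\SFQ_X}_N}$ by a contribution from $\SFQ_{D_2}$ that is itself an IC of a smooth piece) matches $\IC_{\cM_X^\dagger}$ up to shift and Tate twist.

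Concretely, I would define $\cA_X^\dagger$ to be the union of
\begin{itemize}
    \item the open locus in $\cA_{\le \delta_0}\cap\cA^\heartsuit$ where $a$ is very $(G,\delta_a)$-ample (so \Cref{prop:weak_local_model_of_singularity} applies directly), and
    \item on each component of $\cB_X$, the locus where the boundary divisor admits a splitting $D_1+D_2$ with $D_1,D_2$ disjoint, $D_2$ supported at regular semisimple points of the generic mHiggs field, and $\deg D_2$ above the threshold in \Cref{prop:strong_local_model_of_singularity} (so $\ev_N^1$ is formally smooth).
\end{itemize}
The second condition is generic on any component of $\cB_X$ whose total degree is sufficiently large: the locus where one cannot isolate such a $D_2$ is cut out by a finite collection of divisorial conditions on $\cA_X$, and its dimension is bounded by a number depending only on $X$ and $\bM$ (via $g_X$, $\abs{\bW}$, and $\dim\FRM$). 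The first condition, on the other hand, excludes at most a locus of dimension controlled by \Cref{prop:delta_regularity}.

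The main obstacle will be bookkeeping the complementary locus: on components of $\cB_X$ with small degree, neither ampleness condition is automatic, and one must verify that the excluded part of $\cA_X^\heartsuit$ (the union of deeper $\delta$-strata and of components with low boundary degree) has dimension bounded independently of the component. This is precisely the statement codified in \cite[\S 7.3]{Wa24} in the quasi-split case, and since all the ingredients we need (cameral-cover equivariance, vanishing of $\RH^1$ of the relevant cokernel, and $\delta$-regularity) are formally identical here, the extension to $\FRM$ and $\FRM'$ is mechanical: the $\sigma$-twist on $\bG'$ is replaced by $\iota\sigma$ for the symmetric case and $\tau\sigma$ for the unitary case, and one takes fixed points of the corresponding involutions to pass from the $\bG'$-cohomology to the $G$- and $G'$-cohomology, just as in the proof of \Cref{prop:weak_local_model_of_singularity}. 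Once $\cA_X^\dagger$ is in place, the isomorphisms
\[
    \ev_N^{\dagger *}\IC_{\Stack*{\SFQ_X}_N}\cong \IC_{\cM_X^\dagger},\qquad
    \ev_N^{\prime\dagger*}\IC_{\Stack*{\SFQ_X'}_N}\cong \IC_{\cM_X^{\prime\dagger}}
\]
follow from formal smoothness plus the observation that $\cM_X^\dagger$ is of pure dimension over each component (this again reduces, via \(\iota\sigma\)- or \(\tau\sigma\)-invariance, to the equidimensionality established for the quasi-split monoidal mH-fibrations in \cite{Wa24}).
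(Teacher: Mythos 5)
Your approach is correct and matches the paper's: the paper itself presents this theorem with a one-line justification ("a corollary of \Cref{prop:weak_local_model_of_singularity,prop:strong_local_model_of_singularity}"), and your sketch fills in how the combination actually goes — Prop.\ weak on the very-\((G,\delta_a)\)-ample part of the low-\(\delta\) stratum, Prop.\ strong on a $D_1+D_2$ decomposition of the boundary divisor, and \(\delta\)-regularity plus a degree-of-boundary argument to bound the dimension of the complement. One small point worth making explicit: Prop.\ strong gives formal smoothness of \(\ev_N^1\), not \(\ev_N\); what rescues the statement as written is that over the locus where the mHiggs field is regular semisimple at \(D_2\), the \(\SFQ_{D_2}\)-component of the evaluation lands in the open Schubert cell, so \(\ev_N^*\IC_{\Stack*{\SFQ_X}_N}\) already agrees (up to shift and twist) with \(\ev_N^{1*}\IC_{\Stack*{\SFQ_{D_1}}_N}\), and smoothness of \(\ev_N^1\) then yields the desired identity for \(\ev_N\).
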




\subsection{The simple locus} 
\label{sub:the_simple_locus}

As the last part of this appendix, we discuss an open locus
\(\cA_X^{\SIM}\) of \(\cA_X^\heartsuit\) that is particularly nice. We shall
call it the \notion{simple locus}. It is closely related to the notions of
elliptic locus or anisotropic locus. In fact, it is the locus where the
pullbacks of mHiggs bundles to \(X'\) are elliptic as \(\bG\)-mHiggs bundles.

\subsubsection{}
Let
\begin{align}
    h_{X'}\colon\cM_{X'}\to\cA_{X'}
\end{align}
be the mH-fibration associated with stack \(\Stack*{\bM/\bG\x \bZ_\bM}\) and
curve \(X'\). Then we have pullback functors that fit into the commutative
diagram
\begin{equation}
    \begin{tikzcd}
        \cM_X \ar[r]\ar[d] & \cM_{X'} \ar[d] & \cM_X'\ar[l]\ar[d]\\
        \cA_X \ar[r] \ar[d]& \cA_{X'}\ar[d] & \cA_X\ar[l]\ar[d]\\
        \cB_X \ar[r] & \cB_{X'} & \cB_X\ar[l]
    \end{tikzcd}.
\end{equation}

\subsubsection{}
Define the \notion{simple locus} \(\cA_X^\SIM\subset\cA_X^\heartsuit\) to be the
preimage of the \(\bG\)-elliptic locus in \(\cA_{X'}^\heartsuit\), then it is an
open subset of \(\cA_X^\heartsuit\). We also define the \notion{smooth locus}
\(\cA_X^\diamondsuit\) to be where the image of \(X\) intersects with the
extended discriminant divisor transversally. By \cite[Lemma~8.2.8]{Wa25},
\(\cA_{X'}^\diamondsuit\) is contained in the
\(\bG\)-elliptic locus, and it is clear that
\(\cA_X^\diamondsuit\) coincides with the preimage of \(\cA_{X'}^\diamondsuit\).
Using the same argument as in \cite[Proposition~6.3.13]{Wa25},
we see that \(\cA_X^\diamondsuit\) is non-empty, and so \(\cA_X^\SIM\) is also
non-empty.

\subsubsection{}
The \(\bG\)-elliptic locus in \(\cA_{X'}\) is studied in
\cite[\S\S~8.2, 9.2]{Wa25} by
studying the monodromy of the cameral cover. Since \(\bG\) is split, the
discussions in \textit{loc. cit.} can be simplified in that we no longer
need an \(\Out(\bG)\)-torsor over \(X'\). The upshot is that the
\(\bG\)-elliptic locus consisting of points \(a\) such that the \(\bW\)-cameral
cover
\begin{align}
    \tilde{X}_a'\longto X'
\end{align}
does not contain a subcovering
\begin{align}
    \tilde{X}_{a,I}'\subset \tilde{X}_a'\longto X'
\end{align}
that is a finite flat \(\bW_I\)-cover for some proper subset \(I\subset\SimRts\)
of the simple roots. The following result is implicit in \cite{Wa25}:

\begin{lemma}
    If \(m\in\cM_{X'}(\bar{k})\) is a point lying over a
    \(\bG\)-elliptic \(a\), then it does not admit a parabolic reduction.
\end{lemma}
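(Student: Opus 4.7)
The plan is to argue by contrapositive: if $m=(\cE,x)$ admits a parabolic reduction, then the cameral cover $\tilde{X}_a'$ contains a $\bW_I$-subcover for some proper subset $I\subsetneq \SimRts$, contradicting $\bG$-ellipticity. First I would make precise what a parabolic reduction of $m$ means in the monoidal context (following \cite[\S\S~8.2,9.2]{Wa24}): a reduction to a standard parabolic $\bP_I$ with Levi $\bL_I$ consists of a $\bP_I$-bundle $\cE_{\bP_I}$ together with an isomorphism $\cE\simeq \cE_{\bP_I}\x^{\bP_I}\bG$, such that the mHiggs field $x\in \bM\x^\bG\cE$ lifts to a section of $\bar{\bP}_I\x^{\bP_I}\cE_{\bP_I}$, where $\bar{\bP}_I\subset\bM$ is the closure of $\bP_I$. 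Projecting along $\bar{\bP}_I\to \bar{\bL}_I$ (the quotient by the unipotent radical, identifying $\bar{\bP}_I/\Ru(\bP_I)$ with a Levi monoid associated with $\bL_I$) produces a Levi mHiggs bundle $m_I=(\cE_{\bL_I},x_{\bL_I})$ with characteristic $a_I$ in the Hitchin base $\cA_{\bL_I,X'}$.

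Next I would verify that the cameral cover $\tilde{X}_{a,I}'\to X'$ attached to $a_I$ is a finite flat $\bW_I$-cover that embeds into $\tilde{X}_a'$ as a subcover. This is essentially formal: the maximal toric variety $\bar{\bT}_{\bM_I}=\bar{\bT}_\bM$ is common to both, and the invariant maps $\bar{\bT}_\bM/\bW_I\to \bar{\bT}_\bM/\bW=\bC_\bM$ factor the characteristic of $m$ through $\FRC_{\bM_I}$. Pulling back the tautological cameral cover along $a_I$ and comparing with the pullback along $a$ produces the desired subcover $\tilde{X}_{a,I}'\subset\tilde{X}_a'$. Finally, since $I\subsetneq \SimRts$ is proper, this contradicts the definition of $\bG$-ellipticity recalled just before the lemma, completing the proof.

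The main obstacle will be the first step: in the Lie-algebra setting the passage ``parabolic reduction $\Rightarrow$ Levi Higgs bundle'' is a matter of projecting along $\FRp_I\to \FRl_I$, but in the monoidal setting one has to ensure that the lifted mHiggs field genuinely descends to the Levi monoid and that its characteristic lies in the correct stratum of $\FRC_{\bM_I}$. Checking this compatibility, in particular the interaction of $\bar{\bP}_I$ with the abelianization datum in $\bA_\bM$, is the only delicate point; once it is in place, the second step is a straightforward unwinding of the Galois description of cameral covers already exploited throughout \cite{Wa24}.
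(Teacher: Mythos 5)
Your proposal is correct and takes essentially the same route as the paper: the paper's proof is exactly the contrapositive you describe, citing \cite[\S~9.2]{Wa24} for the implication ``parabolic reduction $\Rightarrow$ $a$ lies in the image of a Levi mH-base $\Rightarrow$ the cameral cover acquires a $\bW_I$-subcover for some proper $I$.'' Your sketch just spells out a bit more of what that citation contains (the projection $\bar{\bP}_I\to\bar{\bL}_I$ to a Levi monoid and the resulting subcover), which is the correct unwinding of the reference.
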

\begin{proof}
    If \(m\) admits a parabolic reduction, then the discussion in
    \cite[\S~9.2]{Wa25} essentially shows that \(a\)
    must lie in the image of the mH-base of some Levi monoid, which produces
    the subcovering \(\tilde{X}_{a,I}'\to X'\) for some \(I\subsetneq\SimRts\).
\end{proof}

\begin{corollary}
    \label[corollary]{cor:simple_locus_no_reduction}
    Let \(m\in\cM_{X'}(\bar{k})\) be a \(\bG\)-elliptic point, and
    \(K=\bar{k}(X')\) the geometric function field
    of \(X'\). Let \(V_K\) be the vector space induced by \(m\) and let
    \(x_K\) be the mHiggs field. Then there does not exist a proper subspace
    that is stabilized by the image of \(x_K\) in \(\End_K(V_K)\). The same is
    true if we replace \(V_K\) by \(V_K^\vee\).
\end{corollary}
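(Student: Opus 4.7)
The plan is to reduce the corollary to the preceding lemma by showing that a proper $x_K$-stable subspace produces a parabolic reduction of the mHiggs bundle $m$. Suppose for contradiction that there exists a proper $x_K$-stable subspace $W_K \subset V_K$ of some rank $r$ with $0 < r < n$. The stabilizer in $\bG = \GL_n$ of a rank-$r$ subspace of the standard representation is a parabolic subgroup $P \subsetneq \bG$, so $W_K$ encodes a reduction of the underlying $\bG$-torsor of $m$ to $P$ over the generic point $\Spec K$ of $X'$.

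First I would extend this generic reduction to all of $X'$. Since $\bG/P$ is a (projective) partial flag variety and $X'$ is a smooth projective curve, the generic section of the associated $\bG/P$-bundle extends uniquely to a section over all of $X'$; concretely, take the saturation $W \subset V$ of $W_K$, which is locally free of rank $r$ because $V/W$ is torsion-free on a smooth curve. Next I would verify that the mHiggs field $x$ stabilizes $W$ globally. Viewing $x$ through the first fundamental representation of $\bM$, the condition $x_1(W) \subset W$ amounts to the vanishing of the composition
\begin{align*}
W \hookrightarrow V \xrightarrow{x_1} V \twoheadrightarrow V/W,
\end{align*}
which is a closed condition between locally free sheaves. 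It holds on the generic point by hypothesis, hence on all of $X'$. The pair $(W, x)$ thus constitutes a reduction of $m$ to the parabolic $P$, and the preceding lemma contradicts the $\bG$-ellipticity of $a$.

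For the dual statement, I would observe that a proper subspace $U_K \subset V_K^\vee$ stabilized by the transpose action of $x_K$ corresponds, via the annihilator construction, to a proper $x_K$-stable subspace $U_K^\perp \subset V_K$: if $x_1^\vee U_K \subset U_K$, then for any $v \in U_K^\perp$ and $f \in U_K$ we have $f(x_1 v) = (x_1^\vee f)(v) = 0$, so $x_1 U_K^\perp \subset U_K^\perp$. This reduces the dual case to the case already treated.

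The main obstacle is the global extension and stabilization step in the second paragraph: one must check that the saturated subbundle $W$ is indeed preserved by $x$ everywhere, not just generically, despite $x$ being a section of the monoid $\FRM$ rather than the group. Once one uses that over a smooth curve torsion-free sheaves are locally free and that the vanishing of a map of locally free sheaves is a closed condition, the argument is formal; the rest of the proof is then a direct appeal to the preceding lemma.
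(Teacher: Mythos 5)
Your proof is correct and follows essentially the same route as the paper's: extend the generic parabolic reduction across all of $X'$ (you do this via saturation, the paper via properness of the partial flag variety, but these are the same mechanism on a smooth projective curve), observe that $x_1$-stability of the subbundle is a closed condition that holds generically and hence everywhere, and then invoke the preceding lemma. Your treatment of the dual case via the annihilator is just a spelled-out version of the paper's remark that partial flags in $V_K$ and $V_K^\vee$ correspond bijectively.
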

\begin{proof}
    A proper subspace in \(V_K\) corresponds to a parabolic subgroup
    \(P_K\subset\GL(V_K)\), which extends to a parabolic reduction of the
    \(\bG\)-bundle part of \(m\) over
    \(\breve{X}'\) by properness of the moduli space of partial flags (and that
    \(X'\) is a smooth curve). Since generically the parabolic reduction is
    stable under \(x_K\), it must be stable under the mHiggs field over the
    whole \(X'\) by taking the closure. This means that \(m\) admits a parabolic
    reduction, hence cannot be \(\bG\)-elliptic. The case for \(V_K^\vee\) is
    the same because a partial flag in \(V_K\) corresponds bijectively to one
    in \(V_K^\vee\).
\end{proof}

\begin{remark}
    The name ``simple locus'' corresponds to that the vector bundle
    attached to the mHiggs bundle is a ``simple module'' under the ``action'' of the
    mHiggs field.
\end{remark}

\subsubsection{}
Let \(a\in\cA_X^\heartsuit(\bar{k})\), then we have a Cartesian diagram
\begin{equation}
    \begin{tikzcd}
        \tilde{X}_a' \ar[r]\ar[d] & \tilde{X}_a \ar[d]\\
        X' \ar[r] & X
    \end{tikzcd},
\end{equation}
where top horizontal map is \(\bW=W\)-equivariant.
If \(a\not\in\cA_X^\SIM\), then we can find \(I\subsetneq \SimRts\) and a
\(\bW_I\)-subcover \(\tilde{X}_{a,I}'\to X'\). Moreover, since \(a\) comes from
a point in \(\cA_X\), we see that \(I\) must be
stable under the action of \(\sigma\) through \(\Out(\bG)\).

On the other hand, \(I\) defines a twisted Levi subgroup \(G_I\)
(resp.~\(G_I'\)) of \(G\) (resp.~\(G'\)). Although \cite{Wa25} only
explained the construction of monoids associated with Levi subgroups or
endoscopic groups, the same process can be used to produce a monoidal symmetric
space (resp.~monoid) for \(G_I\) (resp.~\(G_I'\)) as well. Here we need the fact
that \(I\) is \(\sigma\)-stable.
Alternatively (and perhaps more economically), let \(\bM_I\) be the Levi monoid
associated with \(\bM\) and \(I\), then we may define the Levi monoidal
symmetric space to be
\begin{align}
    \FRM_I\defeq\OGT_*\bM_I^{\iota\sigma},
\end{align}
as well as the Levi monoid
\begin{align}
    \FRM_I'\defeq\OGT_*\bM_I^{\tau\sigma}.
\end{align}
Both are very flat in the sense that their abelianization maps are flat with
integral fibers, so the mH-fibrations associated with them fall into the
framework we already studied in this section.

Let \(\cA_{X,I}\) (resp.~\(\cA_{X,I}'\)) be the mH-base of such mH-fibration.
Similar to the case of endoscopic groups in \cite{Wa25}, there is in
general no map from \(\cA_{X,I}\) to \(\cA_X\), because the center
\(Z_{\FRM_I}\) of \(\FRM_I^\x\) does not map into \(Z_\FRM\). Instead, we
replace \(Z_{\FRM_I}\) by the preimage of \(Z_\FRM\) in \(Z_{\FRM_I}\), and the
resulting mH-fibration is simply a pullback of the usual one. Denote this new
mH-base by \(\cA_X^I\), then we have a canonical map
\begin{align}
    \nu_I\colon\cA_X^I\longto\cA_X,
\end{align}
which is finite and unramified by the same argument as in
\cite[Proposition~9.2.4]{Wa25}.
Similarly, we have another finite unramified map for the unitary case
\begin{align}
    \nu_I'\colon\cA_X^{I\prime}\longto\cA_X.
\end{align}

The image of \(\tilde{X}_{a,I}'\)
in \(\tilde{X}_a\) is a \(W_I\)-subcover of \(\tilde{X}_a\) over \(X\), through
which we may deduce that \(a\) comes from both a point in \(\cA_X^I\) and a
point in \(\cA_X^{I\prime}\). In particular, we have
\begin{align}
    \nu_I\bigl(\cA_X^I\bigr)\cap\cA_X^\heartsuit
    =\nu_I'\bigl(\cA_X^{I\prime}\bigr)\cap\cA_X^\heartsuit.
\end{align}

\subsubsection{}
The codimension of \(\nu_I\bigl(\cA_X^I\bigr)\cap\cA_X^\heartsuit\) in
\(\cA_X^\heartsuit\) can be estimated using the same argument as in
\cite[\S~6.11]{Wa25} for endoscopic groups, because the argument does not use
properties specific to endoscopic groups (just as it already works for the Levi
case in \textit{loc.~cit.}). A concrete formula can be given by defining and
using the \notion{resultant divisor}, but for brevity we will only state a
coarse corollary below (the proof is the same as in
\cite[Proposition~9.2.7]{Wa25}):

\begin{proposition}
    Let \(A\subset\cA_X^\heartsuit\) be an irreducible component that is very
    \((G,N)\)-ample (where \(N\in\bbN\)). Then the codimension of \(A-A^\SIM\)
    in \(A\) tends to \(\infty\) as \(N\to\infty\).
\end{proposition}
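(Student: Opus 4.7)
The plan is to cover the non-simple locus by finitely many images of Levi-type mH-bases and estimate the codimension of each such image. Specifically, by the paragraph immediately preceding the statement, a point $a\in\cA_X^\heartsuit(\bar k)$ fails to lie in $\cA_X^\SIM$ precisely when its cameral cover $\tilde X'_a\to X'$ admits a $\bW_I$-subcover for some proper $\sigma$-stable $I\subsetneq\SimRts$. Combined with the observation that $\nu_I(\cA_X^I)\cap\cA_X^\heartsuit = \nu'_I(\cA_X^{I\prime})\cap\cA_X^\heartsuit$, this yields an inclusion
\begin{align}
A-A^\SIM\ \subset\ \bigcup_{I\subsetneq\SimRts,\ \sigma\text{-stable}}\bigl(\nu_I(\cA_X^I)\cap A\bigr).
\end{align}
The set of proper $\sigma$-stable $I$ is finite, so it suffices to bound, for each such $I$, the codimension of $\nu_I(\cA_X^I)\cap A$ in $A$ from below by a quantity that tends to $\infty$ with the $(G,N)$-ampleness parameter.

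For a fixed such $I$, the next step is a Riemann--Roch dimension count. Since $\nu_I$ is finite and unramified, $\dim\bigl(\nu_I(\cA_X^I)\cap A\bigr)\le\dim\nu_I^{-1}(A)$, which is the sum of dimensions of the spaces of global sections associated with the Levi monoidal symmetric space $\FRM_I$ (cut out by the constraint that the center $Z_{\FRM_I}$ projects into $Z_\FRM$ with the prescribed $Z_\FRM$-torsor). On the other hand, $\dim A$ is the analogous sum but involving the full set $\SimRts$ of simple roots. The difference is controlled by the sum of $H^0$'s of line bundles indexed by simple roots (or more precisely, their outer-twist averages) \emph{not} in $I$; these line bundles have positive degrees growing linearly in the $(G,N)$-ampleness, so by Riemann--Roch their $h^0$'s grow at least linearly in $N$. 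This gives the desired codimension bound for each fixed $I$.

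The remaining step is the uniform version: one needs the implicit constant in "codim $\ge cN - C$" to be positive and the threshold for the Riemann--Roch argument to kick in to depend only on $X$, $\FRM$, and the discrete combinatorial data (the boundary divisor component fixed by $A$), not on the choice of $I$. This is handled by passing to the language of the \emph{resultant divisor} for $I$ as in \cite[\S 6.11]{Wa24}: the locus $\nu_I(\cA_X^I)\cap A$ is cut out by the vanishing of an effective divisor on the relevant cameral discriminant, and ampleness of that divisor increases linearly with the $(G,N)$-ampleness. This is exactly the argument carried out in \cite[Proposition~9.2.7]{Wa24} for endoscopic and Levi subgroups; the only modification is that here we apply it to $\FRM_I$ inside $\FRM$ (and its unitary analog), which is legitimate because both $\FRM_I$ and $\FRM_I'$ are very flat very much in the same way as their endoscopic analogs in \emph{loc.\ cit.}.

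The main obstacle I anticipate is purely bookkeeping: carefully matching the $\sigma$-equivariant decomposition of the cocharacter lattice of $Z_{\FRM_I}$ with that of $Z_\FRM$ so that the Riemann--Roch count correctly compares $\dim\cA_X^I$ to $\dim A$, and ensuring that the resultant-type divisor remains effective and grows with ampleness independently of which proper $\sigma$-stable $I$ is chosen. Once the bookkeeping is in place, the codimension estimate is an immediate consequence of \cite[Proposition~6.11.*]{Wa24} transplanted to this twisted setting, and the proposition follows.
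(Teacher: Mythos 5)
Your proposal matches the paper's own approach: cover $A-A^\SIM$ by the finitely many $\sigma$-stable Levi loci $\nu_I(\cA_X^I)\cap A$, and estimate each codimension via the resultant-divisor argument of \cite[\S 6.11, Proposition~9.2.7]{Wa24} transplanted to the twisted setting. The intermediate Riemann--Roch heuristic you sketch is not in the paper (which cites \cite{Wa24} directly), but the covering, the key reference, and the uniformity mechanism are the same.
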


\subsubsection{}
\label{ssub:final_subtlety}
Lastly, we have product formulae for both \(\cM_a\) and \(\cM_a'\) for any
\(a\in\cA_X^\SIM\). The proof is exactly the same as
\cite[Proposition~6.9.1]{Wa25}. A subtlety here is that if one wishes to make
the product formula hold over \(k\) rather than \(\bar{k}\) for \(\cM_a'\), one has to use the
modified MASF introduced in \Cref{sub:affine_jacquet_rallis_fibers} (denoted by
\(\cM_v^{\prime\prime\Hit}(a)\) therein) instead, and similarly for the
symmetric side, due to potential non-existence of the ``base point''. In any case, it has no
influence on purely geometric arguments so we omit it here.
We leave the details to the reader and only record the following
equidimensional result:
\begin{proposition}
    \label[proposition]{prop:usual_mH_fiber_equidimensional}
    The fibers \(\cM_a\) and \(\cM_a'\) for any \(a\in\cA_X^\SIM\) are
    equidimensional.
\end{proposition}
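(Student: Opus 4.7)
The plan is to deduce equidimensionality from the product formula together with the known equidimensionality of local multiplicative affine Springer fibers, handling ellipticity as the input that makes the action of the Picard stack well-behaved.

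First I would reduce to the unitary case $\cM_a'$, since once that is settled, the symmetric case is essentially identical after replacing $G$ by $G/\Gm$ as indicated in the paragraph preceding the statement; the central $\Gm$ in $G$ only contributes a factor whose dimension is constant in $a$, so it does not affect equidimensionality. For $\cM_a'$, the product formula analogous to \cite[Proposition~6.9.1]{Wa24} (with the modified local factors $\cM_v^{\prime\prime\Hit}(a)$ when working over $k$) expresses $\cM_a'$ as the quotient
\begin{align*}
    \cM_a' \simeq \Bigl(\prod_{v\in\abs{X}}\cM_v^{\prime\Hit}(a)\Bigr) \times^{\prod_v \cP_v'(a)} \cP_a',
\end{align*}
where the product on the left is finite because $\cM_v^{\prime\Hit}(a)$ is a point for all but finitely many $v$, and the quotient is taken for the natural action of the product of local Picards via the local-to-global map $\prod_v \cP_v'(a) \to \cP_a'$.

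Second, I would invoke the local equidimensionality of the multiplicative affine Springer fibers: for each $v$, by the results of \cite[\S~4]{Wa24} (which apply verbatim over $\breve{X}_v$ since $\FRM'$ becomes split there), the scheme $\cM_v^{\prime\Hit}(a)$ is equidimensional of dimension $\delta_v(a)$, and moreover the local Picard $\cP_v'(a)$ acts on it with finite stabilizers on a dense open subset. Taking products and noting that $\prod_v \cM_v^{\prime\Hit}(a)$ is therefore equidimensional of dimension $\sum_v \delta_v(a) = \delta_a$ by \eqref{eqn:delta_local_and_global_summation}, we see that the total space $\prod_v \cM_v^{\prime\Hit}(a) \times \cP_a'$ is equidimensional.

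The main technical point, and the one step I would expect to require the most care, is verifying that the quotient map by $\prod_v \cP_v'(a)$ preserves equidimensionality, which is where simplicity enters in an essential way. Since $a\in\cA_X^\SIM$, the pullback to $X'$ is $\bG$-elliptic, which implies by the same argument as in \cite[Proposition~6.9.1]{Wa24} that the kernel and cokernel of $\prod_v \cP_v'(a) \to \cP_a'$ are respectively a commutative affine group scheme of finite type and a finite group (up to factors controlled by $\RH^0$ and $\RH^1$ of the N\'eron model, which are both finite-dimensional in the elliptic case). Consequently the action of $\prod_v \cP_v'(a)$ on the product $\prod_v \cM_v^{\prime\Hit}(a) \times \cP_a'$ has constant stabilizer dimension along each orbit, so the quotient is equidimensional of dimension equal to the dimension of the total space minus $\dim \prod_v \cP_v'(a)$. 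This gives equidimensionality of $\cM_a'$, and the analogous argument, or alternatively the reduction through $G/\Gm$, yields the same for $\cM_a$.
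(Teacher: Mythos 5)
Your proposal is correct and follows essentially the same route as the paper: the paper's proof is a one-liner invoking the product formula (\cite[Proposition~6.9.1]{Wa24}) together with the local equidimensionality of multiplicative affine Springer fibers (\cite[Theorem~1.2.1]{Ch19}, \cite[Theorem~4.2.1]{Wa24}), and these are exactly the two ingredients you identify. One small imprecision: in the quotient step, what actually makes the dimension count work is that the $\prod_v\cP_v'(a)$-action on the factor $\cP_a'$ is (up to the finite automorphism groups present in the simple locus) free by translation, so stabilizers on the full twisted product have uniformly bounded dimension; the kernel/cokernel structure of the local-to-global map $\prod_v\cP_v'(a)\to\cP_a'$ that you invoke is the input for finiteness and properness of $\cM_a'$ rather than the direct reason for equidimensionality. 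This does not affect the conclusion, but the justification you gave for that step conflates two roles of ellipticity.
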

\begin{proof}
    It follows from the product forumlae and
    \cite[Theorem~1.2.1]{Ch19} (see also \cite[Theorem~4.2.1]{Wa25}).
    The subtlety mentioned in \Cref{ssub:final_subtlety} plays no role since we
    only need the product formula over \(\bar{k}\).
\end{proof}



\addcontentsline{toc}{section}{References}
\bibliographystyle{halpha}
\bibliography{main}

@book{Wa25,
      title={Multiplicative Hitchin Fibrations and the Fundamental Lemma}, 
      author={X. Griffin Wang},
      year={2026},
      publisher = {Annals of Mathematics Studies},
      note = {Forthcoming. Preprint available at arXiv:2402.19331},
}

@article {Ng10,
    AUTHOR = {Ng{\^{o}}, Bao Ch\^{a}u},
     TITLE = {Le lemme fondamental pour les alg\`ebres de {L}ie},
   JOURNAL = {Publ. Math. Inst. Hautes \'{E}tudes Sci.},
  FJOURNAL = {Publications Math\'{e}matiques. Institut de Hautes \'{E}tudes
              Scientifiques},
    NUMBER = {111},
      YEAR = {2010},
     PAGES = {1--169},
      ISSN = {0073-8301},
   MRCLASS = {22E35 (11S37 14D23 14G35 22E50)},
  MRNUMBER = {2653248},
MRREVIEWER = {R. P. Langlands},
       DOI = {10.1007/s10240-010-0026-7},
       URL = {https://doi.org/10.1007/s10240-010-0026-7},
}

@article {Of04,
    AUTHOR = {Offen, Omer},
     TITLE = {Relative spherical functions on {$\wp$}-adic symmetric spaces
              (three cases)},
   JOURNAL = {Pacific J. Math.},
  FJOURNAL = {Pacific Journal of Mathematics},
    VOLUME = {215},
      YEAR = {2004},
    NUMBER = {1},
     PAGES = {97--149},
      ISSN = {0030-8730,1945-5844},
   MRCLASS = {11F70 (22E50 43A85 43A90)},
  MRNUMBER = {2060496},
MRREVIEWER = {Solomon\ Friedberg},
       DOI = {10.2140/pjm.2004.215.97},
       URL = {https://doi.org/10.2140/pjm.2004.215.97},
}

@article {Yu11,
    AUTHOR = {Yun, Zhiwei},
     TITLE = {The fundamental lemma of {J}acquet and {R}allis},
      NOTE = {With an appendix by Julia Gordon},
   JOURNAL = {Duke Math. J.},
  FJOURNAL = {Duke Mathematical Journal},
    VOLUME = {156},
      YEAR = {2011},
    NUMBER = {2},
     PAGES = {167--227},
      ISSN = {0012-7094,1547-7398},
   MRCLASS = {22E35 (17B45)},
  MRNUMBER = {2769216},
MRREVIEWER = {Anne-Marie\ H.\ Aubert},
       DOI = {10.1215/00127094-2010-210},
       URL = {https://doi.org/10.1215/00127094-2010-210},
}

@article{Ch19,
    title = {Geometry of {Kottwitz–Viehmann} varieties},
      DOI = {10.1017/S1474748019000604}, 
  journal = {Journal of the Institute of Mathematics of Jussieu},
publisher = {Cambridge University Press}, 
   author = {Chi, Jingren}, 
     year = {2019}, 
    pages = {1–65},
}

@article {MiVi07,
    AUTHOR = {Mirkovi\'{c}, I. and Vilonen, K.},
     TITLE = {Geometric {L}anglands duality and representations of algebraic
              groups over commutative rings},
   JOURNAL = {Ann. of Math. (2)},
  FJOURNAL = {Annals of Mathematics. Second Series},
    VOLUME = {166},
      YEAR = {2007},
    NUMBER = {1},
     PAGES = {95--143},
      ISSN = {0003-486X},
   MRCLASS = {22E55 (11R39 20G05)},
  MRNUMBER = {2342692},
MRREVIEWER = {Peter Fiebig},
       DOI = {10.4007/annals.2007.166.95},
       URL = {https://doi.org/10.4007/annals.2007.166.95},
}

@incollection {Zh17,
    AUTHOR = {Zhu, Xinwen},
     TITLE = {An introduction to affine {G}rassmannians and the geometric
              {S}atake equivalence},
 BOOKTITLE = {Geometry of moduli spaces and representation theory},
    SERIES = {IAS/Park City Math. Ser.},
    VOLUME = {24},
     PAGES = {59--154},
 PUBLISHER = {Amer. Math. Soc., Providence, RI},
      YEAR = {2017},
   MRCLASS = {14M15 (14D24 20F65 22E57)},
  MRNUMBER = {3752460},
MRREVIEWER = {Felipe Zald\'{i}var},
}

@article {Ko86,
    AUTHOR = {Kottwitz, Robert E.},
     TITLE = {Stable trace formula: elliptic singular terms},
   JOURNAL = {Math. Ann.},
  FJOURNAL = {Mathematische Annalen},
    VOLUME = {275},
      YEAR = {1986},
    NUMBER = {3},
     PAGES = {365--399},
      ISSN = {0025-5831,1432-1807},
   MRCLASS = {22E55 (11F70 11F72)},
  MRNUMBER = {858284},
MRREVIEWER = {J.\ Szmidt},
       DOI = {10.1007/BF01458611},
       URL = {https://doi.org/10.1007/BF01458611},
}

@article{GGP21,
  title={Isolation of cuspidal spectrum, with application to the Gan--Gross--Prasad conjecture},
  author={Beuzart-Plessis, Rapha{\"e}l and Liu, Yifeng and Zhang, Wei and Zhu, Xinwen},
  journal={Annals of Mathematics},
  volume={194},
  number={2},
  pages={519--584},
  year={2021},
  publisher={Department of Mathematics, Princeton University Princeton, New Jersey, USA}
}

@article{GGP22,
  title={The global Gan-Gross-Prasad conjecture for unitary groups: the endoscopic case},
  author={Beuzart-Plessis, Rapha{\"e}l and Chaudouard, Pierre-Henri and Zydor, Micha{\l}},
  journal={Publications math{\'e}matiques de l'IH{\'E}S},
  volume={135},
  number={1},
  pages={183--336},
  year={2022},
  publisher={Springer}
}

@article{Les23,
  title={A fundamental lemma for the Hecke algebra: The Jacquet--Rallis case},
  author={Leslie, Spencer},
  journal={Journal of Number Theory},
  volume={243},
  pages={475--494},
  year={2023},
  publisher={Elsevier}
}

@article{LRZ24,
  title={Arithmetic Fundamental Lemma for the spherical Hecke algebra},
  author={Li, Chao and Rapoport, Michael and Zhang, Wei},
  journal={manuscripta mathematica},
  pages={1--51},
  year={2024},
  publisher={Springer}
}

@article{BP21,
  title={A new proof of the Jacquet--Rallis fundamental lemma},
  author={Beuzart-Plessis, Rapha{\"e}l},
  journal={Duke Mathematical Journal},
  volume={170},
  number={12},
  pages={2805--2814},
  year={2021},
  publisher={Duke University Press}
}

@article{W23,
  title={Twisted Gan-Gross-Prasad conjecture for unramified quadratic extensions},
  author={Wang, Danielle},
year={2023},
    eprint={2307.15234},
      archivePrefix={arXiv},
      primaryClass={math.NT}
}

@article{AFL,
  title={On arithmetic fundamental lemmas},
  author={Zhang, Wei},
  journal={Inventiones mathematicae},
  volume={188},
  number={1},
  pages={197--252},
  year={2012},
  publisher={Springer}
}

@incollection{GGP12,
     author = {Gan, Wee Teck and Gross, Benedict H. and Prasad, Dipendra},
     title = {Symplectic local root numbers, central critical $L$-values, and restriction problems in the representation theory of classical groups},
     booktitle = {Sur les Conjectures de Gross et Prasad. I},
     editor = {Gan, W. T. and Gross, B. H. and Prasad, D. and Waldspurger, J.-L.},
     series = {Ast\'erisque},
     pages = {1--109},
     publisher = {Soci\'et\'e math\'ematique de France},
     number = {346},
     year = {2012},
     mrnumber = {3202556},
     zbl = {1280.22019},
     language = {en},
     url = {http://www.numdam.org/item/AST_2012__346__1_0/}
}

@article{zhang2014fourier,
  title={Fourier transform and the global Gan—Gross—Prasad conjecture for unitary groups},
  author={Zhang, Wei},
  journal={Annals of Mathematics},
  volume={180},
  number={3},
  pages={971--1049},
  year={2014},
  publisher={JSTOR}
}

@article{xue2019global,
  title={On the global Gan--Gross--Prasad conjecture for unitary groups: approximating smooth transfer of Jacquet--Rallis},
  author={Xue, Hang},
  journal={Journal f{\"u}r die reine und angewandte Mathematik (Crelles Journal)},
  volume={2019},
  number={756},
  pages={65--100},
  year={2019},
  publisher={De Gruyter}
}

@article{BZSV,
  title={Relative langlands duality},
  author={Ben-Zvi, David and Sakellaridis, Yiannis and Venkatesh, Akshay},
  journal={preprint},
  volume={4},
  year={2023}
}

@article{guo1996generalization,
  title={On a generalization of a result of Waldspurger},
  author={Guo, Jiandong},
  journal={Canadian Journal of Mathematics},
  volume={48},
  number={1},
  pages={105--142},
  year={1996},
  publisher={Cambridge University Press}
}

@article{Liu-GGP-Bessel,
  title={Refined global Gan--Gross--Prasad conjecture for Bessel periods},
  author={Liu, Yifeng},
  journal={Journal f{\"u}r die reine und angewandte Mathematik (Crelles Journal)},
  volume={2016},
  number={717},
  pages={133--194},
  year={2016},
  publisher={De Gruyter}
}

@article{TGGP,
  title={Twisted GGP problems and conjectures},
  author={Gan, Wee Teck and Gross, Benedict H and Prasad, Dipendra},
  journal={Compositio Mathematica},
  volume={159},
  number={9},
  pages={1916--1973},
  year={2023}
}

@incollection {Ng17,
    AUTHOR = {Ng{\^{o}}, Bao Ch\^au},
     TITLE = {Perverse sheaves and fundamental lemmas},
 BOOKTITLE = {Geometry of moduli spaces and representation theory},
    SERIES = {IAS/Park City Math. Ser.},
    VOLUME = {24},
     PAGES = {217--250},
 PUBLISHER = {Amer. Math. Soc., Providence, RI},
      YEAR = {2017},
      ISBN = {978-1-4704-3574-5},
   MRCLASS = {11F70 (11S37 14D24 22E20)},
  MRNUMBER = {3752462},
MRREVIEWER = {Ramin\ Takloo-Bighash},
}

@misc{stackP25,
  author       = {The {Stacks project authors}},
  title        = {The Stacks project},
  howpublished = {\url{https://stacks.math.columbia.edu}},
  year         = {2025},
}

\end{document}